\numberwithin{equation}{section}
     \newtheorem{thm}{Theorem}[section]
     \newtheorem{cor}[thm]{Corollary}
     \newtheorem{lem}[thm]{Lemma}
\theoremstyle{definition}
     \newtheorem{rem}{Remark}[section]
\newcommand{\N}{\mathbb{N}}
\newcommand{\Q}{\mathbb{Q}}
\newcommand{\R}{\mathbb{R}}
\newcommand{\T}{\mathbb{T}}
\newcommand{\Z}{\mathbb{Z}}
\newcommand{\cD}{\mathcal{D}}
\newcommand{\cK}{\mathcal{K}}
\newcommand{\cP}{\mathcal{P}}
\newcommand{\ve}{\varepsilon}
\newcommand{\vp}{\varphi}
\newcommand{\sign}{\operatorname{sign}}
\newcommand{\tE}{\tilde{E}}
\newcommand{\tG}{\tilde{G}}
\newcommand{\dsharp}{d_{\sharp}}
\newcommand{\pt}{\phantom{***}}
\newcommand{\uU}{U}
\newcommand{\msckw}{%
\footnotetext{\hspace{-4mm} 
{\it 2010 Mathematics Subject Classification}. 
Primary 42B05
\endgraf
{\it Key words and phrases}. 
multiple Fourier series, 
lattice point problem,
Fourier transform,
Hardy's identity, 
Gibbs-Wilbraham phenomenon, Pinsky phenomenon, 
spherical partial sum, 
radial function.
\endgraf
This work was supported by JSPS KAKENHI Grant Numbers JP22540166, JP24540159, JP17K18731.
}}
\title{Multiple Fourier series and 
lattice point problems \msckw}
\author{Shigehiko Kuratsubo and Eiichi Nakai}
\date{}
\begin{document}

\baselineskip=18pt

\maketitle

\begin{abstract}
For the multiple Fourier series of the periodization of some radial functions on $\R^d$,
we investigate the behavior of the spherical partial sum.
We show the Gibbs-Wilbraham phenomenon, the Pinsky phenomenon
and the third phenomenon for the multiple Fourier series,
involving the convergence properties of them.
The third phenomenon is closely related to the lattice point problems,
which is a classical theme of the analytic number theory. 
We also prove that, for the case of two or three dimension,
the convergence problem on the Fourier series 
is equivalent to the lattice point problems in a sense.
In particular, the convergence problem at the origin in two dimension
is equivalent to Hardy's conjecture on Gauss's circle problem.
\end{abstract}

\section{Introduction}\label{sec:intro}

It is well known as the Gibbs-Wilbraham phenomenon that,
for the Fourier series of piecewise continuous functions,
in the neighborhood of each jump, 
the partial sums overshoot the jump by approx 9\% of the jump.
This phenomenon can be seen not only in one dimension 
but also in higher dimensions
(see for example \cite{Colzani-Vignati1995, Kuratsubo-Nakai-Ootsubo2010,Taylor2002}). 

In one dimension, it is also well known as the localization property that,
if the function is zero on an interval,
then the Fourier series converges to zero there.
However, in higher dimensions this property is no longer valid.
In 1993, Pinsky, Stanton and Trapa \cite{Pinsky-Stanton-Trapa1993}
showed that, 
for the Fourier series of the indicator function 
of a $d$-dimensional ball with $d\ge3$, 
the spherical partial sum diverges at the center of the ball.
That is, the Fourier series diverges 
at a smooth point - even a point of
local constancy - of the function, resulting from
global rather than local properties. 
This phenomenon is called the Pinsky phenomenon.

In 2010, 
the third phenomenon was discovered in \cite{Kuratsubo1996, Kuratsubo2010}.
Namely, for the Fourier series of the indicator function 
of a $d$-dimensional ball with $d\ge5$, 
the spherical partial sum diverges at all rational points,
while it converges almost everywhere.
This third phenomenon was proved by using 
results on the lattice point problems.

The study of lattice point problems is 
a classical theme of analytic number theory 
which is concerned with the number of integer points. 
It has a long history and deep accumulations 
since G.~Vorono\"\i, G.~H.~Hardy, E.~Landau, 
J.~G.~Van der Corput, V.~Jarn\'ik and A.~Walfisz, 
see \cite{Fricker1982, Kratzel1988, Kratzel2000}. 
For example,
in $\R^2=\{(x_1,x_2):\text{$x_1$ and $x_2 $ are real numbers}\}$,
by $A(s)$ we denote the number of lattice points,
where are points with integral co-ordinates, 
inside the circle
\begin{equation*}
 x_1^2+x_2^2=s,
\end{equation*}
see Figure~\ref{fig:lattice1}.
Then $A(s)$ is the same as the area of the polygon in Figure~\ref{fig:lattice2},
since the polygon is the union of the unit squares whose centers are the lattice points inside the circle.
Let $P(s)=A(s)-\pi s$, where $\pi s$ is the area of this circle.
Gauss showed that 
\begin{equation*}
 P(s)=O(s^{1/2}) \ \text{as $s\to\infty$}.
\end{equation*}
\begin{figure}[htbp]
\begin{minipage}{.48\linewidth}
\begin{center}
\includegraphics[width=.80\linewidth]{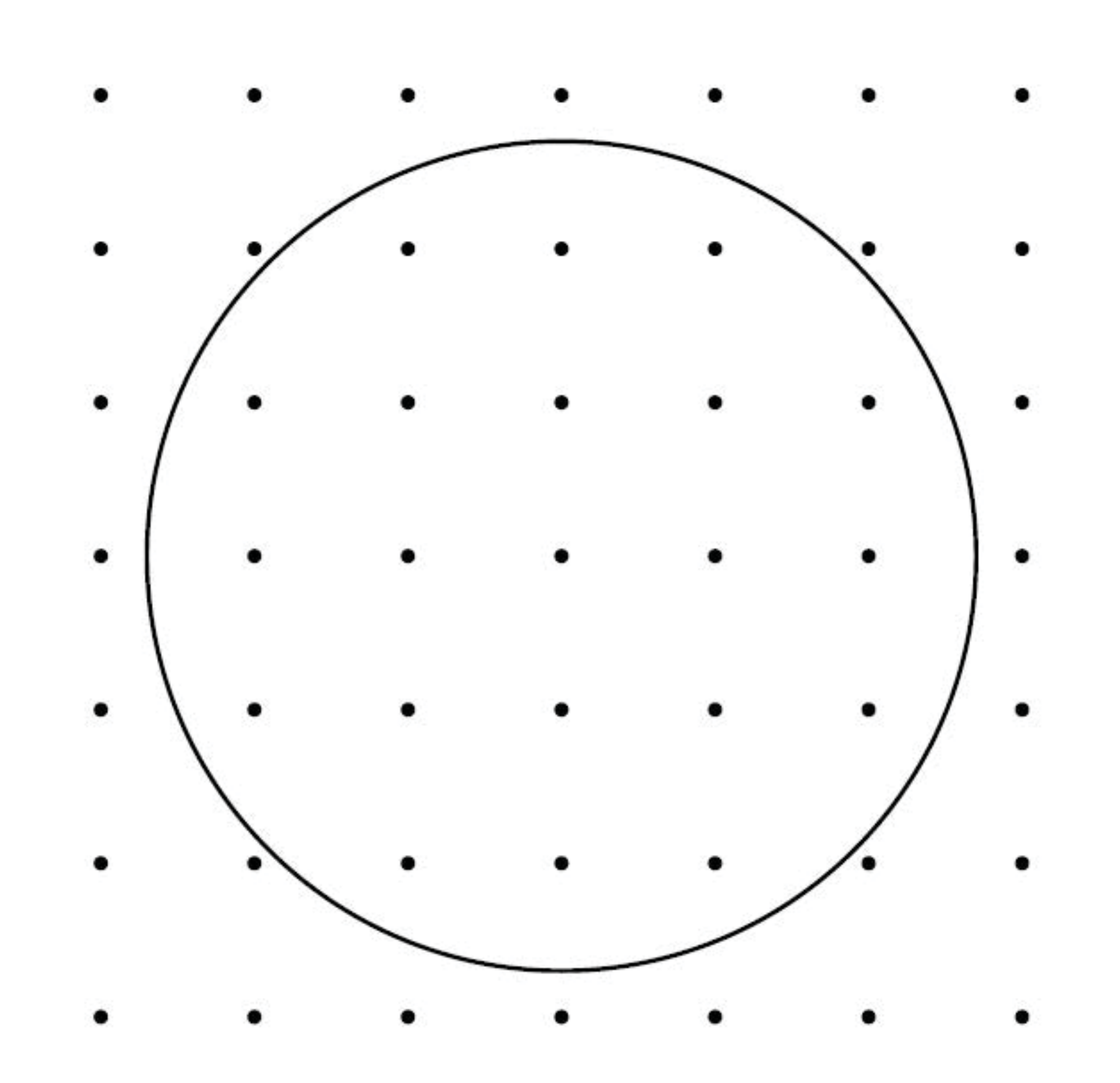} 
\caption{Gauss's circle problem}
\label{fig:lattice1}
\end{center}
\end{minipage}
\begin{minipage}{.48\linewidth}
\begin{center}
\includegraphics[width=.80\linewidth]{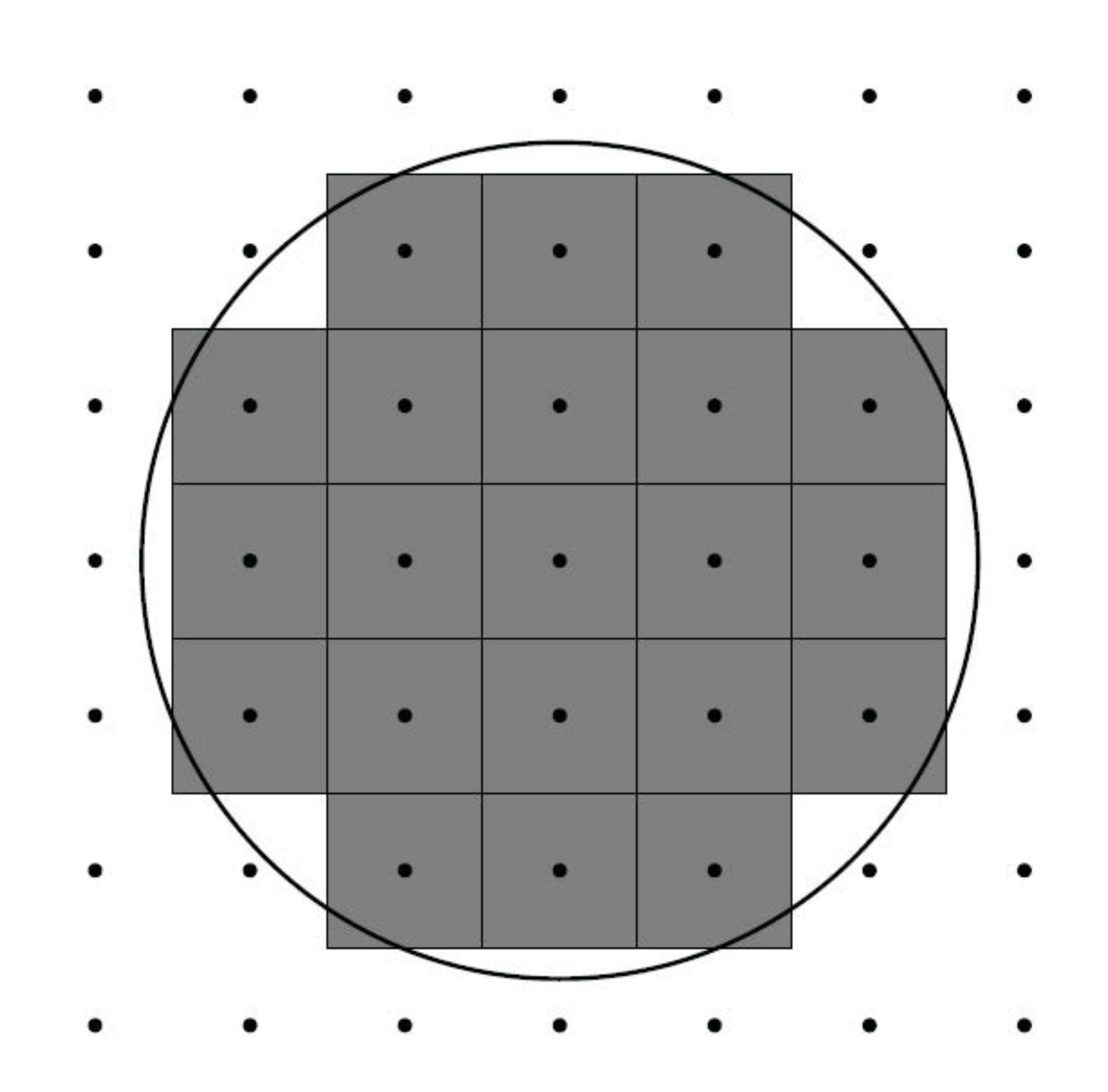} 
\caption{The union of the unit  squares.}
\label{fig:lattice2}
\end{center}
\end{minipage}
\end{figure}
In the above $O$ is Landau's symbol,
that is, $f(s)=O(g(s))$ as $s\to\infty$ 
means that $\limsup_{s\to\infty}|f(s)|/g(s)<\infty$ for the positive valued function $g$.
Similarly, $f(s)=o(g(s))$ as $s\to\infty$ means that $\lim_{s\to\infty}f(s)/g(s)=0$.
In 1915 Hardy~\cite{Hardy1915} proved that
\begin{equation*}
 P(s)\ne O(s^{\theta}) \ \text{if $\theta\le1/4$}.
\end{equation*}
In fact $P(s)\ne o(s^{1/4}\log^{1/4}s)$. 
The best bound on $\theta$ for $P(s)=O(s^{\theta})$
is a very famous open problem known as Gauss's circle problem.
While Hardy's conjecture(\cite{Hardy1917}) is $P(s)=O(s^{1/4+\ve})$ for any $\ve>0$,
the most sharp result up to now 
is $P(s)=O(s^{131/416}(\log s)^{18637/8320})$
by M.~Huxley~\cite{Huxley2003} in 2003,
where $131/416=0.3149\dots$.
(Recently, Bourgain and Watt~\cite{Bourgain-Watt_arXiv1709-04340v1} 
gave $\theta=517/1648=0.31371\dots$ in arXiv, 2017.)

Note that $A(s)$ is a special case of 
\begin{equation}\label{LPweight}
 \sum_{m_1^2+\cdots+m_d^2\le s}\exp\left(2\pi i\sum_{k=1}^dm_kx_k\right),
\end{equation}
where $m_1,\cdots,m_d$ are integers and $x_1,\cdots,x_d$ are real numbers,
that is, $A(s)$ is the case $(x_1,\cdots,x_d)=(0,\cdots,0)$ of \eqref{LPweight}
and $d=2$.
The sum \eqref{LPweight} is related to the Fourier series.
Especially the research on the sum \eqref{LPweight} 
by Czechoslovakian mathematician B.~Nov\'ak (1938--2003) 
is very important for the study of the convergence problem 
of multiple Fourier series. 

Recently, Taylor~\cite{Taylor-preprint-web,Taylor-preprint} 
found that the Pinsky phenomenon arises even in two-dimension.
He treated the radial function
\begin{equation*}\label{U2}
 U(x)
 =
 \begin{cases}
  1/\sqrt{a^2-|x|^2}, & |x|<a, \\
  0, & |x|\ge a,
 \end{cases}
\quad x\in\R^2,\ a>0, 
\end{equation*}
Then $U(x)$ is the fundamental solution to the wave equation on $\R\times\T^2$, evaluated at $t=a$.
Our aim in this paper which is  motivated by Taylor~\cite{Taylor-preprint-web,Taylor-preprint}  
is to study the Gibbs-Wilbraham phenomenon, the Pinsky phenomenon 
and the third phenomenon on
the Fourier series of 
\begin{equation}\label{U}
 U_{\beta,a}(x)
 =
 \begin{cases}
  (a^2-|x|^2)^{\beta}, & |x|<a \\
  0, & |x|\ge a,
 \end{cases}
\quad x\in\R^d,\ a>0,
\end{equation}
for $\beta>-1$, $a>0$ and dimension $d$,
involving the convergence properties of them.
If $\beta=0$, 
then $U_{\beta,a}(x)$ is the same as the indicator function of the ball 
centered at the origin and of radius $a$.

By $\R^d$, $\Z^d$ and $\T^d=\R^d/\Z^d$ 
we denote the $d$-dimensional Euclidean space, 
integer lattice and torus, respectively.
In this paper, however,
we always identify $\T^d$ with $(-1/2,1/2]^d$,
that is, $x\in\T^d$ means $x\in(-1/2,1/2]^d$ and $\T^d\subset\R^d$.
Let $\Q$ be the set of all rational numbers, and let 
$\Q^d=\{(x_1,\cdots,x_d):x_1,\cdots,x_d\in\Q\}$.

For an integrable function $F(x)$ on $\R^d$, 
its Fourier transform ${\hat F}(\xi)$ and 
its Fourier spherical partial integral $\sigma_\lambda(F)(x)$ 
of order $\lambda\ge 0$ are defined by
\begin{align}\label{FI}
 {\hat F}(\xi)
 &=
 \int_{{\R}^d}F(x)e^{-2\pi i\xi x}\,dx, 
 \quad \xi=(\xi_1,\cdots, \xi_d)\in\R^d,
\\
 \sigma_\lambda(F)(x)
 &=
 \int_{|\xi|<\lambda}{\hat F}(\xi)e^{2\pi i \xi x}d\,\xi, 
 \quad |\xi|=\sqrt{\sum_{k=1}^d {\xi_k}^2},\ x\in\R^d,
\end{align}
respectively,
where $\xi x$ is the inner product $\sum_{k=1}^d\xi_kx_k$. 
Also, for an integrable function $f(x)$ on $\T^d$, 
its Fourier coefficients ${\hat f}(m)$ and 
its Fourier spherical partial sum $S_\lambda(f)(x)$ of order $\lambda\ge 0$ 
are defined by
\begin{alignat}{3}\label{FC}
 {\hat f}(m)
 &=
 \int_{{\T}^d}f(x)e^{-2\pi imx}\,dx, 
 &\quad m=(m_1,\cdots,m_d)\in\Z^d, 
\\
 S_\lambda(f)(x)
 &=
 \sum_{|m|<\lambda}{\hat f}(m)e^{2\pi imx}, 
 &|m|=\sqrt{\sum_{k=1}^d {m_k}^2},\ x\in\T^d,
\end{alignat}
respectively.

For an integrable function $F(x)$ on $\R^d$, 
we consider its periodization 
\begin{equation}\label{periodization}
 f(x)=\sum_{m\in{\Z}^d}F(x+m), \quad x\in\T^d.
\end{equation}
Note that in \eqref{periodization} the series converges with respect to the $L^1$-norm on $\T^d$ 
and then $f$ is an integrable function on $\T^d$.
Then it is known as the Poisson summation formula that
the equation
\begin{equation}\label{Poisson}
 {\hat f}(m)={\hat F}(m), \quad m\in\Z^d
\end{equation}
holds,
see for example \cite[Theorem 2.4 (page 251)]{Stein-Weiss1971}.
The left hand side of \eqref{Poisson} is defined by \eqref{FC}
and the right hand side of \eqref{Poisson} is defined by \eqref{FI} 
with $\xi=m$.

In particular, we denote by $u_{\beta,a}(x)$ the periodization of $U_{\beta,a}(x)$.
That is,
\begin{equation}\label{periodization U}
 u_{\beta,a}(x)=\sum_{m\in{\Z}^d}U_{\beta,a}(x+m), \quad x\in\T^d.
\end{equation}
If $\beta=0$, then $u_{0,a}$ is the periodization of the indicator function of the ball
centered at the origin and of radius $a$. 
See Figure~\ref{fig:u0 d2} and also Figure~\ref{fig:Ea Ga}.
\begin{figure}[htbp]
\begin{center}
\includegraphics[width=.48\linewidth]{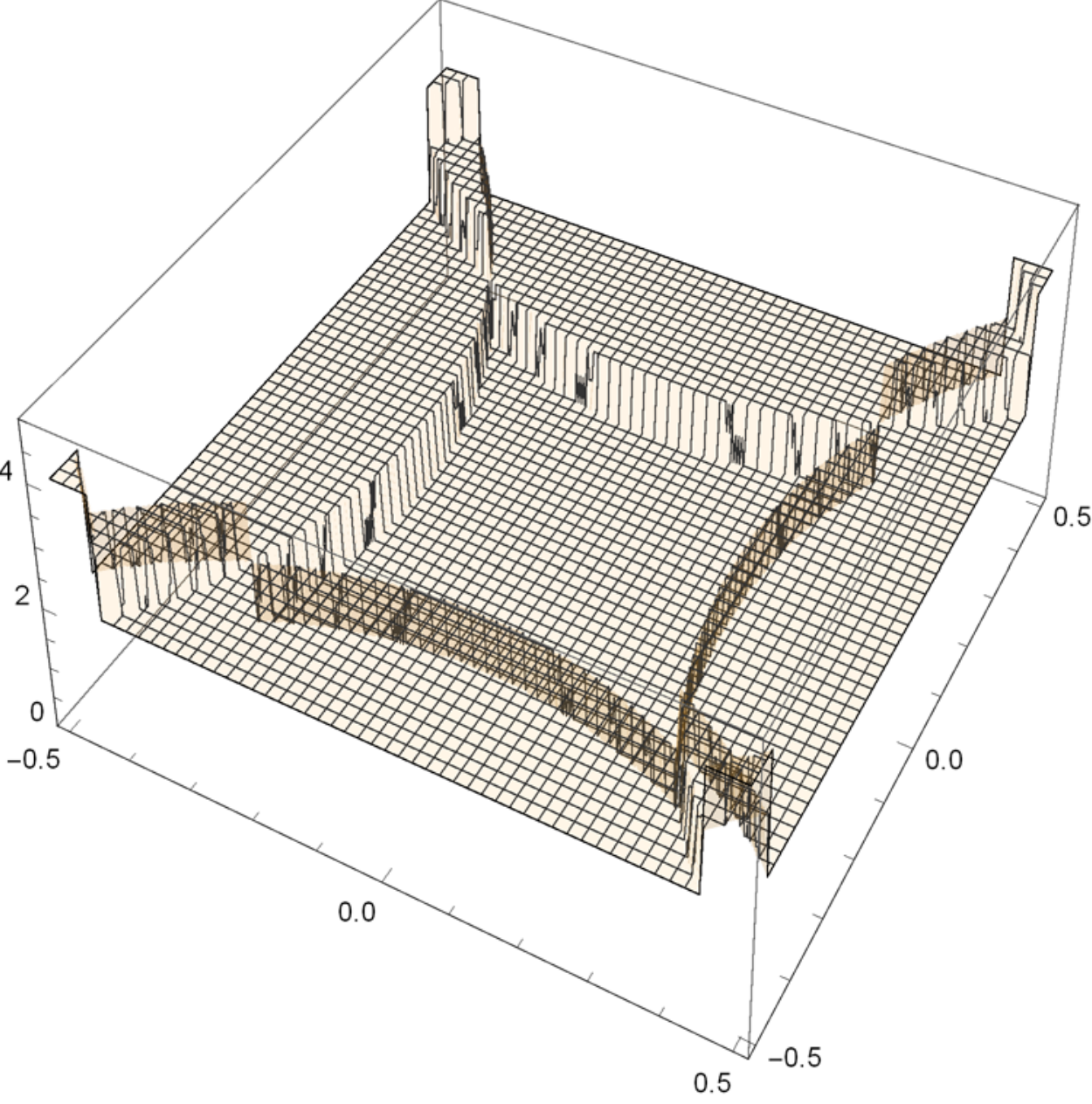} 
\caption{$u_{\beta,a}(x)$ ($d=2$, $\beta=0$, $a=3/4$).}
\label{fig:u0 d2}
\end{center}
\end{figure}
Let
\begin{equation*}
 \overline{u_{0,a}}(x)=\sum_{m\in{\Z}^d}\overline{U_{0,a}}(x+m), \quad x\in\T^d,
 \qquad
 \overline{U_{0,a}}(x)
 =
 \begin{cases}
  1, & |x|<1, \\
  1/2, & |x|=1, \\
  0, & |x|>1,
 \end{cases}
 \quad x\in\R^d.
\end{equation*}
If $d=2$, then 
\begin{equation*}
 S_{\lambda}(u_{0,a})(x)
 =
 \pi a^2 + a\sum_{0<|m|<\lambda} \frac{J_1(2\pi a|m|)}{|m|}e^{2\pi imx}
\end{equation*}
and 
\begin{equation*}
 \lim_{\lambda\to\infty}S_{\lambda}(u_{0,a})(0)
 =\overline{u_{0,a}}(0),
\end{equation*}
see Remark~\ref{rem:Hardy}.
More precisely, we have
\begin{equation}\label{HI0}
  \lim_{\lambda\to\infty}
  \left( \pi a^2 + a\sum_{0<|m|<\lambda} \frac{J_1(2\pi a|m|)}{|m|} \right)
 =
  \sum_{|m|<a}1 + \frac12 \sum_{|m|=a}1.
\end{equation}
The equation~\eqref{HI0}
is well known as Hardy's identity \cite{Hardy1915}.
See Corollary~\ref{cor:HI x} 
for $\displaystyle\lim_{\lambda\to\infty}S_{\lambda}(u_{0,a})(x)$, $x\in\T^2$.

We first show an identity (Theorem~\ref{thm:GHI})
for the periodization of any integrable radial function with compact support.
Then it turns out that 
the difference between the Fourier partial sum and the Fourier partial integral
is closely related to lattice point problems. 
Therefore, to study the convergence problem on the Fourier series of $u_{\beta,a}(x)$
we must also investigate the behavior of $\sigma_{\lambda}(U_{\beta,a})(x)$
as $\lambda\to\infty$
and lattice point problems.

In the case $\beta=0$, 
the convergence problem on the Fourier series of the function $u_{0,a}$
has been studied in detail 
by \cite{%
Kuratsubo1996,Kuratsubo1999,Kuratsubo2008,Kuratsubo2009,Kuratsubo2010,
Kuratsubo-Nakai-Ootsubo2006,Kuratsubo-Nakai-Ootsubo2010,
Pinsky-Stanton-Trapa1993,Pinsky1994,Pinsky2002}.
Some of results in these papers were proved by using 
Nov\'ak's results in \cite{Novak1967,Novak1967/1968,Novak1969,Novak1972}.
His results were very useful and sufficient 
for the affirmative results of the convergence problem on the Fourier series of 
the function $u_{0,a}$.

On the other hand,
in the case $-1<\beta<0$, 
Nov\'ak's results on the sum \eqref{LPweight} are not sufficient
to study the convergence problem on the Fourier series of $u_{\beta,a}$.
Our results on the convergence of the Fourier series of $u_{\beta,a}$
are obtained by using the best estimates up to now on lattice point problems. 
Therefore, if the lattice point problems will be improved in the future,
then our results can be also improved.
Actually, 
the convergence problem on the Fourier series 
and the lattice point problems are equivalent in a sense
as we will show in Section~\ref{sec:relation}.
In particular, the convergence problem at the origin in two dimension
is equivalent to Hardy's conjecture on Gauss's circle problem,
see Remark~\ref{rem:conj 2}.

To state our main results,
for $a>0$, let 
\begin{align}\label{Ea}
 E_a&=\{x\in\T^d: 
   x\neq 0\ \text{and}\ |x-m|\neq a\ \text{for all}\ m\in\Z^d \},
\\
\label{Ga}
 G_a&=
 \{x\in\T^d : x\ne0\ \text{and}\ |x-m|=a\ \text{for some}\ m\in\Z^d \}.
\end{align}
Then
$\T^d=\{0\}\cup G_a\cup E_a$.
See Figures~\ref{fig:Ea Ga}.
If $0<a<1/2$, then
\begin{align*}
 &E_a=\{x\in\T^d: x\ne0\ \text{and}\ |x|\neq a\},
\\
 &G_a=\{x\in\T^d: |x|=a \},
\end{align*}
because $x\in\T^d$ means $x\in(-1/2,1/2]^d$
and then $\{x\in \T^d : |x-m|=a\}$ is empty for $m\neq 0$.
For $a>0$, let also 
\begin{equation}\label{rd}
 r_d(a:x)=\sum_{m\in\Z^d, \ |x-m|=a}1,
 \qquad
 x\in\T^d.
\end{equation}
Then $r_d(a:x)=0$ for $x\in E_a$.
If $0<a<1/2$, then $r_d(a:0)=0$ and $r_d(a:x)=1$ for $x\in G_a$.

\newcommand{\radius}{1.25}
\newcommand{\radiusb}{0.75}
\newcommand{\radiush}{0.25}
\begin{figure}[htbp]
\begin{center}
\begin{tikzpicture}[scale=2.2]
\filldraw[fill=gray!50] (-0.5,-0.5)--(0.5,-0.5)--(0.5,0.5)--(-0.5,0.5)--cycle;
\draw[step=1cm,gray!20] (-1.2,-1.2) grid (1.2,1.2);
\draw[->] (-1.4,0)--(1.4,0) node[right] {$x_1$};
\draw[->] (0,-1.4)--(0,1.4) node[above] {$x_2$};
\filldraw (0,0) circle (0.02);
\node (O) at (-0.1,-0.11) {$\rm O$};
\node (O) at (0.4,0.65) {$\T^2$};
\node (O) at (1.4,1.4) {$\R^2$};
\node (O) at (0.35,0.35) {$E_a$};
\draw[->, thick] (0.6,-0.45) node[right] {$G_a$} --(0.17,-0.176);

\clip (-1.3,-1.3)--(1.3,-1.3)--(1.3,1.3)--(-1.3,1.3)--cycle;
\draw[dashed]
 (0,0) circle (\radiush)
 (1,0) circle (\radiush)
 (1,1) circle (\radiush)
 (0,1) circle (\radiush)
 (-1,1) circle (\radiush)
 (-1,0) circle (\radiush)
 (-1,-1) circle (\radiush)
 (0,-1) circle (\radiush)
 (1,-1) circle (\radiush);

\foreach \x in {-1,1}
\draw (\x cm, 0.5pt) -- (\x cm, -0.5pt) node[below] {$\bf \x$};
\foreach \y in {-1,1}
\draw (0.5pt, \y cm) -- (-0.5pt, \y cm) node[left] {$\bf \y$};

\clip (-0.5,-0.5)--(0.5,-0.5)--(0.5,0.5)--(-0.5,0.5)--cycle;
\draw[thick]
 (0,0) circle (\radiush) 
 (1,0) circle (\radiush) 
 (1,1) circle (\radiush) 
 (0,1) circle (\radiush) 
 (-1,1) circle (\radiush) 
 (-1,0) circle (\radiush) 
 (-1,-1) circle (\radiush) 
 (0,-1) circle (\radiush) 
 (1,-1) circle (\radiush);
\end{tikzpicture}
\begin{tikzpicture}[scale=2.2]
\filldraw[fill=gray!50] (-0.5,-0.5)--(0.5,-0.5)--(0.5,0.5)--(-0.5,0.5)--cycle;
\draw[step=1cm,gray!20] (-1.2,-1.2) grid (1.2,1.2);
\draw[->] (-1.4,0)--(1.4,0) node[right] {$x_1$};
\draw[->] (0,-1.4)--(0,1.4) node[above] {$x_2$};
\filldraw (0,0) circle (0.02);
\node (O) at (-0.1,-0.11) {$\rm O$};
\node (O) at (0.4,0.65) {$\T^2$};
\node (O) at (1.4,1.4) {$\R^2$};

\clip (-1.3,-1.3)--(1.3,-1.3)--(1.3,1.3)--(-1.3,1.3)--cycle;
\draw[dashed]
 (0,0) circle (\radiusb)
 (1,0) circle (\radiusb)
 (1,1) circle (\radiusb)
 (0,1) circle (\radiusb)
 (-1,1) circle (\radiusb)
 (-1,0) circle (\radiusb)
 (-1,-1) circle (\radiusb)
 (0,-1) circle (\radiusb)
 (1,-1) circle (\radiusb);

\foreach \x in {-1,1}
\draw (\x cm, 0.5pt) -- (\x cm, -0.5pt) node[below] {$\bf \x$};
\foreach \y in {-1,1}
\draw (0.5pt, \y cm) -- (-0.5pt, \y cm) node[left] {$\bf \y$};

\clip (-0.5,-0.5)--(0.5,-0.5)--(0.5,0.5)--(-0.5,0.5)--cycle;
\draw[thick]
 (0,0) circle (\radiusb) 
 (1,0) circle (\radiusb) 
 (1,1) circle (\radiusb) 
 (0,1) circle (\radiusb) 
 (-1,1) circle (\radiusb) 
 (-1,0) circle (\radiusb) 
 (-1,-1) circle (\radiusb) 
 (0,-1) circle (\radiusb) 
 (1,-1) circle (\radiusb);
\end{tikzpicture}
\caption{$E_a$ and $G_a$ for $a=1/4$ and $a=3/4$}
\label{fig:Ea Ga}
\end{center}
\end{figure}
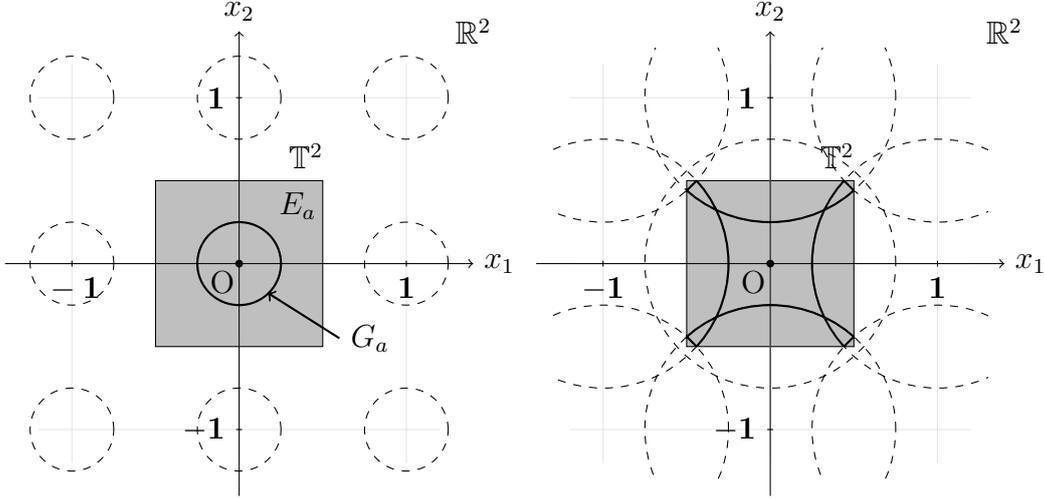

In the following, 
Theorem~\ref{thm:PinskyPh} and Corollary~\ref{cor:PinskyPh} deal with 
the behavior of $S_{\lambda}(u_{\beta,a})$ at $x=0$
including the Pinsky phenomenon, 
Theorem~\ref{thm:|x|=a and PWC} deals with the pointwise behaviors 
including the third phenomenon,
Theorem~\ref{thm:GibbsPh} deals with the Gibbs-Wilbraham phenomenon
near $G_a$. 
Theorem~\ref{thm:AEC} deals with the almost everywhere convergence.


Our first result is on the behavior of $S_{\lambda}(u_{\beta,a})(0)$
which includes the Pinsky phenomenon.
Let $\Gamma(s)$ be the Gamma function.
For $\beta>-1$, $a>0$ and the dimension $d$, let
\begin{equation}\label{P}
P_{\beta,a}^{[d]}
 =
  \frac{\Gamma(\beta+1)}{\Gamma(d/2)}
   a^{(d-3)/2+\beta} \pi^{(d-4)/2-\beta}
\end{equation}
and
\begin{equation}\label{J1}
 L_{\beta,a}
 =
 \dfrac{\Gamma(\beta+1)}2 \left(\dfrac a\pi\right)^{\beta}
 \left(\dfrac{\sin\dfrac{\beta\pi}2}{\dfrac{\beta\pi}2}\right), 
\end{equation}
where $\left(\sin\frac{\beta\pi}2\right)/\frac{\beta\pi}2$
is regarded as $1$ if $\beta=0$, that is, $L_{0,a}=1/2$. 

\begin{thm}\label{thm:PinskyPh}
Let $\beta>-1$ and $a>0$.
\begin{enumerate}
\item
If $d\ge1$ and $\beta>(d-3)/2$,
then
\begin{multline}\label{HK-I}
 S_{\lambda}(u_{\beta,a})(0)
 =
 u_{\beta,a}(0)
 + r_d(a:0)
  \left(L_{\beta,a}+o(1)\right)
  \lambda^{-\beta}
 + O(\lambda^{\frac{d-3}2-\beta})
\\
 \quad\text{as}\quad \lambda\to\infty.
\end{multline}
\item
If $d\ge 2$ and $-1<\beta\le(d-3)/2$, 
then
$S_{\lambda}(u_{\beta,a})$ reveals the Pinsky phenomenon.
More precisely, 
\begin{multline*}
 S_{\lambda}(u_{\beta,a})(0)
 =
 u_{\beta,a}(0)
 +\big(\sigma_{\lambda}(U_{\beta,a})(0)-U_{\beta,a}(0)\big)
\\
 + 
 r_d(a:0)
 (L_{\beta,a}+o(1))\lambda^{-\beta}
 +o(\lambda^{\frac{d-3}2-\beta})
 \quad\text{as}\quad \lambda\to\infty,
\end{multline*}
where
\begin{equation*}
 \sigma_{\lambda}(U_{\beta,a})(0)-U_{\beta,a}(0)
 =
-P_{\beta,a}^{[d]}
 \cos\left(2\pi a\lambda-\frac{d-1+2\beta}4\pi\right)
 \lambda^{\frac{d-3}2-\beta}
 +O(\lambda^{\frac{d-5}2-\beta}).
\end{equation*}
\end{enumerate}
\end{thm}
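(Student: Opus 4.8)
The plan is to start from the Poisson summation formula \eqref{Poisson}, which gives $S_{\lambda}(u_{\beta,a})(0)=\sum_{|m|<\lambda}\widehat{U_{\beta,a}}(m)$, and to compare this lattice sum with the partial integral $\sigma_{\lambda}(U_{\beta,a})(0)=\int_{|\xi|<\lambda}\widehat{U_{\beta,a}}(\xi)\,d\xi$ by means of the generalized Hardy identity of Theorem~\ref{thm:GHI}. Since $U_{\beta,a}$ is radial, so is its Fourier transform, and the classical formula for the Fourier transform of $(a^{2}-|x|^{2})_{+}^{\beta}$ gives the radial profile
\begin{equation*}
 \widehat{U_{\beta,a}}(\xi)
 =
 \Gamma(\beta+1)\,a^{d/2+\beta}\pi^{-\beta}
 \frac{J_{d/2+\beta}(2\pi a|\xi|)}{|\xi|^{d/2+\beta}}.
\end{equation*}
First I would record this explicit expression and, via \eqref{periodization U}, identify $u_{\beta,a}(0)=\sum_{n\in\Z^{d}}U_{\beta,a}(n)$ as the sum of $U_{\beta,a}$ over the lattice points strictly inside the sphere $|x|=a$, so that the points with $|n|=a$, counted by $r_{d}(a:0)$, are precisely the boundary points at which $U_{\beta,a}$ carries its boundary singularity.

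The second step is the asymptotic analysis of the partial integral. Writing $\sigma_{\lambda}(U_{\beta,a})(0)-U_{\beta,a}(0)=-\int_{|\xi|>\lambda}\widehat{U_{\beta,a}}(\xi)\,d\xi$ by Fourier inversion, passing to polar coordinates, and inserting the large-argument asymptotics $J_{\nu}(z)\sim\sqrt{2/(\pi z)}\,\cos(z-\nu\pi/2-\pi/4)$, one is left with an oscillatory integral $\int_{\lambda}^{\infty}t^{(d-3)/2-\beta}\cos(2\pi a t-\phi)\,dt$. A single integration by parts extracts the boundary term at $t=\lambda$, which produces exactly the stated factor $-P_{\beta,a}^{[d]}\cos\!\big(2\pi a\lambda-\tfrac{d-1+2\beta}{4}\pi\big)\lambda^{(d-3)/2-\beta}$ together with the error $O(\lambda^{(d-5)/2-\beta})$; a direct check shows that the constant assembled from $\Gamma(\beta+1)$, the surface measure $2\pi^{d/2}/\Gamma(d/2)$, and the powers of $a$ and $\pi$ coincides with \eqref{P}.

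The third and decisive step is the analysis of the lattice remainder $S_{\lambda}(u_{\beta,a})(0)-\sigma_{\lambda}(U_{\beta,a})(0)$, which by Theorem~\ref{thm:GHI} is a Stieltjes integral of $\widehat{U_{\beta,a}}$ against the lattice discrepancy $D_{d}(t)=\#\{n\in\Z^{d}:|n|\le t\}-\omega_{d}t^{d}$, where $\omega_{d}$ is the volume of the unit ball. The oscillation of $\widehat{U_{\beta,a}}$ has frequency $2\pi a$, and the resonant interaction of this frequency with the part of $D_{d}$ coming from lattice points at distance $a$ produces a secular, non-oscillatory contribution proportional to $r_{d}(a:0)$; a local computation near each such boundary point, reducing the singularity $(a^{2}-|x|^{2})^{\beta}\sim(2a)^{\beta}(a-|x|)_{+}^{\beta}$ to a model Fourier integral whose value carries the factor $\big(\sin\tfrac{\beta\pi}{2}\big)/\tfrac{\beta\pi}{2}$, yields precisely $r_{d}(a:0)(L_{\beta,a}+o(1))\lambda^{-\beta}$ with $L_{\beta,a}$ as in \eqref{J1}. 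The remaining, non-resonant part of the Stieltjes integral must be shown to be of lower order; this is the main obstacle, since for $-1<\beta<0$ the slow decay of $\widehat{U_{\beta,a}}$ makes the bound delicate and forces the use of sharp estimates on the lattice point discrepancy rather than the elementary $D_{d}(t)=O(t^{d-1})$.

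Finally, I would assemble the three contributions, the dichotomy between the two cases being governed by the sign of $(d-3)/2-\beta$. When $\beta>(d-3)/2$ the partial-integral term decays and is absorbed into the error $O(\lambda^{(d-3)/2-\beta})$, giving the clean formula \eqref{HK-I} of part~(i) with no Pinsky phenomenon; when $-1<\beta\le(d-3)/2$ the same oscillatory term has non-decaying (or growing) amplitude and must be displayed explicitly, yielding part~(ii) and exhibiting the Pinsky phenomenon, while the lattice remainder is now controlled only up to $o(\lambda^{(d-3)/2-\beta})$. Specializing to $d=2$, $\beta=0$ recovers Hardy's identity \eqref{HI0}, which serves as a consistency check on the constants $L_{0,a}=1/2$ and $P_{0,a}^{[2]}$.
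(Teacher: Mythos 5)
Your outline follows the same overall architecture as the paper --- Poisson summation, the fundamental identity of Theorem~\ref{thm:GHI}, separate asymptotics for $\sigma_{\lambda}(U_{\beta,a})(0)$, a resonance computation at the lattice points with $|m|=a$, and lattice-point estimates --- but your second step breaks down exactly in the regime where the theorem has content. The identity $\sigma_{\lambda}(U_{\beta,a})(0)-U_{\beta,a}(0)=-\int_{|\xi|>\lambda}\widehat{U_{\beta,a}}(\xi)\,d\xi$ requires the improper spherical integral on the right to exist. Since $\widehat{U_{\beta,a}}(\xi)$ oscillates with amplitude $\asymp|\xi|^{-(d+1)/2-\beta}$, the radial tail is precisely the oscillatory integral $\int_{\lambda}^{\infty}t^{(d-3)/2-\beta}\cos(2\pi a t-\phi)\,dt$ you wrote down, and this converges (conditionally, by Dirichlet's test) if and only if $\beta>(d-3)/2$. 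Throughout part (ii), i.e.\ for $-1<\beta\le(d-3)/2$, the partial integrals oscillate with non-decaying amplitude --- this divergence \emph{is} the Pinsky phenomenon --- so there is no tail representation to integrate by parts: the boundary term at infinity does not vanish, and your computation is only formal. The paper never writes a tail integral in this regime; it expresses $\sigma_{\lambda}(U_{\beta,a})(0)$ as the finite Bessel integral \eqref{F inversion} and evaluates it exactly, using the recurrence \eqref{Bessel-recurrence-formula} and Lemma~\ref{lem:Bessel5}/Corollary~\ref{cor:Bessel51} to reduce the dimension to $d=1$ (where the tail does converge for every $\beta>-1$) or $d=2$ (where there is an exact antiderivative); the explicit terms $\cP_{\beta,a,\lambda}^{[d]}$ produced by this reduction (Lemmas~\ref{lem:Pin d=12} and \ref{lem:Pin d>2}) are what yield $-P_{\beta,a}^{[d]}\cos\left(2\pi a\lambda-\frac{d-1+2\beta}{4}\pi\right)\lambda^{\frac{d-3}2-\beta}+O(\lambda^{\frac{d-5}2-\beta})$. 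Some such device (exact recurrences, or a regularization you would have to justify) is indispensable for part (ii).

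The two estimates you defer are, moreover, the remaining substance of the proof. First, the ``non-resonant part'' of the lattice remainder is not of lower order: by Corollary~\ref{cor:Bessel1} (see \eqref{JJ kd} in the proof of Theorem~\ref{thm:RT}), the contribution of each $m\ne0$ with $|m|<a$ converges to the constant $(a^{2}-|m|^{2})^{\beta}$, and these constants sum to $u_{\beta,a}(0)-U_{\beta,a}(0)$, which is nonzero whenever $a>1$; only after extracting them (with error $G_{\beta,a}(\lambda:0)=O(\lambda^{-\beta-1})$, via Lemma~\ref{lem:Bessel3}) is the rest genuinely small, and otherwise your assembly misses precisely the passage from $U_{\beta,a}(0)$ to $u_{\beta,a}(0)$ in \eqref{HK-I}. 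Second, for the discrepancy terms $\sum_{j=0}^{\dsharp}(-1)^{j}\Delta_{j}(\lambda^{2}:0)A_{\beta,a}^{(j)}(\lambda^{2})$, the elementary bound $\Delta_{0}(s:0)=O(s^{\frac{d-1}2})$ combined with \eqref{Aaaj} gives only $O(\lambda^{\frac{d-3}2-\beta})$, which suffices for part (i) but not for the error $o(\lambda^{\frac{d-3}2-\beta})$ asserted in part (ii); for that one needs Landau's estimate $\Delta_{0}(s:0)=O(s^{\frac d2-\frac{d}{d+1}})$ (Theorem~\ref{thm:Landau}) together with bounds on the higher Riesz means $\Delta_{j}$, which is the content of the paper's Lemma~\ref{lem:K 1-4}. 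You correctly flag this as ``the main obstacle,'' but since you neither state nor prove the required estimate, part (ii) is not established by your argument; likewise the ``local computation'' producing $L_{\beta,a}$ is in the paper a Weber--Schafheitlin-type evaluation (Corollary~\ref{cor:Bessel44}), not a routine step.
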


If $0<a<1/2$, then $r_d(a:0)=0$.
Hence we have the following corollary.

\begin{cor}\label{cor:PinskyPh}
Let $d\ge 1$ and $0<a<1/2$. 
\begin{enumerate}
\item 
If $\beta>\frac{d-3}2$, then
\begin{equation*}
\lim_{\lambda\to\infty}S_{\lambda}(u_{\beta,a})(0)=u_{\beta,a}(0).
\end{equation*}
\item
If $-1<\beta\le\frac{d-3}2$, then
\begin{align*}
 \liminf_{\lambda\to\infty}
 \frac{S_{\lambda}(u_{\beta,a})(0)-u_{\beta,a}(0)}
      {\lambda^{\frac{d-3}2-\beta}}
 &=-P_{\beta,a}^{[d]},
\\
 \limsup_{\lambda\to\infty}
 \frac{S_{\lambda}(u_{\beta,a})(0)-u_{\beta,a}(0)}
      {\lambda^{\frac{d-3}2-\beta}}
 &=P_{\beta,a}^{[d]}.
\end{align*}
\end{enumerate}
\end{cor}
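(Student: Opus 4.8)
The plan is to derive both parts directly from Theorem~\ref{thm:PinskyPh}, exploiting the fact that the hypothesis $0<a<1/2$ forces $r_d(a:0)=0$, as noted just above the statement. With this vanishing, the term carrying $L_{\beta,a}$ drops out of both formulas, and what remains is governed entirely by the transform-side asymptotics.

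For part (i), I would invoke Theorem~\ref{thm:PinskyPh}(i), which under $\beta>(d-3)/2$ together with $r_d(a:0)=0$ reduces to
\begin{equation*}
 S_{\lambda}(u_{\beta,a})(0)=u_{\beta,a}(0)+O(\lambda^{\frac{d-3}2-\beta})
 \quad\text{as}\quad\lambda\to\infty.
\end{equation*}
Since $\beta>(d-3)/2$ gives $\frac{d-3}2-\beta<0$, the error tends to zero and the claimed limit follows at once.

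For part (ii), I would start from Theorem~\ref{thm:PinskyPh}(ii) with $r_d(a:0)=0$ and insert the explicit expansion of $\sigma_{\lambda}(U_{\beta,a})(0)-U_{\beta,a}(0)$, obtaining
\begin{equation*}
 S_{\lambda}(u_{\beta,a})(0)-u_{\beta,a}(0)
 =
 -P_{\beta,a}^{[d]}\cos\!\left(2\pi a\lambda-\frac{d-1+2\beta}4\pi\right)\lambda^{\frac{d-3}2-\beta}
 +o(\lambda^{\frac{d-3}2-\beta}),
\end{equation*}
where the $O(\lambda^{\frac{d-5}2-\beta})$ term has been absorbed into the $o(\lambda^{\frac{d-3}2-\beta})$ remainder because $\lambda^{\frac{d-5}2-\beta}/\lambda^{\frac{d-3}2-\beta}=\lambda^{-1}\to0$. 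Dividing through by $\lambda^{\frac{d-3}2-\beta}$ leaves
\begin{equation*}
 \frac{S_{\lambda}(u_{\beta,a})(0)-u_{\beta,a}(0)}{\lambda^{\frac{d-3}2-\beta}}
 =
 -P_{\beta,a}^{[d]}\cos\!\left(2\pi a\lambda-\frac{d-1+2\beta}4\pi\right)+o(1).
\end{equation*}

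It then remains to take the upper and lower limits. From \eqref{P} one checks $P_{\beta,a}^{[d]}>0$, since $\Gamma(\beta+1)>0$ for $\beta>-1$, while $\Gamma(d/2)>0$ and $a,\pi>0$. Because $a>0$, the argument $2\pi a\lambda-\frac{d-1+2\beta}4\pi$ sweeps continuously over all of $\R$ as $\lambda\to\infty$, so by continuity and $2\pi$-periodicity of cosine the value $\cos(2\pi a\lambda-\frac{d-1+2\beta}4\pi)$ comes arbitrarily close to both $+1$ and $-1$ for arbitrarily large $\lambda$; hence its $\limsup$ equals $1$ and its $\liminf$ equals $-1$, and the $o(1)$ term cannot affect these extreme limits. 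Multiplying by $-P_{\beta,a}^{[d]}<0$ interchanges the two, yielding $\liminf=-P_{\beta,a}^{[d]}$ and $\limsup=P_{\beta,a}^{[d]}$, as asserted. No genuine obstacle arises beyond confirming the positivity of $P_{\beta,a}^{[d]}$ and the elementary oscillation argument; the substantive analytic content is entirely carried by Theorem~\ref{thm:PinskyPh}.
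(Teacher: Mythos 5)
Your proof is correct and follows exactly the route the paper intends: the paper derives this corollary from Theorem~\ref{thm:PinskyPh} by noting that $0<a<1/2$ forces $r_d(a:0)=0$, which is precisely your argument. Your added details (absorbing $O(\lambda^{\frac{d-5}2-\beta})$ into $o(\lambda^{\frac{d-3}2-\beta})$, the positivity of $P_{\beta,a}^{[d]}$, and the oscillation of the cosine as the continuous parameter $\lambda\to\infty$) are the steps the paper leaves implicit, and they are all sound.
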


For example, even if $d=2$,
we can see the Pinsky phenomena
on the graphs of $S_{\lambda}(u_{\beta,a})$ with $\beta=-1/2$ and $a=1/4$ for $\lambda\in\N$, 
as Taylor pointed out in ~\cite{Taylor-preprint-web,Taylor-preprint},
see Figure~\ref{fig:Su d2 3D}.
In this case $P_{\beta,a}^{[d]}=4$.
\begin{figure}[htbp]
\begin{center}
\includegraphics[width=.48\linewidth,height=36mm]{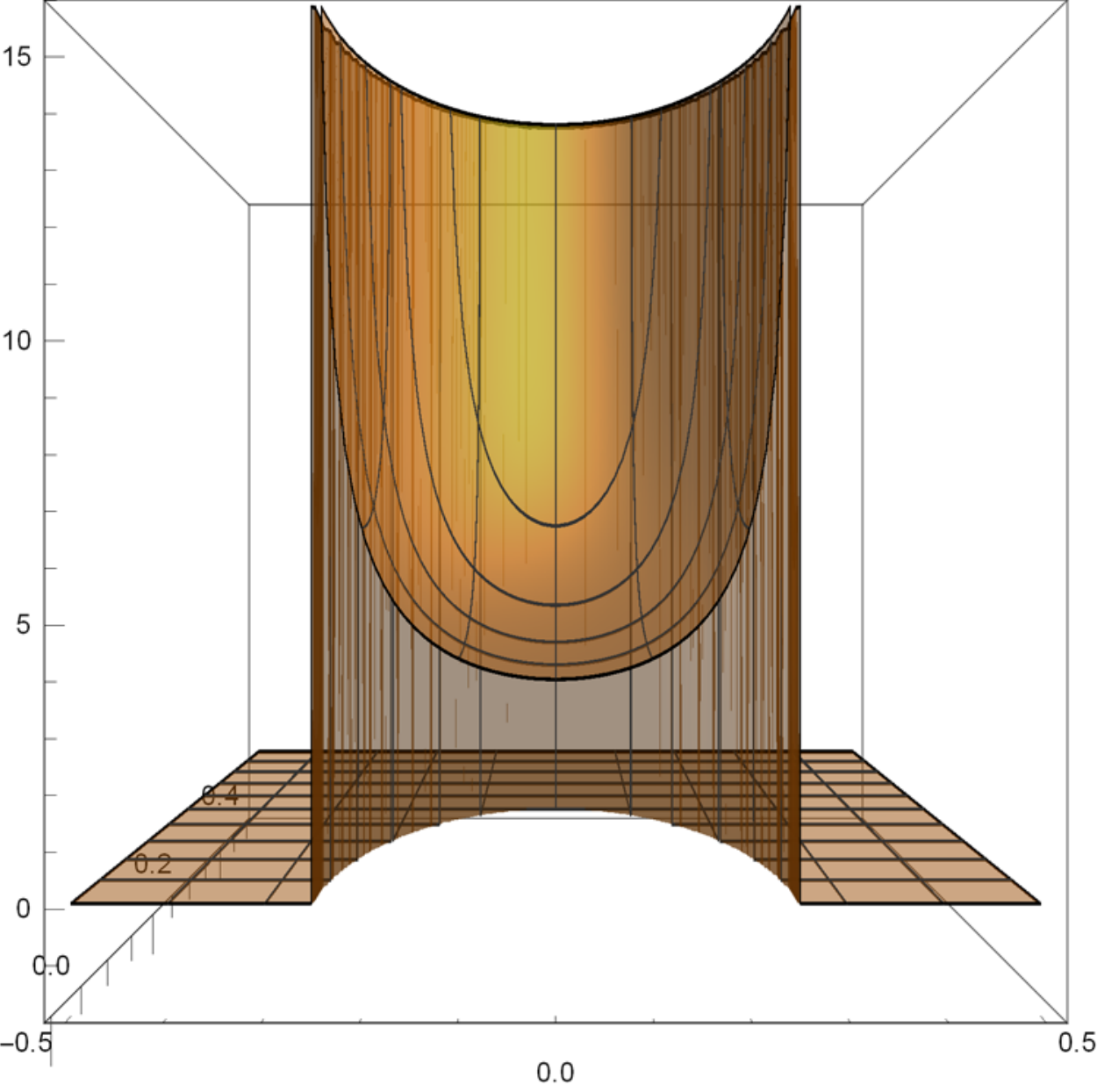} 
\caption{$u_{\beta,a}(x_1,x_2)$ ($d=2$, $\beta=-1/2$, $a=1/4$).}
\label{fig:u d2 3D}
\end{center}
\begin{center}
\includegraphics[width=.48\linewidth,height=36mm]{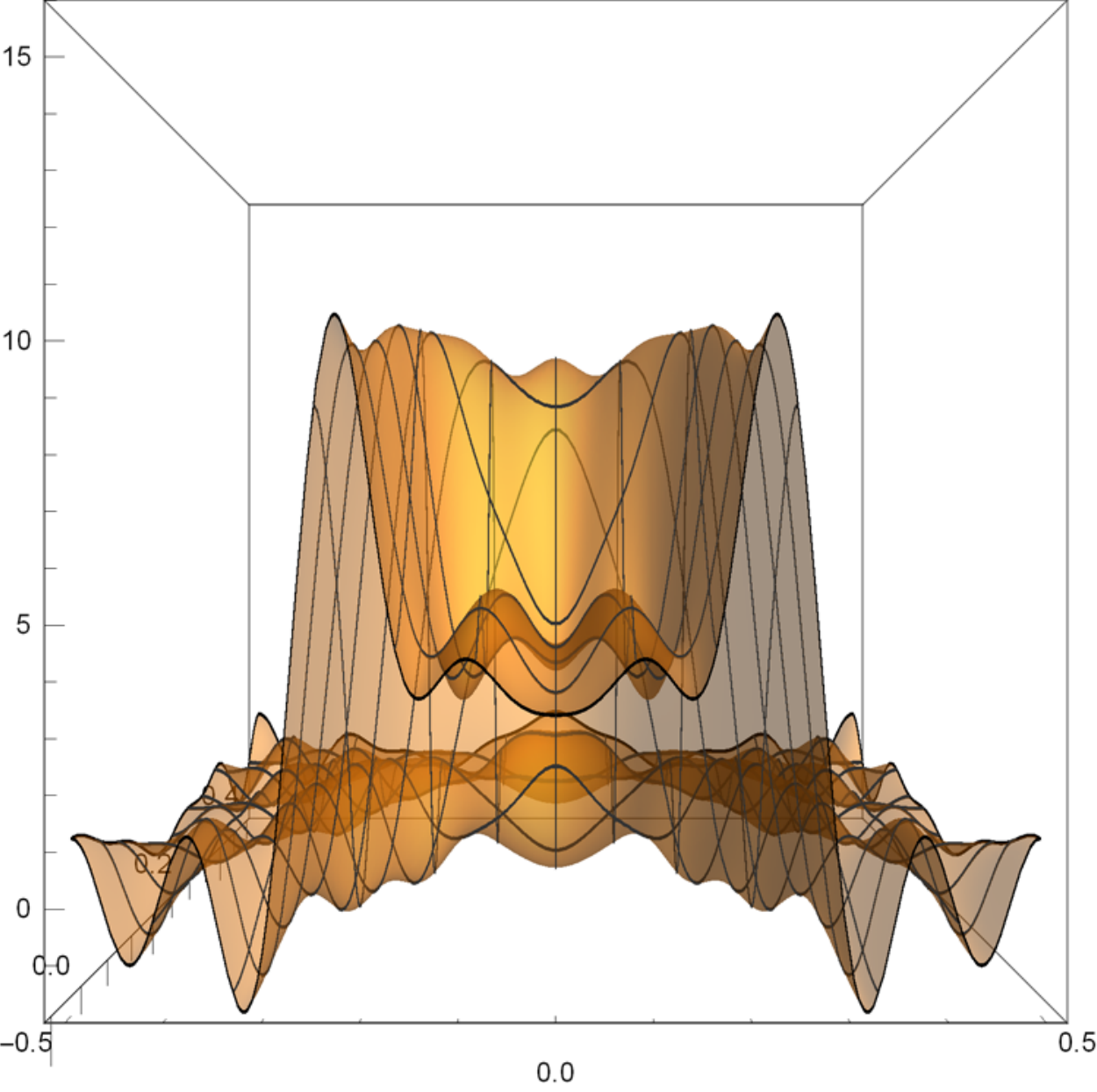} 
\includegraphics[width=.48\linewidth,height=36mm]{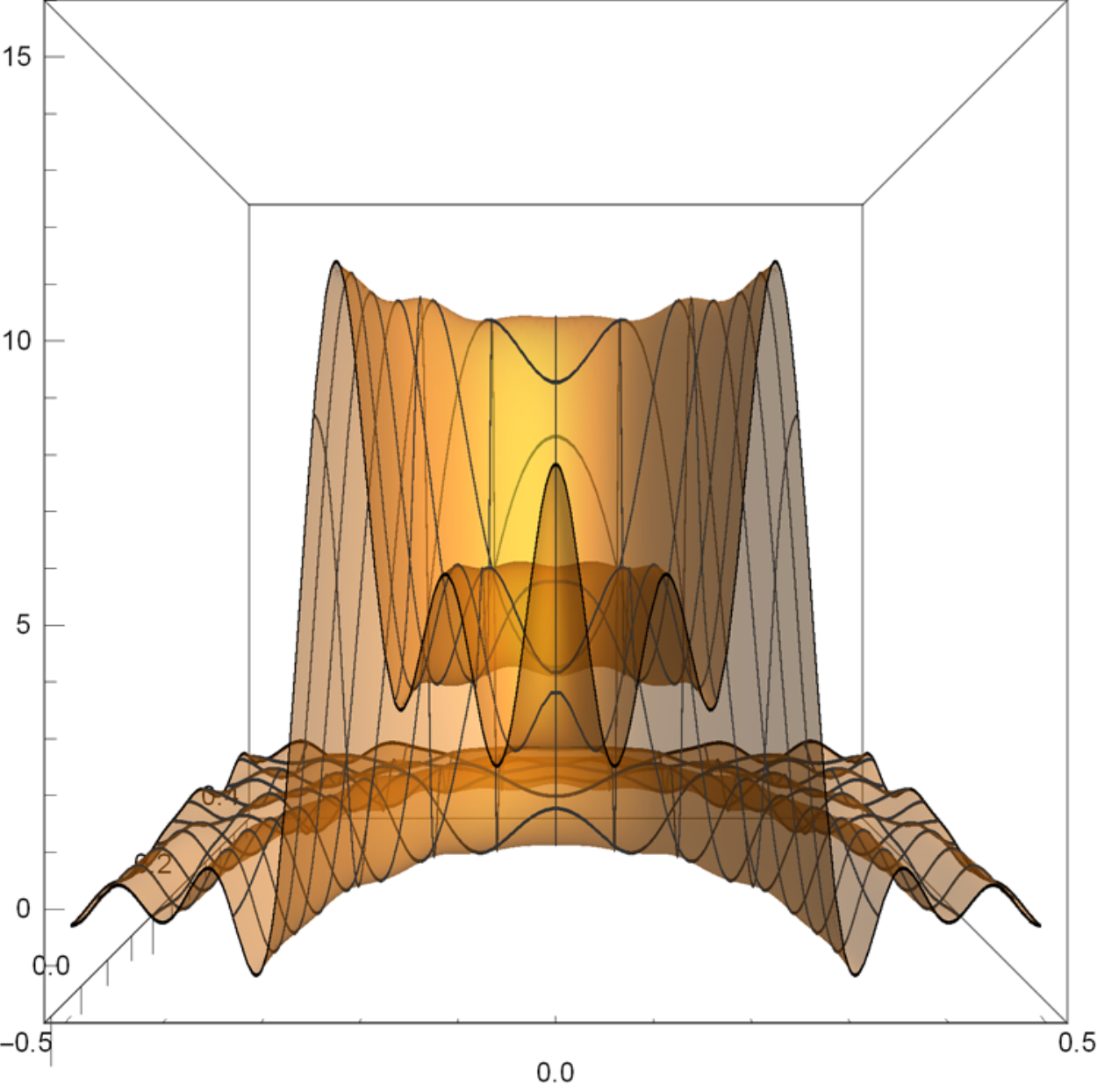} 
\includegraphics[width=.48\linewidth,height=36mm]{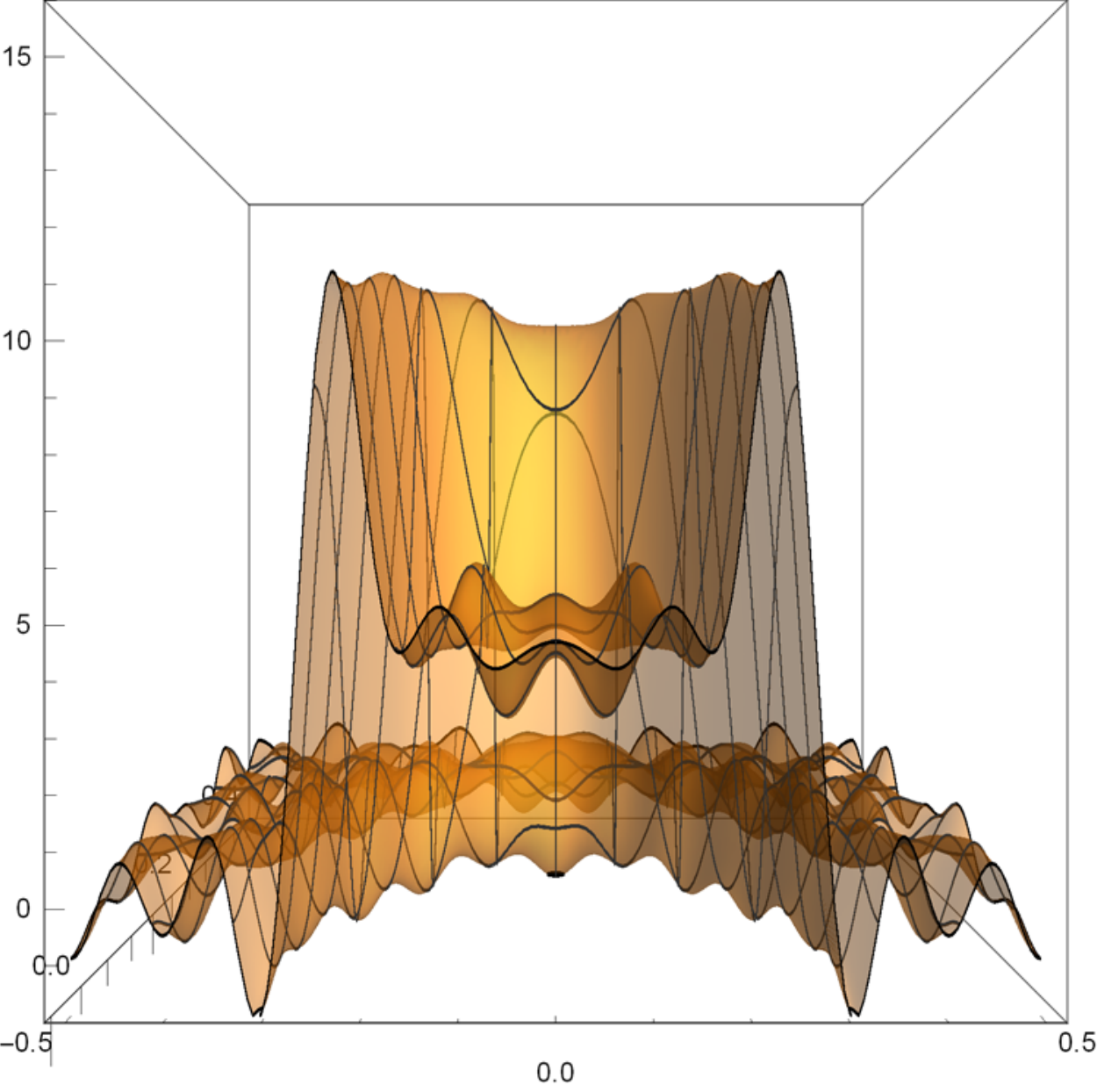} 
\includegraphics[width=.48\linewidth,height=36mm]{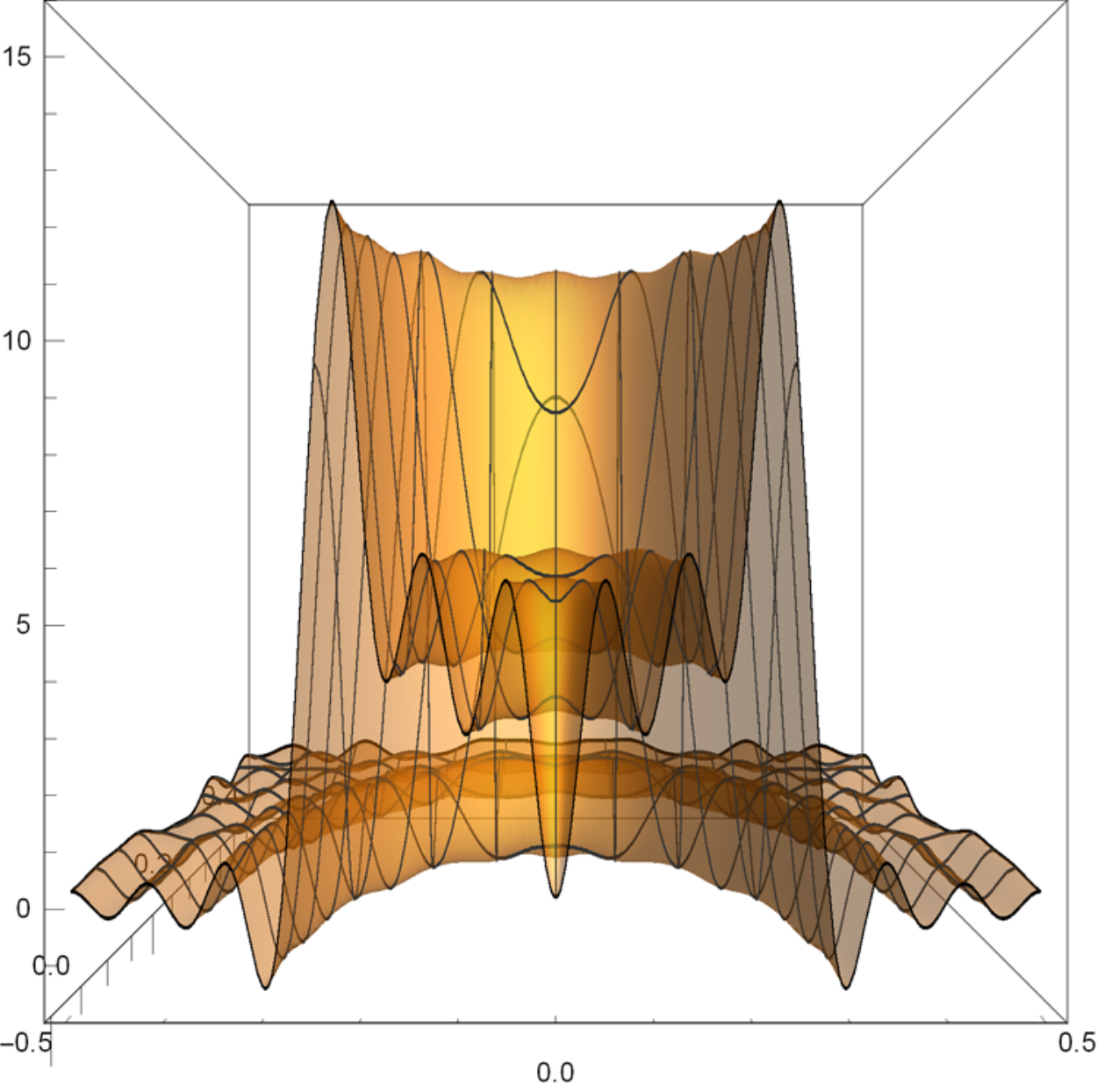} 
\caption{$S_{\lambda}(u_{\beta,a})(x_1,x_2)$ ($d=2$, $\beta=-1/2$, $a=1/4$) for $\lambda=9,10,11,12.$}
\label{fig:Su d2 3D}
\end{center}
\end{figure}

If $d=3$, $\beta=0$ and $a=1/4$, then $P_{\beta,a}^{[d]}=2/\pi$,
see Figure~\ref{fig:Su 3d}.
If $d=4$, $\beta=1/2$ and $a=1/4$, then $P_{\beta,a}^{[d]}=1/8$,
see Figure~\ref{fig:Su 4d}.

\begin{figure}
\begin{center}
\includegraphics[width=.48\linewidth,height=30mm]{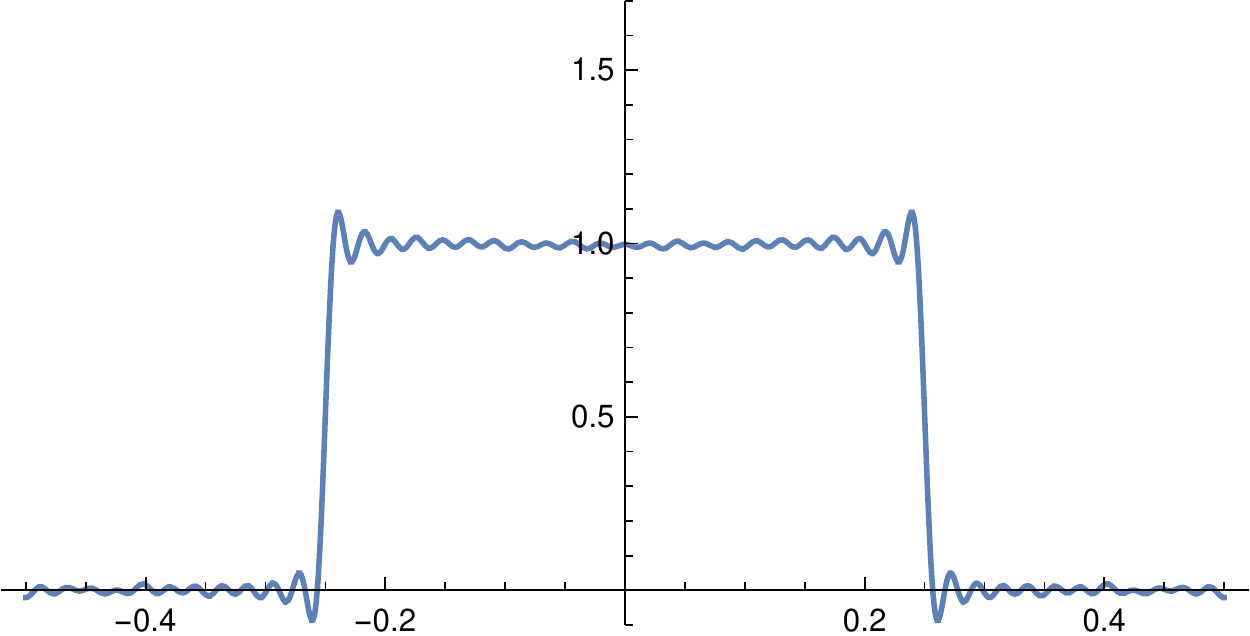} 
\includegraphics[width=.48\linewidth,height=30mm]{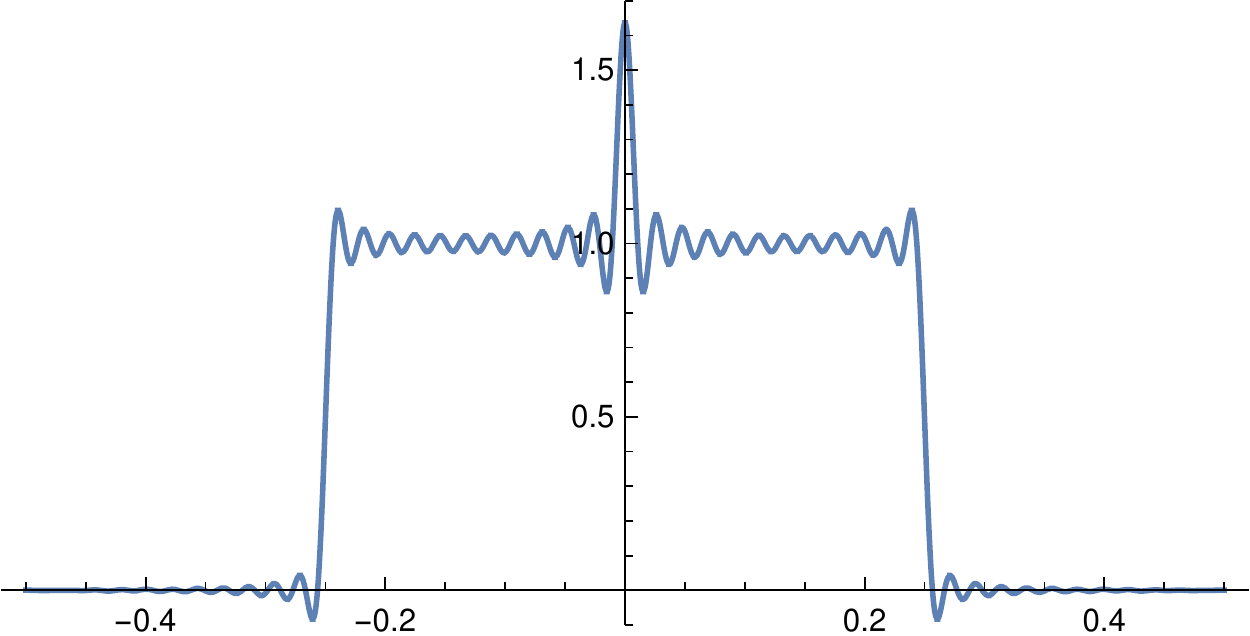} 
\includegraphics[width=.48\linewidth,height=30mm]{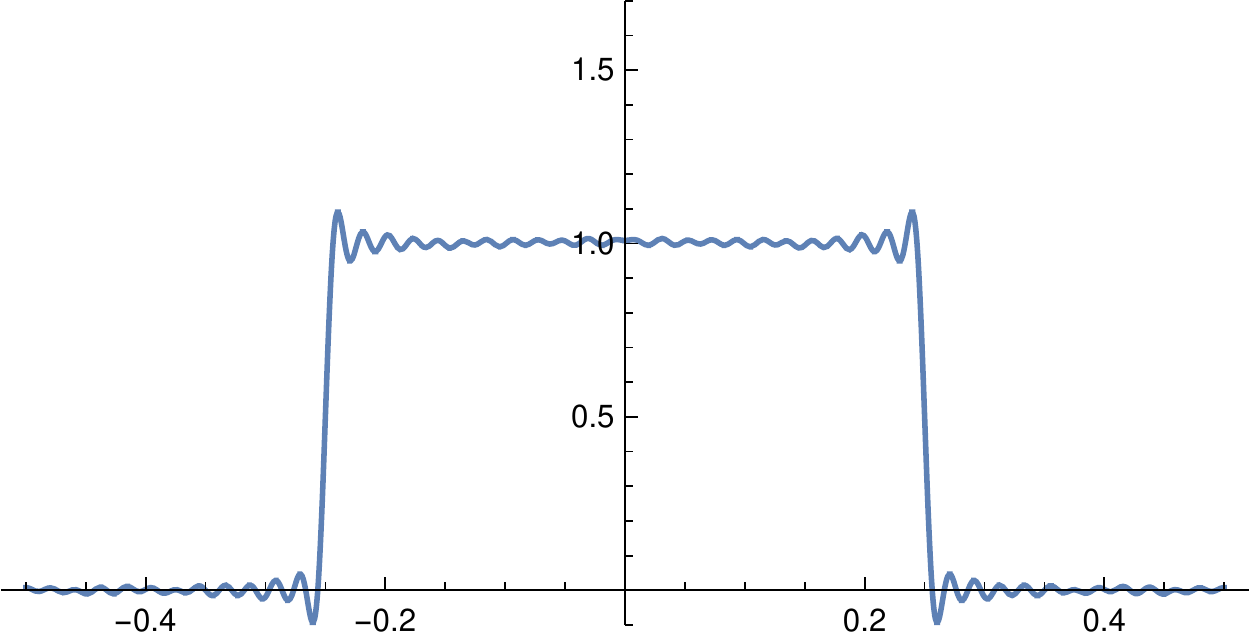} 
\includegraphics[width=.48\linewidth,height=30mm]{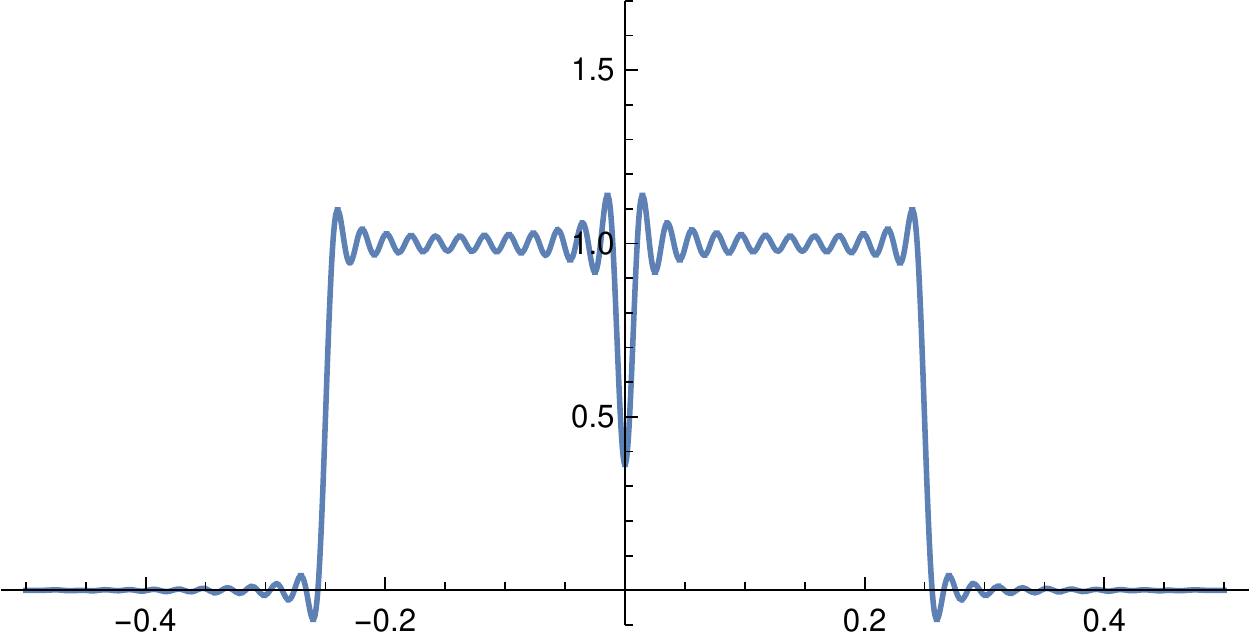} 
\caption{$S_{\lambda}(u_{\beta,a})(x,0,0)$ ($d=3$, $\beta=0$, $a=1/4$) for $\lambda=46,47,48,49$.}
\label{fig:Su 3d}
\end{center}
%
\begin{center}
\includegraphics[width=.48\linewidth]{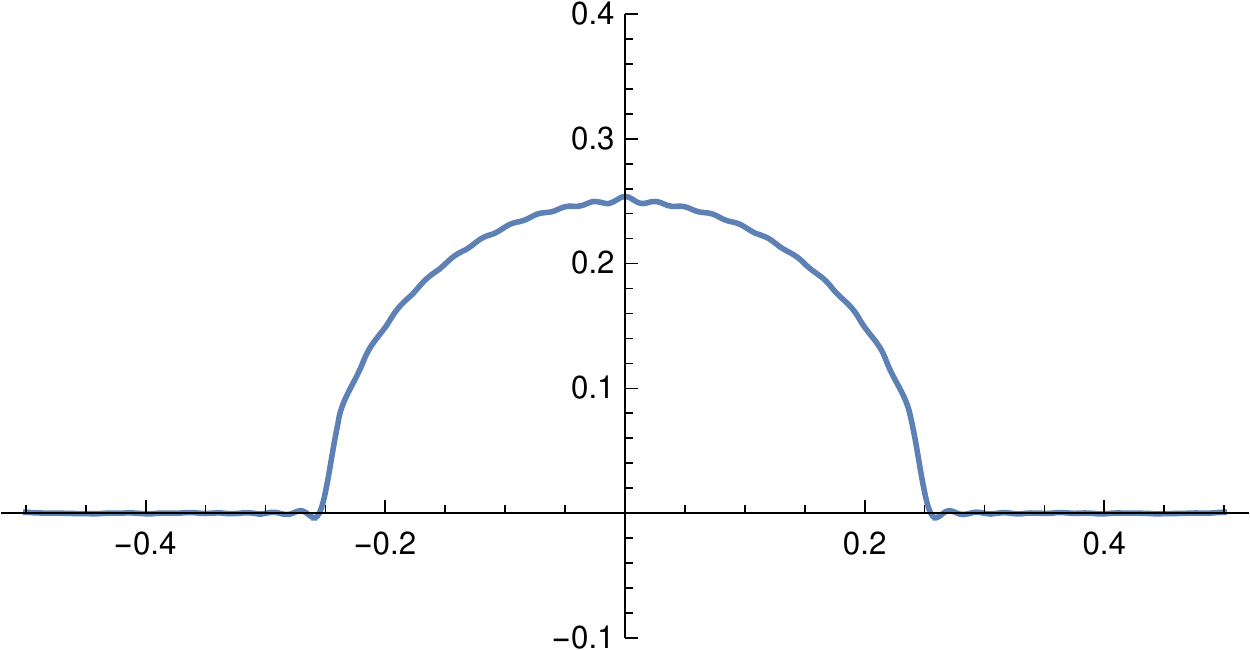} 
\includegraphics[width=.48\linewidth]{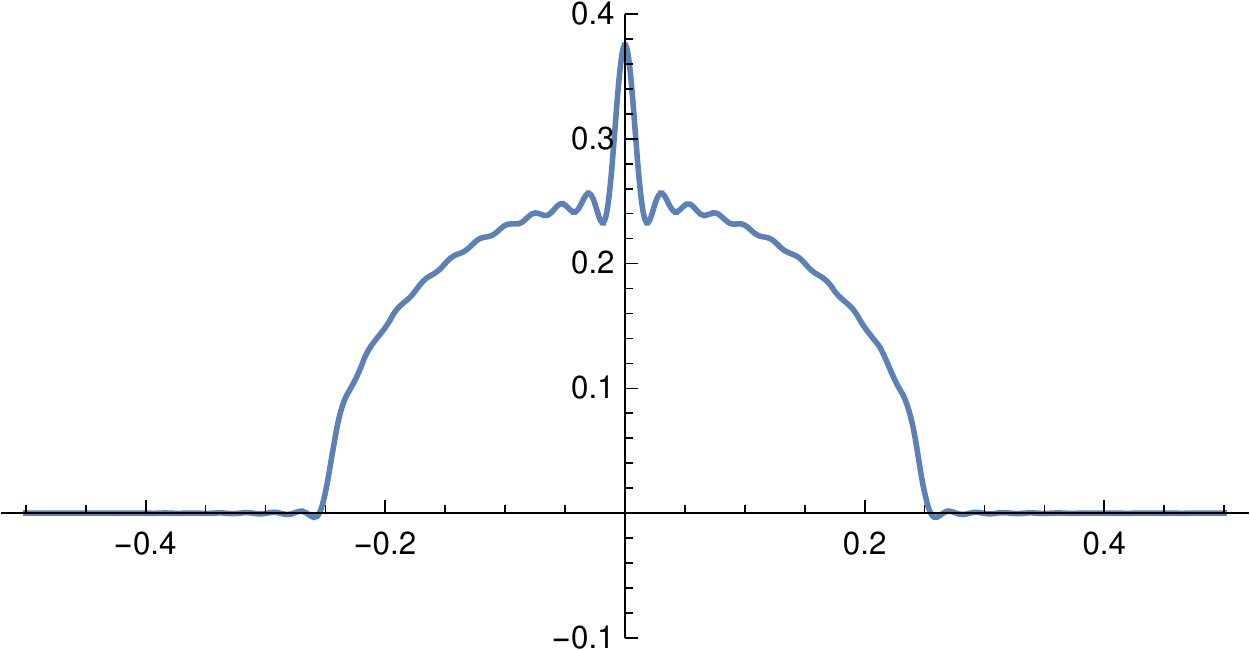} 
\includegraphics[width=.48\linewidth]{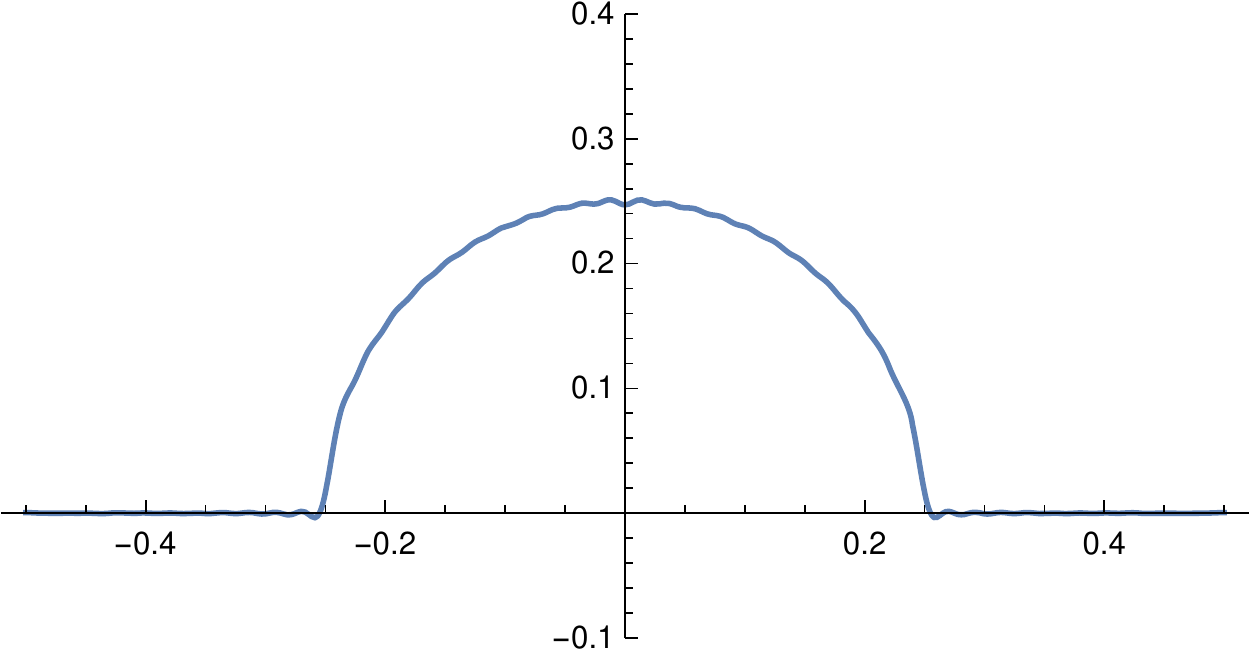} 
\includegraphics[width=.48\linewidth]{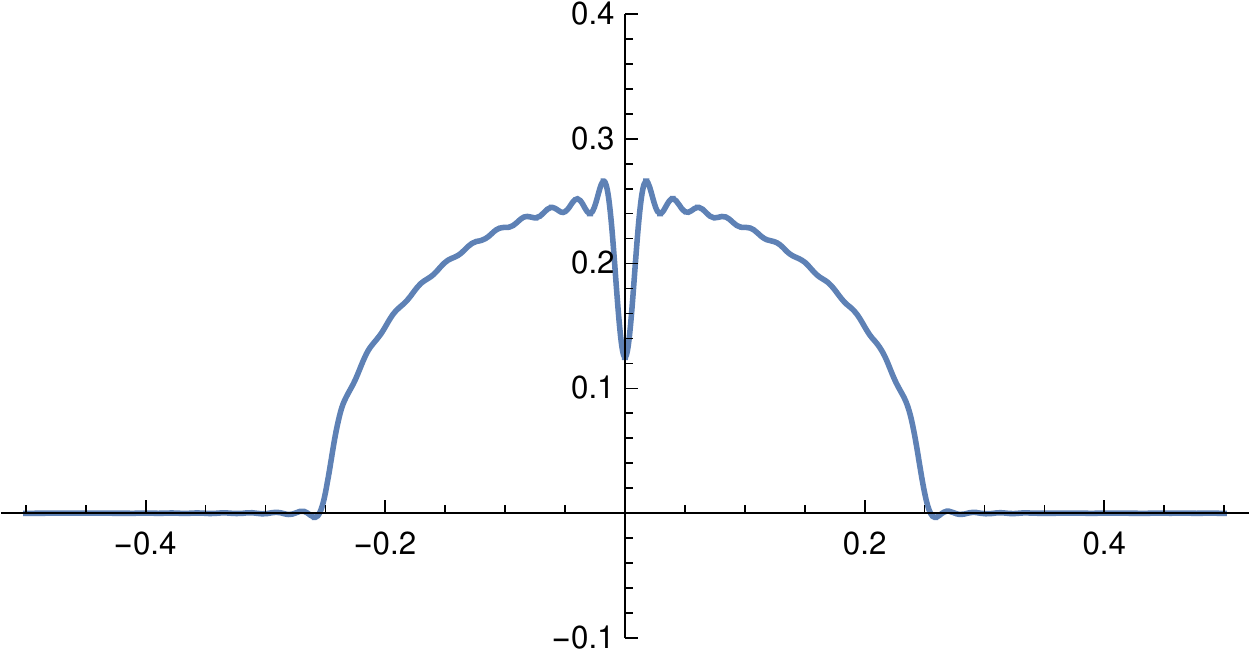} 
\caption{$S_{\lambda}(u_{\beta,a})(x,0,0,0)$ ($d=4$, $\beta=1/2$, $a=1/4$) for $\lambda=43,44,45,46$.}
\label{fig:Su 4d}
\end{center}
\end{figure}

Next, we state the pointwise behaviors of $S_{\lambda}(u_{\beta,a})$ on $\T^d\setminus\{0\}$
which includes the third phenomenon.
We define the number $c(d)$ for the dimension $d$ as the following:
\begin{equation}\label{c(d)}
 c(d)=d-\frac{2d}{d+1}-\frac{d+1}2=\frac{d-5}2+\frac2{d+1}=\frac{d(d-4)-1}{2(d+1)},
\end{equation}
that is,
\begin{equation*}
 c(1)=-1,\ c(2)=-5/6,\ c(3)=-1/2,\ c(4)=-1/10.
\end{equation*}

\begin{thm}\label{thm:|x|=a and PWC}
Let $\beta>-1$ and $a>0$.
\begin{enumerate}
\item
If $1\le d\le4$, then, 
\begin{enumerate}
\item
 for $ \beta>-1$,
 \begin{equation*}
 \lim_{\lambda\to\infty}
 \frac{S_{\lambda}(u_{\beta,a})(x)-u_{\beta,a}(x)}{\lambda^{-\beta}}
 =
 r_d(a:x)L_{\beta,a}
 \quad\text{for all $x\in G_a$}.
\end{equation*}
\item
for $\beta>c(d)$,
\begin{equation*}
 \lim_{\lambda\to\infty}S_{\lambda}(u_{\beta,a})(x)=u_{\beta,a}(x)
 \quad\text{uniformly on any compact set in } E_a.
\end{equation*}
\end{enumerate}

\item
If $d\ge5$, then
\begin{multline*}
  \lim_{\lambda\to\infty}
  \frac1{\lambda^{\frac{d-5}2}}
  \left|\frac{S_{\lambda}(u_{\beta,a})(x)-u_{\beta,a}(x)}{\lambda^{-\beta}}
        -r_d(a:x)L_{\beta,a}\right|
  =0
 \\
  \text{for all $x\in(E_a\cup G_a)\setminus\Q^d$},
\end{multline*}
and
\begin{multline*}
  0<
  \limsup_{\lambda\to\infty}
  \frac1{\lambda^{\frac{d-5}2}}
  \left|\frac{S_{\lambda}(u_{\beta,a})(x)-u_{\beta,a}(x)}{\lambda^{-\beta}}
        -r_d(a:x)L_{\beta,a}\right|
  <\infty 
 \\
  \text{for all $x\in(E_a\cup G_a)\cap\Q^d$}.
\end{multline*}
Namely, if $(d-5)/2-\beta\ge0$, then $S_{\lambda}(u_{\beta,a})$ reveals the third phenomenon.
\end{enumerate}
\end{thm}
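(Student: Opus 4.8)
The plan is to derive everything from the generalized Hardy identity of Theorem~\ref{thm:GHI}. Using the Poisson summation formula \eqref{Poisson}, the partial sum $S_\lambda(u_{\beta,a})(x)=\sum_{|m|<\lambda}\hat{U}_{\beta,a}(m)e^{2\pi imx}$ is the discrete spherical sum of the integrand $\hat{U}_{\beta,a}(\xi)e^{2\pi i\xi x}$ sampled at the lattice $\Z^d$, while $\sigma_\lambda(U_{\beta,a})(x)$ is the corresponding continuous integral over the ball $|\xi|<\lambda$. Theorem~\ref{thm:GHI} makes the difference between the two explicit as a functional of the weighted lattice point sum \eqref{LPweight}. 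This reduces the statement to two essentially independent tasks: \textbf{(A)} the asymptotics of $\sigma_\lambda(U_{\beta,a})(y)$ for $y=x+m\neq0$ (note every translate is nonzero, since $x\in E_a\cup G_a$ forces $x\neq0$ and $x\in(-1/2,1/2]^d$ then gives $x\notin\Z^d$), and \textbf{(B)} the size of the lattice point discrepancy at $x$, which is where the arithmetic nature of $x$ enters.

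For task \textbf{(A)}, since $U_{\beta,a}$ is radial its Fourier transform is the explicit Bessel expression $\hat{U}_{\beta,a}(\xi)\sim C|\xi|^{-(d+1)/2-\beta}\cos(2\pi a|\xi|-\phi)$, so $\sigma_\lambda(U_{\beta,a})(y)$ becomes a one-dimensional oscillatory integral in the radial variable after integration over spheres. A stationary phase analysis at a point $y$ with $|y|\neq a$ shows $\sigma_\lambda(U_{\beta,a})(y)\to U_{\beta,a}(y)$; at a boundary point $|y|=a$ with $y\neq0$ the radial and spherical oscillations resonate and produce an ordinary Gibbs-Wilbraham jump of exact size $L_{\beta,a}\lambda^{-\beta}$, with $L_{\beta,a}$ as in \eqref{J1}, the lower-order corrections being $o(\lambda^{-\beta})$. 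Crucially this jump carries no dimensional amplification, in contrast with the focusing (Pinsky) effect that occurs only at the center $y=0$ in Theorem~\ref{thm:PinskyPh}. Summing over the translates $m$ with $|x-m|=a$ introduces the counting factor $r_d(a:x)$ of \eqref{rd} and yields the term $r_d(a:x)L_{\beta,a}\lambda^{-\beta}$; for $x\in E_a$ the set $\{m:|x-m|=a\}$ is empty by \eqref{Ea}, the distance from any compact subset of $E_a$ to $\bigcup_m\{|x-m|=a\}$ is positive, and so $\sigma_\lambda$ converges to $u_{\beta,a}$ uniformly there.

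For task \textbf{(B)}, I would estimate the lattice discrepancy by Abel summation, pairing the oscillatory, Bessel-decaying weight with the lattice remainder $P_d(t:x)=\sum_{|m|^2\le t}e^{2\pi imx}-(\text{main term})$; the dominant contribution comes from the endpoint $t=\lambda^2$, of order $\lambda^{-(d+1)/2-\beta}\,P_d(\lambda^2:x)$. When $1\le d\le4$ the decisive input is the van der Corput lattice bound, whose exponent $2/(d+1)$ gives $P_d(\lambda^2:x)=O(\lambda^{d-2+2/(d+1)})$ uniformly in $x$, hence a discrepancy $O(\lambda^{c(d)-\beta})$ with $c(d)$ as in \eqref{c(d)}. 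Since $c(d)<0$ for $1\le d\le4$ this is $o(\lambda^{-\beta})$ for every $\beta>-1$, so it does not disturb the jump limit of part (i)(a); and it tends to $0$ exactly when $\beta>c(d)$, which gives the uniform convergence of part (i)(b). When $d\ge5$ the remainder is genuinely larger and must be resolved rather than merely bounded: its main oscillatory part has order $\lambda^{d-2}$, so the normalized discrepancy has the critical order $\lambda^{(d-5)/2-\beta}$. Here the arithmetic dichotomy appears --- the relevant exponential sums over lattice spheres cancel at $x\notin\Q^d$, forcing the normalized quantity in the statement to $0$, while at $x\in\Q^d$ they reinforce, keeping its $\limsup$ strictly between $0$ and $\infty$; with the refined lattice point results (in the spirit of Nov\'ak) this is exactly the third phenomenon, active once $(d-5)/2-\beta\ge0$.

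The main obstacle is task \textbf{(B)} for $d\ge5$: establishing the sharp separation between rational and irrational $x$. Producing an upper bound for the discrepancy is routine given the lattice point estimates, but showing that the principal lattice term truly vanishes in the limit at \emph{every} $x\notin\Q^d$ while surviving with positive $\limsup$ at \emph{every} $x\in\Q^d$ is the number-theoretic heart of the theorem, and reflects the equivalence, in a sense, with the lattice point problems. A secondary difficulty, present already in task \textbf{(A)} and in the Abel summation step, is the oscillatory bookkeeping: the Bessel phase $\cos(2\pi a|\xi|-\phi)$ has to be combined coherently with the lattice remainder so that one lands on the exact exponents $c(d)$ and $(d-5)/2$ rather than losing a power of $\lambda$ to crude absolute-value estimates.
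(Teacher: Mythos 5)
Your overall architecture coincides with the paper's: start from the identity of Theorem~\ref{thm:GHI}, extract the behavior of $\sigma_\lambda(U_{\beta,a})$ at the translates $x-m$ (the paper's Theorem~\ref{thm:FI} and the tail-integral computation in the proof of Theorem~\ref{thm:RT}, which produce $u_{\beta,a}-U_{\beta,a}$, the $\tilde r_d(a:x)(L_{\beta,a}+o(1))\lambda^{-\beta}$ term and the error $G_{\beta,a}$), and reduce parts (i)(b) and (ii) to estimates on the lattice-point discrepancy. Your exponents for $1\le d\le4$ are the correct ones, and you correctly locate the arithmetic dichotomy for $d\ge5$ in Nov\'ak-type results. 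However, there are two genuine gaps, both at steps you flag but do not resolve.

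First, your treatment of task (B) by a \emph{single} Abel summation, with the claim that ``the dominant contribution comes from the endpoint $t=\lambda^2$,'' fails as stated. After one summation by parts the remaining integral involves $D_0(s:x)$ (equivalently $\Delta_0(s:x)$) against $A_{\beta,a}^{(1)}(s)$ over $[0,\lambda^2]$, and a crude absolute-value estimate of that integral is larger than the endpoint term by a full power of $\lambda$; this is exactly the loss you call a ``secondary difficulty.'' The paper avoids it structurally, not by a finer estimate: the identity \eqref{fphi} is obtained by integrating by parts $\dsharp+1$ times, so that the discrepancy appears as the finite sum $\cK_{\beta,a}(\lambda^2:x)=\sum_{j=0}^{\dsharp}(-1)^j\Delta_j(\lambda^2:x)A_{\beta,a}^{(j)}(\lambda^2)$ of pure endpoint terms, while the final tail integral (now absolutely convergent) is \emph{evaluated exactly} via the Bessel integrals of Corollary~\ref{cor:Bessel1}, Lemma~\ref{lem:Bessel3} and Corollary~\ref{cor:Bessel44} rather than estimated. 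Each intermediate $\Delta_j$ then needs its own bound, which the paper gets from Lemma~\ref{lem:LPP1} and the Riesz convexity theorem (Theorem~\ref{thm:Riesz}); none of this machinery is replaceable by the one-step summation you describe.

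Second, for $d\ge5$ and $x\in\Q^d$ the positive $\limsup$ does not follow from Nov\'ak's pointwise $\Omega(s^{\frac d2-1})$ result, because the dominant term is the \emph{product} $\Delta_0(\lambda^2:x)A_{\beta,a}(\lambda^2)$ and, by \eqref{Aj-asymp}, $A_{\beta,a}(\lambda^2)\approx C\lambda^{-(\frac d2+\beta+\frac12)}\cos\bigl(2\pi a\lambda-\tfrac{d+2\beta+1}4\pi\bigr)$, whose cosine vanishes along sequences of $\lambda$; the $\Omega$-sequence for $\Delta_0$ could a priori live exactly where the cosine is small. The paper's resolution is Lemma~\ref{lem:LPP2S}: using Nov\'ak's mean value theorem (Theorem~\ref{thm:Novak-M}) it averages $|\Delta_0(t:x)|^2$ over intervals $[\ell_k^2,m_k^2]$ whose endpoints \eqref{ell_k}--\eqref{m_k} are tuned to the phase so that $|\cos(2\pi a\lambda-\tfrac{d+2\beta+1}4\pi)|\ge 1/\sqrt2$ for all $\lambda\in[\ell_k,m_k]$, and then picks $s_k$ in each interval with $|\Delta_0(s_k:x)|\ge C(x)s_k^{\frac d2-1}$. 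This phase-coherent lower bound is precisely the ``number-theoretic heart'' you name, and it is a concrete missing step, not a routine citation: without it, the rational-point half of part (ii) is unproved.
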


Note that $r_d(a:x)=0$ for $x\in E_a$ in the above.
For example, we can see 
the Gibbs-Wilbraham phenomena, 
the Pinsky phenomena and
the third phenomena on the graphs of 
$S_{\lambda}(u_{\beta,a})$ with $d=5$, $\beta=-1/2$ and $a=1/4$ for $\lambda\in\N$,
see Figures~\ref{fig:Su 5d} and \ref{fig:Su 5d ex}.

\begin{figure}
\begin{center}
\includegraphics[width=.48\linewidth,height=40mm]{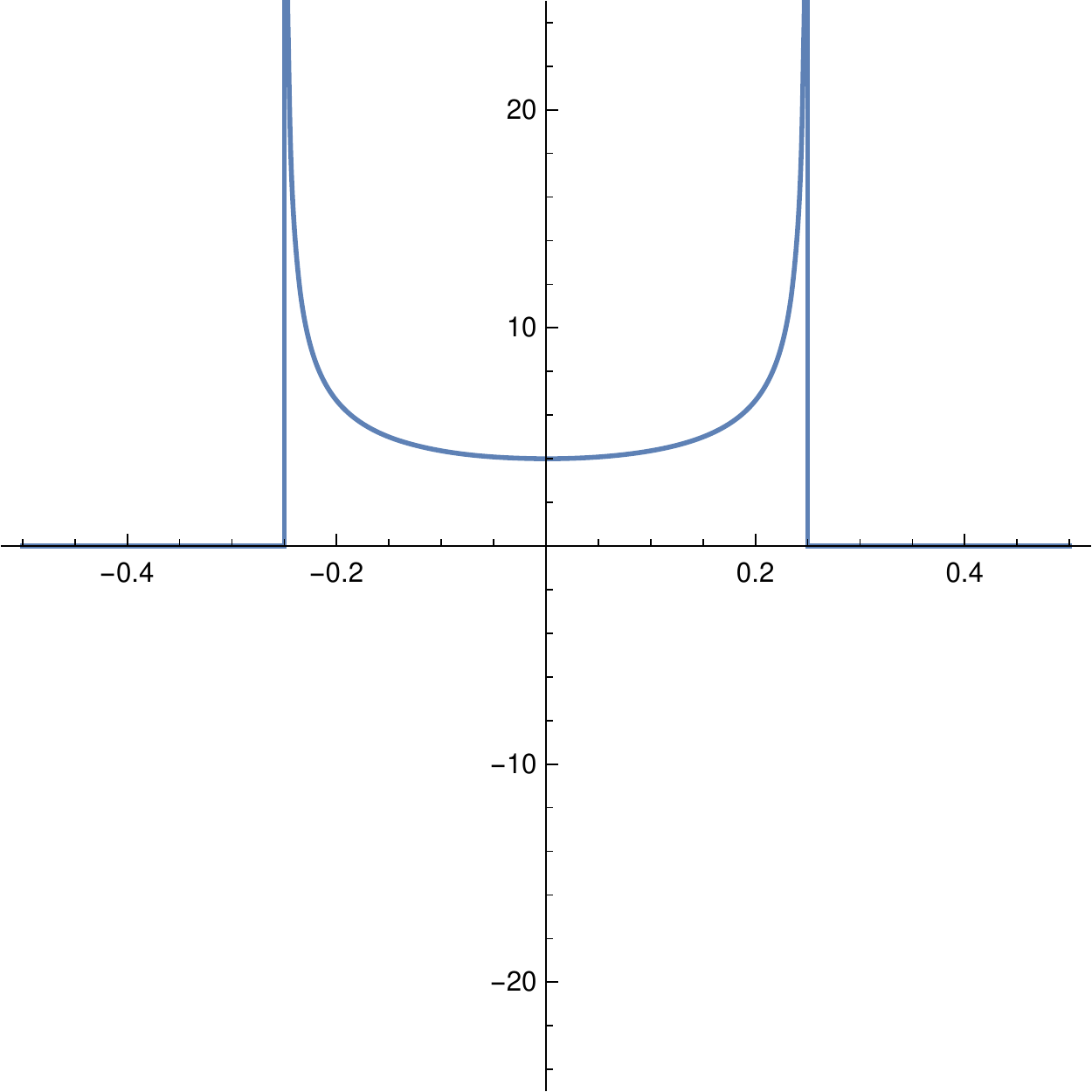} 
\caption{$u_{\beta,a}(x,0,0,0,0)$ ($d=5$, $\beta=-1/2$, $a=1/4$).}
\label{fig:u 5d}
\end{center}
\begin{center}
\includegraphics[width=.48\linewidth,height=40mm]{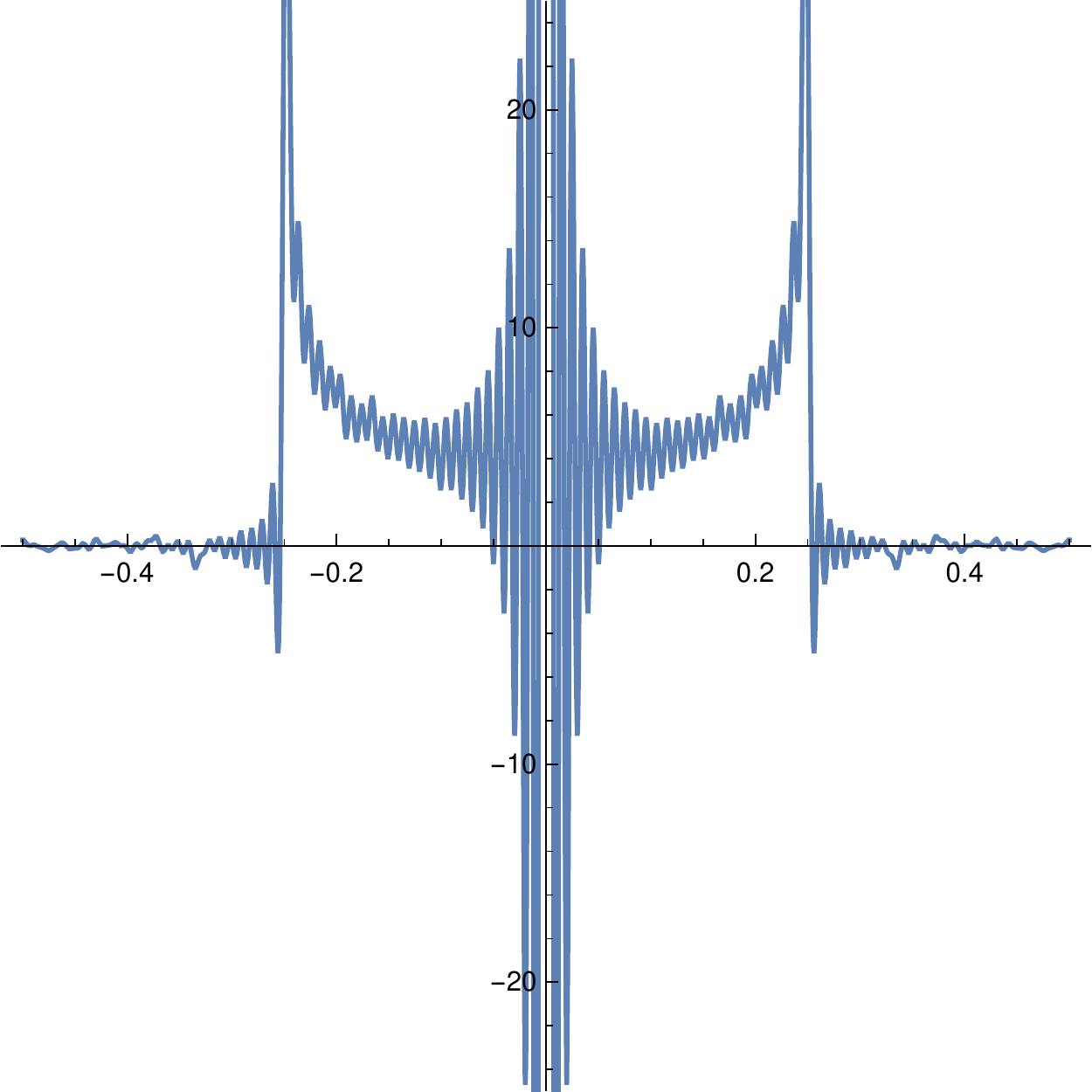} 
\includegraphics[width=.48\linewidth,height=40mm]{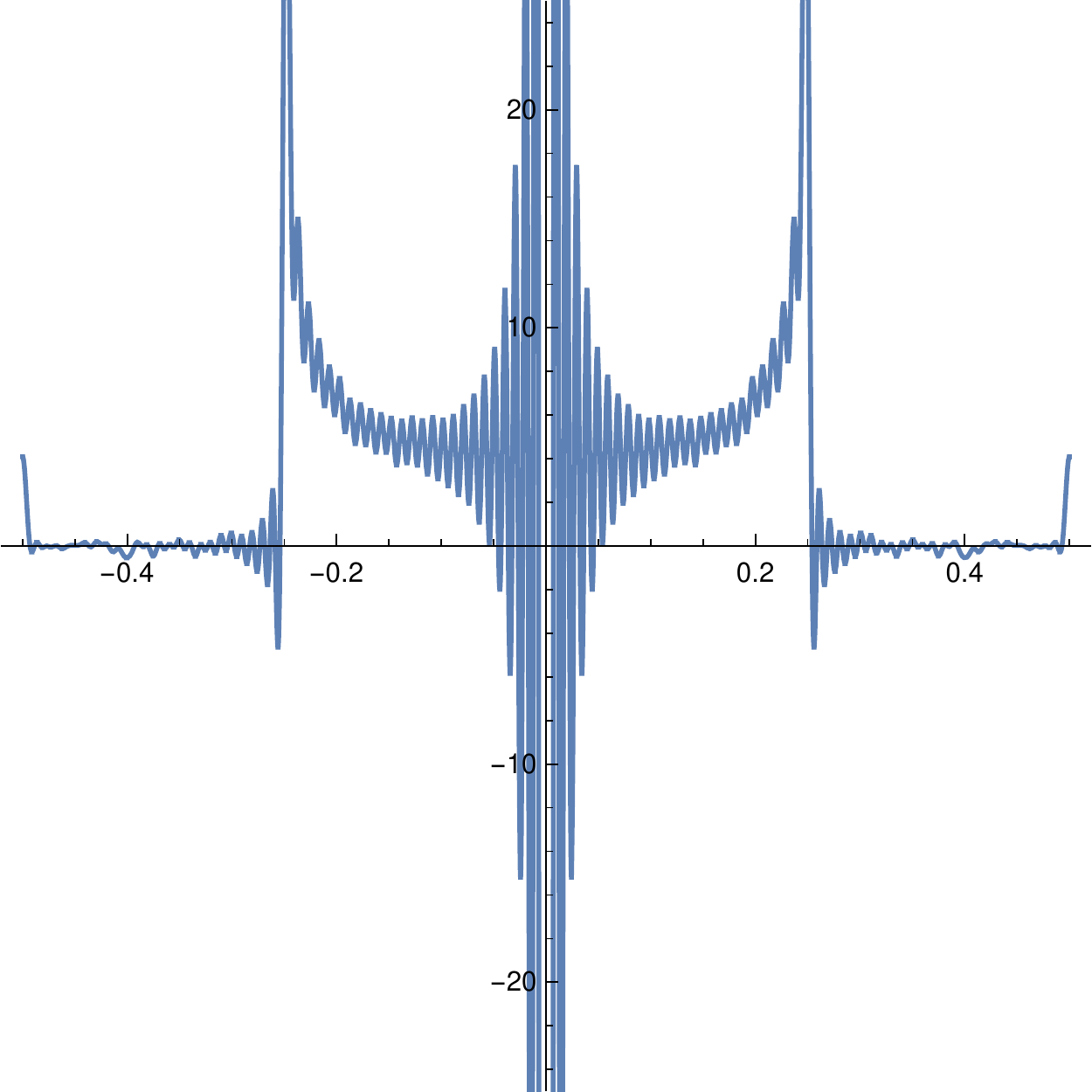} 
\includegraphics[width=.48\linewidth,height=40mm]{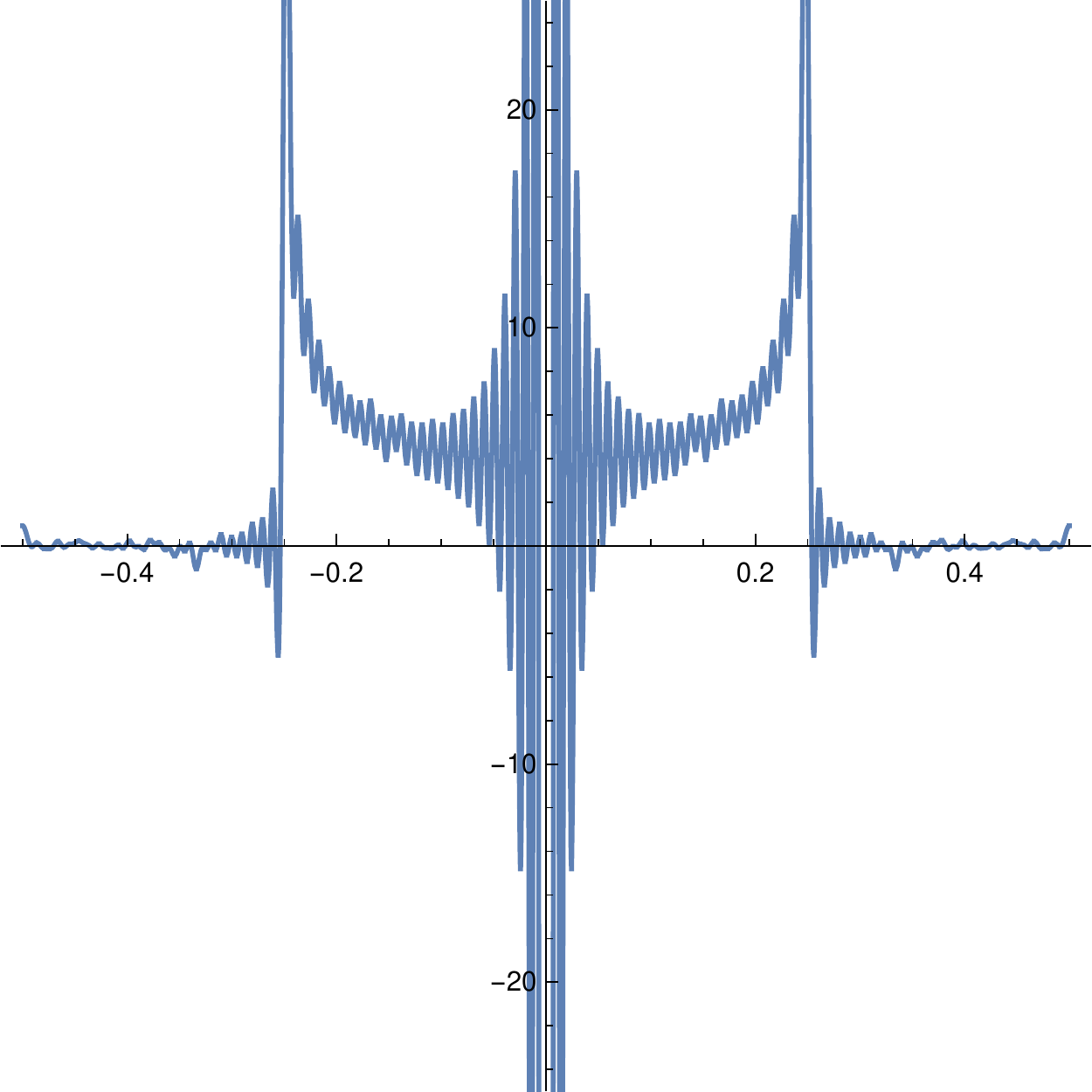} 
\includegraphics[width=.48\linewidth,height=40mm]{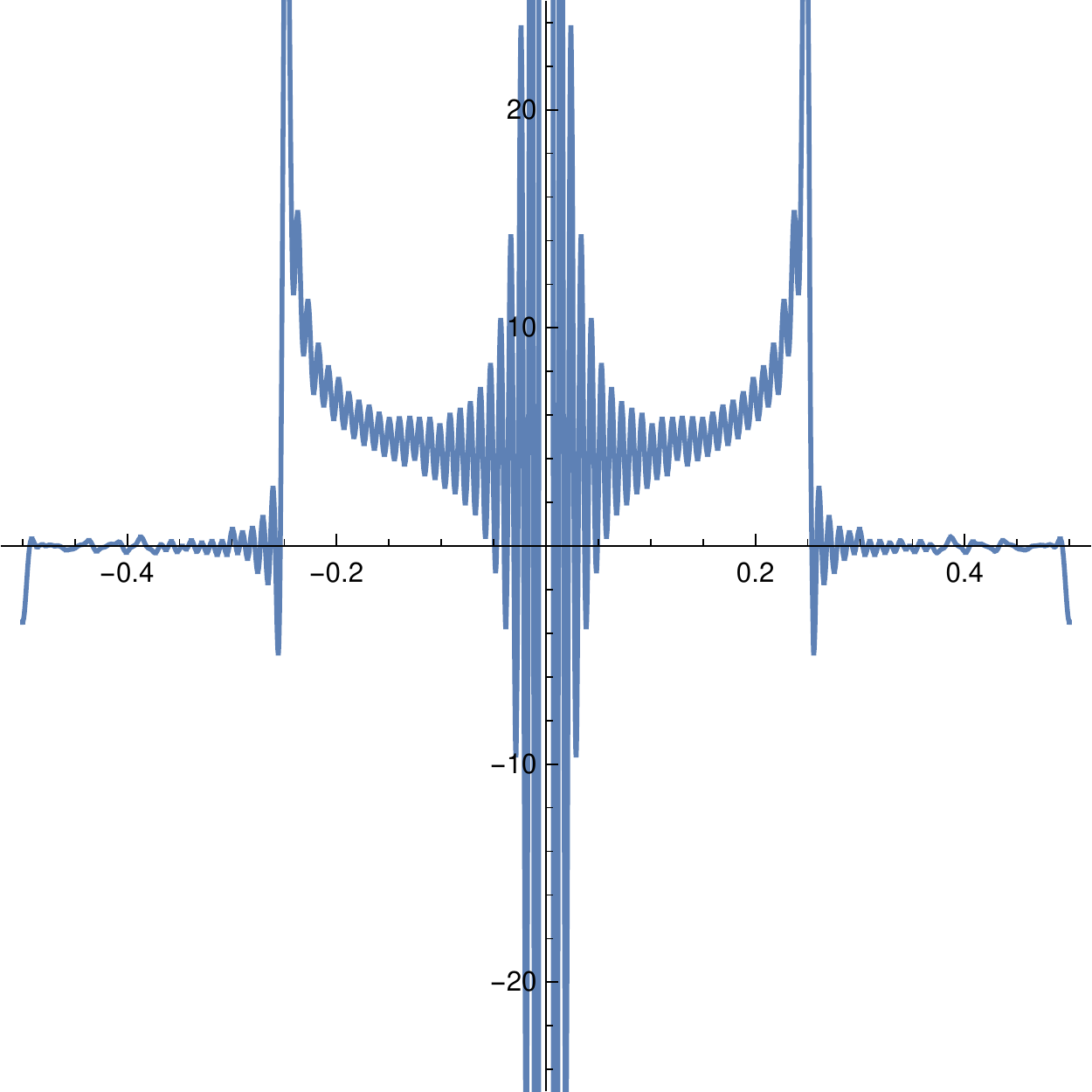} 
\caption{
$S_{\lambda}(u_{\beta,a})(x,0,0,0,0)$ ($d=5$, $\beta=-1/2$, $a=1/4$) for $\lambda=100,\dots,103$.}
\label{fig:Su 5d}
\end{center}
\begin{center}
\includegraphics[width=.24\linewidth,height=30mm]{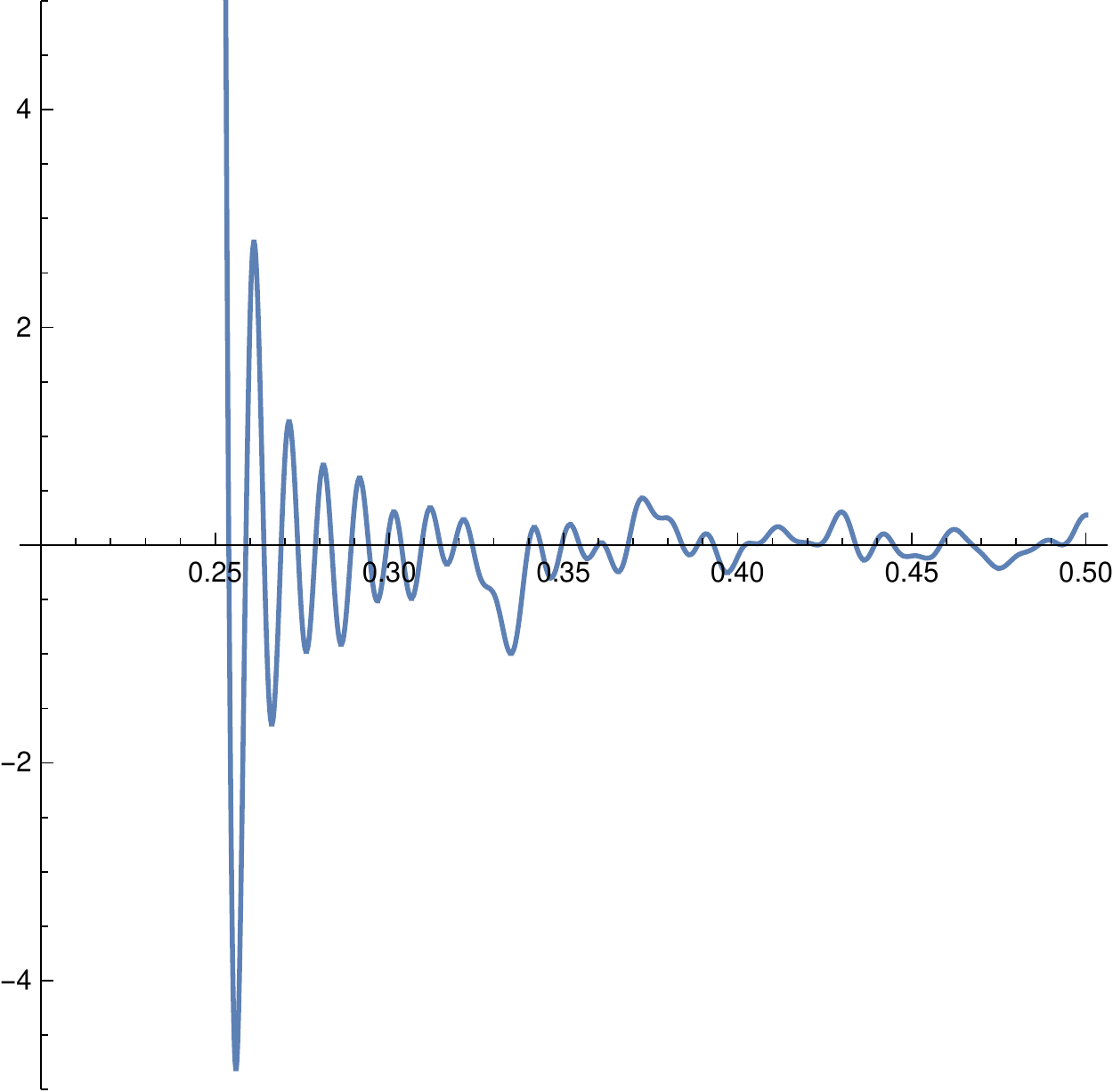} 
\includegraphics[width=.24\linewidth,height=30mm]{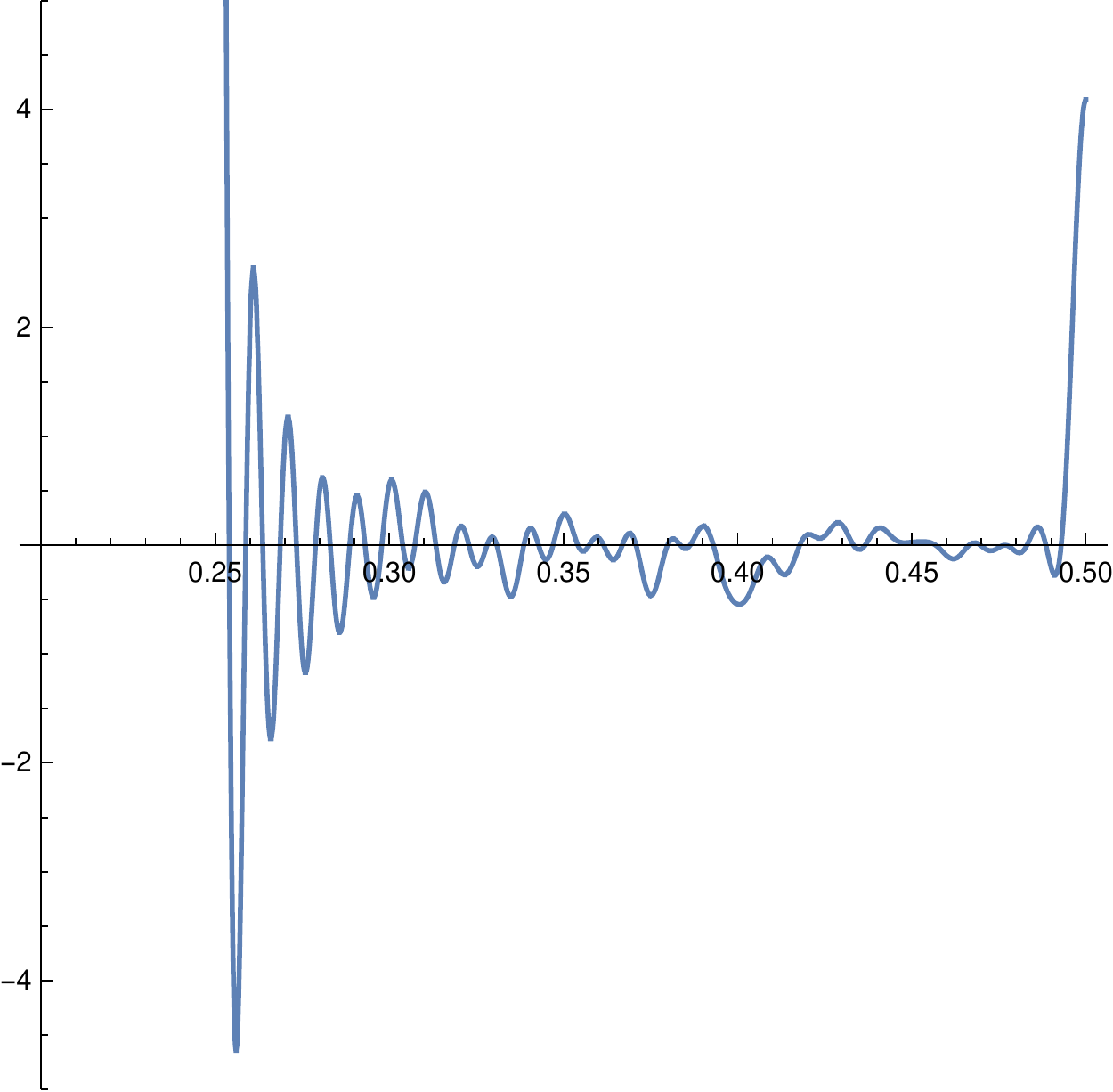} 
\includegraphics[width=.24\linewidth,height=30mm]{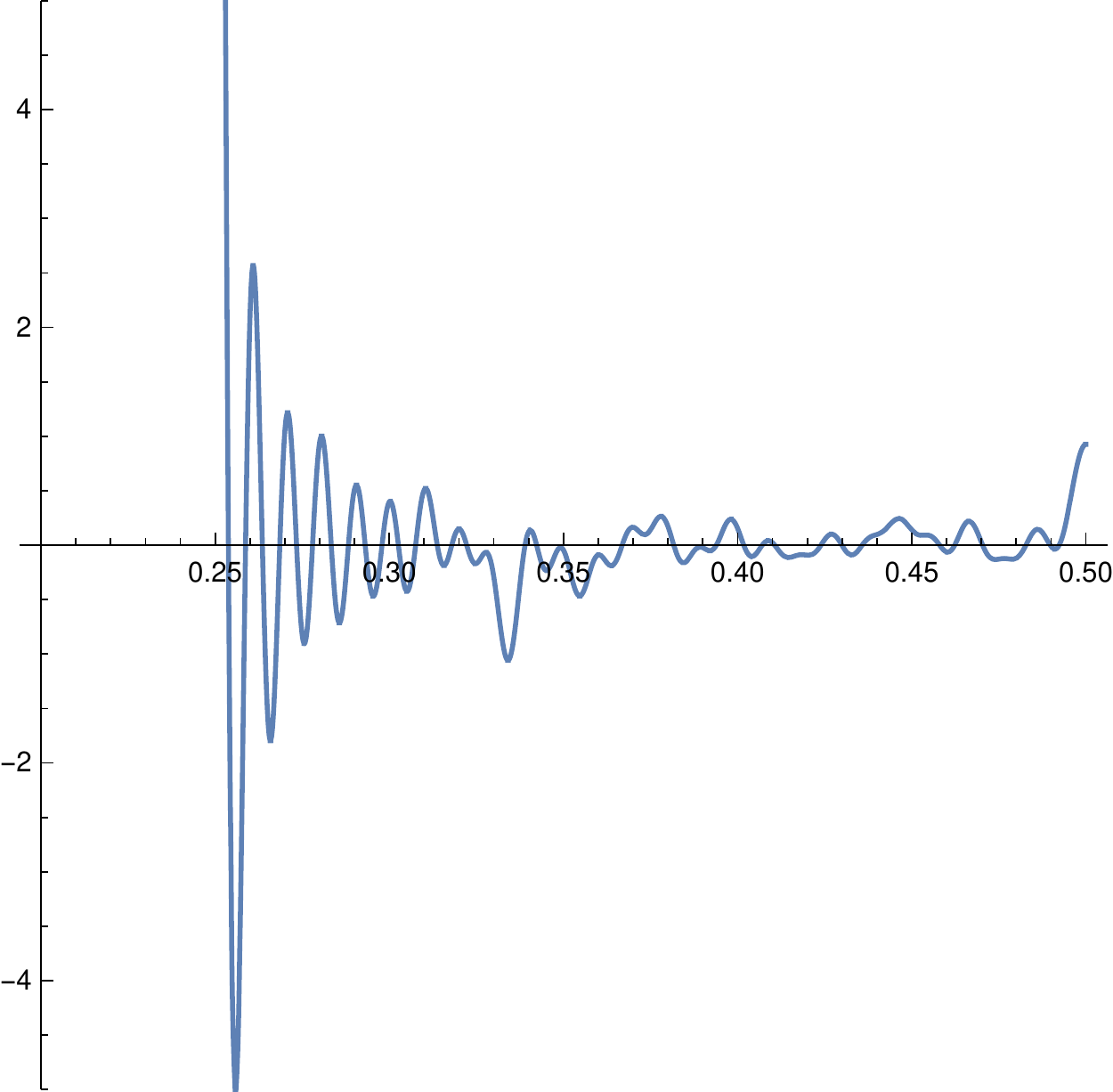} 
\includegraphics[width=.24\linewidth,height=30mm]{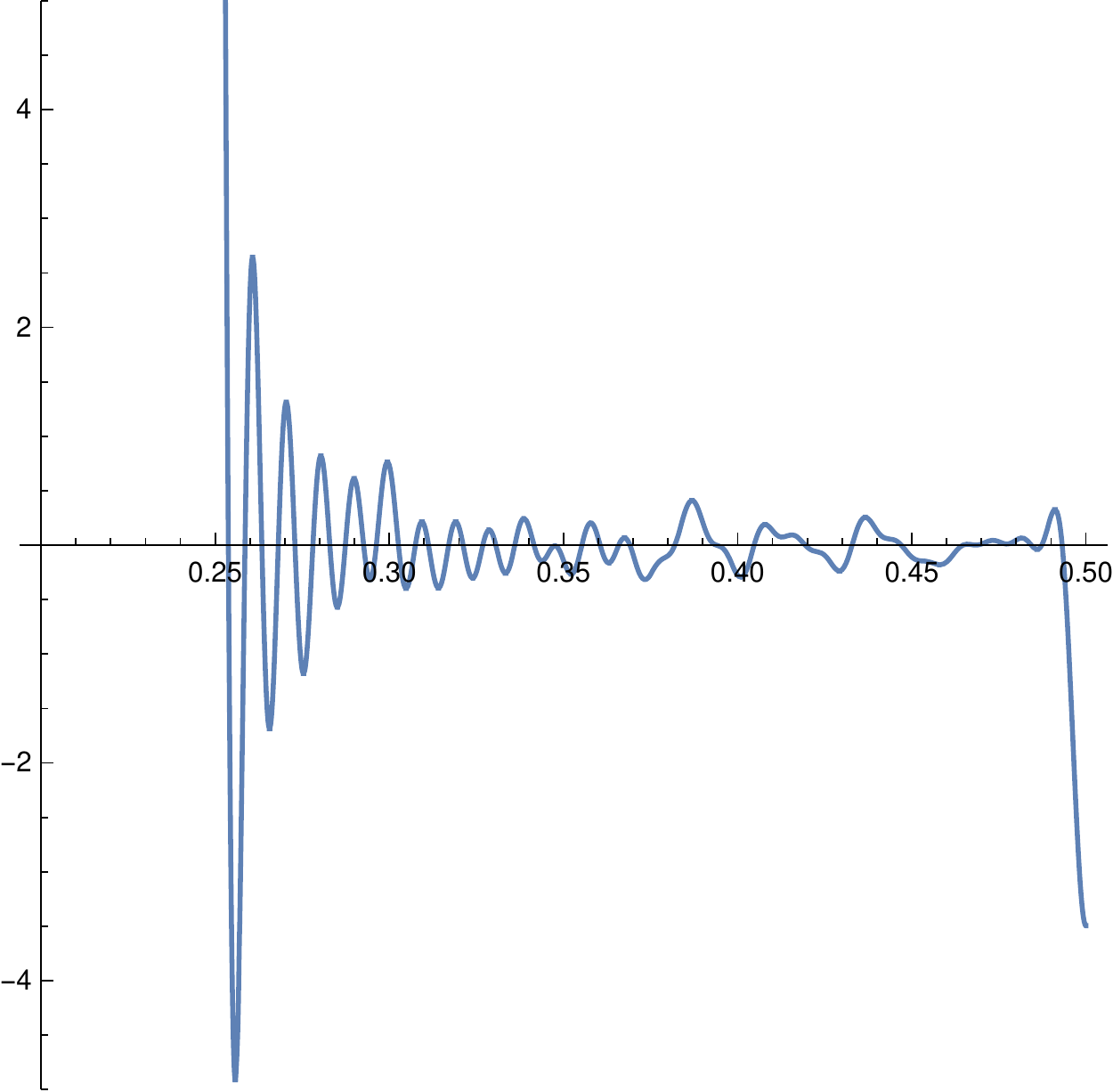} 
\caption{
$S_{\lambda}(u_{\beta,a})(x,0,0,0,0)$ ($d=5$, $\beta=-1/2$, $a=1/4$) for $\lambda=100,\dots,103$: Expansion of the part $0.2\le x\le 0.5$ in Figure~\ref{fig:Su 5d}.}
\label{fig:Su 5d ex}
\end{center}
\end{figure}

\begin{rem}\label{rem:PWC}
In the case $2\le d\le4$ and $-1<\beta\le c(d)$,
the pointwise convergence of $S_{\lambda}(u_{\beta,a})$ on $E_a$ is
an open problem, which is closely related to the lattice point problems.
Especially, if $d=2$ and $x=0$, then
the pointwise convergence of $S_{\lambda}(u_{\beta,a})(0)$ is 
equivalent to Hardy's conjecture on Gauss's circle problem
(see Section~\ref{sec:relation}).
If $d=4$ and $-1<\beta<-1/2$, 
then we get a partial result on the divergence of $S_{\lambda}(u_{\beta,a})$ on $\T^d\cap\Q^d$
(\cite{Oo-Fu-Ku-Na-preparation}).
\end{rem}

Next, we state the Gibbs-Wilbraham phenomenon near $G_a$ for $1\le d\le4$.

\begin{thm}\label{thm:GibbsPh}
Let $1\le d\le 4$, $c(d)<\beta\le 0$ and $0<a<1/2$. 
Then,
on any neighborhood of the set $G_a$,
$S_{\lambda}(u_{\beta,a})$ 
reveals a phenomenon like the Gibbs-Wilbraham phenomenon.
More precisely, the following holds:
For each $x_0\in G_a$, 
let $\{x_\lambda^\pm\}\subset\T^d$ be the sequences
which satisfy $\displaystyle\lim_{\lambda\to\infty}x_\lambda^\pm=x_0$ 
and $|x_\lambda^\pm|=a\mp(2\pm\beta)/(4\lambda)$. 
Then 
\begin{equation*}
 \lim_{\lambda\to\infty}
 \frac{S_{\lambda}(u_{\beta,a})(x_{\lambda}^{\pm})
       -u_{\beta,a}(x_{\lambda}^{\pm})}
      {\lambda^{-\beta}}
 =
 G_{\beta,a}^{\pm},
\end{equation*}
where
\begin{equation}\label{G}
 G_{\beta,a}^{\pm}
 =
 \mp\frac{\Gamma(\beta+1)a^{\beta}(2\pm\beta)^{\beta}}{\pi 2^\beta}
  \int_{\pi}^{\infty} \frac{\sin s}{(s\pm\frac\beta 2\pi)^{\beta+1}}\,ds.
\end{equation}
\end{thm}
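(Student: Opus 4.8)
The plan is to reduce the statement about the Fourier partial sum $S_\lambda(u_{\beta,a})$ to the corresponding statement about the Fourier partial integral $\sigma_\lambda(U_{\beta,a})$, and then to extract the Gibbs constants from the large-argument asymptotics of a Bessel integral. First I would observe that, since $0<a<1/2$ and $x_\lambda^\pm\to x_0\in G_a$ with $|x_\lambda^\pm|\ne a$, for all large $\lambda$ the points $x_\lambda^\pm$ lie in $E_a$; hence $r_d(a:x_\lambda^\pm)=0$ and only the $m=0$ term of the periodization survives, so $u_{\beta,a}(x_\lambda^\pm)=U_{\beta,a}(x_\lambda^\pm)$. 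Applying the identity of Theorem~\ref{thm:GHI}, the difference between $S_\lambda(u_{\beta,a})$ and $\sigma_\lambda(U_{\beta,a})$ is governed by a lattice-point discrepancy whose size is controlled, in the range $1\le d\le4$ and $\beta>c(d)$, by the best known lattice-point estimates encoded in the definition \eqref{c(d)} of $c(d)$; this error is $o(\lambda^{-\beta})$. Thus it suffices to prove that $\lambda^{\beta}\big(\sigma_\lambda(U_{\beta,a})(x_\lambda^\pm)-U_{\beta,a}(x_\lambda^\pm)\big)\to G_{\beta,a}^\pm$.

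Next I would pass to the radial Bessel representation. Using the explicit Fourier transform $\hat U_{\beta,a}(\xi)=\Gamma(\beta+1)\pi^{-\beta}a^{d/2+\beta}|\xi|^{-(d/2+\beta)}J_{d/2+\beta}(2\pi a|\xi|)$ together with the radial form of the inverse transform (the Fourier transform of surface measure on the sphere), one writes, with $\rho=|x|$ and an explicit constant $c_{d,\beta}$ coming from the angular integration,
\[
 \sigma_\lambda(U_{\beta,a})(x)=c_{d,\beta}\,a^{d/2+\beta}\rho^{-(d/2-1)}\int_0^\lambda t^{-\beta}J_{d/2+\beta}(2\pi a t)J_{d/2-1}(2\pi\rho t)\,dt .
\]
Because $|x_\lambda^\pm|\ne a$, the spherical partial integral converges pointwise to $U_{\beta,a}(x_\lambda^\pm)$ as $\lambda\to\infty$ (a Weber--Schafheitlin evaluation, valid for $\beta>-1$), so the integral over $(0,\infty)$ equals $U_{\beta,a}(x_\lambda^\pm)$ and the difference becomes minus the tail,
\[
 \sigma_\lambda(U_{\beta,a})(x_\lambda^\pm)-U_{\beta,a}(x_\lambda^\pm)
 =-c_{d,\beta}\,a^{d/2+\beta}\rho^{-(d/2-1)}\int_\lambda^\infty t^{-\beta}J_{d/2+\beta}(2\pi a t)J_{d/2-1}(2\pi\rho t)\,dt ,
\]
which is precisely what makes the integral $\int_\pi^\infty$ in \eqref{G} appear.

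To evaluate the tail I would insert the asymptotics $J_\nu(z)\sim(\pi z/2)^{-1/2}\cos(z-\nu\pi/2-\pi/4)$ and use a product-to-sum identity. The ``fast'' term, with phase $\propto(a+\rho)t\approx 2at$, and the Bessel remainders integrate to $o(\lambda^{-\beta})$ after dividing by $\lambda^{-\beta}$, as does the bounded range of small $t$ where the asymptotics are invalid. The ``slow'' term has phase $2\pi(a-\rho)t-(\beta+1)\pi/2$ and integrand comparable to $t^{-\beta-1}$. Substituting $u=2\pi|a-\rho|t$ turns the lower limit into $2\pi|a-\rho|\lambda=\pi(2\pm\beta)/2$ and extracts a factor $(2\pi|a-\rho|)^{\beta}=(\pi(2\pm\beta)/2)^{\beta}\lambda^{-\beta}$, exactly cancelling the normalization $\lambda^{-\beta}$; letting $\lambda\to\infty$ (so $\rho\to a$) and performing the affine shift $s=u\mp\tfrac\beta2\pi$ converts the phase into $\pm\sin s$ and moves the lower limit to $\pi$. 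Collecting $c_{d,\beta}a^{d/2+\beta}$, $\rho^{-(d/2-1)}\to a^{-(d/2-1)}$ and $(\pi(2\pm\beta)/2)^\beta$ reduces, after all dependence on $d$ cancels, to exactly the constant $G_{\beta,a}^\pm$ of \eqref{G}.

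The main obstacle is the rigorous passage to the limit in the tail integral: for $-1<\beta\le0$ the integrand decays only like $t^{-\beta-1}$, so the tail is merely conditionally convergent and dominated convergence does not apply. I would control it by integrating by parts once (or by a van der Corput estimate), which both justifies interchanging $\lim_{\lambda\to\infty}$ with the $u$-integral and supplies the uniformity in $\rho$ as $\rho\to a$; the same oscillatory estimates bound the fast term and the Bessel remainders. A secondary delicate point is the uniformity, for the moving point $x_\lambda^\pm$ approaching the singular sphere $G_a$, of the lattice-point error in the first step, which is exactly where the hypotheses $1\le d\le4$ and $\beta>c(d)$ are genuinely used.
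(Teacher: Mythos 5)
Your proposal is correct and follows essentially the same route as the paper: reduction to the Fourier partial integral via the fundamental identity together with the uniform lattice-point bound $\cK_{\beta,a}(\lambda^2:x)=O(\lambda^{c(d)-\beta})$ (Theorem~\ref{thm:RT} and Lemma~\ref{lem:K 1-4}), followed by the Weber--Schafheitlin/Bessel-asymptotics analysis of the tail integral, whose slow-phase part after the substitution $u=2\pi|a-\rho|t$ and the shift by $\tfrac{\beta}{2}\pi$ yields exactly $\int_\pi^\infty \sin s\,(s\pm\tfrac\beta2\pi)^{-\beta-1}ds$ (Theorem~\ref{thm:FI}~(v), via Corollary~\ref{cor:Bessel1}, Lemma~\ref{lem:Bessel3} and Lemma~\ref{lem:psi}~(iii)). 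The only point your sketch subsumes implicitly is that the identity also produces the $m\ne0$ tail-sum term $G_{\beta,a}(\lambda:x)$, which the paper bounds by $O(\lambda^{-\beta-1})$ along the moving points because $0<a<1/2$ keeps $|a-|x_\lambda^\pm-m||$ bounded below.
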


If $d=3$, $\beta=0$ and $a=1/4$, 
then we can see both
the Gibbs-Wilbraham phenomena and the Pinsky phenomena 
on the graphs in Figure~\ref{fig:Su 3d}.
If $d=4$, $\beta=1/2$ and $a=1/4$, 
then we can see 
the Pinsky phenomena but not see the Gibbs-Wilbraham phenomena
on the graphs in Figure~\ref{fig:Su 4d}.

\begin{rem}\label{rem:GibbsPh}
(i)
The constant $G_{\beta,a}^{+}$ is positive and 
$G_{\beta,a}^{-}$ is negative.
Especially
\begin{equation*}
G_{0,a}^{\pm}
 =\mp
 {\frac1\pi}\int_{\pi}^{\infty}\frac{\sin s}s\,ds
 =\mp
 \frac12\pm{\frac1\pi}\int_0^{\pi}\frac{\sin s}s\,ds
 =\pm 0.08949\cdots.
\end{equation*}
(ii)
In the case $2\le d\le4$ and $-1<\beta\le c(d)$,
Theorem~\ref{thm:GibbsPh} is also an open problem 
by the same reason as Remark~\ref{rem:PWC}.
\end{rem}

Finally, we state the almost everywhere convergence of $S_\lambda(u_{\beta,a})$
for $d\ge4$.

\begin{thm}\label{thm:AEC}
Let $d\ge 4$, $\beta>-1/2$ and $a>0$. Then 
\begin{equation*}
 \lim_{\lambda\rightarrow\infty}S_\lambda(u_{\beta,a})(x)
 =
 u_{\beta,a}(x),
 \quad \text{a.e. $x\in\T^d$}.
\end{equation*}
\end{thm}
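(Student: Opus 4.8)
The plan is to obtain the a.e.\ convergence from the quantitative $L^2$-decay of the Fourier tails of $u_{\beta,a}$ and to upgrade it to a pointwise statement by a soft maximal-function argument. No everywhere-pointwise information, and hence no sharp lattice-point input, is needed; this is exactly what makes the a.e.\ statement reachable in the whole range $\beta>-1/2$, even where the pointwise problem is open (Remark~\ref{rem:PWC}). I note first that the case $1\le d\le 3$ is already contained in Theorem~\ref{thm:|x|=a and PWC}(i)(b), since $c(d)\le -1/2<\beta$ there forces convergence on every compact subset of $E_a$ while $\T^d\setminus E_a$ is a null set; this is why the genuinely new regime is $d\ge4$.

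First I would fix the role of the exponent $\beta>-1/2$. By \eqref{Poisson} we have $\widehat{u_{\beta,a}}(m)=\widehat{U_{\beta,a}}(m)$, and $\widehat{U_{\beta,a}}$ is the radial Bessel expression familiar from Bochner--Riesz theory, whence $|\widehat{U_{\beta,a}}(\xi)|\ls|\xi|^{-\frac{d+1}2-\beta}$ as $|\xi|\to\infty$. Together with the trivial bound $\#\{m:|m|<t\}\ls t^d$ and Abel summation this gives
\[
 \|S_\lambda(u_{\beta,a})-u_{\beta,a}\|_{L^2(\T^d)}^2
 =\sum_{|m|\ge\lambda}|\widehat{U_{\beta,a}}(m)|^2
 \ls\sum_{|m|\ge\lambda}|m|^{-(d+1)-2\beta}
 \ls\lambda^{-1-2\beta}.
\]
In particular $u_{\beta,a}\in L^2(\T^d)$ precisely when $\beta>-1/2$, so $S_\lambda(u_{\beta,a})\to u_{\beta,a}$ in $L^2$; the entire task is to promote this to a.e.\ convergence.

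I would run the upgrade along a dyadic skeleton. Along $\lambda=2^k$ the displayed bound gives $\sum_k\|S_{2^k}(u_{\beta,a})-u_{\beta,a}\|_2^2\ls\sum_k2^{-k(1+2\beta)}<\infty$, so $S_{2^k}(u_{\beta,a})\to u_{\beta,a}$ a.e.\ by Chebyshev and Borel--Cantelli. To fill in the continuous parameter I would estimate the in-block maximal oscillation
\[
 W_k(x)=\sup_{2^k\le\lambda<2^{k+1}}\Big|\sum_{2^k\le|m|<\lambda}\widehat{U_{\beta,a}}(m)e^{2\pi imx}\Big|.
\]
Grouping the frequencies by the spheres $|m|^2=n$, whose contributions are mutually orthogonal, turns $W_k$ into the maximal partial sum of an orthogonal family of $\ls4^k$ blocks, so the Rademacher--Menshov inequality yields $\|W_k\|_2^2\ls k^2\sum_{2^k\le|m|<2^{k+1}}|\widehat{U_{\beta,a}}(m)|^2\ls k^2\,2^{-k(1+2\beta)}$. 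Since $\beta>-1/2$ we get $\sum_k\|W_k\|_2^2<\infty$, hence $W_k\to0$ a.e.; combined with the dyadic convergence this gives $S_\lambda(u_{\beta,a})\to u_{\beta,a}$ a.e.\ as $\lambda\to\infty$, the limit being $u_{\beta,a}$ because it must agree with the $L^2$ limit.

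The main obstacle is exactly this passage from the $L^2$ tail bound to control of the continuous maximal function. A single sphere $|m|^2=n$ may carry of the order of $|m|^{d-2}$ lattice points, so $S_\lambda$ advances in blocks of very uneven size and the logarithmic loss in the Rademacher--Menshov step cannot be removed by purely soft means; it is the decay gained from $\beta>-1/2$ that absorbs the resulting $k^2$ factor, which is what pins the threshold at $\beta=-1/2$. A route closer to the spirit of the paper would instead start from the identity of Theorem~\ref{thm:GHI} --- equivalently, from $S_\lambda(u_{\beta,a})=\sum_{m\in\Z^d}\sigma_\lambda(U_{\beta,a})(\cdot+m)$ via \eqref{Poisson} --- and control the lattice sum through mean-square discrepancy estimates; for the a.e.\ statement, however, the purely spectral argument above is shorter and already uses $\beta>-1/2$ optimally.
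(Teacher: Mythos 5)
Your proof is correct, but it takes a genuinely different route from the paper's. The paper obtains Theorem~\ref{thm:AEC} as an output of its general machinery: the generalized Hardy identity (Theorem~\ref{thm:RT}) reduces everything to the lattice-point term $\cK_{\beta,a}(\lambda^2:x)=\sum_j(-1)^j\Delta_j(\lambda^2:x)A_{\beta,a}^{(j)}(\lambda^2)$, since the remaining terms are $O(\lambda^{-\beta-1})$ on $E_a$ by Theorem~\ref{thm:FI}\,(iii) and \eqref{G -b-1}; the a.e.\ input is then Lemma~\ref{lem:K a.e.}, which rests on Kuratsubo's a.e.\ lattice-rest bound $\Delta_0(s:x)=O(s^{d/4}\log^\tau s)$ (Theorem~\ref{thm:Kura}) interpolated with Landau's estimate via the Riesz convexity theorem (Theorem~\ref{thm:Riesz}), giving $|\Delta_j(\lambda^2:x)A_{\beta,a}^{(j)}(\lambda^2)|\ls\lambda^{-\frac12-\beta+\ve}$ a.e.; this exponent is exactly where $\beta>-1/2$ enters. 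You bypass the identity and all lattice-point theory and work purely on the spectral side: Poisson summation and the Bessel asymptotics give $|\hat U_{\beta,a}(m)|\ls|m|^{-(d+1)/2-\beta}$, Plancherel turns this into the tail bound $\|S_\lambda(u_{\beta,a})-u_{\beta,a}\|_2^2\ls\lambda^{-1-2\beta}$, and dyadic blocking plus the Rademacher--Menshov maximal inequality (with the orthogonal blocks carried by the spheres $|m|^2=n$, so that the sup over continuous $\lambda\in[2^k,2^{k+1})$ is a max of at most $\ls 4^k$ nested partial sums) upgrades $L^2$ convergence to a.e.\ convergence, with $\beta>-1/2$ now entering through the summability of $k^2 2^{-k(1+2\beta)}$. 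All the steps check out, including the identification of the a.e.\ limit with the $L^2$ limit $u_{\beta,a}$. The trade-off: your argument is shorter, self-contained, and valid in every dimension (as you note, $1\le d\le3$ is anyway covered by Theorem~\ref{thm:|x|=a and PWC}\,(i)\,(b) since $c(d)\le-1/2$ there), but it is intrinsically soft; the paper's argument is heavier, yet its ingredients are shared infrastructure that simultaneously yields the everywhere statements on $\Q^d$ versus its complement (the third phenomenon), the Pinsky and Gibbs analyses, and a quantitative a.e.\ rate $O(\lambda^{-\frac12-\beta+\ve})$ for $\cK_{\beta,a}$ --- information invisible to a pure orthogonality argument. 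Note finally that both proofs hit the threshold $\beta=-1/2$ for essentially the same underlying reason: Kuratsubo's $s^{d/4}$ bound is itself of mean-square origin, so your proof can be read as inlining that $L^2$ mechanism directly into the Fourier series.
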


\begin{rem}\label{rem:Kuratsubo1996}
(i) From Theorems~\ref{thm:|x|=a and PWC} and \ref{thm:AEC}
we get Theorem in \cite{Kuratsubo1996} as a corollary,
which is the case $\beta=0$.

\noindent (ii)
In the case $d\ge4$ and $-1<\beta\le-1/2$,
the almost everywhere convergence of $S_{\lambda}(u_{\beta,a})$ is
an open problem. 
\end{rem}

We first prove in Section~\ref{sec:GHI} a fundamental identity 
for the periodization $f$ 
of any integrable radial function $F$ on $\R^d$ with compact support.
This identity gives the relation among
$S_{\lambda}(f)(x)$, $\sigma_{\lambda}(F)(x)$ 
and the term related to lattice point problems.
In Section~\ref{sec:B function} we collect the results with respect to Bessel functions.
In Section~\ref{sec:F inversion} 
we investigate the behavior of $\sigma_{\lambda}(U_{\beta,a})(x)$.
We show the uniform convergence of $\sigma_{\lambda}(U_{\beta,a})(x)$
on any compact subset of $E_a$, 
the Pinsky phenomenon at the origin,
and, the Gibbs-Wilbraham phenomenon near the spherical surface.
In Section~\ref{sec:LPP} we prove lemmas related to the lattice point problems.
Then, in Section~\ref{sec:proof}, 
using the results in Sections~\ref{sec:GHI}--\ref{sec:LPP},
we prove the main results,
in which
it turns out that
the third phenomenon is closely related to the lattice point problems.
Finally, in Section~\ref{sec:relation} we give
the relation between the convergence of the spherical partial sum 
and lattice point problems.


\section{Fundamental identity}\label{sec:GHI}

Let $J_{\nu}$ be the Bessel function of order $\nu$.
If $\nu>-1$, then
\begin{equation*} 
  \frac{J_{\nu}(s)}{s^{\nu}} \to \frac{1}{2^{\nu}\Gamma(\nu+1)}
  \quad\text{as}\quad
  s\to0.
\end{equation*}
For this reason,
in this paper 
we always regard 
\begin{equation}\label{Bessel-at-zero}
  \frac{J_{\nu}(s)}{s^{\nu}} = \frac{1}{2^{\nu}\Gamma(\nu+1)}
  \quad\text{for}\quad
  s=0.
\end{equation}
For $\nu>-1$, let
\begin{equation}\label{Lam}
 \Lambda_{\nu}(t:s)
 =
 \dfrac{J_\nu(2\pi t\sqrt{s})}{s^{\frac\nu 2}}, 
 \quad t\ge0,\ s>0.
\end{equation}
By \eqref{Bessel-at-zero} we regard
\begin{equation*}
 \Lambda_{\nu}(t:0)
 =
 \frac{(\pi t)^\nu}{\Gamma(\nu+1)},
 \quad t\ge0.
\end{equation*}

For any $\alpha>-1$ 
we define $D_{\alpha}(s:x)$, ${\cD}_{\alpha}(s:x)$ and $\Delta_{\alpha}(s:x)$ 
as the following:
\begin{align*}
 D_{\alpha}(s:x)
 &=
 \begin{cases}\displaystyle
 {\frac1{\Gamma(\alpha+1)}}\sum_{|m|^2<s}(s-|m|^2)^\alpha e^{2\pi imx}, 
  & s>0,\\
 0, & s=0,
 \end{cases}
 \quad x\in\R^d,
\\
 {\cD}_{\alpha}(s:x)
 &=
 \begin{cases}\displaystyle
 {\frac1{\Gamma(\alpha+1)}}\int_{|\xi|^2<s}(s-|\xi|^2)^\alpha e^{2\pi ix\xi}\,d\xi,
  & s>0,\\
 0, & s=0,
 \end{cases}
 \quad x\in\R^d,
\end{align*}
and
\begin{equation}\label{Delta}
 \Delta_\alpha(s:x)
 =
 D_\alpha(s:x)-{\cD}_\alpha(s:x),
 \quad s\ge0,\ x\in\R^d.
\end{equation}

We consider a function $\phi$ on $[0,\infty)$ 
and the periodization $f_{\phi}$ of the function $F_{\phi}(x)=\phi(|x|)$, $x\in\R^d$,
that is, 
\begin{equation}
 f_{\phi}(x)
 =
 \sum_{m\in{\Z}^d}F_\phi(x+m)=\sum_{m\in{\Z}^d}\phi(|x+m|), 
 \quad x\in\T^d. 
\end{equation}
If the radial function $F_{\phi}$ is integrable on $\R^d$,
then the Fourier transform ${\hat F}_{\phi}$ is also radial and 
has the form
\begin{equation}\label{FT-radial}
 {\hat F}_\phi(\xi)
 =
 \int_{\R^d}\phi(|x|)e^{-2\pi ix\xi}\,dx
\\
 =
 2\pi\int_0^{\infty} 
  \phi(t)\frac{J_{\frac d2-1}(2\pi t|\xi|)}{(t|\xi|)^{\frac d2-1}}t^{d-1}\,dt,
\end{equation}
that is, 
\begin{equation}\label{FT-radial Lam}
 {\hat F}_\phi(\xi)
 =
 2\pi\int_0^{\infty} 
  \phi(t)\Lambda_{{\frac d2}-1}(t:|\xi|^2)\,t^{\frac d2}\,dt.
\end{equation}
For the equation \eqref{FT-radial},
see \cite[Theorem 3.3, page 155]{Stein-Weiss1971} if $d\ge2$.
If $d=1$, then \eqref{FT-radial} is also valid by the equation 
$J_{-\frac12}(s)=\sqrt{({2}/{\pi s})}\cos s$.
Let  
\begin{equation}\label{A_phi}
 A_{\phi}(s)
 =
 2\pi\int_0^{\infty} \phi(t)\Lambda_{{\frac d2}-1}(t:s)\,t^{\frac d2}\,dt,
 \quad s\ge0.
\end{equation}
Then
\begin{equation}\label{FT-Fphi}
 {\hat F}_{\phi}(\xi)=A_{\phi}(|\xi|^2).
\end{equation}
Moreover, if the support of $\phi$ is compact,
then $A_{\phi}$ is infinitely differentiable and
\begin{equation*}
 A_{\phi}^{(j)}(s)
 =
 2\pi(-\pi)^j\int_0^{\infty} \phi(t)\Lambda_{{\frac d2}-1+j}(t:s)\,t^{\frac d2+j}\,dt,
 \quad s\ge0,\ j=1,2,\cdots,
\end{equation*}
since
\begin{equation}\label{d Lam}
 \frac{\partial}{\partial s}\Lambda_{\mu}(t:s)=(-\pi t)\Lambda_{\mu+1}(t:s),
\end{equation}
by the Bessel recurrence formula (see \eqref{Bessel-recurrence-formula} below).

Now, we define $\dsharp$ as the smallest integer that is greater than $(d-1)/2$, that is,
\begin{equation}\label{kd}
  \dsharp 
  =\left[\frac{d-1}2\right]+1,
\end{equation}
where 
$[t]$ is the integer part of $t\ge0$.
In this section we prove the following:
\begin{thm}\label{thm:GHI}
Let $\phi$ be a function on $[0,\infty)$ with compact support.
Suppose that $F_{\phi}$ is integrable on $\R^d$.
Let $f_{\phi}$ be the periodization of the function $F_{\phi}$.
Then
\begin{multline}\label{fphi}
 S_\lambda(f_\phi)(x)
 =
 \sigma_{\lambda}(F_\phi)(x) 
 +\sum_{j=0}^{\dsharp}(-1)^j\Delta_j(\lambda^2:x)A_{\phi}^{(j)}(\lambda^2) 
\\ 
 +(-1)^{\dsharp+1}\sum_{m\in\Z^d\setminus\{0\}}
 \int_0^{\lambda^2}{\cD}_{\dsharp}(s:x-m)A_{\phi}^{(\dsharp+1)}(s)\,ds,
\end{multline}
for all $x\in\T^d$. 
\end{thm}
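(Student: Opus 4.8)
The plan is to pass from the series to the integral through a Riemann–Stieltjes version of Abel summation in the spectral variable $s=|\xi|^2$, using the integration by parts to raise the order of the Riesz kernel to the point where a Poisson summation formula becomes legitimate. First I would invoke the Poisson summation formula \eqref{Poisson}, which together with \eqref{FT-Fphi} gives $\hat f_\phi(m)=\hat F_\phi(m)=A_\phi(|m|^2)$, so that $S_\lambda(f_\phi)(x)=\sum_{|m|^2<\lambda^2}A_\phi(|m|^2)e^{2\pi imx}$. The initial step is to read both sides of the target relation as Stieltjes integrals: since $D_0(s:x)$ is the left-continuous step function jumping by $\sum_{|m'|=|m|}e^{2\pi im'x}$ at each $s=|m|^2$, one has $S_\lambda(f_\phi)(x)=\int_{[0,\lambda^2)}A_\phi(s)\,d_sD_0(s:x)$, while the co-area formula together with \eqref{FT-Fphi} identifies $\sigma_\lambda(F_\phi)(x)=\int_0^{\lambda^2}A_\phi(s)\,d_s\cD_0(s:x)$. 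Subtracting,
\[
 S_\lambda(f_\phi)(x)-\sigma_\lambda(F_\phi)(x)=\int_0^{\lambda^2}A_\phi(s)\,d_s\Delta_0(s:x).
\]

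Next I would carry out repeated integration by parts. Differentiating the defining series and integral termwise yields $\partial_s D_\alpha=D_{\alpha-1}$ and $\partial_s\cD_\alpha=\cD_{\alpha-1}$ for $\alpha\ge1$, hence $\partial_s\Delta_{j+1}(s:x)=\Delta_j(s:x)$ for $j\ge0$, together with the vanishing initial values $\Delta_j(0:x)=0$. Because $\phi$ has compact support, $A_\phi$ is infinitely differentiable, so I may integrate by parts $\dsharp+1$ times, each time trading $\Delta_j$ for its antiderivative $\Delta_{j+1}$ and one more derivative on $A_\phi$. The boundary contributions at $s=0$ all vanish by $\Delta_j(0:x)=0$, the boundary contributions at $s=\lambda^2$ assemble into $\sum_{j=0}^{\dsharp}(-1)^jA_\phi^{(j)}(\lambda^2)\Delta_j(\lambda^2:x)$, and there remains $(-1)^{\dsharp+1}\int_0^{\lambda^2}A_\phi^{(\dsharp+1)}(s)\Delta_{\dsharp}(s:x)\,ds$. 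This already reproduces $\sigma_\lambda(F_\phi)(x)$ together with the finite sum in \eqref{fphi}, so it only remains to identify the remainder with the lattice sum.

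The decisive step is a Poisson summation formula for the Riesz kernel of order $\dsharp$, namely $\Delta_{\dsharp}(s:x)=\sum_{m\in\Z^d\setminus\{0\}}\cD_{\dsharp}(s:x-m)$. Indeed, $\cD_{\dsharp}(s:\cdot)$ is the inverse Fourier transform of the compactly supported function $\xi\mapsto(s-|\xi|^2)_+^{\dsharp}/\Gamma(\dsharp+1)$, whose value at $\xi=m$ is exactly the $m$-th Fourier coefficient of $D_{\dsharp}(s:\cdot)$; hence $D_{\dsharp}(s:\cdot)$ is the periodization of $\cD_{\dsharp}(s:\cdot)$, and removing the $m=0$ term gives the claim. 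Expressing $\cD_{\dsharp}(s:y)$ through the radial Fourier transform formula \eqref{FT-radial} and the Bessel asymptotics collected in Section~\ref{sec:B function}, one obtains $\cD_{\dsharp}(s:y)=O(|y|^{-(d/2+\dsharp+1/2)})$ as $|y|\to\infty$, uniformly for $s$ in the bounded range $[0,\lambda^2]$. Substituting this identity into the remainder and interchanging the sum over $m$ with the integral then produces the last term of \eqref{fphi}.

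The hard part will be this final step. Both the pointwise validity of the Poisson summation formula for $\Delta_{\dsharp}$ and the interchange of summation and integration hinge on the decay estimate for $\cD_{\dsharp}(s:y)$, and it is precisely here that the choice \eqref{kd} of $\dsharp$ is forced: the exponent $d/2+\dsharp+1/2$ exceeds $d$ (so the lattice sum converges absolutely) exactly when $\dsharp>(d-1)/2$. At order $0$ the kernel $\cD_0$ decays too slowly for $\sum_{m}\cD_0(s:x-m)$ to converge, and the whole point of the integration by parts is to raise the smoothing order from $0$ to $\dsharp$ so that Poisson summation becomes admissible. The remaining matters — the co-area computation identifying $\sigma_\lambda(F_\phi)$, and the bookkeeping of the jump of $D_0$ at $s=\lambda^2$ that keeps the strict inequality $|m|<\lambda$ consistent on both sides — are routine.
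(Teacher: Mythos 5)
Your proposal is correct, and it rests on the same two pillars as the paper's proof: the Poisson summation formula giving ${\hat f}_\phi(m)=A_\phi(|m|^2)$, and the periodization identity $D_{\dsharp}(s:x)=\sum_{m\in\Z^d}\cD_{\dsharp}(s:x-m)$ (the paper's \eqref{D}, which it quotes from earlier work and you re-derive by Poisson summation using the decay bound $\cD_{\dsharp}(s:y)=O(|y|^{-(d/2+\dsharp+1/2)})$ --- exactly the point where $\dsharp>(d-1)/2$ is needed, as you observe). The genuine difference is organizational: where $\sigma_\lambda(F_\phi)$ enters. You identify $\sigma_\lambda(F_\phi)(x)=\int_0^{\lambda^2}A_\phi(s)\,d_s\cD_0(s:x)$ at the outset and run a single cascade of $\dsharp+1$ integrations by parts on the differences $\Delta_j=D_j-\cD_j$, using $\partial_s\Delta_{j+1}=\Delta_j$ and $\Delta_j(0:x)=0$, so that the finite sum in \eqref{fphi} and the remainder $(-1)^{\dsharp+1}\int_0^{\lambda^2}\Delta_{\dsharp}(s:x)A_\phi^{(\dsharp+1)}(s)\,ds$ appear in one stroke; \eqref{D} then splits the remainder into the lattice sum. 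The paper instead runs the cascade on $D_j$ alone (starting from $S_\lambda(f_\phi)(x)-D_0(\lambda^2:x)A_\phi(\lambda^2)=-\int_0^{\lambda^2}D_0(s:x)A_\phi^{(1)}(s)\,ds$), applies \eqref{D} to the full sum \emph{including} $m=0$, and then recovers $\sigma_\lambda(F_\phi)(x)$ from the $m=0$ term by a second, reverse cascade of integrations by parts driven by \eqref{cD-dif}, terminating at $\partial_s\cD_0$ and the radial Fourier transform formula \eqref{FT-radial}. Your route buys a shorter argument with no reverse unwinding, at the cost of the (routine) polar-coordinate identification of $\sigma_\lambda(F_\phi)$ as a Stieltjes integral in $s=|\xi|^2$ and a slightly more careful bookkeeping of the jump of $D_0$ at $s=\lambda^2$, which your left-continuity convention handles; the paper's route keeps every intermediate object as an explicit Bessel integral, which is what its later sections reuse. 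The interchange of the sum over $m$ with the integral in $s$ is justified identically in both arguments by the same absolute-convergence estimate.
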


As $\phi$ in Theorem~\ref{thm:GHI}, take
\begin{equation}\label{phi_beta,a}
 \phi_{\beta,a}(t)
 =
 \begin{cases}
  (a^2-t^2)^{\beta}, & 0\le t<a, \\
  0, & t\ge a,
 \end{cases}
\end{equation}
with  $\beta>-1$ and $a>0$.
Then $U_{\beta,a}(x)=\phi_{\beta,a}(|x|)$, $x\in\R^d$, and
the function $u_{\beta,a}$ is the periodization 
of $U_{\beta,a}(x)$ defined by \eqref{periodization U}.
In this case 
we denote $A_{\phi}$ by $A_{\beta,a}$.
Then 
$A_{\beta,a}$ can be calculated explicitly 
by using \eqref{FT-radial}, \eqref{FT-Fphi} and \eqref{cD} below.
Its derivatives $A^{(j)}_{\beta,a}$ are also calculated by \eqref{d Lam}.
That is,
\begin{equation}\label{A_beta,a}
\begin{cases}
 A_{\beta,a}(s)
 =\displaystyle
 \frac{\Gamma(\beta+1)}{\pi^\beta}
 a^{{\frac d2}+\beta}
 \Lambda_{{\frac d2}+\beta}(a:s),
\\[2ex]
 A_{\beta,a}^{(j)}(s)
 =\displaystyle
 (-1)^j{\frac{\Gamma(\beta+1)}{\pi^{\beta-j}}}a^{{\frac d2}+\beta+j}
 \Lambda_{{\frac d2}+\beta+j}(a:s).
\end{cases}
\end{equation}
Then we have the following corollary:

\begin{cor}\label{cor:GHI}
Let $\beta>-1$ and $a>0$.
Then 
\begin{multline}\label{GHI for u}
 S_\lambda(u_{\beta,a})(x)
 =
 \sigma_{\lambda}(U_{\beta,a})(x) 
 +\sum_{j=0}^{\dsharp}(-1)^j
  \Delta_j(\lambda^2:x)A_{\beta,a}^{(j)}(\lambda^2) 
\\ 
 +(-1)^{\dsharp+1}\sum_{m\in\Z^d\setminus\{0\}}
 \int_0^{\lambda^2}{\cD}_{\dsharp}(s:x-m)A_{\beta,a}^{(\dsharp+1)}(s)\,ds,
\end{multline}
for all $x\in\T^d$. 
\end{cor}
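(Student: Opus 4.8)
The plan is to obtain the corollary as the specialization of Theorem~\ref{thm:GHI} to the particular function $\phi=\phi_{\beta,a}$ defined in \eqref{phi_beta,a}, so the real work is only to (a) check that $\phi_{\beta,a}$ meets the hypotheses of the theorem and (b) insert the explicit expressions \eqref{A_beta,a} for $A_{\beta,a}$ and its derivatives. For the hypotheses: the support of $\phi_{\beta,a}$ lies in $[0,a]$ and is therefore compact, while $F_{\phi_{\beta,a}}(x)=\phi_{\beta,a}(|x|)=U_{\beta,a}(x)$, so the periodization of $F_{\phi_{\beta,a}}$ is exactly $u_{\beta,a}$ by \eqref{periodization U}. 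It remains to verify that $U_{\beta,a}$ is integrable on $\R^d$: in polar coordinates $\int_{\R^d}|U_{\beta,a}|$ reduces to a constant times $\int_0^a (a^2-t^2)^{\beta}t^{d-1}\,dt$, and near $t=a$ the integrand behaves like $(a-t)^{\beta}$, which is integrable precisely because $\beta>-1$. Hence Theorem~\ref{thm:GHI} applies to $f_{\phi_{\beta,a}}=u_{\beta,a}$.

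The one genuinely computational step is the evaluation of $A_{\beta,a}=A_{\phi_{\beta,a}}$. By \eqref{A_phi} and the definition \eqref{Lam} of $\Lambda_{d/2-1}$,
\[
 A_{\beta,a}(s)
 =\frac{2\pi}{s^{(d-2)/4}}\int_0^a (a^2-t^2)^{\beta}J_{\frac d2-1}(2\pi\sqrt{s}\,t)\,t^{\frac d2}\,dt .
\]
Writing $t^{d/2}=t^{(d/2-1)+1}$, the inner integral is a Sonine finite Bessel integral, whose evaluation is recorded in \eqref{cD}: with $\nu=d/2-1$ and $b=2\pi\sqrt{s}$,
\[
 \int_0^a (a^2-t^2)^{\beta}J_{\nu}(bt)\,t^{\nu+1}\,dt
 =2^{\beta}\Gamma(\beta+1)\,a^{\nu+\beta+1}\,b^{-\beta-1}\,J_{\nu+\beta+1}(ab).
\]
Substituting this and collecting the powers of $\pi$ and of $s$ (using $\nu+\beta+1=d/2+\beta$) yields $A_{\beta,a}(s)=\pi^{-\beta}\Gamma(\beta+1)\,a^{d/2+\beta}\,\Lambda_{d/2+\beta}(a:s)$, which is the first line of \eqref{A_beta,a}. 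The derivatives then follow by iterating the differentiation rule \eqref{d Lam}: each application of $\partial/\partial s$ produces a factor $-\pi a$ and raises the order of $\Lambda$ by one, so after $j$ steps one obtains the second line of \eqref{A_beta,a}.

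Finally I would substitute $f_{\phi_{\beta,a}}=u_{\beta,a}$, $F_{\phi_{\beta,a}}=U_{\beta,a}$, and the values $A_{\beta,a}^{(j)}(\lambda^2)$, $A_{\beta,a}^{(\dsharp+1)}(s)$ into the identity \eqref{fphi} of Theorem~\ref{thm:GHI}; this reproduces \eqref{GHI for u} term by term, with the absolute convergence of the sum over $m\in\Z^d\setminus\{0\}$ already guaranteed by the theorem. The main obstacle, such as it is, is the Sonine evaluation leading to \eqref{A_beta,a}; once that closed form is in hand, the remainder of the proof is verification of hypotheses and direct substitution, and I anticipate no further difficulty.
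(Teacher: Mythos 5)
Your proposal is correct and takes essentially the same route as the paper: specialize Theorem~\ref{thm:GHI} to $\phi=\phi_{\beta,a}$, compute $A_{\beta,a}$ in the closed form \eqref{A_beta,a} (the paper obtains it from \eqref{FT-radial}, \eqref{FT-Fphi} and \eqref{cD}, whose content is precisely the Sonine finite integral you evaluate), get the derivatives by iterating \eqref{d Lam}, and substitute into \eqref{fphi}. Your explicit verification of the hypotheses (compact support, and integrability of $U_{\beta,a}$ for $\beta>-1$) is a detail the paper leaves implicit.
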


To prove Theorem~\ref{thm:GHI},
first we state the properties of the Bessel functions,
$D_{\alpha}$ and $\cD_{\alpha}$.
The following are the recurrence formulas for Bessel functions.
\begin{equation}\label{Bessel-recurrence-formula}
  {\frac d{ds}}\left(s^\nu J_\nu(s)\right)
  =
  s^\nu J_{\nu-1}(s),
 \quad
  {\frac d{ds}}\left({\frac{J_\nu(s)}{s^\nu}}\right)
  =
  -{\frac{J_{\nu+1}(s)}{s^\nu}}.
\end{equation}
The Bessel functions have also the following asymptotic behavior.
\begin{align}\label{Bessel-asymp-zero}
  J_{\nu}(s)
  &=
  \frac{s^{\nu}}{2^{\nu}\Gamma(\nu+1)} + O(s^{\nu+1})
  \quad\text{as}\quad
  s\to0,
\\ 
  J_{\nu}(s)
  &=
  \sqrt{\frac{2}{\pi s}} \cos\!\left(s-{\frac{2\nu+1}{4}}\pi\right) + O(s^{-3/2})
  \quad\text{as}\quad
  s\to\infty. \label{Bessel-asymp-infty}
\end{align}
If $\alpha>-1$, then
\begin{equation}\label{D-int}
 \int_0^t D_{\alpha}(s:x)\,ds = D_{\alpha+1}(t:x), 
\quad t\ge0,\ x\in\R^d,
\end{equation}
and 
\begin{equation}\label{cD}
  \cD_{\alpha}(s:x)
  =
  \frac{1}{\Gamma(\alpha+1)}
  \int_{|\xi|^2<s} (s-|\xi|^2)^{\alpha} e^{2\pi i\xi x}\,d\xi
  =
  \frac{s^{\frac d2+\alpha}}{\pi^{\alpha}}
  \frac{J_{{\frac d2}+\alpha}(2\pi\sqrt{s}|x|)}{(\sqrt{s}|x|)^{{\frac d2}+\alpha}}.
\end{equation}
If $\alpha>(d-1)/2$, for example, $\alpha=\dsharp$, then
\begin{equation}\label{D} 
  D_{\alpha}(s:x)
  = \sum_{m\in\Z^d}\cD_\alpha(s:x-m)=
  \frac{s^{\frac d2+\alpha}}{\pi^{\alpha}}
  \sum_{m\in\Z^d}
  \frac{J_{{\frac d2}+\alpha}(2\pi\sqrt{s}|x-m|)}{(\sqrt{s}|x-m|)^{{\frac d2}+\alpha}},
\end{equation}
where the sum in \eqref{D} converges absolutely.
In the above the equation \eqref{D-int} follows from the definition.
For the equalities \eqref{cD} and \eqref{D}, 
see (4.2) and (4.3) in \cite{Kuratsubo-Nakai-Ootsubo2010}, respectively. 
By \eqref{Bessel-recurrence-formula} and \eqref{cD} we have
\begin{equation}\label{cD-dif}
 \begin{cases}
 \dfrac{\partial}{\partial s}\cD_{\alpha}(s:x)
 =
 \cD_{\alpha-1}(s:x), & \alpha>0, 
\\[1ex]
 \dfrac{\partial}{\partial s}\cD_{0}(s:x)
 =
 \pi\left({\dfrac{\sqrt{s}}{|x|}}\right)^{{\frac d2}-1}
 J_{{\frac d2}-1}(2\pi|x|\sqrt{s}),
 & \alpha=0. 
 \end{cases}
\end{equation}

\begin{proof}[Proof of Theorem~\ref{thm:GHI}]\label{proof:thm:GHI}
Combining \eqref{Poisson} with \eqref{FT-Fphi} we have
\begin{equation*}
 {\hat f}_\phi(m)={\hat F_\phi}(m)=A_{\phi}(|m|^2), \quad m\in\Z^d.
\end{equation*}
Then
\begin{align*}
 &S_\lambda(f_\phi)(x)-D_0(\lambda^2:x)A_\phi(\lambda^2)
\\
 &=
 \sum_{|m|<\lambda}{\hat f}_\phi(m)e^{2\pi imx}
 -\left(\sum_{|m|<\lambda}e^{2\pi imx}\right)A_\phi(\lambda^2)
\\
 &=
 \sum_{|m|<\lambda}\left(A_\phi(|m|^2)-A_\phi(\lambda^2)\right) e^{2\pi imx}
\\
 &=
 \sum_{|m|<\lambda}\left(-\int_{|m|^2}^{\lambda^2}A_{\phi}^{(1)}(s)\,ds\right)
 e^{2\pi imx}
\\
 &=
 -\int_{0}^{\lambda^2}\left(\sum_{|m|^2<s}e^{2\pi imx}\right)A_{\phi}^{(1)}(s)
  \,ds
\\
 &=
 -\int_0^{\lambda^2}D_0(s:x) A_{\phi}^{(1)}(s)\,ds.
\end{align*}
Using \eqref{D-int} and integration by parts, we have
\begin{align*}
 &-\int_0^{\lambda^2}D_0(s:x) A_{\phi}^{(1)}(s)\,ds
\\
 &=
 -\left[D_1(s:x)A^{(1)}_\phi(s)\right]_0^{\lambda^2}+\int_0^{\lambda^2}D_1(s:x)
 A^{(2)}_\phi(s)\,ds
\\
 &=
-D_1(\lambda^2:x)A^{(1)}_\phi(\lambda^2)+\left[D_2(s:x)A^{(2)}_\phi(s)\right]_0^{\lambda^2}-\int_0^{\lambda^2}D_2(s:x)
 A^{(3)}_\phi(s)\,ds
\\
 &=
 \pt\cdots\cdots\cdots 
\\
 &=
 \sum_{j=1}^{\dsharp}(-1)^jD_j(\lambda^2:x)A^{(j)}_\phi(\lambda^2)
 +(-1)^{{\dsharp}+1}\int_0^{\lambda^2}D_{\dsharp}(s:x)
 A^{({\dsharp}+1)}_\phi(s)\,ds.
\end{align*}
Therefore, by \eqref{D} we have
\begin{multline*}
 S_\lambda(f_{\phi})(x) \\
 =
 \sum_{j=0}^{\dsharp}(-1)^jD_j(\lambda^2:x)A^{(j)}_\phi(\lambda^2) 
 +(-1)^{{\dsharp}+1}\sum_{m\in Z^d}\int_0^{\lambda^2}\cD_{\dsharp}(s:x-m)A^{({\dsharp}+1)}_\phi(s)\,ds.
\end{multline*}
Using \eqref{cD-dif} and integration by parts again, 
we have
\begin{align*}
 &
 \int_0^{\lambda^2} {\cD}_{\dsharp}(s:x) A_{\phi}^{(\dsharp+1)}(s)\,ds
\\
 &=
 {\cD}_{\dsharp}(\lambda^2:x) A_{\phi}^{(\dsharp)}(\lambda^2)
 -
 \int_0^{\lambda^2} {\cD}_{\dsharp-1}(s:x) A_{\phi}^{(\dsharp)}(s)\,ds
\\
 &=\ \cdots\cdots
\\
 &=
 \sum_{j=0}^{\dsharp}
  (-1)^j{\cD}_{\dsharp-j}(\lambda^2:x) A^{(\dsharp-j)}_\phi(\lambda^2)
\\
 &\pt
  +(-1)^{\dsharp+1} \int_0^{\lambda^2} 
   \pi\left({\dfrac{\sqrt{s}}{|x|}}\right)^{{\frac d2}-1}
   J_{{\frac d2}-1}(2\pi|x|\sqrt{s}) A_{\phi}(s) \,ds
\\
 &=
 \sum_{j=0}^{\dsharp}
  (-1)^{\dsharp-j}{\cD}_{j}(\lambda^2:x) A^{(j)}_\phi(\lambda^2)
\\
 &\pt
  +(-1)^{\dsharp+1} (2\pi)\int_0^{\lambda} 
    A_{\phi}(u^2)\frac{J_{{\frac d2}-1}(2\pi|x|u)}{(|x|u)^{\frac d2-1}}
   u^{d-1}\,du.
\end{align*}
Here we note that, by \eqref{FT-Fphi} and \eqref{FT-radial},
\begin{align*}
 \sigma_{\lambda}(F_{\phi})(x)
 &=
 \int_{|\xi|<\lambda} {\hat F}_\phi(\xi) e^{2\pi i x\xi}\,d\xi
 =
 \int_{|\xi|<\lambda} A_\phi(|\xi|^2) e^{2\pi i x\xi}\,d\xi
\\
 &=
  2\pi\int_0^{\lambda} 
    A_{\phi}(u^2)\frac{J_{{\frac d2}-1}(2\pi|x|u)}{(|x|u)^{\frac d2-1}}
   u^{d-1}\,du.
\end{align*}
Therefore
\begin{multline*}
 S_\lambda(f_{\phi})(x) 
 =
 \sigma_{\lambda}(F_{\phi})(x)
 +
 \sum_{j=0}^{\dsharp}(-1)^j\Delta_j(\lambda^2:x)A^{(j)}_\phi(\lambda^2) \\
 +(-1)^{{\dsharp}+1}\sum_{m\in Z^d,m\ne0}\int_0^{\lambda^2}\cD_{\dsharp}(s:x-m)A^{({\dsharp}+1)}_\phi(s)\,ds.
\qedhere
\end{multline*}
\end{proof}

\section{Further properties of Bessel functions}\label{sec:B function}

In this section we collect properties of Bessel functions which we use in the following sections.
Adding to \eqref{Bessel-asymp-infty},
we first recall the following asymptotic behavior of the Bessel functions:
\begin{multline}\label{asymp-infty2}
 J_{\nu}(s)
 =
 \sqrt{\dfrac2{\pi s}} 
 \left(\cos\left(s-\dfrac{2\nu+1}4\pi\right)
       -\frac{4\nu^2-1}{8s}\sin\left(s-\frac{2\nu+1}4\pi\right)
 \right)+O(s^{-5/2})
 \\
 \text{as}\quad
 s\to\infty.
\end{multline}

\begin{lem}[Gradshteyn and Ryzhik~\cite{Gradshteyn-Ryzhik2007}, page~676, {\bf 6.561,~14}]\label{lem:Bessel11}
Let $\nu+1>-\mu>-1/2$. Then
\begin{equation*}
 \int_0^{\infty} x^{\mu} J_{\nu}(x)\,dx
 =
 2^{\mu}\frac{\Gamma((1+\nu+\mu)/2)}{\Gamma((1+\nu-\mu)/2)}.
\end{equation*}
\end{lem}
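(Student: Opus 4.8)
The integral converges only conditionally at infinity: under the hypothesis $-\mu>-1/2$, i.e.\ $\mu<1/2$, the asymptotic \eqref{Bessel-asymp-infty} gives $t^{\mu}J_{\nu}(t)=O(t^{\mu-1/2})$ with $\mu-1/2<0$, so the improper integral $\int_0^{\infty}t^{\mu}J_{\nu}(t)\,dt$ exists by Dirichlet's test, while the condition $\nu+1>-\mu$ (that is $\mu+\nu>-1$) together with \eqref{Bessel-asymp-zero} secures integrability at the origin. The plan is to evaluate the integral by Abel-type regularization: I would insert a Gaussian factor $e^{-a^{2}t^{2}}$ with $a>0$, compute the resulting absolutely convergent integral in closed form, and then let $a\to0^{+}$.

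For fixed $a>0$ the regularized integrand is absolutely integrable, so I may substitute the power series $J_{\nu}(t)=\sum_{k=0}^{\infty}\frac{(-1)^{k}}{k!\,\Gamma(\nu+k+1)}(t/2)^{\nu+2k}$ and integrate term by term using $\int_0^{\infty}t^{c}e^{-a^{2}t^{2}}\,dt=\tfrac12 a^{-(c+1)}\Gamma(\tfrac{c+1}{2})$. Collecting the Gamma factors through the Pochhammer identity for $\Gamma(\tfrac{\mu+\nu+1}{2}+k)/\Gamma(\nu+1+k)$ recognizes the series as a confluent hypergeometric function, giving
\begin{equation*}
 \int_0^{\infty}t^{\mu}e^{-a^{2}t^{2}}J_{\nu}(t)\,dt
 =
 \frac{a^{-(\mu+\nu+1)}}{2^{\nu+1}}\,
 \frac{\Gamma\!\left(\frac{\mu+\nu+1}{2}\right)}{\Gamma(\nu+1)}\,
 {}_1F_1\!\left(\frac{\mu+\nu+1}{2};\,\nu+1;\,-\frac{1}{4a^{2}}\right).
\end{equation*}
As $a\to0^{+}$ the argument $-1/(4a^{2})\to-\infty$, and Kummer's asymptotic ${}_1F_1(\alpha;\gamma;z)\sim\Gamma(\gamma)\Gamma(\gamma-\alpha)^{-1}(-z)^{-\alpha}$ for $z\to-\infty$ contributes a factor $(2a)^{\mu+\nu+1}\Gamma(\nu+1)/\Gamma(\tfrac{\nu-\mu+1}{2})$, whose power of $a$ cancels the prefactor $a^{-(\mu+\nu+1)}$ exactly. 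The surviving constant is $2^{\mu}\Gamma(\tfrac{\mu+\nu+1}{2})/\Gamma(\tfrac{\nu-\mu+1}{2})$, which is the claimed value; note that working with $t^{\mu}$ directly (rather than $t^{\mu-1}$) lets the powers of two collapse without invoking the Legendre duplication formula.

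The main obstacle is the Abelian step, namely justifying $\lim_{a\to0^{+}}\int_0^{\infty}t^{\mu}e^{-a^{2}t^{2}}J_{\nu}(t)\,dt=\int_0^{\infty}t^{\mu}J_{\nu}(t)\,dt$, since the limiting integral is not absolutely convergent and dominated convergence does not apply directly. I would split at a large $R$: on $[0,R]$ the regularizer tends to $1$ uniformly, so that piece is routine. On $[R,\infty)$ I would insert \eqref{Bessel-asymp-infty}; the remainder $O(t^{-3/2})$ multiplied by $t^{\mu}$ is absolutely integrable because $\mu-3/2<-1$, so dominated convergence handles it, while for the leading oscillatory term $\sqrt{2/\pi t}\cos(t-\tfrac{2\nu+1}{4}\pi)$ a single integration by parts transfers the derivative onto $t^{\mu-1/2}e^{-a^{2}t^{2}}$, producing an absolutely integrable majorant uniform in $a\in(0,1]$ together with a controlled boundary term; dominated convergence then yields the limit. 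This establishes the identity on the full range $\nu+1>-\mu>-1/2$.
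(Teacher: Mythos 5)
Your proof is correct, but it does not follow the paper's route, because the paper has no proof to follow: the identity is quoted verbatim from Gradshteyn--Ryzhik ({\bf 6.561,~14}) as a classical table entry (it is the Weber integral, going back to Weber and Schafheitlin). Your derivation --- Gaussian regularization, term-by-term integration into a confluent hypergeometric function, Kummer's large-negative-argument asymptotics, and an Abelian limit --- is a standard, essentially complete way to actually prove that entry. The computation checks out: the ${}_1F_1$ reduction is right, the factor $(-z)^{-\alpha}=(2a)^{\mu+\nu+1}$ exactly cancels the prefactor $a^{-(\mu+\nu+1)}$, and the constant $2^{\mu}\Gamma\bigl(\tfrac{\mu+\nu+1}{2}\bigr)/\Gamma\bigl(\tfrac{\nu-\mu+1}{2}\bigr)$ survives. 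The Abelian step, which you rightly single out as the crux, also holds up: on $[R,\infty)$ the only delicate piece after integration by parts is $2a^{2}t^{\mu+1/2}e^{-a^{2}t^{2}}$, and it does admit an $a$-uniform integrable majorant because $a^{2}t^{2}e^{-a^{2}t^{2}}\le e^{-1}$ gives $2a^{2}t^{\mu+1/2}e^{-a^{2}t^{2}}\le 2e^{-1}\,t^{\mu-3/2}$, so dominated convergence applies throughout. One marginal caveat: the hypotheses allow $(1+\nu-\mu)/2$ to be a pole of the Gamma function (e.g.\ $\nu=-0.7$, $\mu=0.3$), in which case the leading Kummer coefficient vanishes and the asymptotic as you quote it degenerates; there the full expansion shows ${}_1F_1$ is exponentially small, the limit is $0$, and this agrees with the right-hand side, so the identity persists. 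As for what each approach buys: the paper's citation keeps the exposition lean and defers to a standard reference, which is appropriate since the lemma is used only as an ingredient (with $\nu=\tfrac12+\beta$, $\mu=-\tfrac12-\beta$) in Corollary~\ref{cor:Bessel11}; your argument makes the lemma self-contained and yields, as a by-product, the closed form of the Gaussian-regularized integral, at the cost of invoking Kummer's asymptotics, which are themselves a nontrivial classical fact.
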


Take
\begin{equation*}
 \nu=\dfrac 12+\beta \ \text{and} \ \mu=-\dfrac 12-\beta 
\end{equation*}
in Lemma~\ref{lem:Bessel11},
and we have the following corollary.

\begin{cor}\label{cor:Bessel11}
Let $\beta>-1$. Then
\begin{equation*}
 \int_0^{\infty}\frac{J_{\frac12+\beta}(s)}{s^{\frac12+\beta}}\,ds
 =
 \frac1{2^\beta\Gamma(1+\beta)}\sqrt{\dfrac\pi 2}.
\end{equation*}
\end{cor}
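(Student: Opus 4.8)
The plan is to obtain this simply as the special case of Lemma~\ref{lem:Bessel11} corresponding to the substitution already indicated, namely $\nu=\frac12+\beta$ and $\mu=-\frac12-\beta$, and then to verify that the range of validity of that lemma coincides exactly with the hypothesis $\beta>-1$. Since both sides are explicit, the entire argument is a specialization followed by Gamma-function bookkeeping; there is no genuine obstacle beyond keeping the constants straight.

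First I would check the admissibility condition $\nu+1>-\mu>-1/2$ of Lemma~\ref{lem:Bessel11}. With the chosen values one has $\nu+1=\frac32+\beta$ and $-\mu=\frac12+\beta$, so the left inequality $\frac32+\beta>\frac12+\beta$ holds automatically, while the right inequality $\frac12+\beta>-\frac12$ is equivalent to $\beta>-1$. Thus the lemma applies precisely on the stated range, and this equivalence is the one point that warrants explicit mention, since it shows the hypothesis $\beta>-1$ is neither too weak nor too strong.

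Next I would identify the integrand: because $x^{\mu}J_{\nu}(x)=J_{\frac12+\beta}(x)/x^{\frac12+\beta}$, the left-hand side of Lemma~\ref{lem:Bessel11} is exactly the integral in the corollary (with $x$ playing the role of the dummy variable $s$). It then remains only to simplify the right-hand side $2^{\mu}\,\Gamma((1+\nu+\mu)/2)/\Gamma((1+\nu-\mu)/2)$. Computing the two Gamma arguments gives $1+\nu+\mu=1$ and $1+\nu-\mu=2+2\beta$, so the expression collapses to $2^{-\frac12-\beta}\,\Gamma(1/2)/\Gamma(1+\beta)$.

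Finally, using $\Gamma(1/2)=\sqrt{\pi}$ and writing $2^{-\frac12-\beta}=2^{-\beta}\cdot 2^{-1/2}$, I would rearrange the result into $\frac{1}{2^{\beta}\Gamma(1+\beta)}\sqrt{\pi/2}$, which is precisely the claimed value. The main step requiring attention is therefore the cancellation $\nu+\mu=0$ that turns the numerator Gamma factor into $\Gamma(1/2)$, combined with the admissibility verification above; neither is difficult, so the corollary follows directly.
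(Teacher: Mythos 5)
Your proposal is correct and coincides with the paper's own argument: the authors likewise obtain the corollary by taking $\nu=\tfrac12+\beta$ and $\mu=-\tfrac12-\beta$ in Lemma~\ref{lem:Bessel11}, and your Gamma-function simplification and the check that the admissibility condition $\nu+1>-\mu>-\tfrac12$ reduces exactly to $\beta>-1$ are both accurate.
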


\begin{lem}\label{lem:Bessel1a}
Let $\beta>-1$. Then
\begin{equation*}
 \int_u^{\infty}\frac{J_{\frac12+\beta}(s)}{s^{\frac12+\beta}}\,ds
 =
 \sqrt{\dfrac2{\pi}} 
 u^{-\beta-1}\cos\left(u-\frac\beta2\pi\right)+O(u^{-\beta-2})
 \quad\text{as}\quad u\to\infty.
\end{equation*}
\end{lem}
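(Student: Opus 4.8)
The plan is to feed the two-term asymptotic expansion \eqref{asymp-infty2} directly into the integrand and to integrate the resulting explicit oscillatory expression over $[u,\infty)$; the whole point will be that crude absolute bounds are useless, and the cancellation coming from the oscillation must be extracted by integration by parts. Writing $\nu=\frac12+\beta$, I would first note that the phase simplifies, since $\frac{2\nu+1}4\pi=\frac\pi2+\frac{\beta\pi}2$, so that $\cos\!\left(s-\frac{2\nu+1}4\pi\right)=\sin\!\left(s-\frac{\beta\pi}2\right)$ and $\sin\!\left(s-\frac{2\nu+1}4\pi\right)=-\cos\!\left(s-\frac{\beta\pi}2\right)$. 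Dividing \eqref{asymp-infty2} by $s^{\nu}=s^{1/2+\beta}$ then yields, for $s$ large,
\begin{equation*}
 \frac{J_{\frac12+\beta}(s)}{s^{\frac12+\beta}}
 =
 \sqrt{\frac2\pi}\,s^{-1-\beta}\sin\!\left(s-\frac{\beta\pi}2\right)
 +\sqrt{\frac2\pi}\,\frac{4\nu^2-1}8\,s^{-2-\beta}\cos\!\left(s-\frac{\beta\pi}2\right)
 +O(s^{-3-\beta}).
\end{equation*}

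Next I would integrate the leading term. An integration by parts with $g(s)=s^{-1-\beta}$ and $h'(s)=\sin\!\left(s-\frac{\beta\pi}2\right)$ gives
\begin{equation*}
 \int_u^{\infty} s^{-1-\beta}\sin\!\left(s-\frac{\beta\pi}2\right)ds
 =
 u^{-1-\beta}\cos\!\left(u-\frac{\beta\pi}2\right)
 -(1+\beta)\int_u^{\infty}s^{-2-\beta}\cos\!\left(s-\frac{\beta\pi}2\right)ds,
\end{equation*}
where the boundary contribution at infinity vanishes because $1+\beta>0$. The remaining integral is oscillatory with monotone decreasing amplitude $s^{-2-\beta}$, so a further integration by parts (equivalently, the Dirichlet test) bounds it by $O(u^{-2-\beta})$. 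Multiplying through by $\sqrt{2/\pi}$ produces exactly the claimed main term $\sqrt{2/\pi}\,u^{-\beta-1}\cos\!\left(u-\frac{\beta}2\pi\right)$ together with an $O(u^{-\beta-2})$ error.

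Finally, the two lower-order pieces of the expansion feed only into the error. The $O(s^{-3-\beta})$ remainder is absolutely integrable, with $\int_u^{\infty}O(s^{-3-\beta})\,ds=O(u^{-2-\beta})$, and the $s^{-2-\beta}\cos\!\left(s-\frac{\beta\pi}2\right)$ term is again an oscillatory integral with monotone amplitude, hence $O(u^{-2-\beta})$ by the same integration-by-parts argument; collecting the three contributions gives the assertion. The main obstacle is the leading integral above: its integrand $s^{-1-\beta}\sin\!\left(s-\frac{\beta\pi}2\right)$ is \emph{not} absolutely integrable on $[u,\infty)$ when $-1<\beta\le0$, and even for $\beta>0$ its absolute integral is of order $u^{-\beta}$, a full power larger than the stated answer $u^{-1-\beta}$. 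Hence no estimate based on absolute values can succeed, and the result rests entirely on the cancellation produced by the oscillation, which the integration by parts makes explicit. Retaining two terms of \eqref{asymp-infty2} rather than one is precisely what makes the genuinely non-oscillatory remainder $O(s^{-3-\beta})$ integrable down to the required order $O(u^{-\beta-2})$.
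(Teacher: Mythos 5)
Your proposal is correct and follows essentially the same route as the paper: substitute the two-term asymptotic expansion \eqref{asymp-infty2} with $\nu=\tfrac12+\beta$, extract the main term $\sqrt{2/\pi}\,u^{-\beta-1}\cos\left(u-\tfrac{\beta}{2}\pi\right)$ from the leading oscillatory integral, and absorb the second-order and remainder terms into $O(u^{-\beta-2})$. The only difference is that the paper simply quotes the two oscillatory-integral estimates as known identities, whereas you derive them explicitly by integration by parts — a harmless elaboration of the same argument.
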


\begin{proof}
Use \eqref{asymp-infty2} and two equations 
\begin{align*}
 \int_u^{\infty}\frac{\cos(s-\theta)}{s^{1+\beta}}\,ds
 &=
 u^{-\beta-1}\cos\left(u-\theta+\frac{\pi}2\right)+O(u^{-2-\beta}),
\\
 \int_u^{\infty}\frac{\sin(s-\theta)}{s^{2+\beta}}\,ds
 &=
 O(u^{-2-\beta}),
\end{align*}
and we have the conclusion.
\end{proof}

\begin{lem}[Gradshteyn and Ryzhik~\cite{Gradshteyn-Ryzhik2007}, page~683, {\bf 6.575,~1}]\label{lem:Bessel1}
Let $\nu+1>\mu>-1$, $A>0$ and $B>0$. 
Then
\begin{equation*}
 \int_0^{\infty} J_{\nu+1}(As)J_\mu(Bs)s^{\mu-\nu}\,ds
 =
\begin{cases}
 0 &  (A<B) \\
 \dfrac{(A^2-B^2)^{\nu-\mu}B^\mu}{2^{\nu-\mu}A^{\nu+1}\Gamma(\nu-\mu+1)}
 & (A> B).
 \end{cases}
 \end{equation*}
\end{lem}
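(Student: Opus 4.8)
The plan is to recognize the stated integral as the discontinuous Weber--Schafheitlin integral and to prove it by first forcing the two Bessel factors to share the \emph{same} order $\mu$, after which the Hankel orthogonality relation produces the sharp cutoff at $A=B$ automatically. The device that equalizes the orders is Sonine's first finite integral, which writes $J_{\nu+1}$ as a continuous superposition of $J_\mu$ at a rescaled argument:
\begin{equation*}
 J_{\nu+1}(As)
 =
 \frac{(As)^{\nu-\mu+1}}{2^{\nu-\mu}\Gamma(\nu-\mu+1)}
 \int_0^{\pi/2} J_\mu(As\sin\phi)\,
 \sin^{\mu+1}\phi\,\cos^{2(\nu-\mu)+1}\phi\,d\phi.
\end{equation*}
This representation is valid exactly when $\mu>-1$ and $\nu-\mu>-1$, i.e.\ precisely under the hypotheses $\nu+1>\mu>-1$ of the lemma, so no extra assumptions are incurred.

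First I would insert this formula into $\int_0^\infty J_{\nu+1}(As)J_\mu(Bs)s^{\mu-\nu}\,ds$ and interchange the $\phi$- and $s$-integrations. The powers of $s$ then collapse, since $s^{\nu-\mu+1}\cdot s^{\mu-\nu}=s$, and the inner integral becomes the Bessel closure relation of order $\mu$,
\begin{equation*}
 \int_0^\infty s\,J_\mu(As\sin\phi)\,J_\mu(Bs)\,ds
 =
 \frac{\delta(A\sin\phi-B)}{A\sin\phi}.
\end{equation*}
The argument $A\sin\phi-B$ has a zero in $(0,\pi/2)$ if and only if $B/A\le 1$; hence for $A<B$ the delta never fires and the integral vanishes, which is exactly the first case of the lemma.

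For $A>B$ the equation $\sin\phi_0=B/A$ has a unique root, at which $\cos\phi_0=\sqrt{A^2-B^2}/A$ and $\frac{d}{d\phi}(A\sin\phi-B)=A\cos\phi_0=\sqrt{A^2-B^2}$. Evaluating the delta against this Jacobian and substituting $\sin\phi_0=B/A$, $\cos\phi_0=\sqrt{A^2-B^2}/A$ into the factor $\sin^{\mu+1}\phi_0\,\cos^{2(\nu-\mu)+1}\phi_0/(A\sin\phi_0)$, the surviving powers of $A$, $B$ and $A^2-B^2$ combine to the stated value $(A^2-B^2)^{\nu-\mu}B^\mu/\big(2^{\nu-\mu}A^{\nu+1}\Gamma(\nu-\mu+1)\big)$, finishing the second case. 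I expect the main obstacle to be analytic rather than algebraic: the original integral converges only conditionally at infinity (the integrand decays like $s^{\mu-\nu-1}$ with $\mu-\nu<1$), so the formal interchange of integrations and the use of the distributional closure relation must be justified. The cleanest remedy is to insert a convergence factor $e^{-\ve s}$ (or truncate at $s\le R$), carry out the identical computation with the resulting absolutely integrable kernel, and then pass to the limit $\ve\downarrow0$ (resp.\ $R\to\infty$), in which the jump at $A=B$ appears as the limit of a concentrating approximate identity rather than a bare delta.
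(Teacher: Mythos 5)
The paper does not actually prove this lemma: it is quoted verbatim from Gradshteyn--Ryzhik (entry 6.575.1), i.e.\ it is the classical Weber--Schafheitlin discontinuous integral, cited as a table result. Your derivation is therefore necessarily a different route, and it is the classical formal one: Sonine's first finite integral (whose validity range $\mu>-1$, $\nu-\mu>-1$ indeed coincides exactly with the hypothesis $\nu+1>\mu>-1$) followed by the Hankel closure relation. I checked the bookkeeping: with $\sin\phi_0=B/A$, $\cos\phi_0=\sqrt{A^2-B^2}/A$ and Jacobian $A\cos\phi_0=\sqrt{A^2-B^2}$, the $\phi$-integral contributes $B^\mu(A^2-B^2)^{\nu-\mu}A^{-(2\nu-\mu+2)}$, and multiplying by the Sonine prefactor $A^{\nu-\mu+1}/\bigl(2^{\nu-\mu}\Gamma(\nu-\mu+1)\bigr)$ gives precisely the stated right-hand side, while for $A<B$ the delta indeed never fires; so the algebra is sound. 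The genuine mathematical content, as you correctly identify, is the justification of the interchange and of the distributional closure relation, since the original integral converges only conditionally at infinity (absolutely near $0$ because the integrand is $O(s^{2\mu+1})$ there). Your remedy is workable but is the bulk of the work: with a regularizer one needs an Abelian theorem to identify the limit with the improper integral, plus uniform control of the inner integral in $\phi$, including near $\phi=\pi/2$ where $\cos^{2(\nu-\mu)+1}\phi$ may blow up when $\nu-\mu<-1/2$. A Gaussian factor $e^{-p^2s^2}$ is cleaner than $e^{-\ve s}$ here, because Weber's second exponential integral evaluates the inner integral in closed form as $\tfrac{1}{2p^2}\exp\bigl(-\tfrac{a^2+B^2}{4p^2}\bigr)I_\mu\bigl(\tfrac{aB}{2p^2}\bigr)$ with $a=A\sin\phi$, which is manifestly a Gaussian approximate identity concentrating at $a=B$ as $p\downarrow0$, so the ``delta'' step becomes an honest limit. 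For comparison, the standard rigorous proof in the literature (Watson, \S 13.4) instead represents one Bessel factor by a Mellin--Barnes contour integral and reads off the two cases from which side the contour must be closed; that route avoids distributions entirely at the cost of complex-analytic machinery, whereas yours stays elementary and real-variable but must earn the concentration argument.
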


Take
\begin{equation*}
 \nu+1=\mu+\beta+1, \ 
 A=1 \ \text{and} \ B=\dfrac ta,
\end{equation*}
in Lemma~\ref{lem:Bessel1}, 
and we have the following corollary.

\begin{cor}\label{cor:Bessel1}
Let $\mu>-1$, $a>0$, $\beta>-1$ and $t>0$. 
Then
\begin{equation*}
 2^{\beta}\Gamma(\beta+1)a^{2\beta}
 \int_0^{\infty}
  \frac{J_{\mu}(\frac ta s)J_{\mu+\beta+1}(s)}
       {\left(\frac ta\right)^{\mu}s^{\beta}}
 \,ds
 =
 \begin{cases}
  0 & (t>a) \\
  (a^2-t^2)^{\beta} & (0<t<a),
 \end{cases}
\end{equation*}
\end{cor}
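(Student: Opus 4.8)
The plan is to obtain Corollary~\ref{cor:Bessel1} as a direct specialization of Lemma~\ref{lem:Bessel1}, followed by a simplification of the resulting constant. Concretely, I would set $\nu=\mu+\beta$ (so that $\nu+1=\mu+\beta+1$ and $\nu-\mu=\beta$), $A=1$, and $B=t/a$ in the lemma. Before substituting, I would check that the hypotheses translate correctly: the requirement $\nu+1>\mu$ becomes $\beta>-1$, which is assumed; the requirement $\mu>-1$ is assumed; and $A=1>0$, $B=t/a>0$ hold since $t>0$ and $a>0$. Thus the lemma is applicable for every admissible choice of parameters.

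Under these choices the integrand $J_{\nu+1}(As)J_{\mu}(Bs)s^{\mu-\nu}$ becomes exactly $J_{\mu+\beta+1}(s)J_{\mu}(\tfrac{t}{a}s)s^{-\beta}$, and the dichotomy $A<B$ versus $A>B$ translates into $t>a$ versus $0<t<a$. In the first case Lemma~\ref{lem:Bessel1} yields the value $0$, while in the second case it yields
\[
 \int_0^{\infty} J_{\mu}\!\left(\tfrac{t}{a}s\right)J_{\mu+\beta+1}(s)\,s^{-\beta}\,ds
 =
 \frac{\left(1-(t/a)^2\right)^{\beta}\,(t/a)^{\mu}}{2^{\beta}\,\Gamma(\beta+1)},
\]
since $(A^2-B^2)^{\nu-\mu}=(1-(t/a)^2)^{\beta}$, $B^{\mu}=(t/a)^{\mu}$, $2^{\nu-\mu}=2^{\beta}$, $A^{\nu+1}=1$, and $\Gamma(\nu-\mu+1)=\Gamma(\beta+1)$.

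Finally I would reconcile this with the exact form of the statement. Dividing through by $(t/a)^{\mu}$ produces the integral $\int_0^{\infty}\frac{J_{\mu}(\frac{t}{a}s)J_{\mu+\beta+1}(s)}{(\frac{t}{a})^{\mu}s^{\beta}}\,ds$ appearing in the corollary, and multiplying by the prefactor $2^{\beta}\Gamma(\beta+1)a^{2\beta}$ collapses the constant via the identity $a^{2\beta}\left(1-(t/a)^2\right)^{\beta}=\left(a^2-t^2\right)^{\beta}$, giving the claimed value $(a^2-t^2)^{\beta}$ for $0<t<a$ and $0$ for $t>a$. There is essentially no obstacle here, as the argument is a routine specialization; the only points demanding care are verifying that the parameter constraints transfer correctly and bookkeeping the powers of $t/a$ and $a$ through the final simplification.
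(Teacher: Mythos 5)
Your proposal is correct and is exactly the paper's own argument: the paper obtains Corollary~\ref{cor:Bessel1} by taking $\nu+1=\mu+\beta+1$, $A=1$, $B=t/a$ in Lemma~\ref{lem:Bessel1} and simplifying the constant via $a^{2\beta}\bigl(1-(t/a)^2\bigr)^{\beta}=(a^2-t^2)^{\beta}$, just as you do. Your verification of the hypotheses $\beta>-1$, $\mu>-1$ and the bookkeeping of the powers of $t/a$ are all accurate.
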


\begin{lem}\label{lem:Bessel2}
Let $\nu>-1$, $\mu>-1$, $A>0$ and $B>0$. 
Then
\begin{align*}
  J_{\nu}(Au) J_{\mu}(Bu)
 &=
 \dfrac 1{\pi\sqrt{AB}u}
 \bigg(
  {\cos\left((A-B)u-\frac{\nu-\mu}{2}\pi\right)}
\\
 &\phantom{*********}+
  {\cos\left((A+B)u-\frac{\nu+\mu+1}{2}\pi\right)}
 \bigg)
\\
 &\phantom{***********}+
 \frac1{\sqrt{AB}}
 \left(
  \frac1A+\frac1B 
 \right)
 O(u^{-2})
 \quad\text{as}\quad u\to\infty.
\end{align*}
\end{lem}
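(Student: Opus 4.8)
The plan is to substitute the leading-order asymptotic \eqref{Bessel-asymp-infty} into each Bessel factor and then apply the product-to-sum identity for cosines. First I would write
\[
 J_{\nu}(Au)
 =
 \sqrt{\frac{2}{\pi Au}}\cos\left(Au-\frac{2\nu+1}{4}\pi\right)+O\left((Au)^{-3/2}\right)
\]
and the analogous expression for $J_{\mu}(Bu)$, both valid as $u\to\infty$ because $A,B>0$ are fixed. Multiplying these yields a main term equal to the product of the two leading cosines times $\frac{2}{\pi u\sqrt{AB}}$, together with three error contributions. Note that only the one-term expansion \eqref{Bessel-asymp-infty} is needed here, since the claimed remainder is of order $u^{-2}$ and the refined expansion \eqref{asymp-infty2} would only be required for a sharper main term.

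For the main term I would use $\cos\alpha\cos\beta=\frac12(\cos(\alpha-\beta)+\cos(\alpha+\beta))$ with $\alpha=Au-\frac{2\nu+1}{4}\pi$ and $\beta=Bu-\frac{2\mu+1}{4}\pi$. A short computation gives $\alpha-\beta=(A-B)u-\frac{\nu-\mu}{2}\pi$ and $\alpha+\beta=(A+B)u-\frac{\nu+\mu+1}{2}\pi$, so after multiplying by $\frac{2}{\pi u\sqrt{AB}}$ and absorbing the factor $\frac12$, the leading term becomes exactly
\[
 \frac{1}{\pi\sqrt{AB}\,u}\left(\cos\left((A-B)u-\frac{\nu-\mu}{2}\pi\right)+\cos\left((A+B)u-\frac{\nu+\mu+1}{2}\pi\right)\right),
\]
which matches the stated main term.

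It remains to collect the error contributions. The two cross terms have the shape $\sqrt{\frac{2}{\pi Au}}\cos(\cdots)\cdot O((Bu)^{-3/2})$ and $O((Au)^{-3/2})\cdot\sqrt{\frac{2}{\pi Bu}}\cos(\cdots)$, which are $O(A^{-1/2}B^{-3/2}u^{-2})$ and $O(A^{-3/2}B^{-1/2}u^{-2})$ respectively; the product of the two remainders is $O((AB)^{-3/2}u^{-3})$ and is dominated by these. Factoring $\frac{1}{\sqrt{AB}}\,u^{-2}$ out of the two dominant contributions produces exactly $\frac{1}{\sqrt{AB}}\left(\frac1A+\frac1B\right)O(u^{-2})$. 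The only step requiring any care is this final bookkeeping of how the implied constants depend on $A$ and $B$, so that the remainder assumes the precise factored form claimed; everything else is a routine substitution and a single trigonometric identity.
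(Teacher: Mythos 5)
Your proof is correct and takes essentially the same route as the paper: the paper's proof of this lemma is exactly the instruction to substitute \eqref{Bessel-asymp-infty} into each factor and apply $2\cos\theta\cos\phi=\cos(\theta+\phi)+\cos(\theta-\phi)$. Your explicit bookkeeping of the cross terms and of the $A$, $B$-dependence of the remainder (yielding the factored form $\frac1{\sqrt{AB}}\left(\frac1A+\frac1B\right)O(u^{-2})$, with the product of the two remainders dominated in the regime where both expansions apply) simply fills in the details the paper leaves implicit.
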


\begin{proof}
Use \eqref{Bessel-asymp-infty} and 
$2\cos\theta \cos\phi=\cos(\theta+\phi)+\cos(\theta-\phi)$,
and we have the conclusion.
\end{proof}

Let 
\begin{equation*}
 \sign(r)
 =
 \begin{cases}
 1, & r>0, \\
 0, & r=0, \\
 -1,& r<0. 
\end{cases}
\end{equation*}

\begin{lem}\label{lem:psi}
For $\beta>-1$,
let
\begin{equation}\label{psi}
 \psi_{\beta}(\lambda,r)
 =
 |r|^{\beta} 
 \int_{2\pi|r|\lambda}^{\infty}
  \frac1{s^{\beta+1}} \cos\left(s-{\sign}(r){\frac{\beta+1}2}\pi\right)
 \,ds, 
 \quad 
 \lambda>0, \ r\in\R\setminus\{0\}.
\end{equation}
Then 
$\psi_{\beta}$ has the following properties:
\begin{enumerate}
\item
For each $r\in\R\setminus\{0\}$,
\begin{equation}\label{psi nega}
 \psi_{\beta}(\lambda,r)=|r|^{-1}O(\lambda^{-\beta-1})
 \quad\text{as}\quad \lambda\to\infty.
\end{equation}
\item
If $\beta>0$, then
\begin{equation}\label{psi pos}
 \psi_{\beta}(\lambda,r)=O(\lambda^{-\beta})
 \quad\text{as}\quad \lambda\to\infty
\end{equation}
uniformly with respect to $r\in\R\setminus\{0\}$.
\item
If $-1<\beta\le0$,
then, for each $\lambda>0$, 
$-\psi_{\beta}(\lambda,r)$ has values 
\begin{equation*}
 \left(\frac{2+\beta}{4\lambda}\right)^\beta
 \int_\pi^\infty\frac{-\sin s}{(s+\frac\beta 2 \pi)^{\beta+1}}ds
 \quad\text{at}\quad r=\frac{2+\beta}{4\lambda},
\end{equation*}
and 
\begin{equation*}
 \left(\frac{2-\beta}{4\lambda}\right)^\beta
 \int_\pi^\infty\frac{\sin s}{(s-\frac\beta 2 \pi)^{\beta+1}}ds
 \quad\text{at}\quad r=-\frac{2-\beta}{4\lambda}.
\end{equation*}
\end{enumerate}
\end{lem}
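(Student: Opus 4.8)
The plan is to reduce all three assertions to a single improper integral. Writing $u=2\pi|r|\lambda$ and $\theta=\sign(r)\frac{\beta+1}{2}\pi$, we have
\[
 \psi_\beta(\lambda,r)=|r|^\beta\,I(u,\theta),
 \qquad
 I(u,\theta)=\int_u^\infty\frac{\cos(s-\theta)}{s^{\beta+1}}\,ds .
\]
Since $\beta>-1$ gives $\beta+1>0$, a single integration by parts turns $I$ into an absolutely convergent integral, so $I(u,\theta)$ is well defined (conditionally convergent when $-1<\beta\le0$, by the Dirichlet test). Parts (i) and (iii) exploit the oscillation of the cosine, while part (ii) needs only a crude size estimate.

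For (i) I would quote the integration-by-parts asymptotic already derived in the proof of Lemma~\ref{lem:Bessel1a},
\[
 \int_u^\infty\frac{\cos(s-\theta)}{s^{\beta+1}}\,ds
 =u^{-\beta-1}\cos\!\Bigl(u-\theta+\tfrac{\pi}{2}\Bigr)+O(u^{-\beta-2})
 \quad\text{as}\quad u\to\infty .
\]
Substituting $u=2\pi|r|\lambda$ and multiplying by $|r|^\beta$, the main term becomes $|r|^{-1}(2\pi\lambda)^{-\beta-1}\cos(\cdots)$ and the remainder becomes $|r|^{-2}O(\lambda^{-\beta-2})$. For each fixed $r$ the remainder is of strictly lower order in $\lambda$, which yields $\psi_\beta(\lambda,r)=|r|^{-1}O(\lambda^{-\beta-1})$ and proves (i).

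For (ii) the decisive point is that when $\beta>0$ the weight $s^{-\beta-1}$ is integrable on the whole of $[u,\infty)$, so the trivial bound $|\cos|\le1$ gives $|I(u,\theta)|\le\int_u^\infty s^{-\beta-1}\,ds=u^{-\beta}/\beta$. Inserting $u=2\pi|r|\lambda$, the factor $|r|^\beta u^{-\beta}=(2\pi\lambda)^{-\beta}$ absorbs all dependence on $r$, so $|\psi_\beta(\lambda,r)|\le(2\pi\lambda)^{-\beta}/\beta$ uniformly in $r\in\R\setminus\{0\}$, which is the claimed uniform $O(\lambda^{-\beta})$.

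Part (iii) is a direct evaluation at the two prescribed points. At $r=(2+\beta)/(4\lambda)$ one has $\sign(r)=1$ and $u=2\pi|r|\lambda=\pi+\frac{\beta}{2}\pi$, and the phase rewrites as $\cos(s-\frac{\beta+1}{2}\pi)=\sin(s-\frac{\beta}{2}\pi)$; the substitution $t=s-\frac{\beta}{2}\pi$ then identifies $I$ with $\int_\pi^\infty\sin t\,(t+\frac{\beta}{2}\pi)^{-\beta-1}\,dt$, and since $|r|^\beta=((2+\beta)/(4\lambda))^\beta$ this reproduces $\psi_\beta$, hence $-\psi_\beta$, in the stated form. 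The point $r=-(2-\beta)/(4\lambda)$ is treated identically with $\sign(r)=-1$, $u=\pi-\frac{\beta}{2}\pi$, the phase identity $\cos(s+\frac{\beta+1}{2}\pi)=-\sin(s+\frac{\beta}{2}\pi)$, and the substitution $t=s+\frac{\beta}{2}\pi$ (the extra minus sign here matching the opposite sign of $\sin$ in the second formula). I expect the only genuine care to lie in justifying the conditional convergence of these integrals for $-1<\beta\le0$ and in checking that the shifted denominators $s\pm\frac{\beta}{2}\pi$ remain positive on $[\pi,\infty)$, which holds because $-2<\beta<2$; everything else is bookkeeping of the phase shifts $\frac{\beta+1}{2}\pi$ versus $\frac{\beta}{2}\pi$.
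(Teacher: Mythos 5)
Your proposal is correct and follows essentially the same route as the paper: part (i) is the integration-by-parts bound $\int_v^\infty \cos(s-\theta)s^{-\beta-1}\,ds=O(v^{-\beta-1})$ (you quote the sharper two-term asymptotic of Lemma~\ref{lem:Bessel1a}, which is more than needed but harmless), part (ii) is the observation that $|r|^{\beta}(2\pi|r|\lambda)^{-\beta}=(2\pi\lambda)^{-\beta}$ kills the $r$-dependence when $\beta>0$ (the paper phrases this as a change of variables, you as a direct bound--the same computation), and part (iii) is the same phase-shift identity $\cos\bigl(s\mp\tfrac{\beta+1}{2}\pi\bigr)=\pm\sin\bigl(s\mp\tfrac{\beta}{2}\pi\bigr)$ followed by the substitution $t=s\mp\tfrac{\beta}{2}\pi$, with the lower limit landing exactly at $\pi$ for the two prescribed values of $r$.
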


\begin{proof}
(i)
Using the equation
\begin{equation}\label{cos/s}
 \int_v^{\infty}\frac{\cos(s-\theta)}{s^{\beta+1}}\,ds
 =
 O(v^{-\beta-1}),
\end{equation}
we have
$
 \psi_{\beta}(\lambda,r)
 =
 |r|^{\beta} 
 O((|r|\lambda)^{-\beta-1})
 =
 |r|^{-1} 
 O(\lambda^{-\beta-1})
$.

\noindent
(ii)
Changing of variables, we have
\begin{equation*}
 \psi_{\beta}(\lambda,r)
 =
 \frac 1{(2 \pi \lambda)^\beta}
 \int_1^\infty\frac{\cos(2\pi |r|\lambda u-\sign(r)\frac{\beta+1}2 \pi)}{u^{\beta+1}}\,du,
\end{equation*}
which shows $\psi_{\beta}(\lambda,r)=O(\lambda^{-\beta})$ uniformly with respect to $r$,
since $\beta>0$.

\noindent
(iii)
If $r>0$, 
then
\begin{equation*}
 -\psi_{\beta}(\lambda,r)
 =
 r^{\beta}
 \int_{2\pi r\lambda}^{\infty}
 \frac{-\sin\left(s-{\frac{\beta}2}\pi\right)}{s^{\beta+1}} 
 \,ds
 =
 r^{\beta}
 \int_{2\pi r\lambda-\frac{\beta}2\pi}^{\infty}
 \frac{-\sin s}{\left(s+{\frac{\beta}2}\pi\right)^{\beta+1}} 
 \,ds,
\end{equation*}
and, if $r<0$,
then
\begin{equation*}
 -\psi_{\beta}(\lambda,r)
 =
 (-r)^{\beta}
 \int_{2\pi(-r)\lambda}^{\infty}
 \frac{\sin\left(s+{\frac{\beta}2}\pi\right)}{s^{\beta+1}} 
 \,ds
 =
 (-r)^{\beta}
 \int_{2\pi(-r)\lambda+\frac{\beta}2\pi}^{\infty}
 \frac{\sin s}{\left(s-{\frac{\beta}2}\pi\right)^{\beta+1}} 
 \,ds.
\end{equation*}
Then we have the conclusion.
\end{proof}

\begin{lem}\label{lem:Bessel3}
Let $\mu>-1$, $\beta>-1$ and $a>0$. 
If $0<t<a$ or $t>a$, then
\begin{multline*} 
 \int_{2\pi a\lambda}^{\infty}
  \frac{J_{\mu}(\frac ta u)J_{\mu+\beta+1}(u)}
       {\left(\frac ta\right)^{\mu}u^{\beta}}
 \,du
\\
 =
 \dfrac1{\pi a^{\beta}} \left(\frac at\right)^{\mu+\frac12}
 \psi_{\beta}(\lambda,a-t)
 + \frac1{a^{\beta+1}}
  \left(\frac at\right)^{\mu+\frac12}
  \left(1+{\frac at}\right)O(\lambda^{-\beta-1})
 \quad\text{as}\quad \lambda\to\infty,
\end{multline*}
where the term $O(\lambda^{-\beta-1})$ is uniform with respect to $t$.
\end{lem}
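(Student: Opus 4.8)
The plan is to insert the large-argument expansion of the product of the two Bessel functions from Lemma~\ref{lem:Bessel2}, integrate term by term over $(2\pi a\lambda,\infty)$, and match the resulting pieces to $\psi_{\beta}$ and to the claimed error. I apply Lemma~\ref{lem:Bessel2} with the first factor $J_{\mu}(\frac ta u)$ (so $A=t/a$ and order $\mu$) and the second factor $J_{\mu+\beta+1}(u)$ (so $B=1$ and order $\mu+\beta+1$). Since then $A-B=-(a-t)/a$ with order difference $-(\beta+1)$, and $A+B=(a+t)/a$ with order sum plus one equal to $2\mu+\beta+2$, the lemma gives, after using that cosine is even to tidy the first phase,
\begin{multline*}
J_{\mu}\Bigl(\tfrac{t}{a}u\Bigr)J_{\mu+\beta+1}(u)
=\frac{1}{\pi(t/a)^{1/2}u}\Bigl(\cos\Bigl(\tfrac{a-t}{a}u-\tfrac{\beta+1}{2}\pi\Bigr)
+\cos\Bigl(\tfrac{a+t}{a}u-\tfrac{2\mu+\beta+2}{2}\pi\Bigr)\Bigr)
\\
+\Bigl(\tfrac{a}{t}\Bigr)^{1/2}\Bigl(1+\tfrac{a}{t}\Bigr)O(u^{-2}).
\end{multline*}
Since the lower limit $2\pi a\lambda\to\infty$, this asymptotic is valid throughout the range of integration. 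Dividing by $(t/a)^{\mu}u^{\beta}$ and integrating over $u>2\pi a\lambda$ produces the factor $\frac1\pi(\frac at)^{\mu+\frac12}$ times a difference-frequency integral $I_1$ and a sum-frequency integral $I_2$, plus the contribution of the $O(u^{-2})$ remainder.

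The main term comes from $I_1=\int_{2\pi a\lambda}^{\infty}u^{-\beta-1}\cos(\tfrac{a-t}{a}u-\tfrac{\beta+1}{2}\pi)\,du$. Here I substitute $s=\frac{|a-t|}{a}u$, so the lower limit becomes $2\pi|a-t|\lambda$, matching the one in \eqref{psi}. Under this substitution $\tfrac{a-t}{a}u=\sign(a-t)\,s$, and treating $t<a$ and $t>a$ separately one checks (using that cosine is even) that $\cos(\sign(a-t)s-\tfrac{\beta+1}{2}\pi)=\cos(s-\sign(a-t)\tfrac{\beta+1}{2}\pi)$, which is exactly the integrand appearing in $\psi_{\beta}(\lambda,a-t)$ in \eqref{psi}. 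Collecting the Jacobian and powers gives $I_1=a^{-\beta}\psi_{\beta}(\lambda,a-t)$, hence the announced main term $\frac{1}{\pi a^{\beta}}(\frac at)^{\mu+\frac12}\psi_{\beta}(\lambda,a-t)$.

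Both remaining contributions feed the error. For $I_2$ the sum frequency $(a+t)/a$ is bounded below by $1$; substituting $w=\frac{a+t}{a}u$ and invoking \eqref{cos/s} yields $I_2=(\frac{a+t}{a})^{\beta}O((a+t)^{-\beta-1}\lambda^{-\beta-1})$, so $|I_2|\lesssim a^{-\beta}(a+t)^{-1}\lambda^{-\beta-1}$; since $at\le(a+t)^2$, this is dominated by $a^{-\beta-1}(1+\frac at)O(\lambda^{-\beta-1})$. The $O(u^{-2})$ remainder integrates directly, $\int_{2\pi a\lambda}^{\infty}u^{-\beta-2}\,du=O((a\lambda)^{-\beta-1})$, contributing precisely $(\frac at)^{\mu+\frac12}(1+\frac at)a^{-\beta-1}O(\lambda^{-\beta-1})$, which is the stated error form.

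The step I would handle most carefully is the phase bookkeeping that turns $I_1$ into $\psi_{\beta}(\lambda,a-t)$ uniformly across $t<a$ and $t>a$: the factor $\sign(a-t)$ lands inside the cosine argument after the change of variables but inside the phase shift in the definition \eqref{psi}, and these agree only because cosine is even, which must be verified in each case. For the uniformity in $t$ of the error, I would note that the implied constant in \eqref{cos/s} is independent of the phase (as a single integration by parts shows) and that the orders entering Lemma~\ref{lem:Bessel2} are fixed, so the only $t$-dependence is through the explicit prefactors, all of which are displayed; this is what makes the $O(\lambda^{-\beta-1})$ uniform with respect to $t$.
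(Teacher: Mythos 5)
Your proof is correct and takes essentially the same route as the paper: apply Lemma~\ref{lem:Bessel2} with $A=t/a$, $B=1$, split into the difference-frequency integral (which the substitution $s=\frac{|a-t|}{a}u$ and the evenness of cosine turn into $\frac1{\pi a^\beta}\bigl(\frac at\bigr)^{\mu+\frac12}\psi_\beta(\lambda,a-t)$), the sum-frequency integral handled by \eqref{cos/s}, and the directly integrated $O(u^{-2})$ remainder. Incidentally, your sum-frequency phase $\frac{2\mu+\beta+2}{2}\pi$ is the correct one (the paper's proof has a harmless typo, $\frac{2\mu+\beta+1}{2}\pi$), though it is immaterial since that term is absorbed into the error in either case.
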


\begin{proof}
By Lemma~\ref{lem:Bessel2}
we have
\begin{multline*}
 J_\mu(\frac ta u)J_{\mu+\beta+1}(u)
\\
 =
 \frac1{\pi u}\sqrt{\frac at}
 \left(\cos\left(\frac{|a-t|}{a}u-\sign(a-t)\frac{1+\beta}2\pi\right)
       +\cos\left(\frac{a+t}{a}u-\frac{2\mu+\beta+1}2\pi\right)\right)
\\
 +\sqrt{\frac at}\left(1+{\frac at}\right)O(u^{-2}).
\end{multline*}
Let
\begin{align*}
 I_1
 &=
 \frac1{\pi}\left(\frac at\right)^{\mu+\frac12}
 \int_{2\pi a\lambda}^{\infty}
  \frac{\cos\left(\frac{|a-t|}{a}u-\sign(a-t)\frac{\beta+1}2\pi\right)}
       {u^{\beta+1}}\,du,
\\
 I_2
 &=
 \frac1{\pi}\left(\frac at\right)^{\mu+\frac12}
 \int_{2\pi a\lambda}^{\infty}
  \frac{\cos\left(\frac{a+t}{a}u-\frac{2\mu+\beta+1}2\pi\right)}
       {u^{\beta+1}}\,du.
\end{align*}
Then
\begin{equation*}
 \int_{2\pi a\lambda}^{\infty}
  \frac{J_{\mu}(\frac ta u)J_{\mu+\beta+1}(u)}
       {\left(\frac ta\right)^{\mu}u^{\beta}}
 \,du
 =
 I_1+I_2
 +
 \left(\frac at\right)^{\mu+\frac12}\left(1+{\frac at}\right) a^{-\beta-1}O(\lambda^{-\beta-1}).
\end{equation*}
Changing of variables and using \eqref{cos/s}, 
we have
\begin{align*}
 I_1
 &=
 \frac1{\pi}\left(\frac at\right)^{\mu+\frac12}
 \left(\frac{|a-t|}{a}\right)^{\beta}
 \int_{2\pi|a-t|\lambda}^{\infty}
  \frac{\cos\left(s-\sign(a-t)\frac{\beta+1}2\pi\right)}
       {s^{\beta+1}}\,ds
\\
 &=
 \dfrac1{\pi a^{\beta}} \left(\frac at\right)^{\mu+\frac12}
 \psi_{\beta}(\lambda,a-t)
\end{align*}
and
\begin{align*}
 I_2
 &=
 \frac1{\pi}\left(\frac at\right)^{\mu+\frac12}
 \left(\frac{a+t}{a}\right)^{\beta}
 \int_{2\pi(a+t)\lambda}^{\infty}
  \frac{\cos\left(s-\frac{2\mu+\beta+1}2\pi\right)}
       {s^{\beta+1}}\,ds
\\
 &=
 \left(\frac at\right)^{\mu+\frac12}
 \left(\frac{a+t}{a}\right)^{\beta}(a+t)^{-\beta-1}
 O(\lambda^{-\beta-1})
\\
 &=
 \frac1{a^{\beta+1}}\left(\frac at\right)^{\mu+\frac12}\left(\frac{a}{a+t}\right)
 O(\lambda^{-\beta-1}).
\end{align*}
Then we have the conclusion.
\end{proof}

\begin{lem}[Gradshteyn and Ryzhik~\cite{Gradshteyn-Ryzhik2007}, page~683, {\bf 6.574,~2}]\label{lem:Bessel4}
Let $\nu+\mu+1>\kappa>0$ and $b>0$.
Then
\begin{equation*}
 \int_0^{\infty} \frac{J_{\nu}(bs)J_{\mu}(bs)}{s^{\kappa}}\,ds
 =
 \frac{b^{\kappa-1}\Gamma(\kappa)\Gamma(\frac{\mu+\nu-\kappa+1}{2})}
      {2^{\kappa}\Gamma(\frac{\mu-\nu+\kappa+1}{2})
       \Gamma(\frac{-\mu+\nu+\kappa+1}{2})\Gamma(\frac{\mu+\nu+\kappa+1}{2})}.
\end{equation*}
Especially, for  $\mu>-1$ and $\beta>0$,
\begin{equation*}
 \int_0^{\infty}
  \frac{J_{\mu+\beta+1}(s)J_{\mu}(s)}
       {s^{\beta}}
 \,ds
 =0.
\end{equation*}
\end{lem}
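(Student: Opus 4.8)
The plan is to derive the ``Especially'' special case directly from the general Weber--Schafheitlin identity stated in the first part of the lemma, which I take as given (it is the cited formula from Gradshteyn and Ryzhik). First I would specialize the parameters by setting $b=1$, $\kappa=\beta$, and $\nu=\mu+\beta+1$, so that the integrand $J_{\nu}(bs)J_{\mu}(bs)/s^{\kappa}$ becomes exactly $J_{\mu+\beta+1}(s)J_{\mu}(s)/s^{\beta}$, the integral appearing in the special case.

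Next I would verify that the hypothesis $\nu+\mu+1>\kappa>0$ of the general formula reduces, under this substitution, to $2\mu+\beta+2>\beta$ and $\beta>0$, that is, precisely to $\mu>-1$ and $\beta>0$. Hence the general formula applies verbatim under the stated assumptions.

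Then I would evaluate the four Gamma factors. Since $b^{\kappa-1}=1$, the numerator is $\Gamma(\beta)\,\Gamma\!\left(\frac{\mu+\nu-\kappa+1}{2}\right)=\Gamma(\beta)\,\Gamma(\mu+1)$, because $\mu+\nu-\kappa+1=2\mu+2$. Among the three denominator factors, the crucial one has argument $\frac{\mu-\nu+\kappa+1}{2}=\frac{\mu-(\mu+\beta+1)+\beta+1}{2}=0$, while the remaining two are $\Gamma(\beta+1)$ and $\Gamma(\mu+\beta+1)$, both finite. The key observation is that $\Gamma$ has a pole at $0$, so $1/\Gamma(0)=0$, which forces the entire right-hand side to vanish and yields the asserted value $0$.

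There is essentially no obstacle here: once the substitution is made, the argument is a one-line reduction, and the only point that deserves a word of care is the interpretation of the $\Gamma(0)$ in the denominator as a pole of the Gamma function, so that its reciprocal---and therefore the whole integral---equals zero.
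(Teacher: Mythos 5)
Your proposal is correct and is essentially the paper's own (implicit) derivation: the paper states the general Gradshteyn--Ryzhik formula and obtains the ``Especially'' case by exactly the substitution $b=1$, $\kappa=\beta$, $\nu=\mu+\beta+1$, under which the hypothesis becomes $\mu>-1$, $\beta>0$ and the factor $\Gamma\!\left(\frac{\mu-\nu+\kappa+1}{2}\right)=\Gamma(0)$ in the denominator forces the value $0$ via the vanishing of the reciprocal Gamma function. Your parameter checks and Gamma-factor computations are all accurate, so there is nothing to add.
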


\begin{lem}[Gradshteyn and Ryzhik~\cite{Gradshteyn-Ryzhik2007}, page~660, {\bf 6.512,~3}]\label{lem:Bessel42}
Let $\nu>0$. Then
\begin{equation*}
 \int_0^{\infty}
  J_\nu(s)J_{\nu-1}(s)
 \,ds
 =\dfrac12.
\end{equation*}
\end{lem}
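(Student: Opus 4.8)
The plan is to reduce the integral to the absolutely convergent Weber--Schafheitlin integral already recorded in Lemma~\ref{lem:Bessel4}, using nothing beyond the recurrence formula \eqref{Bessel-recurrence-formula} together with the elementary asymptotics \eqref{Bessel-asymp-zero} and \eqref{Bessel-asymp-infty}. First I would rewrite the product $J_\nu J_{\nu-1}$ as a total derivative plus a lower-order term. Expanding the first recurrence in \eqref{Bessel-recurrence-formula} gives $J_{\nu-1}(s)=J_\nu'(s)+\frac{\nu}{s}J_\nu(s)$, and therefore
\begin{equation*}
 J_\nu(s)J_{\nu-1}(s)
 =
 \frac12\frac{d}{ds}\bigl(J_\nu(s)^2\bigr)
 +\frac{\nu}{s}J_\nu(s)^2 .
\end{equation*}

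Next I would integrate this identity over $(0,\infty)$, treating the two terms separately. The first term integrates exactly to $\frac12\bigl[J_\nu(s)^2\bigr]_0^\infty$; by \eqref{Bessel-asymp-zero} one has $J_\nu(0)=0$ since $\nu>0$, and by \eqref{Bessel-asymp-infty} one has $J_\nu(s)^2=O(s^{-1})\to0$, so this contribution vanishes. The second term is absolutely convergent: near $s=0$ the integrand is $O(s^{2\nu-1})$ by \eqref{Bessel-asymp-zero}, which is integrable precisely because $\nu>0$, while near infinity it is $O(s^{-2})$ by \eqref{Bessel-asymp-infty}. Its value is supplied by Lemma~\ref{lem:Bessel4} with $\mu=\nu$, $b=1$ and $\kappa=1$, where the hypothesis $\nu+\mu+1>\kappa>0$ reads $2\nu+1>1>0$, i.e. exactly $\nu>0$; after simplifying the Gamma factors this yields $\int_0^\infty J_\nu(s)^2 s^{-1}\,ds=\tfrac1{2\nu}$. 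Multiplying by $\nu$ gives $\tfrac12$, so the two contributions sum to $0+\tfrac12=\tfrac12$, as claimed.

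The one point requiring care is that $\int_0^\infty J_\nu J_{\nu-1}\,ds$ is only conditionally convergent at infinity: by \eqref{Bessel-asymp-infty} the integrand behaves like $\frac1{\pi s}\cos(2s-\nu\pi)$, so the splitting into two pieces must be justified rather than assumed. Here this is painless, since each of the two pieces on the right-hand side of the displayed identity converges in its own right (the first as a genuine limit of $\frac12 J_\nu(s)^2$, the second absolutely), and hence their sum equals the improper integral on the left. I expect this bookkeeping about convergence to be the only real obstacle; the algebra is immediate once Lemma~\ref{lem:Bessel4} is invoked.

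An alternative route, also internal to the paper, is to let $\kappa\to0^+$ in the general formula of Lemma~\ref{lem:Bessel4} with $\mu=\nu-1$: the factor $\Gamma(\kappa)/\Gamma(\kappa/2)$ tends to $\tfrac12$ while the remaining Gamma factors tend to $1$, again producing $\tfrac12$. This is quicker formally, but it demands an Abelian limit theorem to pass $\lim_{\kappa\to0^+}$ through the conditionally convergent integral, so I would prefer the first argument, which sidesteps that difficulty entirely.
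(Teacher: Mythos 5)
Your proof is correct, but it is worth noting that the paper itself offers no proof at all for this statement: Lemma~\ref{lem:Bessel42} is quoted verbatim from Gradshteyn--Ryzhik ({\bf 6.512,~3}), exactly as Lemmas~\ref{lem:Bessel11}, \ref{lem:Bessel1} and \ref{lem:Bessel4} are. So your argument is a genuine derivation where the authors content themselves with a table lookup. The derivation itself checks out: the recurrence \eqref{Bessel-recurrence-formula} does give $J_{\nu-1}(s)=J_\nu'(s)+\frac{\nu}{s}J_\nu(s)$, hence $J_\nu J_{\nu-1}=\frac12\frac{d}{ds}\bigl(J_\nu^2\bigr)+\frac{\nu}{s}J_\nu^2$; the boundary term vanishes because $\nu>0$ kills it at $s=0$ and $J_\nu(s)^2=O(s^{-1})$ kills it at infinity; and Lemma~\ref{lem:Bessel4} with $\mu=\nu$, $b=1$, $\kappa=1$ (whose hypothesis $2\nu+1>1>0$ is precisely $\nu>0$) gives
\begin{equation*}
 \int_0^{\infty}\frac{J_\nu(s)^2}{s}\,ds
 =\frac{\Gamma(1)\Gamma(\nu)}{2\,\Gamma(1)\Gamma(1)\Gamma(\nu+1)}
 =\frac1{2\nu},
\end{equation*}
so the total is $0+\nu\cdot\frac1{2\nu}=\frac12$. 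Your handling of the conditional convergence is also the right one: since the integrand behaves like $\frac{1}{\pi s}\cos(2s-\nu\pi)$ at infinity, the integral only exists as $\lim_{R\to\infty}\int_0^R$, and you legitimately evaluate that limit by showing each piece of your decomposition converges separately on $[\varepsilon,R]$ and passing to the limit. What your approach buys is self-containedness relative to the paper's toolkit: you reduce one table citation (6.512,~3) to another that the paper already relies on (6.574,~2, i.e.\ Lemma~\ref{lem:Bessel4}), using only the recurrence and asymptotics the paper records in Section~\ref{sec:B function}. Your caution about the alternative route (letting $\kappa\to0^+$ with $\mu=\nu-1$) is well placed: the limit of the Gamma factors is indeed $\frac12$, but interchanging $\lim_{\kappa\to0^+}$ with the conditionally convergent integral needs an Abelian argument that the first proof avoids entirely.
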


\begin{lem}\label{lem:Bessel43}
Let $\mu>-1$ and $\beta\ge0$. Then
\begin{multline*}
 \int_u^{\infty}
  \frac{J_{\mu+\beta+1}(s)J_{\mu}(s)}{s^{\beta}}
 \,ds
 =
 \begin{cases}
 -\dfrac1{\pi\beta}
 \sin\left(\dfrac{\beta\pi}2\right)
 u^{-\beta}
 +O(u^{-1-\beta}),
 & \text{if}\ \beta>0,\\ 
 O(u^{-1}), 
 & \text{if}\ \beta=0,
\end{cases}  
\\ \quad\text{as}\quad u\to\infty.
\end{multline*}
\end{lem}

\begin{proof}
By Lemma~\ref{lem:Bessel2}
we have
\begin{equation}\label{JJ}
 J_{\mu+\beta+1}(s)J_{\mu}(s)
 =
 \frac1{\pi s}
 \left(\cos\left(\frac{\beta+1}2\pi\right)
       +\cos\left(2s-\frac{2\mu+2+\beta}2\pi\right)\right)
 +O(s^{-2}).
\end{equation}
Using
\begin{equation*}
 \frac1{\pi}
 \int_u^{\infty}
 \frac{1}{s^{\beta+1}}\cos\left({\frac{\beta+1}2}\pi\right)\,ds
 =
 \begin{cases}
  -\dfrac1{\pi\beta}\sin\left({\dfrac{\beta\pi}2}\right)u^{-\beta}, 
   & \beta>0, \\
  0, & \beta=0,
 \end{cases}
\end{equation*}
and \eqref{cos/s}, we have the conclusion.
\end{proof}

\begin{cor}\label{cor:Bessel44}
Let $\mu>-1$ and $\beta>-1$. Then
\begin{multline}\label{x=a}  
 2^\beta\Gamma(\beta+1)a^{2\beta}
 \int_0^{2\pi a\lambda}
 {\frac{J_{\mu+\beta+1}(s)J_{\mu}(s)}{s^\beta}}
 \,ds
\\
 =
 L_{\beta,a}
 \lambda^{-\beta} 
 +
 \begin{cases}
  O(\lambda^{-\beta-1}), & \beta\ge0, \\
  O(1), & -1<\beta<0,
 \end{cases}
 \quad\text{as $\lambda\to\infty$}.
\end{multline}
In the above $L_{\beta,a}$ is defined by \eqref{J1}, that is,
\begin{equation*} 
 L_{\beta,a}
 =
 \dfrac{\Gamma(\beta+1)}2 \left(\dfrac a\pi\right)^{\beta}
 \left(\dfrac{\sin\dfrac{\beta\pi}2}{\dfrac{\beta\pi}2}\right), 
\end{equation*}
where $\left(\sin\frac{\beta\pi}2\right)/\frac{\beta\pi}2$
is regarded as $1$ if $\beta=0$. 
\end{cor}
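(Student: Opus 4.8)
The plan is to reduce the finite integral to quantities whose asymptotics are already recorded, splitting according to the sign of $\beta$, since the infinite integral $\int_0^\infty \frac{J_{\mu+\beta+1}(s)J_\mu(s)}{s^\beta}\,ds$ converges only when $\beta\ge0$ and diverges when $-1<\beta<0$ (for $-1<\beta<0$ the non-oscillatory part of the Bessel product decays like $s^{-(\beta+1)}$ with $\beta+1<1$).

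For $\beta>0$ I would write $\int_0^{2\pi a\lambda}=\int_0^\infty-\int_{2\pi a\lambda}^\infty$. The first integral vanishes by Lemma~\ref{lem:Bessel4}, and the second is supplied by Lemma~\ref{lem:Bessel43} with $u=2\pi a\lambda$, namely $-\frac1{\pi\beta}\sin(\frac{\beta\pi}2)(2\pi a\lambda)^{-\beta}+O(\lambda^{-\beta-1})$, so the finite integral equals $\frac1{\pi\beta}\sin(\frac{\beta\pi}2)(2\pi a\lambda)^{-\beta}+O(\lambda^{-\beta-1})$. For $\beta=0$ the same decomposition applies, but now $\int_0^\infty J_{\mu+1}(s)J_\mu(s)\,ds=\frac12$ by Lemma~\ref{lem:Bessel42} and the tail is $O(\lambda^{-1})$ by Lemma~\ref{lem:Bessel43}, giving $\frac12+O(\lambda^{-1})$. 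In both subcases, multiplying by $2^\beta\Gamma(\beta+1)a^{2\beta}$ and using $(2\pi a\lambda)^{-\beta}=2^{-\beta}\pi^{-\beta}a^{-\beta}\lambda^{-\beta}$ should produce exactly $L_{\beta,a}\lambda^{-\beta}$ with error $O(\lambda^{-\beta-1})$.

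The case $-1<\beta<0$ is where the argument must change, and this is the main obstacle: the total integral over $(0,\infty)$ no longer converges, so the ``total minus tail'' device is unavailable and one must extract the growing term directly. I would insert the product expansion from Lemma~\ref{lem:Bessel2} (equivalently \eqref{JJ}), $J_{\mu+\beta+1}(s)J_\mu(s)=\frac1{\pi s}\big(\cos(\frac{\beta+1}2\pi)+\cos(2s-\frac{2\mu+2+\beta}2\pi)\big)+O(s^{-2})$, divide by $s^\beta$, and split $\int_0^{2\pi a\lambda}=\int_0^1+\int_1^{2\pi a\lambda}$. The near-origin piece $\int_0^1$ is a bounded constant because the integrand behaves like $s^{2\mu+1}$ there (integrable since $\mu>-1$), contributing $O(1)$. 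In $\int_1^{2\pi a\lambda}$ the non-oscillatory term $\frac{\cos(\frac{\beta+1}2\pi)}{\pi s^{\beta+1}}$ integrates to $\frac{\cos(\frac{\beta+1}2\pi)}{-\pi\beta}\big((2\pi a\lambda)^{-\beta}-1\big)$; its leading part is the desired $\frac1{\pi\beta}\sin(\frac{\beta\pi}2)(2\pi a\lambda)^{-\beta}$ (using $\cos(\frac{\beta+1}2\pi)=-\sin(\frac{\beta\pi}2)$), while the remaining $-1$ is a genuine $O(1)$ constant. The oscillatory term and the $O(s^{-\beta-2})$ remainder both yield convergent integrals over $(1,\infty)$ --- the former by the estimate \eqref{cos/s}, the latter because $-\beta-2<-1$ --- so each contributes only $O(1)$ with $o(1)$ tails. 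Collecting these and multiplying by $2^\beta\Gamma(\beta+1)a^{2\beta}$ gives $L_{\beta,a}\lambda^{-\beta}+O(1)$, matching the stated error.

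Finally I would verify the constant: in every case the coefficient of $\lambda^{-\beta}$ reduces, after the cancellations $2^\beta2^{-\beta}=1$ and $a^{2\beta}a^{-\beta}=a^\beta$, to $\Gamma(\beta+1)\frac{a^\beta}{\pi^{\beta+1}}\frac{\sin(\beta\pi/2)}{\beta}$, which is precisely $L_{\beta,a}$ as defined in \eqref{J1}; the $\beta=0$ value $L_{0,a}=1/2$ arises as the limit and agrees with the direct computation via Lemma~\ref{lem:Bessel42}. The essential difficulty is thus localized in the $-1<\beta<0$ regime, where one must argue that the divergent non-oscillatory contribution produces exactly the advertised power $\lambda^{-\beta}$ up to a bounded remainder, rather than relying on the clean cancellation available for $\beta\ge0$, and this also explains why the error drops from $O(\lambda^{-\beta-1})$ to merely $O(1)$ there.
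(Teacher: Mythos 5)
Your proposal is correct and follows essentially the same route as the paper: for $\beta\ge0$ the ``total minus tail'' decomposition using Lemmas~\ref{lem:Bessel4}, \ref{lem:Bessel42} and \ref{lem:Bessel43}, and for $-1<\beta<0$ the splitting $\int_0^1+\int_1^{2\pi a\lambda}$ combined with the product asymptotics \eqref{JJ}, extracting the non-oscillatory term $\cos(\frac{\beta+1}{2}\pi)/(\pi s^{\beta+1})$ as the source of the $\lambda^{-\beta}$ growth. Your treatment of the near-origin piece (noting the integrand is $\sim s^{2\mu+1}$, integrable for $\mu>-1$) and of the remainder term as $O(1)$ is, if anything, slightly more careful than the paper's write-up.
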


\begin{proof}
{\bf Case 1.} 
Let $\beta\ge0$.
By Lemmas~\ref{lem:Bessel4} and \ref{lem:Bessel42}
we have 
\begin{equation*}
 \int_0^\infty
 \frac{J_{\mu+\beta+1}(s)J_{\mu}(s)}{s^{\beta}} \,ds
 =
 \begin{cases}
  0, & \text{if}\ \beta>0,\\
  \frac12, & \text{if}\ \beta=0.
 \end{cases}  
\end{equation*}
Then
\begin{equation*}
 \int_0^{2\pi a\lambda}
 {\frac{J_{\mu+\beta+1}(s)J_{\mu}(s)}{s^\beta}}
 \,ds
 =
 -\int_{2\pi a\lambda}^{\infty}
 {\frac{J_{\mu+\beta+1}(s)J_{\mu}(s)}{s^\beta}}
 \,ds
 +
 \begin{cases}
  0, & \text{if}\ \beta>0,\\
  \frac12, & \text{if}\ \beta=0.
\end{cases}  
\end{equation*}
By Lemma~\ref{lem:Bessel43} we have \eqref{x=a}.

\noindent
{\bf Case 2.} 
Let $-1<\beta<0$.
By \eqref{JJ} we have
\begin{align*}
 &\int_1^{2\pi a\lambda}
 \frac{J_{\mu+\beta+1}(s)J_{\mu}(s)}{s^\beta}\,ds
\\
 &=
 \frac1{\pi}
 \int_1^{2\pi a\lambda}\frac{1}{s^{\beta+1}}
 \left\{\cos\left({\frac{\beta+1}2}\pi\right)
       +\cos\left(2s-\frac{2\nu+2+\beta}2\pi\right)
 \right\}\,ds
  +O(\lambda^{-\beta-1}) 
\\
 &=
 \frac1{\pi}\left(-\sin\left({\frac{\beta\pi}2}\right)\right)
 \frac{(2\pi a\lambda)^{-\beta}}{-\beta}
  +O(1).
\end{align*}
Since
\begin{equation*}
 \int_0^1
 \frac{J_{\mu+\beta+1}(s)J_{\mu}(s)}{s^\beta}\,ds
\end{equation*}
is a constant, we have the conclusion.
\end{proof}

The following lemma is an extension of 
\cite[Lemma~3.3]{Kuratsubo-Nakai-Ootsubo2010}
which is for the case $\beta=0$.

\begin{lem}\label{lem:Bessel5}
Let $\nu>0$, $\lambda>0$, $\omega\ge0$ and $\beta>-1$.
Then
\begin{equation}\label{int part}
 \int_0^{\lambda}
  \frac{J_{\nu+\beta+1}(s)J_{\nu}(\omega s)}{\omega^{\nu}s^{\beta}}\,ds
 =
 \int_0^{\lambda}
  \frac{J_{\nu+\beta}(s)J_{\nu-1}(\omega s)}{\omega^{\nu-1}s^{\beta}}\,ds
 -
 \frac{J_{\nu+\beta}(\lambda)J_{\nu}(\omega\lambda)}
      {\omega^{\nu}\lambda^{\beta}}.
\end{equation}
In the above, if $\omega=0$, 
then by \eqref{Bessel-at-zero} we regard \eqref{int part} as
\begin{equation*}
 \int_0^{\lambda}
 \frac{J_{\nu+\beta+1}(s)}{s^{\beta}}
 \frac{s^{\nu}}{2^{\nu}\Gamma(\nu+1)}
 \,ds 
 =
 \int_0^{\lambda}
 \frac{J_{\nu+\beta}(s)}{s^{\beta}}\frac{s^{\nu-1}}{2^{\nu-1}\Gamma(\nu)}\,ds 
 -\frac{J_{\nu+\beta}(\lambda)}{{\lambda}^{\beta}}
  \frac{{\lambda}^{\nu}}{2^{\nu}\Gamma(\nu+1)}.
\end{equation*}
\end{lem}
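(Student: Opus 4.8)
The plan is to prove the identity \eqref{int part} by a single integration by parts, using the two Bessel recurrence formulas in \eqref{Bessel-recurrence-formula}. The key observation is that both factors of the integrand are (up to a power of $s$) derivatives of elementary Bessel expressions. From the second formula in \eqref{Bessel-recurrence-formula}, applied with order $\nu+\beta$, we have $\frac{d}{ds}\bigl(J_{\nu+\beta}(s)/s^{\nu+\beta}\bigr)=-J_{\nu+\beta+1}(s)/s^{\nu+\beta}$, so that $J_{\nu+\beta+1}(s)/s^{\beta}=-s^{\nu}\,\frac{d}{ds}\bigl(J_{\nu+\beta}(s)/s^{\nu+\beta}\bigr)$. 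From the first formula in \eqref{Bessel-recurrence-formula}, applied with order $\nu$ together with the chain rule in the argument $\omega s$, we obtain $\frac{d}{ds}\bigl(s^{\nu}J_{\nu}(\omega s)/\omega^{\nu}\bigr)=s^{\nu}J_{\nu-1}(\omega s)/\omega^{\nu-1}$.

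With these two identities in hand, I would set $v(s)=J_{\nu+\beta}(s)/s^{\nu+\beta}$ and $u(s)=s^{\nu}J_{\nu}(\omega s)/\omega^{\nu}$, so that the integrand on the left-hand side of \eqref{int part} equals exactly $-u(s)\,v'(s)$. Integration by parts then yields $-[u v]_0^{\lambda}+\int_0^{\lambda}u'(s)v(s)\,ds$. The product $u(s)v(s)$ simplifies to $J_{\nu}(\omega s)J_{\nu+\beta}(s)/(\omega^{\nu}s^{\beta})$, whose value at $s=\lambda$ is precisely the boundary term written in \eqref{int part}; and the remaining integral $\int_0^{\lambda}u'v\,ds$ is, by the formula for $u'$ above, exactly the first integral on the right-hand side of \eqref{int part}. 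Thus the claimed equality matches term by term.

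The one point requiring care, and which I expect to be the main (though mild) obstacle, is the endpoint behaviour at $s=0$. Here the small-argument asymptotics \eqref{Bessel-asymp-zero} are needed: one checks that $u(s)v(s)=O(s^{2\nu})$ as $s\to0$, which vanishes because $\nu>0$, so that the lower boundary term drops out. Simultaneously, one must verify that all three integrands in \eqref{int part} behave like $s^{2\nu+1}$ or $s^{2\nu-1}$ near the origin and are hence integrable there; this uses $\nu>0$ (for the $s^{2\nu-1}$ term) together with $\beta>-1$. These are the only places the hypotheses enter, and they are straightforward estimates once \eqref{Bessel-asymp-zero} is invoked.

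Finally, the degenerate case $\omega=0$ is handled through the convention \eqref{Bessel-at-zero}. Replacing $J_{\nu}(\omega s)/\omega^{\nu}$ and $J_{\nu-1}(\omega s)/\omega^{\nu-1}$ by their limiting values $s^{\nu}/(2^{\nu}\Gamma(\nu+1))$ and $s^{\nu-1}/(2^{\nu-1}\Gamma(\nu))$, the same computation goes through with $u(s)=s^{2\nu}/(2^{\nu}\Gamma(\nu+1))$; since $\nu\Gamma(\nu)=\Gamma(\nu+1)$, its derivative is $u'(s)=s^{2\nu-1}/(2^{\nu-1}\Gamma(\nu))$, and the integration by parts reproduces exactly the special form of \eqref{int part} stated for $\omega=0$.
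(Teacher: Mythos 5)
Your proof is correct and is essentially the paper's own argument: a single integration by parts driven by the two recurrence formulas \eqref{Bessel-recurrence-formula}, with the boundary term at $s=0$ vanishing because $\nu>0$, and the $\omega=0$ case handled via the convention \eqref{Bessel-at-zero}. The only (immaterial) difference is direction — the paper starts from the right-hand integral and moves the derivative onto $J_{\nu+\beta}(s)/s^{\nu+\beta}$, whereas you start from the left-hand integral; you are in fact slightly more careful than the paper in justifying that the lower boundary term drops out.
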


\begin{proof}
{\bf Case 1.} 
Let $\omega>0$.
Then, 
using \eqref{Bessel-recurrence-formula} and integration by parts, 
we have
\begin{align*}
 &\int_0^{\lambda}
 \frac{J_{\nu+\beta}(s)J_{\nu-1}(\omega s)}{s^{\beta}}\omega^{\nu+1}\,ds 
\\
 &=
 \int_0^{\lambda}
 \frac{J_{\nu+\beta}(s)}{s^{\nu+\beta}}
 \big((\omega s)^{\nu}J_{\nu-1}(\omega s)\big)\omega
 \,ds
\\
 &=
 \int_0^{\lambda}
 \frac{J_{\nu+\beta}(s)}{s^{\nu+\beta}}
 \frac{\partial}{\partial s}\big((\omega s)^\nu J_\nu(\omega s)\big)
 \,ds
\\
 &=
 \left[
  \frac{J_{\nu+\beta}(s)}{s^{\nu+\beta}}
  \big((\omega s)^\nu J_\nu(\omega s)\big)
 \right]_0^{\lambda}
 +\int_0^{\lambda}
  \frac{J_{\nu+\beta+1}(s)}{s^{\nu+\beta}}
  \big((\omega s)^\nu J_\nu(\omega s)\big)
  \,ds
\\
 &=
 \frac{J_{\nu+\beta}(\lambda)J_\nu(\omega\lambda)}{\lambda^\beta}\omega^{\nu}
 +\int_0^{\lambda}
 \frac{J_{\nu+\beta+1}(s) J_{\nu}(\omega s)}{s^\beta}{\omega}^{\nu}
  \,ds.
\end{align*}

\noindent
{\bf Case 2.} 
Let $\omega=0$.
Then, 
\begin{align*}
 &\int_0^{\lambda}
 \frac{J_{\nu+\beta}(s)}{s^{\beta}}\frac{s^{\nu-1}}{2^{\nu-1}\Gamma(\nu)}\,ds 
\\
 &=
 \int_0^{\lambda}
 \frac{J_{\nu+\beta}(s)}{s^{\beta+\nu}}
 \frac{s^{2\nu-1}}{2^{\nu-1}\Gamma(\nu)}
 \,ds 
\\
 &=
 \left[
  \frac{J_{\nu+\beta}(s)}{s^{\beta+\nu}}
  \frac{s^{2\nu}}{2{\nu}2^{\nu-1}\Gamma(\nu)}
 \right]_0^{\lambda}
 +\int_0^{\lambda}
  \frac{J_{\nu+\beta+1}(s)}{s^{\beta+\nu}}
  \frac{s^{2\nu}}{2{\nu}2^{\nu-1}\Gamma(\nu)}
 \,ds 
\\
 &=
  \frac{J_{\nu+\beta}(\lambda)}{{\lambda}^{\beta}}
  \frac{{\lambda}^{\nu}}{2^{\nu}\Gamma(\nu+1)}
 +\int_0^{\lambda}
  \frac{J_{\nu+\beta+1}(s)}{s^{\beta}}
  \frac{s^{\nu}}{2^{\nu}\Gamma(\nu+1)}
  \,ds.
\end{align*}
Therefore we have the conclusion.
\end{proof}

Using Lemma~\ref{lem:Bessel5} several times, we have the following.
\begin{cor}\label{cor:Bessel51}
Let $d\ge3$, $\beta>-1$, $a>0$ and $t\ge0$,
and let $\dsharp$ be as in \eqref{kd}.
\begin{enumerate}
\item 
If $t>0$, then
\begin{multline*}
 \int_0^{2\pi a\lambda}
  \frac{J_{\frac d2+\beta}(s)J_{\frac d2-1}(\frac ta s)}
       {(\frac ta)^{\frac d2-1}s^{\beta}}
 \,ds
\\
 =
 \int_0^{2\pi a\lambda}
  \frac{J_{\frac d2+\beta-\dsharp+1}(s)J_{\frac d2-\dsharp}(\frac ta s)}
       {(\frac ta)^{\frac d2-1-\dsharp+1}s^{\beta}}
 \,ds
 -\sum_{\ell=1}^{\dsharp-1}
  \frac{J_{\frac d2+\beta-\ell}(2\pi a\lambda)J_{\frac d2-\ell}(2\pi t\lambda)}
       {(\frac ta)^{\frac d2-\ell}(2\pi a\lambda)^{\beta}},
\end{multline*}
where the first term of the right hand side in the above equation 
is equal to
\begin{equation*}
\begin{cases}
\displaystyle
 \int_0^{2\pi a\lambda}
  \dfrac{J_{\beta+\frac12}(s)J_{-\frac12}(\frac ta s)}
        {(\frac ta)^{-\frac12}s^\beta}\,ds,
   & \text{if $d$ is odd}, \\[3ex]
\displaystyle
 \int_0^{2\pi a\lambda}
  \dfrac{J_{\beta+1}(s)J_0(\frac ta s)}{s^\beta}\,ds, 
   & \text{if $d$ is even}.
\end{cases}
\end{equation*}
\item
If $t=0$, then
\begin{multline*}
 \int_0^{2\pi a\lambda}
  \frac{J_{\frac d2+\beta}(s)s^{\frac d2-1}}
       {2^{\frac d2-1}\Gamma(\frac d2)s^{\beta}}
 \,ds
\\
 =
 \int_0^{2\pi a\lambda}
  \frac{J_{\frac d2+\beta-\dsharp+1}(s)s^{\frac d2-\dsharp}}
       {2^{\frac d2-\dsharp}\Gamma(\frac d2-\dsharp+1)s^{\beta}}
 \,ds
 -\sum_{\ell=1}^{\dsharp-1}
  \frac{(\pi a\lambda)^{\frac d2-\ell} 
         J_{\frac d2+\beta-\ell}(2\pi a\lambda)}
       {\Gamma(\frac d2-\ell+1) (2\pi a\lambda)^{\beta}},
\end{multline*}
where the first term of the right hand side in the above equation 
is equal to
\begin{equation*}
\begin{cases}
\displaystyle
 \sqrt{\frac 2\pi}
 \int_0^{2\pi a\lambda}
  \dfrac{J_{\beta+\frac12}(s)}{s^{\beta+\frac12}}ds,
   & \text{if $d$ is odd}, \\[3ex]
\displaystyle
 \int_0^{2\pi a\lambda}
  \dfrac{J_{\beta+1}(s)}{s^\beta}ds, 
   & \text{if $d$ is even}.
\end{cases}
\end{equation*}
\end{enumerate}
\end{cor}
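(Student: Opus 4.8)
The plan is to apply Lemma~\ref{lem:Bessel5} repeatedly, each application simultaneously lowering the orders of both Bessel factors by one while producing an explicit boundary term, and then to read off the endpoint of the iteration according to the parity of $d$. For part (i) I first identify the integral on the left with the left-hand side of \eqref{int part}: taking $\nu=\frac d2-1$, $\omega=t/a$, and reading the upper limit of Lemma~\ref{lem:Bessel5} as $2\pi a\lambda$, the integrand $J_{\nu+\beta+1}(s)J_\nu(\omega s)/(\omega^\nu s^\beta)$ becomes exactly $J_{\frac d2+\beta}(s)J_{\frac d2-1}(\frac ta s)/((\frac ta)^{\frac d2-1}s^\beta)$. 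A single application then replaces this integral by the same one with orders $\frac d2+\beta-1$ and $\frac d2-2$, minus the boundary term $J_{\frac d2+\beta-1}(2\pi a\lambda)J_{\frac d2-1}(2\pi t\lambda)/((\frac ta)^{\frac d2-1}(2\pi a\lambda)^\beta)$, where I have used $\frac ta\cdot 2\pi a\lambda=2\pi t\lambda$; this is precisely the $\ell=1$ summand.

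I would then prove by induction on $k$, for $1\le k\le\dsharp-1$, the intermediate identity in which, after $k$ applications, the surviving integral carries orders $\frac d2+\beta-k$ and $\frac d2-1-k$ and is corrected by $\sum_{\ell=1}^{k}J_{\frac d2+\beta-\ell}(2\pi a\lambda)J_{\frac d2-\ell}(2\pi t\lambda)/((\frac ta)^{\frac d2-\ell}(2\pi a\lambda)^\beta)$. The inductive step is one more invocation of Lemma~\ref{lem:Bessel5}, now with $\nu=\frac d2-1-k$. Setting $k=\dsharp-1$ yields the stated formula; the residual integral has second order $\frac d2-\dsharp$, which equals $-\tfrac12$ when $d$ is odd and $0$ when $d$ is even, giving $J_{-1/2}$ or $J_0$ as claimed, while the first order $\frac d2+\beta-\dsharp+1$ equals $\beta+\tfrac12$ (odd $d$) or $\beta+1$ (even $d$) by the same parity computation, and for odd $d$ the factor $(\frac ta)^{-1/2}$ sits correctly in the denominator.

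Part (ii) is the $\omega=0$ case, legitimized by the convention \eqref{Bessel-at-zero}, and I would argue identically using the second displayed form of Lemma~\ref{lem:Bessel5}. Here the factors $s^\nu/(2^\nu\Gamma(\nu+1))$ propagate through the iteration, and the boundary terms collapse via $(2\pi a\lambda)^{\frac d2-\ell}/2^{\frac d2-\ell}=(\pi a\lambda)^{\frac d2-\ell}$ into the stated sum $\sum_{\ell=1}^{\dsharp-1}(\pi a\lambda)^{\frac d2-\ell}J_{\frac d2+\beta-\ell}(2\pi a\lambda)/(\Gamma(\frac d2-\ell+1)(2\pi a\lambda)^\beta)$. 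For odd $d$ the leftover integrand $s^{-1/2}/(2^{-1/2}\Gamma(1/2))$ produces the prefactor $\sqrt{2/\pi}$, matching the first term in the odd case, while for even $d$ the factor reduces to $1$.

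The only point requiring care, and the natural place to verify the hypotheses, is that every application of Lemma~\ref{lem:Bessel5} demands $\nu>0$. The $k$-th application uses $\nu=\frac d2-k$, so the binding constraint is the last one, $\nu=\frac d2-\dsharp+1$; by the definition \eqref{kd} of $\dsharp$ one computes $\frac d2-\dsharp+1=\tfrac12$ when $d$ is odd and $=1$ when $d$ is even, hence $\frac d2-\dsharp+1>0$ for every $d\ge3$, and since $\frac d2-k$ is decreasing in $k$ all intermediate orders stay positive. There is no genuine analytic difficulty here; the entire content is the index bookkeeping together with this positivity check.
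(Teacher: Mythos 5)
Your proposal is correct and coincides with the paper's intended argument: the paper proves this corollary simply by the remark ``Using Lemma~\ref{lem:Bessel5} several times,'' and your iteration of that lemma with $\nu=\frac d2-k$ at the $k$-th step, the accumulation of the boundary terms into the $\ell$-sum, and the parity evaluation of $\frac d2-\dsharp$ and $\frac d2+\beta-\dsharp+1$ is exactly that computation made explicit. Your verification that every application satisfies the hypothesis $\nu>0$ (down to $\frac d2-\dsharp+1=\frac12$ or $1$) is the right point to check and is precisely what the paper leaves implicit.
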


\section{Fourier inversion for the function $U_{\beta,a}(x)$}\label{sec:F inversion}

Recall that $U_{\beta,a}(x)=\phi_{\beta,a}(|x|)$ with
\begin{equation*}
 \phi_{\beta,a}(t)
 =
 \begin{cases}
  (a^2-t^2)^{\beta}, & 0\le t<a, \\
  0, & t\ge a.
 \end{cases}
\end{equation*}
Then $\phi_{\beta,a}$ and $U_{\beta,a}$ have the following properties:
\begin{enumerate}
\item 
$\phi_{\beta,a}$ is continuous and bounded variation
if and only if $\beta>0$;
\item
$\phi_{0,a}$ is discontinuous and bounded variation
and $U_{0,a}$ is
the indicator function 
of the ball in $\R^d$ centered at the origin with the radius $a$;
\item
$\phi_{\beta,a}$ is not bounded variation
if and only if $\beta<0$;
\item 
$U_{\beta,a}$ is integrable on $\R^d$
if and only if $\beta>-1$ 
for all dimensions $d$.
\end{enumerate}

In this section, for $\beta>-1$, 
we investigate the behavior of 
the Fourier spherical partial integral $\sigma_\lambda(U_{\beta,a})(x)$ 
as $\lambda\to\infty$.

For $a>0$, let 
\begin{align}\label{tEa}
 \tE_a&=\{x\in\R^d: x\neq 0\ \text{and}\ |x|\neq a\},
\\
\label{tGa}
 \tG_a&=\{x\in\R^d: |x|=a\}.
\end{align}
Then
$\R^d=\{0\}\cup \tE_a\cup \tG_a$.

\begin{thm}\label{thm:FI}
Let $d\ge1$, $a>0$ and $\beta>-1$.
Then
\begin{equation}\label{F inversion}
 \sigma_\lambda(U_{\beta,a})(x)
 =
 2^\beta\Gamma(\beta+1)a^{2\beta}
 \int_0^{2\pi a\lambda}
 \frac{J_{{\frac d2}-1}(\frac{|x|}{a}s)J_{{\frac d2}+\beta}(s)}
      {\left(\frac{|x|}{a}\right)^{\frac d2-1}s^\beta}
 \,ds,
\end{equation}
for all $x\in\R^d$ and $\lambda>0$.
Moreover, $\sigma_\lambda(U_{\beta,a})$ has the following properties:
\begin{enumerate}
\item
At $x=0$, 
\begin{multline}\label{FI 0}
 \sigma_{\lambda}(U_{\beta,a})(0)
 =
 U_{\beta,a}(0)-P_{\beta,a}^{[d]}
  \cos\left(2\pi a\lambda-\frac{d-1+2\beta}4\pi\right)
  \lambda^{\frac{d-3}2-\beta}
\\
 +O(\lambda^{\frac{d-5}2-\beta})
 \quad\text{as}\quad \lambda\to\infty,
\end{multline}
where $P_{\beta,a}^{[d]}$ is the constant defined by \eqref{P}.
Consequently,
\begin{enumerate}
\item
if $\beta>(d-3)/2$,
then
\begin{equation*}
\sigma_{\lambda}(U_{\beta,a})(0)
 =
 U_{\beta,a}(0)+O(\lambda^{\frac{d-3}2-\beta})
 \quad\text{as}\quad \lambda\to\infty,
\end{equation*}
\item
if  $-1<\beta\le(d-3)/2$,
then
$\sigma_{\lambda}(U_{\beta,a})(x)$ reveals the Pinsky phenomenon,
that is,
\begin{align*}
 \liminf_{\lambda\to\infty}
 \frac{\sigma_{\lambda}(U_{\beta,a})(0)-U_{\beta,a}(0)}
      {\lambda^{(d-3)/2-\beta}}
 &=-P_{\beta,a}^{[d]},
\\
 \limsup_{\lambda\to\infty}
 \frac{\sigma_\lambda(U_{\beta,a})(0)-U_{\beta,a}(0)}
      {\lambda^{(d-3)/2-\beta}}
 &=P_{\beta,a}^{[d]}.
\end{align*}
\end{enumerate}

\item
For $x\in\tG_a$, 
\begin{equation*}
 \lim_{\lambda\to\infty}
 \frac{\sigma_{\lambda}(U_{\beta,a})(x)}{\lambda^{-\beta}}
 =
 L_{\beta,a},
\end{equation*}
where $L_{\beta,a}$ is the constant defined by \eqref{J1},
and consequently, 
\begin{equation*}
 \lim_{\lambda\to\infty}\sigma_{\lambda}(U_{\beta,a})(x)
 =
 \begin{cases}
 0, & \beta>0, \\
 \dfrac12, & \beta=0, \\
 \infty, & -1<\beta<0.
 \end{cases} 
\end{equation*}

\item
For $x\in \tE_a$,
\begin{equation*}
 \sigma_{\lambda}(U_{\beta,a})(x)= U_{\beta,a}(x)+O(\lambda^{-\beta-1})
 \quad\text{as}\quad \lambda\to\infty,
\end{equation*}
where the last term $O(\lambda^{-\beta-1})$ is uniform on any compact subset of $\tE_a$.

\item
If $\beta>0$, 
then
\begin{equation*}
 \lim_{\lambda\to\infty}\sigma_{\lambda}(U_{\beta,a})(x)
 =
 U_{\beta,a}(x)
 \quad\text{for}\ x\in\tE_a\cup\tG_a 
\end{equation*}
where the convergence is uniform on any compact subset of $\tE_a\cup\tG_a$ 
and 
$\sigma_{\lambda}(U_{\beta,a})$ does not reveal the Gibbs-Wilbraham phenomenon. 

\item
If $-1<\beta\le0$, then 
\begin{equation*}
 \lim_{\lambda\to\infty}\sigma_{\lambda}(U_{\beta,a})(x)
 =
 U_{\beta,a}(x)
 \quad\text{for}\ x\in\tE_a,
\end{equation*}
where the convergence is uniform on any compact subset of $\tE_a$, 
and
$\sigma_{\lambda}(U_{\beta,a})$
reveals a phenomenon like the Gibbs-Wilbraham phenomenon near $\tG_a$.
More precisely, the following holds:
For each $x_0\in\tG_a$, 
let $\{x_\lambda^\pm\}$ be the sequences which satisfy 
$\displaystyle\lim_{\lambda\to\infty}x_\lambda^\pm=x_0$ 
and $|x_\lambda^\pm|=a\mp(2\pm\beta)/(4\lambda)$.
Then
\begin{equation*}
 \lim_{\lambda\to\infty}
 \frac{\sigma_{\lambda}(U_{\beta,a})(x_{\lambda}^+)-U_{\beta,a}(x_{\lambda}^+)}
      {\lambda^{-\beta}}
 =
 G_{\beta,a}^{+},
\end{equation*}
and
\begin{equation*}
 \lim_{\lambda\to\infty}
 \frac{\sigma_{\lambda}(U_{\beta,a})(x_{\lambda}^-)-U_{\beta,a}(x_{\lambda}^-)}
      {\lambda^{-\beta}}
 =
 G_{\beta,a}^{-},
\end{equation*}
where $G_{\beta,a}^{\pm}$ are the constants defined by \eqref{G}.
\end{enumerate}
\end{thm}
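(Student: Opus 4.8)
The plan is to first establish the integral representation \eqref{F inversion} and then read off each of the five regimes from the Bessel estimates of Section~\ref{sec:B function}. For \eqref{F inversion} I would start from the radial formula for the spherical partial integral obtained inside the proof of Theorem~\ref{thm:GHI}, namely $\sigma_\lambda(F_\phi)(x)=2\pi\int_0^\lambda A_\phi(u^2)\frac{J_{d/2-1}(2\pi|x|u)}{(|x|u)^{d/2-1}}u^{d-1}\,du$, substitute the explicit expression \eqref{A_beta,a} for $A_{\beta,a}(u^2)$, and carry out the change of variables $s=2\pi au$. Collecting the powers of $2\pi a$, of $s$, and of $|x|$ reproduces exactly the prefactor $2^\beta\Gamma(\beta+1)a^{2\beta}$ and the integrand in \eqref{F inversion}.

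With \eqref{F inversion} in hand, parts (iii) and (ii) are essentially immediate. For $x\in\tE_a$ (part (iii)), Corollary~\ref{cor:Bessel1} with $\mu=d/2-1$ and $t=|x|$ evaluates the \emph{full} integral $\int_0^\infty$ to $U_{\beta,a}(x)$, so $\sigma_\lambda(U_{\beta,a})(x)-U_{\beta,a}(x)$ equals $-2^\beta\Gamma(\beta+1)a^{2\beta}$ times the tail $\int_{2\pi a\lambda}^\infty$; Lemma~\ref{lem:Bessel3} rewrites this tail through $\psi_\beta(\lambda,a-|x|)$, and Lemma~\ref{lem:psi}(i) bounds it by $O(\lambda^{-\beta-1})$, uniformly on any compact subset of $\tE_a$, where $a-|x|$ and $|x|$ stay bounded away from $0$. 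For $x\in\tG_a$ (part (ii)) I set $|x|=a$, so the integrand collapses to $J_{d/2-1}(s)J_{d/2+\beta}(s)/s^\beta$, and Corollary~\ref{cor:Bessel44} (with $\mu=d/2-1$) yields the leading term $L_{\beta,a}\lambda^{-\beta}$ at once; the three consequences then follow from the sign of $-\beta$ and the explicit value of $L_{\beta,a}$ (note $L_{\beta,a}>0$ when $-1<\beta<0$).

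The main obstacle is part (i), the behavior at $x=0$. Letting $|x|\to0$ in \eqref{F inversion} with the convention \eqref{Bessel-at-zero} turns the integrand into the $t=0$ form of Corollary~\ref{cor:Bessel51}(ii), which I would use (for $d\ge3$) to lower the Bessel order, splitting the integral into a base integral of reduced order plus a boundary sum $\sum_{\ell=1}^{\dsharp-1}$. Inserting the two-term asymptotic \eqref{asymp-infty2} into the $\ell=1$ boundary term, whose Bessel order is $d/2+\beta-1$ and whose phase is exactly $\frac{d-1+2\beta}4\pi$, produces the single oscillating term $-P_{\beta,a}^{[d]}\cos(2\pi a\lambda-\frac{d-1+2\beta}4\pi)\lambda^{(d-3)/2-\beta}$, while the terms with $\ell\ge2$ together with the base integral contribute only $O(\lambda^{(d-5)/2-\beta})$. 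The non-oscillating remainder assembles to the constant $U_{\beta,a}(0)=a^{2\beta}$; when $\beta>(d-3)/2$ this is read off cleanly because the full integral $\int_0^\infty$ converges, by Corollary~\ref{cor:Bessel11} for odd $d$ and Lemma~\ref{lem:Bessel11} for even $d$. For $d=1,2$ there is no boundary sum and the same oscillating term instead arises from the tail of the base integral via Lemma~\ref{lem:Bessel1a}. The hardest part throughout is the precise bookkeeping of the constant $P_{\beta,a}^{[d]}$ and the phase $\frac{d-1+2\beta}4\pi$ through the substitution and the order reduction, which is exactly why the sharp expansion \eqref{asymp-infty2} (rather than merely \eqref{Bessel-asymp-infty}) is needed; the dichotomy (a)/(b) then follows from the sign of $(d-3)/2-\beta$.

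Finally, parts (iv) and (v) refine the tail estimate near $\tG_a$. For $\beta>0$ (part (iv)), Lemma~\ref{lem:psi}(ii) bounds $\psi_\beta(\lambda,r)$ by $O(\lambda^{-\beta})$ \emph{uniformly in $r$}, so the tail in the Lemma~\ref{lem:Bessel3} representation tends to $0$ uniformly even as $|x|\to a$; this gives uniform convergence across $\tG_a$ and rules out any overshoot, hence no Gibbs--Wilbraham phenomenon. For $-1<\beta\le0$ (part (v)), the convergence on $\tE_a$ is as in part (iii), while along the sequences $x_\lambda^\pm$ with $a-|x_\lambda^\pm|=\pm(2\pm\beta)/(4\lambda)$ I would feed the exact values of $-\psi_\beta$ from Lemma~\ref{lem:psi}(iii) into the Lemma~\ref{lem:Bessel3} representation; since $(a/|x_\lambda^\pm|)^{(d-1)/2}\to1$, the limit of $(\sigma_\lambda(U_{\beta,a})(x_\lambda^\pm)-U_{\beta,a}(x_\lambda^\pm))/\lambda^{-\beta}$ reduces, after collecting the constants, to exactly $G_{\beta,a}^\pm$ in \eqref{G}, exhibiting the Gibbs--Wilbraham-type behavior.
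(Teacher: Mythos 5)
Your derivation of \eqref{F inversion} and your treatment of parts (ii)--(v) follow the paper's own proof essentially step for step: the paper also obtains \eqref{F inversion} by inserting \eqref{A_beta,a} into the radial partial-integral formula and substituting $s=2\pi a u$, proves (ii) via Corollary~\ref{cor:Bessel44}, (iii) via Corollary~\ref{cor:Bessel1}, Lemma~\ref{lem:Bessel3} and Lemma~\ref{lem:psi}~(i), (iv) via Lemma~\ref{lem:psi}~(ii), and (v) by feeding the exact values from Lemma~\ref{lem:psi}~(iii) into the Lemma~\ref{lem:Bessel3} representation, exactly as you propose. The order-reduction via Corollary~\ref{cor:Bessel51} with the oscillation extracted from the $\ell=1$ boundary term is also the paper's argument for $d\ge3$.

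The genuine gap is in part (i), in even dimensions. Your mechanism for identifying the constant $a^{2\beta}$ and the oscillation is ``full integral minus tail.'' For odd $d$ this works for every $\beta>-1$, since Corollary~\ref{cor:Bessel11} and Lemma~\ref{lem:Bessel1a} hold in that full range. But for even $d$ the base integrand coming out of Corollary~\ref{cor:Bessel51}~(ii) is $J_{\beta+1}(s)/s^{\beta}$, which oscillates with amplitude of order $s^{-\beta-1/2}$; when $-1<\beta\le-1/2$ both $\int_0^\infty J_{\beta+1}(s)s^{-\beta}\,ds$ and its tail $\int_{2\pi a\lambda}^\infty$ \emph{diverge}, so the decomposition is meaningless there. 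Consistently, Lemma~\ref{lem:Bessel11} (your cited source for even $d$) requires $-\mu>-1/2$, i.e.\ $\beta>-1/2$; and Lemma~\ref{lem:Bessel1a}, which you invoke for $d=2$, is a statement about the half-integer-order integrand $J_{\beta+1/2}(s)/s^{\beta+1/2}$ and does not apply to $J_{\beta+1}(s)/s^{\beta}$ at all. Hence your argument does not establish \eqref{FI 0}, and therefore not the liminf/limsup claims of (i)(b), for even $d$ with $-1<\beta\le-1/2$ --- a range that includes the paper's flagship example $d=2$, $\beta=-1/2$. The paper's fix is elementary but different in kind: for even $d$ the recurrence \eqref{Bessel-recurrence-formula} gives the exact antiderivative
\begin{equation*}
 \int_0^{2\pi a\lambda}\frac{J_{\beta+1}(s)}{s^{\beta}}\,ds
 =\left[-\frac{J_{\beta}(s)}{s^{\beta}}\right]_0^{2\pi a\lambda}
 =\frac{1}{2^{\beta}\Gamma(\beta+1)}-\frac{J_{\beta}(2\pi a\lambda)}{(2\pi a\lambda)^{\beta}},
\end{equation*}
valid for every $\beta>-1$ by \eqref{Bessel-at-zero}, which yields both the constant and the oscillating term at once (Lemma~\ref{lem:Pin d=12}, case $d=2$). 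A minor further correction: the sharp expansion \eqref{asymp-infty2} is genuinely needed only inside Lemma~\ref{lem:Bessel1a} (the odd base case); for the boundary-sum terms in $d\ge3$ the one-term asymptotic \eqref{Bessel-asymp-infty} already gives the stated error $O(\lambda^{\frac{d-5}2-\beta})$.
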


\begin{rem}\label{rem:FI}
The function $U_{\beta,a}$ is piecewise smooth 
in the sense of Pinsky~\cite{Pinsky1994}
if and only if $\beta$ is a nonnegative integer.
In this case the result in Theorem~\ref{thm:FI} (i) is contained in \cite[Theorem 1a]{Pinsky1994}.
\end{rem}

In the following we first prove \eqref{F inversion} in Subsection~\ref{ssec F inversion}.
Then, using \eqref{F inversion}, we prove (i)--(v) of Theorem~\ref{thm:FI} 
in Subsections~\ref{ssec (i)}--\ref{ssec (v)}, respectively.
Let 
\begin{equation}\label{U[d]}
 \uU_{\beta,a,\lambda}^{[d]}(t)
 =
 2^{\beta}\Gamma(\beta+1)a^{2\beta}
 \int_0^{2\pi a\lambda}
 \frac{J_{{\frac d2}-1}(\frac ta s)J_{{\frac d2}+\beta}(s)}
      {\left(\frac ta\right)^{\frac d2-1}s^\beta}
 \,ds.
\end{equation}
Then \eqref{F inversion} means that
\begin{equation}\label{U=Ud}
 \sigma_{\lambda}(U_{\beta,a})(x)=\uU_{\beta,a,\lambda}^{[d]}(|x|).
\end{equation}

\subsection{Proof of \eqref{F inversion}}\label{ssec F inversion}
By \eqref{cD} the Fourier transform of $U_{\beta,a}(x)$ is expressed
by the following:
\begin{equation}\label{hat U}
 {\hat U}_{\beta,a}(\xi)
 = \Gamma(\beta+1)\cD_\beta(a^2:\xi)=
 \Gamma(\beta+1) \frac{a^{d+2\beta}}{\pi^{\beta}}
  \frac{J_{{\frac d2}+\beta}(2\pi a|\xi|)}{(a|\xi|)^{{\frac d2}+\beta}}.
\end{equation}
Since ${\hat U}_{\beta,a}(\xi)$ is a radial function,
using \eqref{FT-radial}, we have
\begin{align*}
 \sigma_{\lambda}(U_{\beta,a})(x) 
 &=
 \int_{|\xi|<\lambda}{\hat U}_{\beta,a}(\xi)e^{2\pi ix\xi}\,d\xi
\\
 &=2\pi \Gamma(\beta+1) \frac{a^{d+2\beta}}{\pi^{\beta}}
 \int_0^{\lambda}   \frac{J_{\frac d2-1}(2\pi s|x|)}{(s|x|)^{\frac d2-1}}\frac{J_{{\frac d2}+\beta}(2\pi as)}{(as)^{{\frac d2}+\beta}}
 s^{d-1}\,ds
 \\
 &=2^\beta \Gamma(\beta+1)a^{2\beta} 
\int_0^{2\pi a\lambda} \frac{J_{\frac d2-1}(\frac {|x|}a s)J_{\frac d2+\beta}(s) }{(\frac {|x|}a)^{\frac d2-1}{s^\beta}}\,ds.
\end{align*}
This shows \eqref{F inversion}.


\subsection{Proof of Theorem~\ref{thm:FI} (i)}\label{ssec (i)}

Since $\sigma(U_{\beta,a})(0)=\uU_{\beta,a,\lambda}^{[d]}(0)$ 
and $U_{\beta,a}(0)=\phi_{\beta,a}(0)=a^{2\beta}$, 
it is enough to prove that
\begin{equation}\label{U[d](1)}
 U_{\beta,a,\lambda}^{[d]}(0)
 =
 a^{2\beta}
 -P_{\beta,a}^{[d]}
  \cos\left(2\pi a\lambda-\frac{d-1+2\beta}4\pi\right)
  \lambda^{\frac{d-3}2-\beta}
 +O(\lambda^{\frac{d-5}2-\beta}).
\end{equation}
To do this we show the following two lemmas.
The first lemma is for the cases $d=1,2$.
For the cases $d\ge3$ we have the conclusion by using both lemmas.

\begin{lem}\label{lem:Pin d=12}
Let $\beta>-1$, $a>0$ and $\lambda>0$.
Then 
\begin{align*}
 U_{\beta,a,\lambda}^{[1]}(0)
 &=
 a^{2\beta}
 -P_{\beta,a}^{[1]}
  \cos\left(2\pi a\lambda-\frac{\beta}2\pi\right)
  \lambda^{-\beta-1}
  +O(\lambda^{-\beta-2})
 \quad \text{as} \ \lambda\to\infty.
\\
 U_{\beta,a,\lambda}^{[2]}(0)
 &=
 a^{2\beta}
 -P_{\beta,a}^{[2]}
  \cos\left(2\pi a\lambda-\frac{2\beta+1}4\pi\right)
  \lambda^{-\beta-\frac12}
  +O(\lambda^{-\beta-\frac 32})
 \quad \text{as} \ \lambda\to\infty.
\end{align*}
\end{lem}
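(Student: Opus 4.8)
The plan is to evaluate $U_{\beta,a,\lambda}^{[d]}(0)$ directly from the definition \eqref{U[d]}, using the zero-convention \eqref{Bessel-at-zero} to collapse the product of two Bessel functions into a single-Bessel integral, and then to read off the asymptotics from the identities collected in Section~\ref{sec:B function}. Both cases are purely computational once that reduction is made.

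First, for $d=1$ I set $t=0$ in \eqref{U[d]}. Since $\frac d2-1=-\frac12$, the convention \eqref{Bessel-at-zero} gives $J_{-1/2}(\tfrac ta s)/(\tfrac ta)^{-1/2}\big|_{t=0}=\sqrt{2/\pi}\,s^{-1/2}$ (using $\Gamma(1/2)=\sqrt\pi$), so that
\[
 U_{\beta,a,\lambda}^{[1]}(0)
 =2^\beta\Gamma(\beta+1)a^{2\beta}\sqrt{\tfrac2\pi}
  \int_0^{2\pi a\lambda}\frac{J_{\frac12+\beta}(s)}{s^{\frac12+\beta}}\,ds.
\]
I then split $\int_0^{2\pi a\lambda}=\int_0^\infty-\int_{2\pi a\lambda}^\infty$. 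By Corollary~\ref{cor:Bessel11} the full integral times the prefactor collapses to exactly $a^{2\beta}=U_{\beta,a}(0)$, while the tail is supplied by Lemma~\ref{lem:Bessel1a} with $u=2\pi a\lambda$, producing the oscillatory term with cosine phase $2\pi a\lambda-\frac\beta2\pi$, power $\lambda^{-\beta-1}$, and remainder $O(\lambda^{-\beta-2})$. A short bookkeeping of the powers of $2$, $\pi$ and $a$ then identifies the coefficient with $P_{\beta,a}^{[1]}$ from \eqref{P}.

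For $d=2$ the reduction is even cleaner: here $\frac d2-1=0$ and $\Gamma(\frac d2)=1$, so $J_0(\tfrac ta s)\big|_{t=0}=1$ and
\[
 U_{\beta,a,\lambda}^{[2]}(0)
 =2^\beta\Gamma(\beta+1)a^{2\beta}
  \int_0^{2\pi a\lambda}\frac{J_{\beta+1}(s)}{s^\beta}\,ds.
\]
I avoid any tabulated integral altogether: the second recurrence in \eqref{Bessel-recurrence-formula} gives $J_{\beta+1}(s)/s^\beta=-\frac{d}{ds}\bigl(J_\beta(s)/s^\beta\bigr)$, so the integral equals $\bigl[1/(2^\beta\Gamma(\beta+1))\bigr]-J_\beta(2\pi a\lambda)/(2\pi a\lambda)^\beta$, the lower endpoint being read off from \eqref{Bessel-asymp-zero}. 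Multiplying through, the constant term is again exactly $a^{2\beta}$, and expanding the boundary term at $2\pi a\lambda$ by the large-argument asymptotics \eqref{Bessel-asymp-infty} yields the stated $\lambda^{-\beta-1/2}$ oscillation with phase $\frac{2\beta+1}4\pi$, coefficient $P_{\beta,a}^{[2]}$, and remainder $O(\lambda^{-\beta-3/2})$.

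Both cases therefore reduce to results already in hand, and I expect no serious analytic obstacle. The only genuine care lies in applying the $t=0$ convention correctly and in verifying that the accumulated constants collapse exactly to the closed form \eqref{P}; I anticipate the $d=1$ case to be the more delicate, since there the cancellation to $a^{2\beta}$ relies on the precise value of the full Bessel integral in Corollary~\ref{cor:Bessel11} rather than on an elementary antiderivative as in $d=2$.
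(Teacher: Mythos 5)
Your proposal is correct and follows essentially the same route as the paper's proof: for $d=1$ the paper likewise reduces $U_{\beta,a,\lambda}^{[1]}(0)$ to $2^\beta\Gamma(\beta+1)a^{2\beta}\sqrt{2/\pi}\int_0^{2\pi a\lambda}J_{\frac12+\beta}(s)s^{-\frac12-\beta}\,ds$, identifies $a^{2\beta}$ with the full integral via Corollary~\ref{cor:Bessel11}, and handles the tail with Lemma~\ref{lem:Bessel1a}; for $d=2$ it uses exactly your antiderivative $\bigl[-J_\beta(s)/s^\beta\bigr]_0^{2\pi a\lambda}$ from the recurrence \eqref{Bessel-recurrence-formula} followed by the asymptotics \eqref{Bessel-asymp-infty}. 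The constant bookkeeping you outline does collapse to $P_{\beta,a}^{[1]}$ and $P_{\beta,a}^{[2]}$ as claimed, so no gap remains.
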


\begin{lem}\label{lem:Pin d>2}
Let $d\ge3$, $\beta>-1$, $a>0$ and $\lambda>0$, and let
\begin{equation}\label{Pin}
 {\cP}_{\beta,a,\lambda}^{[d]}(t)
 =
 -\frac{\Gamma(\beta+1)a^{\beta}}{(\pi\lambda)^{\beta}}
 \sum_{\ell=1}^{\dsharp-1}
  \frac{J_{\frac d2-\ell+\beta}(2\pi a\lambda)
        J_{\frac d2-\ell}(2\pi t\lambda)}
       {\left(\frac ta\right)^{\frac d2-\ell}}, 
 \quad t\ge0,
\end{equation}
where $\dsharp$ is defined by \eqref{kd}.
Then
\begin{equation}\label{U[d] reduction}
 U_{\beta,a,\lambda}^{[d]}(t)
 =
\begin{cases}
 U_{\beta,a,\lambda}^{[1]}(t)+{\cP}_{\beta,a,\lambda}^{[d]}(t), 
 &\text{if $d$ is odd},\\
 U_{\beta,a,\lambda}^{[2]}(t)+{\cP}_{\beta,a,\lambda}^{[d]}(t), 
 &\text{if $d$ is even},
\end{cases}
 \quad t\ge0.
\end{equation}
Moreover, if $t\ne0$, 
then ${\cP}_{\beta,a,\lambda}^{[d]}(t)=O(\lambda^{-\beta-1})$ as $\lambda\to\infty$,
If $t=0$, then
\begin{equation}
 {\cP}_{\beta,a,\lambda}^{[d]}(0)
\\
 = -P_{\beta,a}^{[d]}
 \cos\left(2\pi a\lambda-\frac{d-1+2\beta}{4}\pi\right)
 \lambda^{\frac{d-3}2-\beta}
 +O(\lambda^{\frac{d-5}2-\beta})
 \quad \text{as} \ \lambda\to\infty.
\end{equation}
\end{lem}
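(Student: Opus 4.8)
The plan is to deduce all three assertions from the reduction identity of Corollary~\ref{cor:Bessel51}, which has already carried out the repeated integration by parts that lowers the order of $U_{\beta,a,\lambda}^{[d]}(t)$ down to the base dimensions $d=1,2$.

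\textbf{The reduction formula.} First I would multiply both identities of Corollary~\ref{cor:Bessel51} through by the constant $2^\beta\Gamma(\beta+1)a^{2\beta}$. By the definition \eqref{U[d]}, the left-hand side becomes $U_{\beta,a,\lambda}^{[d]}(t)$, and the leading integral on the right-hand side becomes $U_{\beta,a,\lambda}^{[1]}(t)$ when $d$ is odd (since $\frac d2-\dsharp+1=-\frac12$ there) and $U_{\beta,a,\lambda}^{[2]}(t)$ when $d$ is even, matching the two cases of \eqref{U[d] reduction}. For the boundary sum the prefactor collapses, because $2^\beta\Gamma(\beta+1)a^{2\beta}(2\pi a\lambda)^{-\beta}=\Gamma(\beta+1)a^\beta/(\pi\lambda)^\beta$, which is exactly the constant in the definition \eqref{Pin}; hence the boundary sum equals $\cP_{\beta,a,\lambda}^{[d]}(t)$ and \eqref{U[d] reduction} follows for $t>0$. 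The case $t=0$ comes from part (ii) of Corollary~\ref{cor:Bessel51} in the same way, once one checks that the $t=0$ limit of each summand $J_{\frac d2-\ell}(2\pi t\lambda)/(t/a)^{\frac d2-\ell}$ equals $(\pi a\lambda)^{\frac d2-\ell}/\Gamma(\frac d2-\ell+1)$, which is immediate from the convention \eqref{Bessel-at-zero}.

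\textbf{The estimate for $t\neq0$.} This is the easy direction. For fixed $t\neq0$ both factors $J_{\frac d2-\ell+\beta}(2\pi a\lambda)$ and $J_{\frac d2-\ell}(2\pi t\lambda)$ are $O(\lambda^{-1/2})$ by the asymptotics \eqref{Bessel-asymp-infty}, so each of the finitely many summands in \eqref{Pin} is $O(\lambda^{-1})$; multiplying by the prefactor of size $\lambda^{-\beta}$ gives $\cP_{\beta,a,\lambda}^{[d]}(t)=O(\lambda^{-\beta-1})$.

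\textbf{The asymptotics at $t=0$.} This is the main point, and I expect the constant-matching to be the only delicate step. Using the explicit $t=0$ form of $\cP_{\beta,a,\lambda}^{[d]}(0)$, the $\ell$-th summand carries $\lambda^{\frac d2-\ell}$ from $(\pi a\lambda)^{\frac d2-\ell}$ and a factor $J_{\frac d2-\ell+\beta}(2\pi a\lambda)=O(\lambda^{-1/2})$, so after the $\lambda^{-\beta}$ prefactor its size is $O(\lambda^{\frac{d-1}2-\ell-\beta})$. Hence $\ell=1$ is the unique dominant term, of order $\lambda^{\frac{d-3}2-\beta}$, while the terms with $\ell\ge2$ (which occur precisely when $d\ge5$) are $O(\lambda^{\frac{d-5}2-\beta})$. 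For the $\ell=1$ term I would insert the first-order expansion \eqref{Bessel-asymp-infty} of $J_{\frac d2-1+\beta}(2\pi a\lambda)$, whose phase shift is $\frac{2(\frac d2-1+\beta)+1}4\pi=\frac{d-1+2\beta}4\pi$ and whose amplitude is $\sqrt{2/(\pi\cdot 2\pi a\lambda)}=(\pi\sqrt{a})^{-1}\lambda^{-1/2}$; the remainder $O(\lambda^{-3/2})$ in \eqref{Bessel-asymp-infty} then contributes $O(\lambda^{\frac{d-5}2-\beta})$ and is absorbed into the error. The remaining work is bookkeeping of the constant: collecting the powers of $a$, $\pi$ and the Gamma factors in
\[
 \frac{\Gamma(\beta+1)a^\beta}{\pi^\beta}\cdot\frac{(\pi a)^{\frac d2-1}}{\Gamma(d/2)}\cdot\frac1{\pi\sqrt a}
 =\frac{\Gamma(\beta+1)}{\Gamma(d/2)}\,a^{\frac{d-3}2+\beta}\pi^{\frac{d-4}2-\beta}
\]
yields exactly $P_{\beta,a}^{[d]}$ as defined in \eqref{P}. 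Together with the leading minus sign in \eqref{Pin}, the $\ell=1$ term becomes $-P_{\beta,a}^{[d]}\cos(2\pi a\lambda-\frac{d-1+2\beta}4\pi)\lambda^{\frac{d-3}2-\beta}$, and bundling all $\ell\ge2$ contributions into $O(\lambda^{\frac{d-5}2-\beta})$ completes the claim.
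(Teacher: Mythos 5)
Your proof is correct and takes essentially the same route as the paper: the reduction identity \eqref{U[d] reduction} is read off from Corollary~\ref{cor:Bessel51} after multiplying by $2^{\beta}\Gamma(\beta+1)a^{2\beta}$, the bound for $t\neq0$ uses that both Bessel factors are $O(\lambda^{-1/2})$ by \eqref{Bessel-asymp-infty}, and at $t=0$ the convention \eqref{Bessel-at-zero} plus \eqref{Bessel-asymp-infty} isolates the dominant $\ell=1$ term, with the same constant bookkeeping producing $P_{\beta,a}^{[d]}$. The only blemish is the parenthetical in the odd-dimensional case, where the correct identity is $\frac d2-\dsharp=-\frac12$ (so that the inner Bessel order matches $J_{-1/2}$ and the exponent $(\frac ta)^{\frac d2-\dsharp}$ matches $(\frac ta)^{-\frac12}$), not $\frac d2-\dsharp+1=-\frac12$.
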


\begin{rem}\label{Grafakos}
(i)
In Lemma~\ref{lem:Pin d=12}, if $\beta=-1/2$, then
\begin{equation}\label{Pin(d=2 and beta=-1/2)}
  U_{-1/2,a,\lambda}^{[2]}(0)
 =
 \frac{1-\cos\left(2\pi a\lambda\right)}{a}
  +O(\lambda^{-1})
 \quad \text{as} \ \lambda\to\infty.
\end{equation}
since $P_{-1/2,a}^{[2]}=a^{-1}$.
This 
shows the amplitude of Pinsky phenomenon
which was found by Taylor~\cite{Taylor-preprint}.

\noindent 
(ii)
Recently,
Grafakos and Teschl~\cite{Grafakos-Teschl2003} 
showed some related results with Lemma~\ref{lem:Pin d>2}.
\end{rem}

\begin{proof}[Proof of Lemma~\ref{lem:Pin d=12}]
Let $d=1$.
By \eqref{U[d]} with \eqref{Bessel-at-zero} and Corollary~\ref{cor:Bessel11} we have
\begin{equation*}
 U_{\beta,a,\lambda}^{[1]}(0)
 =
 2^\beta\Gamma(\beta+1)a^{2\beta} \sqrt{\dfrac 2\pi}
  \int_0^{2\pi a\lambda}
  \frac{J_{\frac12+\beta}(s)}{s^{\frac12+\beta}}\,ds
\end{equation*}
and
\begin{equation*}
 a^{2\beta}
 =
 2^\beta\Gamma(\beta+1)a^{2\beta} \sqrt{\dfrac 2\pi}
  \int_0^{\infty}
  \frac{J_{\frac12+\beta}(s)}{s^{\frac12+\beta}}\,ds,
\end{equation*}
respectively.
Then by Lemma~\ref{lem:Bessel1a}
we have
\begin{align*}
 U_{\beta,a,\lambda}^{[1]}(0)-a^{2\beta}
 &=
 -2^\beta\Gamma(\beta+1)a^{2\beta}\sqrt{\dfrac 2\pi}
 \int_{2\pi a\lambda}^\infty
 \frac{J_{\frac12+\beta}(s)}{s^{\frac12+\beta}}\,ds
\\
 &=
 -P_{\beta,a}^{[1]}
  \cos\left(2\pi a\lambda-\frac{\beta}2\pi\right)
  \lambda^{-\beta-1}
  +O(\lambda^{-\beta-2}).
\end{align*}
Let $d=2$.
By \eqref{U[d]} with \eqref{Bessel-at-zero} we have
\begin{align*}
 \uU_{\beta,a,\lambda}^{[2]}(0) 
 &=
 2^\beta\Gamma(\beta+1)a^{2\beta}
 \int_0^{2\pi a\lambda}
 \frac{J_{1+\beta}(s)}{s^\beta}
 \,ds
 \\
 &=
2^\beta\Gamma(\beta+1)a^{2\beta}
 \left[-\frac{J_{\beta}(s)}{s^\beta}\right]_0^{2\pi a\lambda}
\\
 &=
 a^{2\beta}-a^{2\beta}\Gamma(\beta+1)\frac{J_{\beta}(2\pi a\lambda)}{(\pi a\lambda)^\beta}
\\
 &=
 a^{2\beta}-P_{\beta,a}^{[2]}
 \cos\left(2\pi a\lambda-\frac{2\beta+1}{4}\pi\right)
 \lambda^{-\beta-1/2}
 +O(\lambda^{-\beta-3/2}).
\end{align*}
Then the proof is complete.
\end{proof}

\begin{proof}[Proof of Lemma~\ref{lem:Pin d>2}]
The equation \eqref{U[d] reduction} follows from
Collorary~\ref{cor:Bessel51}
and the definition of $\uU_{\beta,a,\lambda}^{[d]}(t)$ (see \eqref{U[d]}). 
If $t\ne0$, then from \eqref{Bessel-asymp-infty} it follows that
\begin{equation*}
 J_{\frac d2-\ell+\beta}(2\pi a\lambda)J_{\frac d2-\ell}(2\pi t\lambda)
 =O(\lambda^{-1}),
 \quad \ell=1,\dots,\dsharp-1.
\end{equation*}
Then we have $ {\cP}_{\beta,a,\lambda}^{[d]}(t)=O(\lambda^{-\beta-1})$.
If $t=0$, then by \eqref{Bessel-at-zero} we regard
\begin{equation*}
 \frac{J_{\frac d2-\ell+\beta}(2\pi a\lambda)
       J_{\frac d2-\ell}(2\pi t\lambda)}
      {\left(\frac ta\right)^{\frac d2-\ell}}
 =
 \frac{(\pi a\lambda)^{\frac d2-\ell}}
      {\Gamma({\frac d2-\ell}+1)}
 J_{\frac d2-\ell+\beta}(2\pi a\lambda),
\quad \ell=1,\dots,\dsharp-1.
\end{equation*}
Then, using \eqref{Bessel-asymp-infty}, we have
\begin{align*}
 &\cP_{\beta,a,\lambda}^{[d]}(0)
\\
 &=
 -\frac{\Gamma(\beta+1)a^{\beta}}{(\pi\lambda)^{\beta}}
 \sum_{\ell=1}^{\dsharp-1}
 \frac{(\pi a\lambda)^{\frac d2-\ell}}
      {\Gamma({\frac d2-\ell}+1)}
 J_{\frac d2-\ell+\beta}(2\pi a\lambda)
\\
 &=
 -\frac{\Gamma(\beta+1)a^{\beta}}{(\pi\lambda)^{\beta}}
\frac{(\pi a\lambda)^{\frac d2-1}}
     {\Gamma({\frac d2})}
 J_{\frac d2-1+\beta}(2\pi a\lambda)
 +O(\lambda^{\frac{d-5}2-\beta})
\\ 
 &=
 \left(
 -\frac{\Gamma(\beta+1)}{\Gamma({\frac d2})}
   a^{\frac{d-3}2+\beta} \pi^{\frac{d-4}2-\beta}
 \right)
 \lambda^{\frac{d-3}2-\beta}
\cos\left(2\pi a\lambda-\frac{d-1+2\beta}{4}\pi\right)
\\
 &\phantom{************************************}
 +O(\lambda^{\frac{d-5}2-\beta}).
\end{align*}
This is the conclusion.
\end{proof}

\subsection{Proof of Theorem~\ref{thm:FI} (ii)}\label{ssec (ii)}
For $x\in\tG_a$, by \eqref{U[d]} and \eqref{U=Ud} we have
\begin{equation*}
 \sigma_{\lambda}(U_{\beta,a})(x)
 =
 \uU_{\beta,a,\lambda}^{[d]}(a)
 =
 2^\beta\Gamma(\beta+1)a^{2\beta}
 \int_0^{2\pi a\lambda}
 {\frac{J_{{\frac d2}+\beta}(s)J_{{\frac d2}-1}(s)}{s^\beta}}
 \,ds.
\end{equation*}
Then, using Corollary~\ref{cor:Bessel44} with $\mu=\frac d2-1$,
we have
\begin{equation}\label{P(ii)}
 \uU_{\beta,a,\lambda}^{[d]}(a)
 =
 L_{\beta,a}\lambda^{-\beta}
 +
 \begin{cases}
  O(\lambda^{-\beta-1}), & \beta\ge0, \\
  O(1), & -1<\beta<0,
 \end{cases}
 \quad\text{as $\lambda\to\infty$},
\end{equation}
which shows the conclusion.

\subsection{Proof of Theorem~\ref{thm:FI} (iii)}\label{ssec (iii)}
Let $x\in\tE_a$ and $|x|=t$.
Then $0<t<a$ or $t>a$.
In this case, by \eqref{U[d]}, \eqref{U=Ud} and Corollary~\ref{cor:Bessel1}, 
we have
\begin{align*}
 \sigma_{\lambda}(U_{\beta,a})(x)-U_{\beta,a}(x) 
 &=
 \uU_{\beta,a,\lambda}^{[d]}(t)-\phi_{\beta,a}(t) \\ 
 &=
 -2^\beta\Gamma(\beta+1)a^{2\beta} 
 \int_{2\pi a\lambda}^\infty
 \frac{J_{{\frac d2}-1}(\frac ta s)J_{{\frac d2}+\beta}(s)}
      {\left(\frac ta\right)^{\frac d2-1}s^\beta}
 \,ds.
\end{align*}
Using Lemma~\ref{lem:Bessel3} with $\mu=\frac d2-1$,
we have
\begin{multline}\label{P(iii)}
 \uU_{\beta,a,\lambda}^{[d]}(t)-\phi_{\beta,a}(t)
\\
 =
 -\frac{2^\beta\Gamma(\beta+1)a^{\beta}}{\pi}
 \left(\frac at\right)^{\frac{d-1}2}
 \psi_{\beta}(\lambda,a-t)
  + 
   \left(\frac at\right)^{\frac{d-1}2}
   \left(1+{\frac at}\right)
   O(\lambda^{-\beta-1}).
\end{multline}
Since $\psi_{\beta}(\lambda,a-t)=|a-t|^{-1}O(\lambda^{-\beta-1})$
by Lemma~\ref{lem:Bessel3}~(i),
\begin{equation*}
 \uU_{\beta,a,\lambda}^{[d]}(t)-\phi_{\beta,a}(t)
 =
 O(\lambda^{-\beta-1})
\end{equation*}
uniformly on any closed interval in $(0,a)\cup(a,\infty)$.
This shows the conclusion.

\subsection{Proof of Theorem~\ref{thm:FI} (iv)}\label{ssec (iv)}
Let $\beta>0$.
By \eqref{P(ii)} we have
\begin{equation*}
 \uU_{\beta,a,\lambda}^{[d]}(a)-\phi_{\beta,a}(a)
 =
 \uU_{\beta,a,\lambda}^{[d]}(a)
 =
 L_{\beta,a}\lambda^{-\beta}+O(\lambda^{-\beta-1}).
\end{equation*}
If $0<t<a$ or $t>a$, then by \eqref{P(iii)} and Lemma~\ref{lem:psi}~(ii) we have
\begin{equation*}
 \uU_{\beta,a,\lambda}^{[d]}(t)-\phi_{\beta,a}(t)
 =
 \left(\frac at\right)^{\frac{d-1}2} O(\lambda^{-\beta})
   + \left(\frac at\right)^{\frac{d-1}2}
     \left(1+{\frac at}\right)O(\lambda^{-\beta-1}),
\end{equation*}
where the terms $O(\lambda^{-\beta})$ and $O(\lambda^{-\beta-1})$ 
are uniform with respect to $t$.
Hence,
$\uU_{\beta,a,\lambda}^{[d]}(t)$ converges to $\phi_{\beta,a}(t)$
uniformly on any closed interval in $(0,\infty)$.
That is, 
$\sigma_{\lambda}(U_{\beta,a})(x)$ converges to $U_{\beta,a}(x)$ 
uniformly on any compact subset of $\tE_a\cup\tG_a=\R^d\setminus\{0\}$,
and consequently, 
it does not reveal any phenomenon like the Gibbs-Wilbraham phenomenon.

\subsection{Proof of Theorem~\ref{thm:FI} (v)}\label{ssec (v)}
Let $-1<\beta\le0$.
The first assertion is already shown by (iii).
Next we show the Gibbs-Wilbraham phenomenon.
Here, we recall that
\begin{equation*}
 G_{\beta,a}^{\pm}
 =
 \mp\frac{\Gamma(\beta+1)a^{\beta}(2\pm\beta)^{\beta}}{\pi 2^\beta}
  \int_{\pi}^{\infty} \frac{\sin s}{(s\pm\frac\beta 2\pi)^{\beta+1}}\,ds.
\end{equation*}
By \eqref{P(iii)} we have
\begin{multline*}
 \sigma_{\lambda}(U_{\beta,a})(x_{\lambda}^+)-U_{\beta,a}(x_{\lambda}^+)
 =
 \uU_{\beta,a,\lambda}^{[d]}(|x_{\lambda}^+|)-\phi_{\beta,a}(|x_{\lambda}^+|)
\\
 =
 -\frac{2^\beta\Gamma(\beta+1)a^{\beta}}{\pi}
 \left(\frac{a}{|x_{\lambda}^+|}\right)^{\frac{d-1}2}
 \psi_{\beta}(\lambda,a-|x_{\lambda}^+|)
  + 
   \left(\frac{a}{|x_{\lambda}^+|}\right)^{\frac{d-1}2}
   \left(1+{\frac{a}{|x_{\lambda}^+|}}\right)
   O(\lambda^{-\beta-1}).
\end{multline*}
Since $a-|x_{\lambda}^+|=\frac{2+\beta}{4\lambda}$,
using Lemma~\ref{lem:psi} (iii),
we have
\begin{equation*}
 -\psi_{\beta}(\lambda,a-|x_{\lambda}^+|)
 =
 \left(\frac{2+\beta}{4\lambda}\right)^\beta
 \int_\pi^\infty\frac{-\sin s}{(s+\frac\beta 2 \pi)^{\beta+1}}ds.
\end{equation*}
Observing
\begin{equation*}
 \left(\frac{a}{|x_{\lambda}^+|}\right)^{\frac{d-1}2}
 =\left(1-\frac{2+\beta}{4\lambda a}\right)^{-\frac{d-1}2}=
 1+O(\lambda^{-1}),
\end{equation*}
we conclude that
\begin{align*}
 &\frac{\sigma_{\lambda}(U_{\beta,a})(x_{\lambda}^+)-U_{\beta,a}(x_{\lambda}^+)}
      {\lambda^{-\beta}}
\\
 &=
 -\frac{2^\beta\Gamma(\beta+1)a^{\beta}}{\pi}
   \left(\frac{2+\beta}{4}\right)^\beta
   \int_\pi^\infty\frac{\sin s}{(s+\frac\beta 2 \pi)^{\beta+1}}ds \,
   \big(1+O(\lambda^{-1})\big)
 +O(\lambda^{-1})
\\
 &\to
 G_{\beta,a}^{+}
 \quad\text{as}\quad \lambda\to\infty.
\end{align*}
In a similar way we also have
\begin{equation*}
 \frac{\sigma_{\lambda}(U_{\beta,a})(x_{\lambda}^-)-U_{\beta,a}(x_{\lambda}^-)}
      {\lambda^{-\beta}}
 \to
 G_{\beta,a}^{-}
 \quad\text{as}\quad \lambda\to\infty.
\end{equation*}


\section{Results related to lattice point problems}\label{sec:LPP}

The terms $\Delta_j(s:x)$, $j=0,1,\cdots$, are closely related to lattice point problems
which have been studied by
Landau, Jarn\'ik, Szeg\"o, Nov\'ak and others,
see \cite{Fricker1982, Ivic2006, Kratzel1988, Kratzel2000,Landau1927, Landau1962, Walfisz1957}.
Recall that 
\begin{equation*}
 \Delta_\alpha(s:x)
 =
 D_\alpha(s:x)-{\cD}_\alpha(s:x),
 \quad \alpha>-1,\ s\ge0,\ x\in\R^d,
\end{equation*}
where 
\begin{align*}
 D_{\alpha}(s:x)
 &=
 \begin{cases}\displaystyle
 {\frac1{\Gamma(\alpha+1)}}\sum_{|m|^2<s}(s-|m|^2)^\alpha e^{2\pi imx}, 
  & s>0,\\
 0, & s=0,
 \end{cases}
 \quad x\in\R^d,
\\
 {\cD}_{\alpha}(s:x)
 &=
 \begin{cases}\displaystyle
 {\frac1{\Gamma(\alpha+1)}}\int_{|\xi|^2<s}(s-|\xi|^2)^\alpha e^{2\pi i\xi x}\,d\xi,
  & s>0,\\
 0, & s=0,
 \end{cases}
 \quad x\in\R^d.
\end{align*}

In this section we consider the behavior of $\Delta_\alpha(s:x)$ as $s\to\infty$. In Stein~\cite{Stein1961}, 
the estimation of $\Delta_{\frac{d-1}2}(s:x)$ was treated.
We shall prove the following four lemmas:
In the following $f(s)=\Omega(g(s))$ means $f(s)\ne o(g(s))$.

\begin{lem}\label{lem:LPP1}
Let $d\ge1$.
Then, as $s\to\infty$,
\begin{equation}\label{LPP1}
 \Delta_{\alpha}(s:x)
 =
 \begin{cases}
 O(s^{\frac d2-\frac{d}{d+1}}), 
 & \text{if $\alpha=0$},\\
 O(s^{\frac d2-\frac{d}{d+1}+\frac{\alpha}{d+1}+\ve}) \ \text{for every $\ve>0$}, 
 & \text{if $0<\alpha\le\frac{d-1}2$},\\
 O(s^{\frac{d-1}4+\frac{\alpha}2}), 
 & \text{if } \alpha>\frac{d-1}2,
 \end{cases}
\end{equation}
uniformly with respected to $x\in \T^d$.
\end{lem}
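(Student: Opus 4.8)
The plan is to split \eqref{LPP1} into its three $\alpha$-ranges and treat them by different mechanisms: the range $\alpha>(d-1)/2$ directly from the Bessel expansion, the endpoint $\alpha=0$ from classical lattice-point theory, and the intermediate range $0<\alpha\le(d-1)/2$ by analytic interpolation between these two.

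First I would dispose of $\alpha>(d-1)/2$, the only range in which the relevant series converges absolutely. Here \eqref{D} gives $D_\alpha(s:x)=\sum_{m\in\Z^d}\cD_\alpha(s:x-m)$, whence
\[
 \Delta_\alpha(s:x)=\sum_{m\neq0}\cD_\alpha(s:x-m).
\]
Substituting the closed form \eqref{cD} and the Bessel bound $|J_\nu(u)|\lesssim u^{-1/2}$ from \eqref{Bessel-asymp-infty} gives, for each $m\neq0$,
\[
 |\cD_\alpha(s:x-m)|\lesssim s^{\frac{d-1}4+\frac\alpha2}\,|x-m|^{-(\frac d2+\alpha+\frac12)}.
\]
Since $x\in\T^d=(-1/2,1/2]^d$ forces $|x-m|\ge1/2$ for every $m\neq0$, the series $\sum_{m\neq0}|x-m|^{-(\frac d2+\alpha+\frac12)}$ converges uniformly in $x$ precisely when $\frac d2+\alpha+\frac12>d$, i.e.\ $\alpha>\frac{d-1}2$, which yields the bound $O(s^{\frac{d-1}4+\frac\alpha2})$ uniformly in $x\in\T^d$.

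For the endpoint $\alpha=0$ I would import the classical estimate for the twisted discrepancy, $\Delta_0(s:x)=O(s^{\theta_0})$ with $\theta_0:=\frac d2-\frac d{d+1}$, uniformly in $x\in\T^d$; this is the classical lattice-point bound for $\sum_{|m|^2<s}e^{2\pi imx}$ going back to Landau, the uniformity in the shift $x$ being intrinsic to the method. For the intermediate range I would then interpolate. The structural tool is the fractional-integration identity
\[
 \Delta_\alpha(s:x)=\frac1{\Gamma(\alpha-\beta)}\int_0^s(s-u)^{\alpha-\beta-1}\Delta_\beta(u:x)\,du,
 \qquad \Re\alpha>\Re\beta>-1,
\]
a consequence of the Beta-integral form of the Riesz kernel, which exhibits $\alpha\mapsto\Delta_\alpha(s:x)$ as an analytic family. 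One checks that the target exponent $\frac d2-\frac d{d+1}+\frac\alpha{d+1}$ is exactly the linear interpolant between the value $\theta_0$ at $\alpha=0$ and the value $\frac{d-1}2$ at $\alpha=\frac{d-1}2$. A three-lines (Phragm\'en--Lindel\"of) argument on the strip $0\le\Re\alpha\le\frac{d-1}2$, fed by the two boundary bounds just established, then delivers the claimed interior exponent, and the extra factor $s^\ve$ absorbs the polynomial-in-$\Im\alpha$ loss picked up on the boundary lines.

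The main obstacle is this interpolation, and within it the bound on the line $\Re\alpha=0$. The naive route---estimating $|\Delta_0(u:x)|\lesssim u^{\theta_0}$ and inserting it into the fractional-integration identity---is too wasteful: it produces the exponent $\alpha+\theta_0$ rather than the correct $\frac\alpha{d+1}+\theta_0$, because it discards the cancellation present in $\Delta_\alpha$. Recovering the sharp linear exponent forces a genuine complex-analytic interpolation, and hence control of the purely imaginary Riesz means $\Delta_{it}(s:x)$ with at most polynomial growth in $t$; since $(s-|m|^2)^{it}$ oscillates and its $t$-derivatives grow in $t$, proving such a bound while keeping all constants uniform in $x\in\T^d$ is the delicate step. (Alternatively, the intermediate estimate may be quoted directly from Nov\'ak's results on the Riesz means of the sum \eqref{LPweight}.)
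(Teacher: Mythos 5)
Your treatment of the range $\alpha>(d-1)/2$ is exactly the paper's (absolute convergence of the Bessel series \eqref{D}, $|x-m|\ge 1/2$), and your interpolation scheme is morally the paper's as well. But there is a genuine gap at the crux, the case $\alpha=0$: you dispose of it by ``importing'' a classical uniform bound, asserting that ``the uniformity in the shift $x$ is intrinsic to the method.'' No such citable statement exists in the form you need, and the assertion cannot be waved through. Landau's classical estimate (Theorem~\ref{thm:Landau}) concerns $P_{\alpha}(s:x)$, which for $x\notin\Z^d$ is the bare exponential sum $D_0(s:x)$ with \emph{no} continuum term subtracted, and it is pointwise in $x$: it degenerates near lattice points. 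Indeed, if $|x|\le \tfrac18 s^{-1/2}$ then $\cos(2\pi mx)\ge\cos(\pi/4)$ for every $|m|^2<s$, so $\mathrm{Re}\, D_0(s:x)\gtrsim s^{d/2}$, and no bound $O(s^{\frac d2-\frac d{d+1}})$ for $P_0=D_0$ can hold uniformly near $x=0$. Uniformity only becomes possible after subtracting $\cD_0(s:x)$, and the transfer of Remark~\ref{rem:Delta-P} (via $\cD_0(s:x)=O(s^{\frac{d-1}4})$) also fails to be uniform near $x=0$, since the Bessel decay $J_\nu(u)=O(u^{-1/2})$ requires $u=2\pi\sqrt{s}|x|\gtrsim1$. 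Establishing the uniform estimate for $\Delta_0$ is therefore the real content of the lemma, and the paper proves it from scratch: with $k=\dsharp$ as in \eqref{kd}, one compares $h^{k}\Delta_0(s:x)$ with the $k$-fold finite difference $\delta_h^{k}\Delta_{k}(s:x)$; the comparison error is controlled by lattice-point counts in the annulus $s\le|m|^2<s+kh$, which reduces matters to Landau's bound \emph{at $x=0$ only} (where there is no uniformity issue); the difference $\delta_h^{k}\Delta_{k}(s:x)$ itself is estimated through the absolutely convergent series \eqref{D} and Landau's inequality for differences of Bessel functions, giving $\lesssim h^{k}(s/h)^{\frac{d-1}2}$; optimizing $h=s^{\frac1{d+1}}$ produces exactly the exponent $\frac d2-\frac d{d+1}$, uniformly in $x\in\T^d$. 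Your proposal contains no substitute for this argument.

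On the intermediate range, your plan is workable but needlessly hard: the three-lines argument you sketch (and correctly flag as delicate, because of imaginary-order means $\Delta_{it}$) is essentially a proof of the Riesz convexity theorem, which the paper simply quotes as Theorem~\ref{thm:Riesz} (Chandrasekharan--Minakshisundaram, Stein--Weiss). Its hypotheses ask only for the two \emph{real} endpoint bounds with monotone majorants, so no control of $\Delta_{it}$ is needed; both you and the paper then use the value $s^{\frac{d-1}2}$ at $\alpha_1=\frac{d-1}2$, which strictly speaking follows by interpolating to $\alpha_1=\frac{d-1}2+\delta$ and absorbing the loss into $s^{\ve}$, and your linear-interpolant computation matches the paper's. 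Your fallback of quoting Nov\'ak for the intermediate bound does not help: his results as used in this paper (Theorems~\ref{thm:Novak} and~\ref{thm:Novak-M}) concern $d\ge5$ and the rational/irrational dichotomy, not a uniform bound of this type. So, modulo supplying the finite-difference argument for $\alpha=0$, your outline parallels the paper's proof.
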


\begin{lem}\label{lem:LPP2}
Let $d\ge5$.
If $0\le\alpha<(d-4)/2$,
then, as $s\to\infty$, 
\begin{equation}\label{LPP2a}
 \Delta_{\alpha}(s:x)
 =
 \begin{cases}
 O(s^{\frac d2-1}),\ \Omega(s^{\frac d2-1}) 
 & \text{for $x\in\T^d\cap\Q^d$},\\
 o(s^{\frac d2-1}) 
 & \text{for $x\in\T^d\setminus\Q^d$}.
 \end{cases}
\end{equation}
If $(d-4)/2\le\alpha\le(d-1)/2$,
then, for every $\varepsilon>0$, as $s\to\infty$,
\begin{equation}\label{LPP2b}
 \Delta_{\alpha}(s:x)
 =
 \begin{cases}
 O(s^{\frac{d-1+\alpha}3+\ve})
 & \text{for $x\in\T^d\cap\Q^d$},\\
 o(s^{\frac{d-1+\alpha}3+\ve}) 
 & \text{for $x\in\T^d\setminus\Q^d$}.
 \end{cases}
\end{equation}
\end{lem}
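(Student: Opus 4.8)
The plan is to analyze $\Delta_\alpha(s:x)=D_\alpha(s:x)-\cD_\alpha(s:x)$ through its Hardy--Landau dual representation and then to split according to whether $x$ is rational. The starting point is the Poisson summation identity \eqref{D}, which for $\alpha>(d-1)/2$ gives $\Delta_\alpha(s:x)=\sum_{n\in\Z^d\setminus\{0\}}\cD_\alpha(s:x-n)$; for the smaller values of $\alpha$ in our range the series no longer converges absolutely, so first I would raise the order by the integration identity \eqref{D-int}, work at an order $\alpha'>(d-1)/2$ where the dual series converges, and recover $\Delta_\alpha$ by differentiating $\alpha'-\alpha$ times in $s$ (equivalently, reading the dual series as a conditionally convergent oscillatory sum). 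Substituting the closed form \eqref{cD} and the asymptotics \eqref{Bessel-asymp-infty}, each summand becomes $c\,s^{(d-1)/4+\alpha/2}|x-n|^{-(d+1)/2-\alpha}\cos(2\pi\sqrt{s}\,|x-n|-\gamma)+(\text{lower order})$, so that $\Delta_\alpha(s:x)$ is governed by the Voronoi--type series $s^{(d-1)/4+\alpha/2}\sum_{n\ne0}|x-n|^{-(d+1)/2-\alpha}\cos(2\pi\sqrt{s}\,|x-n|-\gamma)$, whose exponent of convergence is exactly $\alpha=(d-1)/2$.

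Next I would treat the rational case $x=p/q\in\T^d\cap\Q^d$ directly on the arithmetic side. Writing $m=qn+r$ with $r$ running over residues modulo $q$, the phase $e^{2\pi i mx}$ depends only on $r$, so $D_\alpha(s:x)$ decomposes into finitely many weighted lattice sums over the shifted sublattices $q\Z^d+r$. Each such sum is controlled by the number of representations of an integer as a sum of $d$ squares, and for $d\ge5$ this representation function has a genuine main term of order $k^{d/2-1}$ with singular series bounded above and below; summing the resulting resonant contributions over $k<s$ produces fluctuations of exact order $s^{d/2-1}$. This gives simultaneously the upper bound $O(s^{d/2-1})$ and, since the arithmetic main term does not cancel, the lower bound $\Omega(s^{d/2-1})$ of \eqref{LPP2a}. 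For the middle range $(d-4)/2\le\alpha\le(d-1)/2$ the same decomposition applies, but the boundary term is no longer dominant and one must estimate the oscillatory sums $\sum_{|m|^2<s}(s-|m|^2)^\alpha e^{2\pi i mx}$ by van der Corput's method (second--derivative estimates in $\sqrt{s}$), which is the content of Nov\'ak's results \cite{Novak1967,Novak1967/1968,Novak1969,Novak1972}; this yields the exponent $(d-1+\alpha)/3+\ve$ of \eqref{LPP2b}, matching \eqref{LPP2a} at $\alpha=(d-4)/2$ and Lemma~\ref{lem:LPP1} at $\alpha=(d-1)/2$.

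For the irrational case $x\in\T^d\setminus\Q^d$ the same expansions hold, but the arithmetic resonance disappears: the phases $e^{2\pi i mx}$ are equidistributed, so the partial sums acquire extra cancellation. Quantitatively I would invoke Weyl--type equidistribution of $\{m\cdot x\}$ together with the corresponding refinements in Nov\'ak's work to replace each $O$ by $o$ while preserving the exponent, which is exactly the second lines of \eqref{LPP2a} and \eqref{LPP2b}. Throughout, the uniform bounds of Lemma~\ref{lem:LPP1} serve to control the tails of the dual series and to justify the passage from order $\alpha'$ back to $\alpha$.

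I expect the principal difficulty to lie in two places. The delicate one is the $\Omega$ lower bound at rational points: one must show that the leading arithmetic term in the residue decomposition genuinely survives and is not annihilated by the oscillation $\cos(2\pi\sqrt{s}\,|x-n|-\gamma)$, which requires extracting the singular series for $d\ge5$ and verifying that it stays bounded away from zero. The second is the middle range $(d-4)/2\le\alpha\le(d-1)/2$, where the elementary boundary estimate no longer suffices and one is forced into genuine exponential--sum technology; pushing the exponent down to $(d-1+\alpha)/3$ uniformly, and at the same time upgrading $O$ to $o$ in the irrational case, is precisely where Nov\'ak's estimates carry the essential weight.
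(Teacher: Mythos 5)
Your proposal has a genuine gap, and it sits exactly where you yourself locate the difficulty: the middle range $(d-4)/2\le\alpha\le(d-1)/2$. You attribute the exponent $(d-1+\alpha)/3+\ve$ to van der Corput second-derivative estimates ``which is the content of Nov\'ak's results,'' but Nov\'ak's theorems (cited in the paper as Theorem~\ref{thm:Novak}) cover only $0\le\alpha<(d-4)/2$; they say nothing about the middle range, and no exponential-sum argument is offered that would produce this particular exponent. In the paper the exponent is not an exponential-sum exponent at all: it is obtained by applying the Riesz convexity theorem (Theorem~\ref{thm:Riesz}) to interpolate between the Nov\'ak endpoint bound $O(s^{\frac d2-1})$ (resp.\ $o(s^{\frac d2-1})$ off $\Q^d$) at $\alpha_0=(d-4)/2$ and the Landau bound $O(s^{\frac{d-1}2}\log s)$ at $\alpha_1=(d-1)/2$ from Theorem~\ref{thm:Landau}; writing $\alpha=(1-\theta)\alpha_0+\theta\alpha_1$ one checks
\begin{equation*}
 (1-\theta)\left(\frac d2-1\right)+\theta\left(\frac{d-1}2\right)=\frac{d-1+\alpha}3,
\end{equation*}
so the exponent is purely an interpolation artifact, and the same convexity step (applied with the $o$-majorant at $\alpha_0$) is what upgrades $O$ to $o$ for irrational $x$ in \eqref{LPP2b}. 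Your proposal contains no interpolation mechanism, so it cannot reach \eqref{LPP2b}.

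Two further points. First, for \eqref{LPP2a} the paper does not reprove anything arithmetic: it quotes Nov\'ak (Theorem~\ref{thm:Novak}) for $P_\alpha(s:x)$ and transfers to $\Delta_\alpha$ via Remark~\ref{rem:Delta-P}, using that the correction $\cD_\alpha(s:x)=O(s^{\frac{d-1}4+\frac{\alpha}2})$ is of strictly lower order than $s^{\frac d2-1}$ when $\alpha<(d-4)/2$, so both the $O$/$\Omega$ statements at rational points and the $o$ statement at irrational points survive the subtraction. Your plan to rederive these facts from a residue decomposition, representation numbers of sums of $d$ squares, singular series, and Weyl equidistribution is a heuristic outline of Nov\'ak's papers, not a proof; if you intend to cite Nov\'ak (as the paper does), you should do so cleanly and then supply the $P_\alpha\to\Delta_\alpha$ transfer, which your sketch omits. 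Second, your device of working at an order $\alpha'>(d-1)/2$ and ``recovering $\Delta_\alpha$ by differentiating $\alpha'-\alpha$ times in $s$'' is not legitimate: asymptotic bounds cannot be differentiated. This is precisely why the paper's own Lemma~\ref{lem:LPP1} descends from high order to order zero by a finite-difference argument ($\delta_h^k$ with an optimized step $h$) rather than by differentiation.
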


The following lemma gives more precise information 
on the estimate \eqref{LPP2a} for $x\in\T^d\cap\Q^d$.

\begin{lem}\label{lem:LPP2S}
Let $d\ge5$, $\alpha=0$ and $\beta>-1$.
For $k\in\N=\{1,2,\cdots\}$, let
\begin{align}\label{ell_k}
 \ell_k&=\ell_k(d,\beta)=\frac1{2a}\left(k+\frac{d+2\beta+1}4-\frac14\right),
\\
 m_k&=m_k(d,\beta)=\frac1{2a}\left(k+\frac{d+2\beta+1}4+\frac14\right). 
 \label{m_k}
\end{align}
Then, for large $k\in\N$, there exists $s_k\in[\ell_k^2, m_k^2]$ such that 
\begin{equation}\label{LPP2s}
 |\Delta_{0}(s_k:x)|
 \ge
 C(x)s_k^{\frac d2-1},
 \quad x\in\T^d\cap\Q^d,
\end{equation}
where $C(x)$ is a positive constant dependent on $x$, 
but independent of $s_k$ for large $k$.
\end{lem}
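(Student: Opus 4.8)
The plan is to isolate inside $\Delta_0(s:x)$ a \emph{secondary main term} which, for rational $x$, is of the exact order $s^{d/2-1}$ and varies slowly enough in $s$ that it must attain a value comparable to its own size somewhere inside each interval $[\ell_k^2,m_k^2]$. First I would record the oscillatory form of $\Delta_0(s:x)$. Starting from the absolutely convergent expansion \eqref{D} for the higher mean $\Delta_{\dsharp}$ and unwinding the integrations \eqref{D-int} (equivalently, applying Poisson summation in the summable sense), one writes, for $x\in\T^d\setminus\{0\}$, $\Delta_0(s:x)=\sum_{m\in\Z^d,\,m\ne0}\cD_0(s:x-m)$, and then groups the lattice points $m$ according to the value $\rho=|x-m|$, whose multiplicity is exactly $r_d(\rho:x)$ from \eqref{rd}. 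Inserting the explicit Bessel form \eqref{cD} of $\cD_0$ and the large-argument asymptotics \eqref{Bessel-asymp-infty} turns this into $c_d\,s^{(d-1)/4}\sum_{\rho}\tfrac{r_d(\rho:x)}{\rho^{(d+1)/2}}\cos\!\big(2\pi\sqrt{s}\,\rho-\tfrac{d+1}{4}\pi\big)$ plus a lower-order tail.

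Second — and this is where the rationality of $x$ enters decisively — I would pass to the arithmetic description of this sum. For $x=p/q$ the distances are $\rho=\sqrt{n}/q$ with multiplicities equal to the twisted representation numbers $\#\{v\equiv p\ (\mathrm{mod}\ q):|v|^2=n\}$, and the standard decomposition of these numbers into a singular-series (Eisenstein) part plus a cusp part — valid for $d\ge5$, which is precisely the range of the lemma — shows that the Eisenstein part produces, after summation against $n^{d/2-1}$, a term of the shape $s^{d/2-1}H(s)$, while the cusp contribution together with $\cD_0(s:x)$ itself are $o(s^{d/2-1})$ for $d\ge5$ (recall $(d-1)/4<d/2-1$). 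The function $H$ is the partial-sum transform of the mean-zero, quasi-periodic-in-$n$ fluctuation of the twisted singular series; hence $H$ is a bounded, uniformly almost periodic function of the real variable $s$ whose supremum $M(x)$ is strictly positive. This last nonvanishing is exactly the quantitative content behind the $\Omega$-statement of Lemma~\ref{lem:LPP2}, and I would extract it from (the proof of) that lemma.

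Third, I would localize. Since $H$ is almost periodic with $\sup|H|=M(x)>0$, the set $\{s:|H(s)|\ge M(x)/2\}$ is relatively dense, i.e. it meets every interval of length at least some inclusion length $L=L(x)$. The interval $[\ell_k^2,m_k^2]$ defined by \eqref{ell_k}--\eqref{m_k} has length $m_k^2-\ell_k^2=(m_k-\ell_k)(m_k+\ell_k)\to\infty$ as $k\to\infty$, so for all large $k$ it exceeds $L(x)$ and therefore contains a point $s_k$ with $|H(s_k)|\ge M(x)/2$. Combining this with the error estimate of the second step yields $|\Delta_0(s_k:x)|\ge\tfrac12 M(x)\,s_k^{d/2-1}-o(s_k^{d/2-1})\ge C(x)\,s_k^{d/2-1}$ for large $k$, with $C(x)=\tfrac13 M(x)>0$, which is exactly \eqref{LPP2s}.

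The main obstacle is the second step: making rigorous the passage from the conditionally convergent oscillatory series to a bounded almost periodic secondary main term, i.e. showing that the arithmetic correlations of the twisted representation numbers collapse an apparently divergent sum (the coefficients $r_d(\rho:x)\rho^{-(d+1)/2}$ grow like $\rho^{(d-5)/2}$) into a term of amplitude exactly $s^{d/2-1}$, together with the strict positivity $M(x)>0$ of its sup-norm and the uniform $o(s^{d/2-1})$ control of the cusp/tail remainder. All of this is arithmetic input of the Landau--Szeg\"o--Nov\'ak type already underlying Lemma~\ref{lem:LPP2}; once it is in hand, the extraction of $s_k$ from the growing interval is soft.
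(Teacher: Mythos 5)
Your argument stands or falls with the assertion in your second step: that for $x\in\T^d\cap\Q^d$ one can write $\Delta_0(s:x)=s^{d/2-1}H(s)+o(s^{d/2-1})$ with $H$ \emph{uniformly} (Bohr) almost periodic and $\sup|H|=M(x)>0$. This is a genuine gap, not a routine technicality. First, your third step really needs Bohr almost periodicity: for a merely Besicovitch ($B^2$) almost periodic $H$ of positive norm, the set $\{s:|H(s)|\ge M(x)/2\}$ need not meet every long interval (one can alter a trigonometric polynomial on a density-zero union of arbitrarily long intervals without changing its $B^2$ class), so mean-square information does not localize along your route. Second, the source you point to cannot supply the missing claim: in the paper, Lemma~\ref{lem:LPP2} is proved by quoting Nov\'ak's $O(s^{\frac d2-1})$/$\Omega(s^{\frac d2-1})$ theorem, and an $\Omega$-statement only produces an unlocalized sequence of large values --- it cannot place a large value inside each prescribed interval $[\ell_k^2,m_k^2]$, and it yields $M(x)>0$ only \emph{after} the decomposition is already in hand. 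So what you call ``the main obstacle'' is the entire content of the lemma: establishing the decomposition requires the Eisenstein/cusp analysis of congruence theta series (half-integral weight when $d$ is odd), uniform summability of the singular series, and cancellation in sums of cusp coefficients (termwise bounds, even Deligne-type ones, do not give $o(s^{\frac d2-1})$ for $d=5,6$), none of which is proved or quoted in the paper.

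The paper's proof needs none of this machinery and uses a tool you never invoke: Nov\'ak's mean value theorem (Theorem~\ref{thm:Novak-M}), which, combined with Remark~\ref{rem:Delta-P}, gives for $x\in\Q^d$ the asymptotics $\int_0^s|\Delta_0(t:x)|^2\,dt=K_d(x)s^{d-1}+O(s^{d-2}\log^{\tau}s)$ with $K_d(x)>0$. The intervals \eqref{ell_k}--\eqref{m_k} have length $\asymp k$ at height $\asymp k^2$, so the increment of the main term across $[\ell_k^2,m_k^2]$ is $\asymp k^{2d-3}$, which dominates the error $O(k^{2d-4}\log^{\tau}k)$; dividing by the interval length shows that the average of $|\Delta_0(\cdot:x)|^2$ over $[\ell_k^2,m_k^2]$ is $\gtrsim k^{2d-4}$, and any point $s_k$ where $|\Delta_0(s_k:x)|$ is at least this root-mean-square value satisfies \eqref{LPP2s}. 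In other words, the localization is achieved by the error term in the second-moment asymptotics, not by pointwise rigidity of the remainder; your almost-periodicity route, if completed, would amount to re-deriving the internals of Walfisz--Nov\'ak to prove a statement that the quoted mean value theorem gives in a few lines.
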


\begin{lem}\label{lem:LPP3}
Let $d\ge4$ and $0\le\alpha\le(d-1)/2$.
Then, for every $\ve>0$, as $s\to\infty$,
\begin{equation}\label{LPP3}
 \Delta_{\alpha}(s:x)
 =
 O(s^{\frac d4+\frac{d-2}{2(d-1)}\alpha+\ve})
 \quad\text{for a.e.\,$x\in\T^d$}.
\end{equation}
\end{lem}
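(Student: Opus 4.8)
The plan is to reduce the almost everywhere bound to a mean square (second moment) estimate over the translation parameter $x\in\T^d$, and then to promote that fixed-$s$ control to a bound valid for all large $s$ and almost every $x$ by a maximal argument in the radial variable. The starting point is the representation $\Delta_\alpha(s:x)=\sum_{m\in\Z^d\setminus\{0\}}\cD_\alpha(s:x-m)$, namely \eqref{D} with the singular term $m=0$ removed; by \eqref{cD} each summand is an explicit Bessel expression, and inserting the asymptotics \eqref{Bessel-asymp-infty}--\eqref{asymp-infty2} gives
$$\cD_\alpha(s:x-m)=C_{\alpha,d}\,s^{\frac d4+\frac\alpha2-\frac14}|x-m|^{-\frac{d+2\alpha+1}2}\cos\!\left(2\pi\sqrt s\,|x-m|-\phi\right)+(\text{lower order}).$$
Since the phases $|x-m|$ are, for a generic $x$, mutually incommensurate, the off-diagonal contributions to $\int_{\T^d}|\Delta_\alpha(s:x)|^2\,dx$ oscillate away while the diagonal survives, and because $\sum_{m\neq0}\int_{\T^d}|x-m|^{-(d+2\alpha+1)}\,dx=\int_{\R^d\setminus\T^d}|y|^{-(d+2\alpha+1)}\,dy<\infty$ for $\alpha>-1/2$, one is led to the square-root-cancellation bound $\int_{\T^d}|\Delta_\alpha(s:x)|^2\,dx=O(s^{(d-1)/2+\alpha})$. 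For $\alpha\le(d-1)/2$ the series is only conditionally convergent, so I would first establish this moment bound for $\alpha>(d-1)/2$, where \eqref{D} converges absolutely, and then descend to the stated range by regularization (or by quoting the classical mean value theorem for the spherical lattice discrepancy from the references of this section).

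The second step converts this fixed-$s$ control into a statement valid for all large $s$. Over a dyadic block $s\in[S,2S]$ I would estimate $\int_{\T^d}\sup_{s\in[S,2S]}|\Delta_\alpha(s:x)|^2\,dx$ and then sum over $S=2^k$ by a Borel--Cantelli argument. The exponent in \eqref{LPP3} is exactly the linear interpolant over $0\le\alpha\le(d-1)/2$ of the two values $d/4$ at $\alpha=0$ and $(d-1)/2$ at $\alpha=(d-1)/2$; at the upper endpoint the uniform bound of Lemma~\ref{lem:LPP1} gives $\Delta_{(d-1)/2}(s:x)=O(s^{(d-1)/2+\ve})$ with no loss incurred by the supremum in $s$. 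The organizing principle is that higher Riesz means are smoother in $s$, since $\partial_s\Delta_\alpha=\Delta_{\alpha-1}$ and $\int_0^t\Delta_\alpha=\Delta_{\alpha+1}$ by \eqref{cD-dif} and \eqref{D-int}, so the cost of replacing the fixed-$s$ second moment by the supremum should decrease linearly from a full $s^{1/4}$ at $\alpha=0$ to nothing at $\alpha=(d-1)/2$. I would make this precise through the fractional-integration identity $\Delta_\alpha=I^\alpha_s\Delta_0$ (Riemann--Liouville integration in $s$, consistent with \eqref{D-int}) and interpolate the two endpoint maximal bounds, for instance by Stein's theorem for the analytic family $z\mapsto I^z$, which yields the exponent $\frac d4+\frac{d-2}{2(d-1)}\alpha$.

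The main obstacle is the sharpness of the slope $\frac{d-2}{2(d-1)}$, equivalently the endpoint bound $\Delta_0(s:x)=O(s^{d/4+\ve})$ for almost every $x$. A naive maximal estimate — bounding the supremum by the maximum over the $O(S)$ values $s=|m|^2$ in $[S,2S]$, or using $\sup|\Delta_\alpha|^2\le|\Delta_\alpha(S)|^2+2\int_S^{2S}|\Delta_\alpha|\,|\Delta_{\alpha-1}|$ with Cauchy--Schwarz — loses too much and produces the wrong slope $1/2$. The gain must come from genuinely exploiting the oscillation in $s$ of the factors $\cos(2\pi\sqrt s\,|x-m|)$ in the Bessel expansion, which is where the van der Corput and exponent-pair methods of the lattice point literature enter, and where the smoothness encoded in $I^\alpha_s$ is converted quantitatively into a smaller power of $s$. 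Two subsidiary points also require care: the conditional convergence of the series for $0\le\alpha\le(d-1)/2$, handled by first working at $\alpha>(d-1)/2$ and descending by regularization; and the jump discontinuity of $\Delta_0(\cdot:x)$ in $s$, whose distributional $s$-derivative is a sum of point masses, so that the $\alpha=0$ argument must be carried out at the level of $\Delta_1$ or by sampling at $s=|m|^2$ rather than by a Sobolev embedding. Once the endpoint bounds and the interpolation are in place, the arbitrary $\ve>0$ and the passage to all real $s$ are absorbed in the routine manner.
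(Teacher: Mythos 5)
Your overall architecture coincides with the paper's: establish two endpoint bounds, a uniform one at $\alpha=(d-1)/2$ and an almost-everywhere one at $\alpha=0$, and then interpolate linearly in $\alpha$. The exponent $\frac d4+\frac{d-2}{2(d-1)}\alpha$ is indeed the linear interpolant you identify, and the interpolation device you invoke (Stein's analytic family for $I^z$) is in substance the Riesz convexity theorem (Theorem~\ref{thm:Riesz}), which the paper applies pointwise in $x$ with monotone majorants. The upper endpoint also matches: Lemma~\ref{lem:LPP1}, or Landau's Theorem~\ref{thm:Landau} together with Remark~\ref{rem:Delta-P}, gives $\Delta_{(d-1)/2}(s:x)=O(s^{\frac{d-1}2}\log s)$ uniformly on $\T^d$.

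The genuine gap is the lower endpoint, which is the entire substance of the lemma: you never establish $\Delta_0(s:x)=O(s^{\frac d4+\ve})$ for a.e.\ $x\in\T^d$. Your route to it --- a fixed-$s$ second moment over $x$ of size $O(s^{\frac{d-1}2})$, dyadic blocks in $s$, Borel--Cantelli, and a maximal estimate within each block --- is exactly where you yourself concede the argument breaks: the naive control of $\sup_{s\in[S,2S]}|\Delta_0(s:x)|$ loses a full $s^{1/2}$ rather than the needed $s^{1/4}$, and you defer the missing gain to unspecified ``van der Corput and exponent-pair methods'' that are never carried out. Deferring is not proving; the statement you need is a nontrivial theorem in its own right. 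In the paper it enters as Theorem~\ref{thm:Kura} (Kuratsubo, 1982, building on Nov\'ak's mean value theorems), which asserts $P_{0}(s:x)=O(s^{\frac d4}\log^\tau s)$ for a.e.\ $x$ and every $\tau>3/2$; combined with Remark~\ref{rem:Delta-P} (so that $\Delta_0$ and $P_0$ differ by $O(s^{\frac{d-1}4})$ on $\T^d$) and with the Landau endpoint, the Riesz convexity theorem then finishes the proof in a few lines. So your proposal reproduces the paper's skeleton, but leaves its load-bearing ingredient as an acknowledged obstacle rather than a proved (or even correctly quoted) lemma.
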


To prove above four lemmas we state known results
(Theorems~\ref{thm:Landau}--\ref{thm:Riesz}):
See also Jarn\'ik \cite{Jarnik1969} for Theorem~\ref{thm:Landau}.
For $\alpha\ge0$, let 
\begin{equation}\label{P_alpha}
 P_{\alpha}(s:x)
 =
 D_{\alpha}(s:x)
 -
 \frac{\pi^{\frac d2}s^{\frac d2+\alpha}}
      {\Gamma(\frac d2+\alpha+1)}
 \delta(x),
 \quad x\in\R^n,\ s\ge0,
\end{equation}
where
$\delta(x)$ is the indicator function of $\Z^d$.

\begin{thm}[Landau~\cite{Landau1915,Landau1962}]\label{thm:Landau}
Let $d\ge2$. Then, for $x\in\R^d$,
\begin{equation}\label{Landau}
 P_{\alpha}(s:x)
 =
 \begin{cases}
 O(s^{\frac d2+\alpha-\frac{d}{d+1-2\alpha}}), 
  & \text{if } 0\le\alpha<\frac{d-1}2,\\
 O(s^{\frac{d-1}2}\log s), 
  & \text{if } \alpha=\frac{d-1}2,\\
 O(s^{\frac{d-1}4+\frac{\alpha}2}), 
  & \text{if } \alpha>\frac{d-1}2.
 \end{cases}
\end{equation}
\end{thm}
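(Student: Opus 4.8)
The plan is to establish Theorem~\ref{thm:Landau} in three regimes according to the size of $\alpha$ relative to $(d-1)/2$, using as the one structural input the Poisson-type expansion \eqref{D}, which writes $D_\alpha(s:x)$ as the lattice sum of the explicit Bessel kernels $\cD_\alpha(s:x-m)$. Granting this, estimating $P_\alpha(s:x)$ becomes the problem of controlling a sum of Bessel functions over $m\in\Z^d$, and the three cases of \eqref{Landau} correspond exactly to the three convergence behaviours of that sum.

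First, for $\alpha>(d-1)/2$ --- the case I would carry out first --- I would start from the absolutely convergent identity \eqref{D}. The single term with $x-m=0$, present only when $x\in\Z^d$, reproduces the main term of \eqref{P_alpha}, so $P_\alpha(s:x)$ equals the sum of $\cD_\alpha(s:x-m)$ over the remaining $m$. Feeding the decay $J_\nu(z)=O(z^{-1/2})$ from \eqref{Bessel-asymp-infty} into \eqref{cD} gives
\begin{equation*}
 \cD_\alpha(s:x-m)=O\!\left(s^{\frac{d-1}4+\frac\alpha2}\,|x-m|^{-\frac{d+2\alpha+1}2}\right),
\end{equation*}
and $\sum_m|x-m|^{-(d+2\alpha+1)/2}$ converges precisely when $\alpha>(d-1)/2$; this yields $O(s^{(d-1)/4+\alpha/2})$ uniformly in $x$, the third line of \eqref{Landau}. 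The borderline $\alpha=(d-1)/2$ is treated by the same expansion, but now $\sum_m|x-m|^{-d}$ diverges only logarithmically: decomposing dyadically over shells $|x-m|\sim 2^k$, each carrying $O(2^{kd})$ lattice points and hence contributing $O(1)$, and summing up to the scale $|x-m|\ls\sqrt s$, produces the factor $\log s$ and the middle line of \eqref{Landau}.

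The genuinely hard regime is $0\le\alpha<(d-1)/2$, where \eqref{D} is no longer absolutely convergent and the estimate is the content of the classical lattice point problem. Here the plan is to pass to the conditionally convergent Voronoi/Hardy--Landau form of the expansion --- schematically
\begin{equation*}
 P_\alpha(s:x)\sim s^{\frac{d-1}4+\frac\alpha2}\sum_{m}|x-m|^{-\frac{d+2\alpha+1}2}\cos\!\left(2\pi\sqrt s\,|x-m|-\theta_{d,\alpha}\right)
\end{equation*}
--- to truncate this series at a length $N$, bound the tail, and estimate the remaining finite exponential sum by van der Corput's method (the iterated second-derivative test, or exponent pairs, in $d$ variables). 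Optimizing over $N$ is what produces the exponent $\frac d2+\alpha-\frac{d}{d+1-2\alpha}$; for $\alpha=0$ and $d=2$ it specializes to the Sierpi\'nski exponent $1/3$ of Gauss's circle problem. The relation \eqref{D-int} confirms consistency across orders, since averaging in $s$ only smooths $D_\alpha$, and one checks directly that the three exponents match at the threshold $\alpha=(d-1)/2$. The main obstacle is precisely this last step --- the van der Corput estimation of the truncated exponential sum for small $\alpha$ --- which is where the genuine analytic number theory enters and, consistently with the introduction, the reason why only Landau-level bounds are presently available.
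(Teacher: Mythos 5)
First, a point of context: the paper never proves Theorem~\ref{thm:Landau} at all --- it is imported as a classical result of Landau (the surrounding text says these are ``known results'' that are merely stated, and they are then used as black boxes in the proofs of Lemmas~\ref{lem:LPP1}--\ref{lem:LPP3}). So your proposal is measured against what a complete proof would require, not against an argument in the paper. By that standard, your treatment of the range $\alpha>\frac{d-1}2$ is correct and complete: the expansion \eqref{D} converges absolutely exactly in that range, \eqref{cD} together with \eqref{Bessel-asymp-infty} gives $\cD_\alpha(s:x-m)=O\bigl(s^{\frac{d-1}4+\frac\alpha2}|x-m|^{-\frac{d+2\alpha+1}2}\bigr)$, and for each fixed $x$ the sum over $m$ converges, yielding the third line of \eqref{Landau}; this is the same mechanism the paper uses in Remark~\ref{rem:Landau} and in the high-index part of the proof of Lemma~\ref{lem:LPP1}.

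The other two cases contain genuine gaps. For $\alpha=\frac{d-1}2$ you apply \eqref{D} outside the range where it is established (the paper asserts it, with absolute convergence, only for $\alpha>\frac{d-1}2$), and the summation itself fails: the termwise bound is $s^{\frac{d-1}2}|x-m|^{-d}$, and $\sum_m |x-m|^{-d}$ diverges logarithmically at infinity, so your dyadic count ``up to the scale $|x-m|\ls\sqrt s$'' silently discards an infinite tail for which you have no estimate. Recovering the middle line of \eqref{Landau} needs either a regularization (finite differences in $s$, as in the paper's proof of Lemma~\ref{lem:LPP1}) or genuine cancellation in the tail; neither is supplied. More seriously, the case $0\le\alpha<\frac{d-1}2$ --- which is the entire substance of Landau's theorem, and which the paper invokes quantitatively (with $\alpha=0$, $x=0$) inside the proof of Lemma~\ref{lem:LPP1} --- is not proved but only planned. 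The ``conditionally convergent Voronoi/Hardy--Landau form'' you start from is itself a nontrivial theorem for small $\alpha$: it cannot be obtained from \eqref{D}, which does not converge there, and justifying it already requires the truncated-formula-with-error-term machinery. Beyond that, the truncation, the tail bound, the van der Corput estimation, and the optimization in $N$ that are supposed to produce the exponent $\frac d2+\alpha-\frac d{d+1-2\alpha}$ are precisely the steps you defer as ``the main obstacle.'' Note that this exponent is strictly smaller than $\frac{d-1}2$ for $d\ge2$, i.e.\ strictly better than what any absolute-value estimation of the truncated series can give (the trivial annulus/shell count yields only $O(s^{\frac{d-1}2})$), so oscillation input is unavoidable and cannot be waved at. A proof that stops at the main obstacle is a roadmap, not a proof --- which is presumably why the paper cites Landau rather than reproving him.
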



\begin{thm}[Nov\'ak~\cite{Novak1972}]\label{thm:Novak}
Let $d\ge5$, and let $0\le\alpha<(d-4)/2$.
Then
\begin{equation}\label{Novak}
 P_{\alpha}(s:x)
 =
 \begin{cases}
 O(s^{\frac d2-1}),\ \Omega(s^{\frac d2-1}) & \text{for $x\in\Q^d$},\\
 o(s^{\frac d2-1}) & \text{for $x\notin\Q^d$},\\
 O(s^{\frac{d}4+\frac{\alpha}2}\log^{\tau}s) & \text{for a.e.\,$x\in\R^d$},
 \end{cases}
\end{equation}
where $\tau=3d$ if $\alpha=0$ and $\tau=3d-1$ if $\alpha>0$.
\end{thm}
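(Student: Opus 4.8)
Since this is Nov\'ak's classical result, I would reconstruct it from a Hardy--Voronoi Bessel-series representation of $P_\alpha$ together with the arithmetic of sums of squares. The starting point is that, for $\alpha>(d-1)/2$, the identity \eqref{D} gives the absolutely convergent formula $D_\alpha(s:x)=\sum_{m\in\Z^d}\cD_\alpha(s:x-m)$, each summand being the Bessel expression \eqref{cD}. Deleting the term(s) with $x-m\in\Z^d$, which produce exactly the main term of \eqref{P_alpha}, yields
\begin{equation*}
 P_\alpha(s:x)
 =
 \frac{s^{\frac d2+\alpha}}{\pi^\alpha}
 \sum_{m\in\Z^d:\,x-m\notin\Z^d}
 \frac{J_{\frac d2+\alpha}(2\pi\sqrt s\,|x-m|)}{(\sqrt s\,|x-m|)^{\frac d2+\alpha}}.
\end{equation*}
To reach the range $0\le\alpha<(d-4)/2$, where this series is no longer absolutely convergent, I would pass to smaller $\alpha$ by differentiating in $s$ (the inverse of the fractional-integration identity \eqref{D-int}), truncate the resulting conditionally convergent series, and control the tail through the second asymptotic term of \eqref{Bessel-asymp-infty}. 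Inserting \eqref{Bessel-asymp-infty} then converts the expansion into a trigonometric series; grouping the $m$'s by $n=|x-m|^2$ and writing the counts via the notation \eqref{rd}, its leading part takes the shape
\begin{equation*}
 P_\alpha(s:x)
 \sim
 c\,s^{\frac{d-1}4+\frac\alpha2}
 \sum_{n}\frac{r_d(\sqrt n:x)}{n^{\frac{d+1}4+\frac\alpha2}}
 \cos\!\left(2\pi\sqrt{ns}-\theta\right),
\end{equation*}
so that everything is governed by the shifted representation counts $r_d(\sqrt n:x)$ and by whether the phases $2\pi\sqrt{ns}$ reinforce one another.

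I would then split into the three regimes. For $x=p/q\in\Q^d$, decomposing $m$ into residue classes modulo $q$ writes $D_\alpha(s:x)$ as a character-weighted sum of Riesz means over the sublattices $q\Z^d+r$; the class $r=0$ reproduces the pure ball problem (together with its main term), while the remaining classes are genuine shifted ball problems. Each piece is $O(s^{d/2-1})$ by the sharp $d\ge5$ estimate behind Theorem~\ref{thm:Landau}, which gives the upper bound; the matching $\Omega(s^{d/2-1})$ comes from a Hardy-type resonance for the $r=0$ piece, choosing $s$ so that a positive-density block of phases $2\pi\sqrt{ns}$ is simultaneously near $0\pmod{2\pi}$ and forces constructive interference of size $s^{d/2-1}$. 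It is exactly the hypothesis $\alpha<(d-4)/2$ that keeps Riesz smoothing of order $\alpha$ from damping this resonance. For $x\notin\Q^d$ the distances $|x-m|$ are Diophantine-generic, the phases equidistribute, and a Weyl-type cancellation estimate sharpens the bound to $o(s^{d/2-1})$.

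Finally, the almost-everywhere bound is the softer, $L^2$-based step. Regarding $P_\alpha(\,\cdot:x)$ as a function of $x\in\T^d$, where off $\Z^d$ it equals $\Delta_\alpha(s:x)+\cD_\alpha(s:x)$ with $\cD_\alpha(s:x)=O(s^{(d-1)/4+\alpha/2})$ pointwise by \eqref{cD}, a Parseval computation for $\Delta_\alpha=D_\alpha-\cD_\alpha$ gives the mean-square estimate $\int_{\T^d}|\Delta_\alpha(s:x)|^2\,dx=O(s^{d/2+\alpha})$ (up to logarithms), the mass concentrating on frequencies $|k|\approx\sqrt s$. A Borel--Cantelli argument over a dyadic-then-refined net of values of $s$, using the smooth dependence of $D_\alpha$ on $s$ to pass from the net back to all $s$, then upgrades this to $P_\alpha(s:x)=O(s^{d/4+\alpha/2}\log^\tau s)$ for a.e.\ $x$; the powers $\tau$ are precisely the cost of summing the second moments and of sieving over $s$.

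The genuinely hard parts are the two arithmetic dichotomy statements. The $\Omega$-resonance at rational points requires a delicate choice of the resonating sequence of values of $s$, and, more subtly, the $o$-cancellation at irrational points must be extracted from a merely conditionally convergent series, so no absolute-value bound suffices and one is forced to exploit the fine Diophantine structure of the numbers $|x-m|^2$; this is the number-theoretic core of Nov\'ak's theorem. By comparison, the representation and the a.e.\ estimate are routine, the latter being essentially a second-moment computation with logarithmic bookkeeping.
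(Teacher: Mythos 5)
First, a point of comparison: the paper does not prove this statement at all. Theorem~\ref{thm:Novak} is quoted verbatim, with attribution, as a known result of Nov\'ak \cite{Novak1972}; it is listed in Section~\ref{sec:LPP} among the ``known results'' (Theorems~\ref{thm:Landau}--\ref{thm:Riesz}) that are then fed into the proofs of Lemmas~\ref{lem:LPP1}--\ref{lem:LPP3}. So there is no internal proof to measure your attempt against, and your proposal has to stand on its own as a reconstruction of Nov\'ak's theorem. It does not: it is a research outline whose gaps sit exactly at the three assertions of \eqref{Novak}.

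Concretely: (1) For the $O(s^{\frac d2-1})$ bound at rational $x$ you appeal to ``the sharp $d\ge5$ estimate behind Theorem~\ref{thm:Landau}'', but Theorem~\ref{thm:Landau} with $\alpha=0$ gives only $O(s^{\frac d2-\frac d{d+1}})$, and $\frac d{d+1}<1$, so it is strictly weaker than what you need; the genuine $d\ge5$ bound requires the Hardy--Littlewood arithmetic of representation numbers (singular series for shifted lattices $q\Z^d+r$), which is neither in the paper nor supplied by you. (2) Your $\Omega(s^{\frac d2-1})$ claim rests on ``choosing $s$ so that a positive-density block of phases is simultaneously near $0$'', but no such choice is constructed, and the standard route is different: one exploits the jumps of $D_\alpha(\cdot:x)$ at $s=n$, i.e.\ the spherical exponential sums $\sum_{|m|^2=n}e^{2\pi imx}$, showing these are $\Omega(n^{\frac d2-1})$ along a suitable sequence of $n$ when $x\in\Q^d$, and that Riesz smoothing of order $\alpha<(d-4)/2$ cannot erase jumps of that size; none of this is carried out. (3) The $o(s^{\frac d2-1})$ assertion at irrational $x$ --- which is the actual core of Nov\'ak's theorem, requiring cancellation in those same spherical sums for $x\notin\Q^d$ --- is only named (``Weyl-type cancellation''), and you concede as much. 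In addition, your analytic starting point is itself unjustified: identity \eqref{D} converges absolutely only for $\alpha>(d-1)/2$, and the descent to $0\le\alpha<(d-4)/2$ by ``differentiating in $s$ and truncating'' a conditionally convergent Bessel series is a real technical obstacle, not a formality; and in the a.e.\ statement the specific exponents $\tau=3d$ and $\tau=3d-1$ are asserted, not derived from the second-moment/Borel--Cantelli scheme you sketch. Acknowledging that the dichotomy statements are ``the number-theoretic core'' is honest, but it does not close the gap: as written, the proposal proves none of the three clauses of \eqref{Novak}.
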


\begin{thm}[Nov\'ak~\cite{Novak1969}]\label{thm:Novak-M}
Let $d\ge3$.
Then, for all $x\in\Q^d$, 
there exists a positive constant $K_d(x)$ such that
\begin{equation}\label{Novak-M}
 \int_0^{s} \left|P_{0}(t:x)\right|^2\,dt
 =
 \begin{cases}
 K_d(x)s^{2}\log s+O(s^{2}\log^{1/2}s), & \text{if $d=3$},\\
 K_d(x)s^{3}+O(s^{5/2}\log s), & \text{if $d=4$},\\
 K_d(x)s^{4}+O(s^{3}\log^2s), & \text{if $d=5$},\\
 K_d(x)s^{d-1}+O(s^{d-2}), & \text{if $d\ge6$}.
 \end{cases}
\end{equation}
\end{thm}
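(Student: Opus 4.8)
The plan is to fix the rational point $x$ and analyse $P_0(s:x)$ (defined in \eqref{P_alpha}) through a truncated Voronoi-type expansion into Bessel terms, and then to compute the second moment $\int_0^s|P_0(t:x)|^2\,dt$ by separating diagonal from off-diagonal frequencies. The arithmetic of the rational point enters only through the distances from $x$ to the integer lattice, and the dimensional dichotomy in \eqref{Novak-M} will come out of a single partial-summation computation whose boundary case is $d=3$.

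First I would expand the discrepancy into the Bessel terms attached to the lattice translates of $x$, using \eqref{cD} and \eqref{D}. Writing $x=h/q$ in lowest terms, the distances $|x-m|$, $m\in\Z^d$, take the values $\sqrt{\kappa}/q$ with $\kappa=|h-qm|^2\in\N$, occurring with multiplicity $a_x(\kappa)=\#\{m\in\Z^d:|h-qm|^2=\kappa\}$. Substituting the closed form \eqref{cD} for $\cD_0(s:x-m)$ and the asymptotics \eqref{Bessel-asymp-infty}, each translate contributes an oscillation of amplitude $\asymp s^{(d-1)/4}$; grouping by $\kappa$ and truncating at $\kappa\ls s$ gives, in a summability sense,
\[
 P_0(s:x)=\frac{s^{(d-1)/4}}{\pi}\sum_{\kappa\ls s}\frac{a_x(\kappa)\,q^{(d+1)/2}}{\kappa^{(d+1)/4}}\cos\!\Big(\tfrac{2\pi\sqrt{s\kappa}}{q}-\tfrac{(d+1)\pi}{4}\Big)+(\text{error}),
\]
where the error absorbs both the Bessel remainders and the high-frequency tail $\kappa\gs s$.

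Next I would square this expression and integrate over $s\in[0,S]$. The diagonal terms $\kappa=\kappa'$ produce $\cos^2$, whose mean value $\tfrac12$ yields the dominant part $\frac{1}{2\pi^2}\int_0^S s^{(d-1)/2}\sum_{\kappa\ls s}a_x(\kappa)^2 q^{d+1}\kappa^{-(d+1)/2}\,ds$. Exchanging summation and integration and feeding in the arithmetic estimate $\sum_{\kappa\le X}a_x(\kappa)^2\sim c_x X^{d-1}$ (equivalently, the rightmost pole of the Epstein-type Dirichlet series $\sum_\kappa a_x(\kappa)^2\kappa^{-w}$), partial summation gives $\sum_{\kappa\ls S}a_x(\kappa)^2\kappa^{-(d+1)/2}\asymp S^{(d-3)/2}$ for $d\ge4$ and $\asymp\log S$ for $d=3$; the logarithm is precisely the boundary case $d-1=(d+1)/2$. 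Since $\int_0^S s^{(d-1)/2}\,ds\asymp S^{(d+1)/2}$, this reproduces the leading powers $S^{d-1}$ for $d\ge4$ and $S^{2}\log S$ for $d=3$, and exhibits $K_d(x)$ as the resulting arithmetic constant, manifestly dependent on $x$ through $q$ and the shifted representation numbers $a_x(\kappa)$.

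Finally, the off-diagonal terms $\kappa\ne\kappa'$ must be pushed below the stated secondary errors. For these the oscillatory integral $\int_0^S s^{(d-1)/2}\cos\!\big(2\pi\sqrt{s}\,(\sqrt\kappa-\sqrt{\kappa'})/q\big)\,ds$ is handled by integration by parts, gaining a factor $\sqrt{S}\,/\,|\sqrt\kappa-\sqrt{\kappa'}|$; summing against $a_x(\kappa)a_x(\kappa')$ with spacing and counting bounds for the shifted-lattice norms then controls the cross terms. The main obstacle is exactly this off-diagonal bookkeeping together with the truncation error: extracting the sharp remainders $O(S^{2}\log^{1/2}S)$, $O(S^{5/2}\log S)$, $O(S^{3}\log^{2}S)$ and $O(S^{d-2})$ demands precise correlation estimates for $a_x(\kappa)$ and a carefully tuned truncation level, and this is the technical heart of the argument.
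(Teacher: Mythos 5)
A preliminary structural point: the paper contains no proof of this statement. Theorem~\ref{thm:Novak-M} is imported, with citation, from Nov\'ak~\cite{Novak1969} (his mean value theorems for lattice points with weights) and is then used as a black box in the proof of Lemma~\ref{lem:LPP2S}. So there is no ``paper's own proof'' to compare against; your attempt is effectively being measured against Nov\'ak's original argument, which the authors deliberately did not reproduce.

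Your outline does identify the classically correct route (it is in the spirit of the Landau--Walfisz--Nov\'ak method): expand $P_0(t:x)$ into Bessel terms attached to the translates $x-m$, group by the values $\kappa=q^2|x-m|^2$, square, integrate, and separate diagonal from off-diagonal frequencies; and your observation that $d=3$ is the boundary case where $\sum_\kappa a_x(\kappa)^2\kappa^{-(d+1)/2}$ diverges logarithmically does explain the $s^2\log s$ main term. But as a proof it has genuine gaps, only some of which you concede. First, the expansion itself is illegitimate as written: the identity \eqref{D} is valid only for $\alpha>(d-1)/2$, and for $\alpha=0$, $d\ge2$ the Bessel series does not converge, so a truncated expansion ``in a summability sense'' with an $L^2$-error small enough to survive squaring and integrating is itself a substantial theorem, not a substitution. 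Second, the diagonal main term needs an asymptotic $\sum_{\kappa\le X}a_x(\kappa)^2=c_xX^{d-1}+(\text{error})$ with a quantified error term; for $d=3,4,5$ this is delicate arithmetic about representation numbers of shifted ternary and quaternary forms, and it is precisely there that the stated remainders $O(s^2\log^{1/2}s)$, $O(s^{5/2}\log s)$, $O(s^3\log^2 s)$ originate --- you assert only the leading behavior. Third, the off-diagonal sums, which you defer to unspecified ``correlation estimates'' and ``a carefully tuned truncation level,'' are the technical heart of Nov\'ak's work; naming them is not doing them. In short, the proposal is a reasonable strategic sketch of the known method, but it does not constitute a proof of the theorem.
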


\begin{rem}\label{rem:Novak-M}
In Theorem~\ref{thm:Novak-M} 
the positive constant $K_d(x)$ is given explicitly for each $x\in\Q^d$, 
see \cite{Kuratsubo2010}.
For example, if $d\ge 4$ and $x=0$, then
\begin{equation*}
 K_d(0)=\frac{\pi^d(2^d+8)\zeta(d-2)}{12(d-1)(2^d-1)\zeta(d)\Gamma^2(d/2)},
\end{equation*}
where $\zeta$ is the Riemann's zeta function, see \cite{Walfisz1957}.
\end{rem}

\begin{thm}[Kuratsubo~\cite{Kuratsubo1982}]\label{thm:Kura}
Let $d\ge2$ . 
Then, for every $\tau>3/2$,
\begin{equation*}
 P_{0}(s:x)
 =
 O(s^{\frac d4}\log^\tau s)
 \quad\text{for a.e.\,$x$}. 
\end{equation*}
\end{thm}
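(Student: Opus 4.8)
The plan is to reduce the statement to an almost-everywhere growth bound for the pure lattice-point exponential sum and then to deploy a maximal inequality for an orthogonal series. First I would observe that, since $\delta(x)$ is the indicator of $\Z^d$ and $\Z^d$ has measure zero, for a.e.\ $x$ we have $P_0(s:x)=D_0(s:x)=\sum_{|m|^2<s}e^{2\pi imx}$. Because $|m|^2\in\Z$, the function $s\mapsto D_0(s:x)$ is constant on each interval $(n,n+1]$, where it equals $V_n(x):=\sum_{|m|^2\le n}e^{2\pi imx}$. Hence it suffices to prove that, for every $\tau>3/2$, $V_n(x)=O(n^{d/4}(\log n)^\tau)$ for a.e.\ $x$; the assertion for continuous $s$ then follows since $s\asymp n$ on $(n,n+1]$.

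Next I would exploit orthogonality. Write $V_n=\sum_{k=0}^{n}W_k$, where $W_k(x)=\sum_{|m|^2=k}e^{2\pi imx}$ collects the contribution of the lattice layer $|m|^2=k$. The functions $\{W_k\}_{k\ge0}$ are pairwise orthogonal in $L^2(\T^d)$, since distinct $k$ correspond to disjoint sets of frequencies $m\in\Z^d$; moreover $\|W_k\|_{L^2}^2=\#\{m\in\Z^d:|m|^2=k\}$, so that $\sum_{k\le n}\|W_k\|_{L^2}^2=\#\{m\in\Z^d:|m|^2\le n\}\ls n^{d/2}$ for $d\ge2$. Thus $V_n$ is the $n$-th partial sum of an orthogonal series whose $L^2$ masses sum to a quantity of size $n^{d/2}$, which already pins the expected order $n^{d/4}$.

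Then I would invoke the Rademacher--Menshov maximal inequality for orthogonal series, which gives $\big\|\max_{n\le N}|V_n|\big\|_{L^2}^2\ls(\log N)^2\sum_{k\le N}\|W_k\|_{L^2}^2\ls(\log N)^2N^{d/2}$. Specializing to $N=N_j:=2^j$ and applying Chebyshev's inequality at the threshold $2^{jd/4}j^\tau$, I obtain $\big|\{x:\max_{n\le 2^j}|V_n(x)|>2^{jd/4}j^\tau\}\big|\ls j^{2-2\tau}$. Since $\sum_j j^{2-2\tau}<\infty$ exactly when $\tau>3/2$, the Borel--Cantelli lemma shows that for a.e.\ $x$ there is $J(x)$ with $\max_{n\le2^j}|V_n(x)|\le2^{jd/4}j^\tau$ for all $j\ge J(x)$. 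For $n\in(2^{j-1},2^j]$ with $j\ge J(x)$ this yields $|V_n(x)|\ls n^{d/4}(\log n)^\tau$, completing the reduction.

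The main obstacle is precisely the passage from mean-square control to a pointwise a.e.\ bound: a direct Chebyshev estimate applied to each integer $n$ only bounds the exceptional measure by a constant multiple of $(\log n)^{-2\tau}$, whose sum over $n$ diverges for every $\tau$, so a naive Borel--Cantelli scheme fails. The role of the Rademacher--Menshov inequality is to replace summation over all $n$ by summation over the sparse dyadic scales $2^j$, at the cost of only a factor $(\log N)^2$; this logarithmic loss is exactly what renders the exceptional measures summable under the sharp condition $\tau>3/2$. One could instead attempt to estimate higher moments of $V_n$ and run an analogous argument, but the orthogonality-based maximal inequality is both cleaner and yields the optimal power of the logarithm.
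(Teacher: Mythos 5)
The paper itself offers no proof of this statement: it is imported as a known result, with a citation to Kuratsubo's 1982 paper, alongside the Landau and Nov\'ak theorems quoted in the same section. So there is no internal argument to compare yours against, and judged on its own your proof is correct. The chain works exactly as you describe: off the null set $\Z^d$ one has $P_0(s:x)=D_0(s:x)$; on $(n,n+1]$ this equals $V_n(x)=\sum_{|m|^2\le n}e^{2\pi imx}$ because $|m|^2$ is an integer; the spherical layers $W_k(x)=\sum_{|m|^2=k}e^{2\pi imx}$ are pairwise orthogonal with $\sum_{k\le n}\|W_k\|_{L^2(\T^d)}^2=\#\{m\in\Z^d:|m|^2\le n\}=O(n^{d/2})$; Rademacher--Menshov gives $\bigl\|\max_{n\le N}|V_n|\bigr\|_{L^2}^2\lesssim(\log N)^2N^{d/2}$; Chebyshev at the thresholds $2^{jd/4}j^\tau$ along $N=2^j$ yields exceptional sets of measure $O(j^{2-2\tau})$, summable precisely when $\tau>3/2$; and Borel--Cantelli plus the dyadic-to-general-$n$ passage (which costs only constants in $n^{d/4}(\log n)^\tau$) finishes the argument. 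Your diagnosis of why the maximal inequality is indispensable is also right: term-by-term Chebyshev gives exceptional measures of size $(\log n)^{-2\tau}$, which are never summable. What your route buys, compared with the paper's reliance on the literature, is self-containedness: the a.e.\ bound follows from nothing but orthogonality of the layers and the trivial lattice-point count $O(n^{d/2})$, with no appeal to Nov\'ak-type mean value theorems. Two minor points, neither a gap: the full-measure set you produce depends on $\tau$, which the statement permits (and intersecting over a sequence $\tau_k\downarrow 3/2$ would give one set serving all $\tau>3/2$), and the implied constant depends on $x$, which is the intended reading of such a.e.\ $O$-statements.
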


\begin{rem}\label{rem:Delta-P}
Theorems~\ref{thm:Landau}--\ref{thm:Kura} valid
for $\Delta_{\alpha}(s:x)$ instead of $P_{\alpha}(s:x)$, if $x\in\T^d$.
Actually,
\begin{equation*}
 \Delta_{\alpha}(s:x)-P_{\alpha}(s:x)
 =
 \begin{cases}
 0, & \text{if $x=0$}, \\
 -\cD_{\alpha}(s:x)=O(s^{\frac{d-1}4+\frac{\alpha}2}), 
 & \text{if $x\in\T^d\setminus\{0\}$},
 \end{cases}
\end{equation*}
since (see \eqref{cD} and \eqref{Bessel-asymp-infty}) 
\begin{equation*}
 \cD_{\alpha}(s,x)
 =
 \begin{cases}\displaystyle
 \frac{\pi^{\frac d2}s^{\frac d2+\alpha}}{\Gamma(\frac d2+\alpha+1)}, & x=0, \\[2ex]
 \displaystyle
 \frac{s^{\frac d2+\alpha}}{\pi^{\alpha}}
 \frac{J_{{\frac d2}+\alpha}(2\pi\sqrt{s}|x|)}{(\sqrt{s}|x|)^{{\frac d2}+\alpha}}
 =
 O(s^{\frac{d-1}4+\frac{\alpha}2}),
 & x\ne0.
 \end{cases}
\end{equation*}
Therefore, 
\begin{equation*}
 |\Delta_{\alpha}(s:x)|
 =
 |P_{\alpha}(s:x)|+O(s^{\frac{d-1}4+\frac{\alpha}2}),
 \quad\text{if $x\in\T^d$}.
\end{equation*}
\end{rem}

\begin{rem}\label{rem:Landau}
Let $d=1$. 
Then 
\begin{equation}\label{Landau 1}
 \Delta_{\alpha}(s:x)
 =
 \begin{cases}
 O(1), & \text{if } \alpha=0,\\
 O(s^{\frac{\alpha}2}), & \text{if } \alpha>0,
 \end{cases}
\end{equation}
uniformly with respect to $x\in\T$.
Actually,
for all $s>0$, choosing $N\in\N$ such that $N<\sqrt{s}\le N+1$,
we have by an elementary calculation
\begin{equation*}
 \Delta_{0}(s:x)
 =
 \frac{\sin(2\pi(N+\frac12)x)}{\sin\pi x}
 -\frac{\sin(2\pi\sqrt{s}x)}{\pi x}
 =O(1).
\end{equation*}
For $\alpha>0$, from \eqref{cD} and \eqref{D} it follows that
\begin{equation*}
 \Delta_{\alpha}(s:x)
 =
 \frac{s^{\frac12+\alpha}}{\pi^{\alpha}}
 \sum_{m\in\Z,\ m\ne0}
 \frac{J_{{\frac12}+\alpha}(2\pi\sqrt{s}|x-m|)}{(\sqrt{s}|x-m|)^{{\frac12}+\alpha}}. 
\end{equation*}
By \eqref{Bessel-asymp-infty} 
we have 
\begin{equation*}
 \frac{|J_{{\frac12}+\alpha}(2\pi\sqrt{s}|x-m|)|}{(\sqrt{s}|x-m|)^{{\frac12}+\alpha}}
 \le
 \frac{C}{(\sqrt{s}|x-m|)^{1+\alpha}},
\end{equation*}
for some positive constant $C$.
Since $|x-m|\ge1/2$ for $x\in\T$ and $m\ne0$,
the sum converges absolutely 
and $\Delta_{\alpha}(s:x)=O(s^{\frac{\alpha}2})$. 
\end{rem}

The following is the Riesz' convexity theorem.
(see \cite[page 285]{Stein-Weiss1971} and \cite[page 13]{CM1952}).

\begin{thm}\label{thm:Riesz}
Let $0\le\alpha_0<\alpha_1<\infty$.
For $x\in\R^d$, let $V_0(s:x)$ and $V_1(s:x)$ be two positive nondecreasing functions with respect to $s>0$.
Assume that
\begin{equation*}
 |\Delta_{\alpha_i}(s:x)|\le V_i(s:x), \quad i=0,1.
\end{equation*}
Then, for $0\le\theta\le1$,
\begin{equation*}
 |\Delta_{(1-\theta)\alpha_0+\theta\alpha_1}(s:x)|
 \le 
 C 
 V_0(s:x)^{1-\theta}V_1(s:x)^{\theta},
\end{equation*}
where $C$ is a positive constant 
dependent on $\alpha_1$, $\alpha_0$ and $\theta$,
and independent of $s$ and $x$.
\end{thm}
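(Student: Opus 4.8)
The plan is to reduce the statement to a one–variable convexity inequality for fractional integrals and then prove that by a split–and–optimize argument. Fix $x$ and put $g(t)=\Delta_{\alpha_0}(t:x)$. The key is the Beta–integral identity
\[
 \frac{(s-u)_+^{\nu}}{\Gamma(\nu+1)}
 =\frac1{\Gamma(\nu-\alpha_0)}\int_u^{s}(s-t)^{\nu-\alpha_0-1}\,
   \frac{(t-u)_+^{\alpha_0}}{\Gamma(\alpha_0+1)}\,dt,
 \qquad \nu>\alpha_0 .
\]
Applying it with $u=|m|^2$ and summing over the finitely many $m$ with $|m|^2<s$ gives $D_{\nu}(s:x)=\frac1{\Gamma(\nu-\alpha_0)}\int_0^{s}(s-t)^{\nu-\alpha_0-1}D_{\alpha_0}(t:x)\,dt$, and the same identity for $\cD_{\nu}$ follows from \eqref{cD} by Fubini; subtracting and using \eqref{Delta} yields the fundamental raising relation
\[
 \Delta_{\nu}(s:x)
 =\frac1{\Gamma(\nu-\alpha_0)}\int_0^{s}(s-t)^{\nu-\alpha_0-1}g(t)\,dt,
 \qquad \nu>\alpha_0,
\]
which is consistent with \eqref{D-int} and \eqref{cD-dif} when $\nu-\alpha_0\in\N$. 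Writing $\gamma=\alpha_1-\alpha_0$ and $\beta_1=\alpha-\alpha_0=\theta\gamma$, both $\Delta_{\alpha_1}$ and $\Delta_{\alpha}$ are now fractional integrals of the single function $g$, of orders $\gamma$ and $\beta_1$; the hypotheses become $|g(t)|\le V_0(t:x)$ and $|\Delta_{\alpha_1}(s:x)|\le V_1(s:x)$, and the target is $|\Delta_{\alpha}(s:x)|\le C\,V_0^{1-\theta}V_1^{\theta}$.

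For the convexity inequality itself I would introduce a scale parameter $h>0$ and estimate $\Delta_{\alpha}(s:x)$ by splitting it at distance $h$ from the endpoint $t=s$. Over the window $(s-h,s]$ the monotonicity of $V_0$ gives, directly from the raising relation, a contribution of size $\tfrac1{\Gamma(\beta_1)\beta_1}V_0(s:x)\,h^{\beta_1}$. The bulk $[0,s-h]$ must instead be controlled by $V_1$; here one cannot insert $|g|\le V_0$, because the bound $|\Delta_{\alpha_1}|\le V_1$ reflects cancellation in $g$. Instead one represents the bulk through $\Delta_{\alpha_1}$ itself, for instance by writing
\[
 \Delta_{\alpha}(s:x)
 =h^{-\beta_0}\sum_{k\ge0}(-1)^k\binom{\beta_0}{k}\Delta_{\alpha_1}(s-kh)+R_h,
 \qquad \beta_0=\gamma-\beta_1=(1-\theta)\gamma,
\]
a (fractional) finite difference of $\Delta_{\alpha_1}$ of order $\beta_0$. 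Since $\beta_0>0$ the binomial coefficients are absolutely summable, so monotonicity of $V_1$ bounds the sum by $C\,V_1(s:x)$ and the leading term by $C\,h^{-\beta_0}V_1(s:x)$, while the remainder $R_h$ collects the endpoint defect and is $O(h^{\beta_1}V_0(s:x))$. Combining the two pieces yields $|\Delta_{\alpha}(s:x)|\lesssim h^{\beta_1}V_0+h^{-\beta_0}V_1$ for every admissible $h$; choosing $h$ so that $h^{\gamma}\sim V_1/V_0$ balances the terms and produces exactly $V_0^{\beta_0/\gamma}V_1^{\beta_1/\gamma}=V_0^{1-\theta}V_1^{\theta}$, with a constant depending only on $\alpha_0,\alpha_1,\theta$.

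The one genuinely delicate point, and the main obstacle, is the bulk estimate: one must localize away from the endpoint while keeping the cancellation of $\Delta_{\alpha_1}$ intact. When $\gamma\in\N$ this is a clean integration by parts based on \eqref{D-int} and \eqref{cD-dif}, converting the $g$–integral into a $\Delta_{\alpha_1}$–integral; for non–integer $\gamma$ it is the fractional–difference bookkeeping indicated above, where the work lies in verifying that $R_h$ really is of order $h^{\beta_1}V_0$ and that the leading term carries the factor $h^{-\beta_0}V_1$. This is precisely the content of the classical Riesz convexity theorem, so in the write–up I would either carry out this estimate in detail or simply quote it from \cite[p.~285]{Stein-Weiss1971} and \cite[p.~13]{CM1952}; the reduction of the first paragraph and the window estimate are routine.
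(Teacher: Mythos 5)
The first thing to note is that the paper contains no proof of this statement: Theorem~\ref{thm:Riesz} is quoted as the classical Riesz convexity theorem, with pointers to \cite[page 285]{Stein-Weiss1971} and \cite[page 13]{CM1952}, and is then used as a black box. So the only meaningful comparison is with the classical proof in those references, whose skeleton is exactly what you sketch. The parts you actually carry out are correct: the Beta-integral raising relation $\Delta_\nu(s:x)=\frac{1}{\Gamma(\nu-\alpha_0)}\int_0^s(s-t)^{\nu-\alpha_0-1}\Delta_{\alpha_0}(t:x)\,dt$ holds (termwise on the finite sum defining $D_\nu$, and by Fubini for $\cD_\nu$), the window estimate near $t=s$ is routine, and the optimization $h^{\gamma}\sim V_1/V_0$ does produce $V_0^{1-\theta}V_1^{\theta}$, with the edge case $(V_1/V_0)^{1/\gamma}>s$ disposed of by the trivial bound $|\Delta_\alpha(s:x)|\le s^{\beta_1}V_0(s:x)/\Gamma(\beta_1+1)\le V_0^{1-\theta}V_1^{\theta}$.

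The genuine gap is the bulk estimate, and it is not bookkeeping: the two claims that (a) $\Delta_\alpha(s:x)$ equals $h^{-\beta_0}\sum_{k\ge0}(-1)^k\binom{\beta_0}{k}\Delta_{\alpha_1}(s-kh:x)$ up to a remainder $R_h$, and (b) $|R_h|\le C\,h^{\beta_1}V_0(s:x)$, together constitute the entire analytic content of the theorem, and nothing in the proposal establishes them. Absolute summability of $\binom{\beta_0}{k}$ only controls the leading sum by $Ch^{-\beta_0}V_1$; the remainder bound amounts to showing that the discrete kernel $\frac{h^{-\beta_0}}{\Gamma(\gamma)}\sum_{k\le u/h}(-1)^k\binom{\beta_0}{k}(u-kh)^{\gamma-1}$ approximates $\frac{u^{\beta_1-1}}{\Gamma(\beta_1)}$ to within $O(h^{\beta_1})$ in $L^1_u(0,s)$ uniformly in $s$ (since all one knows about $g=\Delta_{\alpha_0}$ is the size bound $V_0$), and this rests on cancellation among the signed terms — for $\gamma<1$ the discrete kernel even has a singularity at every point $u=kh$ — not on size estimates alone. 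Your stated fallback, quoting the inequality from \cite{Stein-Weiss1971} and \cite{CM1952}, is legitimate, but then your argument coincides with the paper's (a citation), and the Beta-identity reduction becomes superfluous: the classical theorem applies verbatim to $\Delta_\alpha(s:x)$, which is already the order-$\alpha$ Riesz mean $\frac{1}{\Gamma(\alpha+1)}\int_{[0,s)}(s-t)^{\alpha}\,d\mu_x(t)$ of the single complex measure $\mu_x$ given by the point masses $e^{2\pi imx}$ at $|m|^2$ minus the image of $e^{2\pi i\xi x}\,d\xi$ under $\xi\mapsto|\xi|^2$, so no preliminary raising identity is needed before invoking it.
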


\begin{proof}[Proof of Lemma~\ref{lem:LPP1}]\label{proof:LPP1}
The case $d=1$ has been already proven in Remark~\ref{rem:Landau}.
Then we consider the case $d\ge2$.
In general, for a function $f:[0,\infty)\to\R$, 
the difference of $f$ with $h>0$ is defined by
\begin{equation*}
 \delta_h f(s)=\delta_h^1 f(s)=f(s+h)-f(s),
\end{equation*}
and\
\begin{equation*}
 \delta_h^{k+1} f(s)=\delta_h^k f(s+h)-\delta_h^k f(s), \quad k=1,2,\cdots.
\end{equation*}
Then 
\begin{equation*}
 \delta_h^k f(s)=\int_s^{s+h} ds_1\cdots\int_{s_{k-1}}^{s_{k-1}+h}f^{(k)}(s_k)\,ds_k.
\end{equation*}

Let $k=\dsharp$ as in \eqref{kd} 
and use this relation for $f(s)=\Delta_k(s:x)$.
Then, from \eqref{D-int} and \eqref{cD-dif} we see that
\begin{align*}
 &\delta_h^k \Delta_k(s:x)-h^k \Delta_0(s:x)
 \\
 &=
 \int_s^{s+h}ds_1\cdots\int_{s_{k-1}}^{s_{k-1}+h} (\Delta_0(s_k:x)-\Delta_0(s:x))\,ds_k
 \\
 &=
 \int_s^{s+h}ds_1\cdots\int_{s_{k-1}}^{s_{k-1}+h}
 \bigg((D_0(s_k:x)-D_0(s:x))-({\cD}_0(s_k:x)-{\cD}_0(s:x))\bigg)\,ds_k
 \\
 &=
 \int_s^{s+h}ds_1\cdots\int_{s_{k-1}}^{s_{k-1}+h}
 \left(\sum_{s\le |m|^2<s_k}e^{2\pi imx}-\int_{s\le|\xi|^2<s_k}e^{2\pi i\xi x}\right)
 \,ds_k.
\end{align*}
Using $s_k\le s+kh$ inside the integration,
we have
\begin{align*}
 &\left|\sum_{s\le |m|^2<s_k}e^{2\pi imx}-\int_{s\le|\xi|^2<s_k}e^{2\pi i\xi x}\right|
\\
 &\le
 \sum_{s\le |m|^2<s_k}1+\int_{s\le|\xi|^2<s_k}1
\\
 &\le
 \sum_{s\le |m|^2<s+kh}1+\int_{s\le|\xi|^2<s+kh}1
\\
 &=
 D_0(s+kh:0)-D_0(s:0)
  +v_d\left((s+kh)^{\frac d2}-s^{\frac d2}\right)
\\
 &\le
 |\Delta_0(s+kh:0)|+|\Delta_0(s:0)|
  +2v_d\left((s+kh)^{\frac d2}-s^{\frac d2}\right)
\\
 &\le
 |\Delta_0(s+kh:0)|+|\Delta_0(s:0)|
  +dkh v_d(s+kh)^{\frac d2-1},
\end{align*}
where $v_d$ is the volume of the $d$-dimensional unit ball.
Hence
\begin{align*}
 &\left| \delta_h^k \Delta_k(s:x) - h^k\Delta_0(s:x) \right|
\\
 &\le
 \int_s^{s+h}ds_1\cdots\int_{s_{k-1}}^{s_{k-1}+h}
 \left|\sum_{s\le |m|^2<s_k}e^{2\pi imx}-\int_{s\le|\xi|^2<s_k}e^{2\pi i\xi x}\right|
 \,ds_k
\\
 &\le
 h^k\left(|\Delta_0(s+kh:0)|+|\Delta_0(s:0)|+dkh v_d(s+kh)^{\frac d2-1}\right).
\end{align*}
By Theorem~\ref{thm:Landau} with $\alpha=0$ and $x=0$, we have
\begin{equation}\label{Delta-1}
 \left| \delta_h^k \Delta_k(s:x) - h^k\Delta_0(s:x) \right|
 \le
 C h^k\left(s^{\frac d2-\frac d{d+1}} + hs^{\frac d2-1}\right),
 \quad\text{if $kh\le s$},
\end{equation}
where $C$ is a positive constant independent of $h$, $s$ and $x$.

Next we estimate $\delta_h^k\Delta_k(s:x)$.
By \eqref{cD} and \eqref{D} we have
\begin{align*}
 \left|\delta_h^k\Delta_k(s:x)\right|
 &=
 \left|
 \delta_h^k
 \left(
  \sum_{m\in\Z^d,\,m\ne0}
   \frac{s^{\frac 12(\frac d2+k)}}{\pi^k}
   \frac{J_{\frac d2+k}(2\pi\sqrt{s}|x-m|)}{|x-m|^{\frac d2+k}}
 \right)
 \right|
\\
 &=
 \left|
  \sum_{m\in\Z^d,\,m\ne0}
 \frac{\delta_h^k
        \left(s^{\frac 12(\frac d2+k)}J_{\frac d2+k}(2\pi\sqrt{s}|x-m|)\right)}
      {\pi^k|x-m|^{\frac d2+k}}
 \right|.
\end{align*}
Here, using a well known inequality on the Bessel function 
(see the equation (37) in \cite[page 472]{Landau1915});
\begin{equation*}
 \left|
 \delta_h^k\left(s^{\frac 12(\frac d2+k)}J_{\frac d2+k}(2\pi\sqrt{st})\right)
 \right|
 \le
 C \frac{s^{\frac{d-1}4}}{t^{\frac 14}} (\min(s,h^2t))^{\frac k2},
\end{equation*}
where $C$ is a positive constant independent of $s$, $t$ and $h$,
we have
\begin{align*}
 \left|\delta_h^k\Delta_k(s:x)\right|
 &\le C
 \left|
  \sum_{m\in\Z^d,\,m\ne0}
   \frac{s^{\frac{d-1}4}(\min(s,h^2|x-m|^2))^{\frac k2}}{|x-m|^{\frac12}}
   \frac{1}{|x-m|^{\frac d2+k}}
 \right|
\\
 &=C
 \left(
 \sum_{|x-m|^2\le{s}/{h^2},\,m\ne0}
 \frac{h^ks^{\frac{d-1}4}}
      {|x-m|^{\frac{d+1}2}}
 +\sum_{|x-m|^2>{s}/{h^2}}
 \frac{s^{\frac{d-1}4+\frac k2}}
      {|x-m|^{\frac{d+1}2+k}}
 \right)
\\
 &\le C
 \left(
  h^ks^{\frac{d-1}4}\left(\frac{s}{h^2}\right)^{\frac{d-1}4}
  + s^{\frac{d-1}4+\frac k2}\left(\frac{s}{h^2}\right)^{\frac{d-1}4-\frac k2}
 \right)
 =C h^k\left(\frac{s}{h}\right)^{\frac{d-1}2}.
\end{align*}
That is
\begin{equation}\label{Delta-2}
 \left|\delta_h^k\Delta_k(s:x)\right|
 \le
 C h^k\left(\frac{s}{h}\right)^{\frac{d-1}2}.
\end{equation}

Therefore, combining \eqref{Delta-1} and \eqref{Delta-2}, we have
\begin{align*}
 |\Delta_0(s:x)|
 &\le
 h^{-k}
 \left(
 \left|\delta_h^k\Delta_k(s:x)-h^k\Delta_0(s:x)\right|
 +\left|\delta_h^k\Delta_k(s:x)\right|
 \right)
\\
 &\le
 C \left(s^{\frac d2-\frac d{d+1}} + hs^{\frac d2-1}\right)
 +C \left(\frac{s}{h}\right)^{\frac{d-1}2},
 \quad\text{if $kh\le s$}.
\end{align*}
Then, setting $h$ as $hs^{\frac d2-1}=(s/h)^\frac{d-1}2$, 
that is, $h=s^\frac1{d+1}$,
we have that
\begin{equation*}
 |\Delta_0(s:x)|\le C s^{\frac d2-\frac{d}{d+1}},
 \quad\text{if $s\ge k^{\frac{d+1}d}$},
\end{equation*} 
where $C$ is a positive constant independent of $x\in\T^d$ and $s\ge k^{\frac{d+1}d}$.

If $\alpha>(d-1)/2$, 
then we have by \eqref{cD} and \eqref{D} that
\begin{equation*}
 \Delta_{\alpha}(s:x)
 =
 \frac{s^{\frac d2+\alpha}}{\pi^{\alpha}}
 \sum_{m\in\Z^d,\,m\ne0}
  \frac{J_{\frac d2+\alpha}(2\pi\sqrt{s}|x-m|)}{(\sqrt{s}|x-m|)^{\frac d2+\alpha}}.
\end{equation*}
Moreover, since $|x-m|\ge1/2$ for $x\in\T^d$ and $m\ne0$,
the sum converges absolutely and 
\begin{equation*}
 |\Delta_{\alpha}(s:x)|
 \le C
 s^{\frac{d-1}4+\frac{\alpha}2},
\end{equation*}
where $C$ is a positive constant independent of $x\in\T^d$ and $s>0$.
Therefore, we have 
\begin{equation*}
 \Delta_{\alpha}(s:x)
 =
 \begin{cases}
 O(s^{\frac d2-\frac{d}{d+1}}), & \text{if } \alpha=0,\\
 O(s^{\frac{d-1}4+\frac{\alpha}2}), & \text{if } \alpha>\frac{d-1}2,
 \end{cases}
\end{equation*}
uniformly with respect to $x\in\T^d$.

Applying Theorem~\ref{thm:Riesz}
as 
\begin{equation*}
 \alpha_0=0,\ \alpha_1=\frac{d-1}2\ \
 \text{and}\ \ \alpha=\theta\alpha_1,
\end{equation*}
we have 
\begin{equation*}
 \Delta_{\alpha}(s:x)
 =
 O(s^{\frac d2-\frac{d}{d+1}+\frac{\alpha}{d+1}+\ve})
 \ \text{for every $\ve>0$, if } 0<\alpha\le\frac{d-1}2,
\end{equation*}
since
\begin{equation*}
 (1-\theta)\left(\frac d2-\frac{d}{d+1}\right)
  +\theta\left(\frac{d-1}4+\frac{\alpha_1}2\right) 
 =\frac d2-\frac{d}{d+1}+\frac{\alpha}{d+1}. 
\end{equation*}
Then the proof is complete.
\end{proof}

\begin{rem}\label{rem:LPP1}
We have the following comparison between Lemma~\ref{lem:LPP1}
and Landau's estimate \eqref{Landau}:
\begin{equation*}
 \frac d2-\frac{d}{d+1}+\frac{\alpha}{d+1}
 <
 \frac d2+\alpha-\frac{d}{d+1-2\alpha},
 \ \text{if}\ 0<\alpha<\frac{d-1}2.
\end{equation*}
\end{rem}

\begin{proof}[Proof of Lemma~\ref{lem:LPP2}]\label{proof:LPP2}
If $0\le\alpha<(d-4)/2$,
then, 
by Theorem~\ref{thm:Novak} and Remark~\ref{rem:Delta-P}
we have \eqref{LPP2a} immediately.
Next we show \eqref{LPP2b}. 
By Theorems~\ref{thm:Landau}, \ref{thm:Novak} and Remark~\ref{rem:Delta-P}
we have
\begin{equation*}
 \Delta_{\alpha}(s:x)
 =
 \begin{cases}
 O(s^{\frac d2-1})
 & \text{for $x\in\T^d\cap\Q^d$},\\
 o(s^{\frac d2-1}) 
 & \text{for $x\in\T^d\setminus\Q^d$},
 \end{cases}
 \quad\text{if $\alpha<\frac{d-4}2$},
\end{equation*}
and
\begin{equation*}
 \Delta_{\alpha}(s:x)
 =
 O(s^{\frac{d-1}2}\log s)
 \ \text{for $x\in\T^d$}, \quad\text{if $\alpha=\frac{d-1}2$}.
\end{equation*}
Applying Theorem~\ref{thm:Riesz} 
as 
\begin{equation*}
 \alpha_0=\frac{d-4}2,\ \alpha_1=\frac{d-1}2\ \
 \text{and}\ \ 
 \alpha=(1-\theta)\alpha_0+\theta\alpha_1,
\end{equation*}
we have 
\begin{equation*}
 \Delta_{\alpha}(s:x)
 =
 O(s^{\frac{d-1+\alpha}3+\ve})
 \ \text{for every $\ve>0$, if } \frac{d-4}2\le\alpha\le\frac{d-1}2,
\end{equation*}
since
\begin{equation*}
 (1-\theta)\left(\frac d2-1\right)+\theta\,\left(\frac{d-1}2\right)
 =\frac{d-1+\alpha}3.
 \qedhere
\end{equation*}
\end{proof}

\begin{proof}[Proof of Lemma~\ref{lem:LPP2S}]\label{proof:LPP2S}
By Theorem~\ref{thm:Novak-M} and Remark~\ref{rem:Delta-P} we have
\begin{equation*}
 \int_0^{s}|\Delta_{0}(t:x)|^2\,dt
 =
 K_d(x)s^{d-1}+O(s^{d-2}\log^{\tau}s),
\end{equation*}
where 
$\tau=2$ if $d=5$, 
$\tau=0$ if $d\ge6$.
Using
\begin{align*}
 m_k^2-\ell_k^2
 &=
 \frac1{(2a)^2}\left(k+\frac{d+2\beta+1}4\right),
\\
 m_k^{2(d-1)}-{\ell}_k^{2(d-1)}
 &=
 \frac{d-1}{(2a)^{2(d-1)}}k^{2d-3}+O(k^{2d-4}),
\end{align*}
we have
\begin{align*}
 \frac1{m_k^2-\ell_k^2}
 \int_{\ell_k^2}^{m_k^2}|\Delta_{0}(t:x)|^2\,dt
 &=
 \frac1{m_k^2-\ell_k^2}
 \left(
 \int_{0}^{m_k^2}|\Delta_{0}(t:x)|^2\,dt
 -\int_{0}^{\ell_k^2}|\Delta_{0}(t:x)|^2\,dt
 \right)
\\
 &=
 K_d(x)\frac{m_k^{2(d-1)}-{\ell}_k^{2(d-1)}}{m_k^2-\ell_k^2}
  +O(k^{2(d-2)-1}\log^{\tau}k)
\\
 &=
 \frac{d-1}{(2a)^{2(d-2)}}K_d(x)\,k^{2d-4}
  +O(k^{2(d-2)-1}\log^{\tau}k).
\end{align*}
Hence we can take $\tilde{K}_d(x)$ such that
\begin{equation*}
 \frac1{m_k^2-\ell_k^2}
 \int_{\ell_k^2}^{m_k^2}|\Delta_{0}(t:x)|^2\,dt
 \ge
 \tilde{K}_d(x)^2m_k^{2d-4}
 \quad\text{for large $k$}.
\end{equation*}
Therefore, for large $k$, there exists $s_k\in[\ell_k^2,m_k^2]$ such that
\begin{equation*}
 |\Delta_{0}(s_k:x)|
 \ge
 \tilde{K}_d(x)m_k^{d-2}
 \ge
 \tilde{K}_d(x)s_k^{\frac d2-1}. \qedhere
\end{equation*}
\end{proof}

\begin{proof}[Proof of Lemma~\ref{lem:LPP3}]\label{proof:LPP3}
By Theorems~\ref{thm:Landau}, \ref{thm:Kura} and Remark~\ref{rem:Delta-P}
we have 
\begin{equation*}
 \Delta_{\alpha}(s,x)
 =
 \begin{cases}
 O(s^{\frac d4}\log^\tau s), & \text{if } \alpha=0,\\
 O(s^{\frac{d-1}2}\log s), & \text{if } \alpha=\frac{d-1}2,
 \end{cases}
 \quad\text{for a.e.\,$x\in\T^d$}. 
\end{equation*}
Applying Theorem~\ref{thm:Riesz} 
as 
\begin{equation*}
 \alpha_0=0,\ \alpha_1=\frac{d-1}2\ \
 \text{and}\ \ \alpha=\theta\alpha_1, 
\end{equation*}
we have 
\begin{equation*}
 \Delta_{\alpha}(s,x)
 =
 O(s^{\frac d4+\frac{d-2}{2(d-1)}\alpha+\ve})
 \ \text{for every $\ve>0$, if } 0<\alpha<\frac{d-1}2,
\end{equation*}
since
\begin{equation*}
 (1-\theta)\frac d4+\theta\frac{d-1}2
 =
 \frac d4+\frac{d-2}{2(d-1)}\alpha. 
 \qedhere
\end{equation*}
\end{proof}

\begin{rem}\label{rem:LPP3}
We have the following comparison between Lemma~\ref{lem:LPP1} and Lemma~\ref{lem:LPP3}:
\begin{equation*}
 \frac d4+\frac{d-2}{2(d-1)}\alpha
 <
 \frac d2-\frac{d}{d+1}+\frac{\alpha}{d+1},
 \ \text{if $d\ge4$ and $0\le\alpha<\frac{d-1}2$}.
\end{equation*}
\end{rem}

\section{Proof of the main theorems}\label{sec:proof}

In this section we prove 
the pointwise convergence 
of the Fourier series of the function $u_{\beta,a}(x)$
described in the main theorems (Theorems~\ref{thm:PinskyPh}--\ref{thm:AEC}).
First we state a generalized Hardy's identity (Theorem~\ref{thm:RT}) 
and three lemmas 
(Lemmas~\ref{lem:K 1-4}--\ref{lem:K a.e.}).
Next, 
using the generalized Hardy's identity and three lemmas, 
we will prove the main theorems
in Subsection~\ref{ss:proof}.
The proofs of Theorem~\ref{thm:RT} 
and Lemmas~\ref{lem:K 1-4}--\ref{lem:K a.e.}
are in 
Subsections~\ref{ss:proof-thm:RT},
\ref{ss:proof-lem:K 1-4},
\ref{ss:proof-lem:K 5-} and
\ref{ss:proof-lem:K a.e.}, respectively.

\subsection{Generalized Hardy's identity and three lemmas}\label{ss:GHI}

Recall that 
$u_{\beta,a}(x)$ is the periodization 
of $U_{\beta,a}(x)=\phi_{\beta,a}(|x|)$ with
\begin{equation*}
 \phi_{\beta,a}(t)
 =
 \begin{cases}
  (a^2-t^2)^{\beta}, & 0\le t<a, \\
  0, & t\ge a.
 \end{cases}
\end{equation*}
That is, 
\begin{equation}\label{uba}
 u_{\beta,a}(x)
 =
 \sum_{m\in{\Z}^d}U_{\beta,a}(x+m)=\sum_{|x+m|<a}(a^2-|x+m|^2)^\beta
,
 \quad x\in\T^d.
\end{equation}
Let $\Delta_j$, $A_{\beta,a}^{(j)}$, $\dsharp$ and $\psi_{\beta}$ be as 
in \eqref{Delta}, \eqref{A_beta,a}, \eqref{kd} and \eqref{psi},
respectively,
and let
\begin{equation}\label{K}
 \cK_{\beta,a}(s:x)
 =
 \sum_{j=0}^{\dsharp}(-1)^j\Delta_j(s:x)A_{\beta,a}^{(j)}(s),
\end{equation}
and
\begin{equation}\label{Gx}
 G_{\beta,a}(\lambda,x)
 =
 \frac{\Gamma(\beta+1)(2a)^\beta}\pi
 \sum_{m\in\Z^d\setminus\{0\},\ |x-m|\ne a}
 \left(\frac{a}{|x-m|}\right)^{\frac{d+1}2+\dsharp}
 \psi_{\beta}(\lambda,|a-|x-m||).
\end{equation}
First we show the convergence of the infinite sum in \eqref{Gx}.
If $x\in\T^d$ and $m\in\Z^d\setminus\{0\}$,
then $|x-m|>1/2$ and then
\begin{equation}\label{sup sum}
 \sup_{x\in\T^d}
 \left\{
 \sum_{m\in\Z^d\setminus\{0\}}
  \frac1{|x-m|^{\frac{d+1}2+\dsharp}}
 \right\}
 <\infty,
\end{equation}
since $\frac{d+1}2+\dsharp>d$.
By Lemma~\ref{lem:psi}~(i) we have that
\begin{equation*}
 \psi_{\beta}(\lambda,|a-|x-m||)
 =
 |a-|x-m||^{-1}
 O(\lambda^{-\beta-1})
 \quad\text{as}\quad \lambda\to\infty.
\end{equation*}
Letting
\begin{equation*} 
 M_a(x)=\max_{m\in\Z^d\setminus\{0\},\ |x-m|\ne a} |a-|x-m||^{-1},
\end{equation*}
we have
\begin{multline}\label{G converge}
 \sum_{m\in\Z^d\setminus\{0\},\ |x-m|\ne a}
  \left(\frac{a}{|x-m|}\right)^{\frac{d+1}2+\dsharp}
  \big|\psi_{\beta}(\lambda,|a-|x-m||)\big| \\
 \le
 C
 \left\{
 \sum_{m\in\Z^d\setminus\{0\}}
  \frac1{|x-m|^{\frac{d+1}2+\dsharp}}
 \right\}
 M_a(x)\lambda^{-\beta-1}<\infty.
\end{multline}
Hence, 
\begin{equation}\label{G -b-1}
 G_{\beta,a}(\lambda,x)
 =
 M_a(x)
 O(\lambda^{-\beta-1})
 \quad\text{as}\quad \lambda\to\infty.
\end{equation}
Next, recall that
\begin{equation*}
 r_d(a:x)=\sum_{m\in\Z^d, \ |x-m|=a}1,
 \qquad
 x\in\T^d,
\end{equation*}
and let
\begin{equation}\label{tr}
 \tilde{r}_d(a:x)=\sum_{m\in\Z^d\setminus\{0\},\ |x-m|=a}1,
 \qquad x\in\T^d.
\end{equation}
Then
\begin{equation}\label{rd trd}
 \tilde{r}_d(a:0)
 =
 r_d(a:0)
 \quad\text{and}\quad
 \tilde{r}_d(a:x)
 =
 r_d(a:x)=0
 \ \text{for} \ 
 x\in E_a.
\end{equation}

\begin{thm}[Generalized Hardy's identity]\label{thm:RT}
Let $d\ge 1$, $\beta>-1$ and $a>0$.
Then 
\begin{multline}\label{GHI}
 S_\lambda(u_{\beta,a})(x)
 =
 u_{\beta,a}(x)
 +\big(\sigma_\lambda(U_{\beta,a})(x)-U_{\beta,a}(x)\big)
 \\
 +\bigg(G_{\beta,a}(\lambda:x)
 +\tilde{r}_d(a:x)\Big(L_{\beta,a}+o(1)\Big)\lambda^{-\beta}\bigg)
\\
 + \cK_{\beta,a}(\lambda^2:x) 
  +O(\lambda^{-\beta-1})
 \quad\text{as}\quad \lambda\to\infty
\end{multline}
for all $x\in\T^d$.
\end{thm}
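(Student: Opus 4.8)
The plan is to start from the fundamental identity of Corollary~\ref{cor:GHI}. In that formula the term $\sigma_\lambda(U_{\beta,a})(x)$ already appears, and the finite sum $\sum_{j=0}^{\dsharp}(-1)^j\Delta_j(\lambda^2:x)A_{\beta,a}^{(j)}(\lambda^2)$ is, by the definition \eqref{K}, exactly $\cK_{\beta,a}(\lambda^2:x)$. Hence \eqref{GHI} will follow once the off-diagonal lattice sum
\begin{equation*}
 R_\lambda(x)=(-1)^{\dsharp+1}\sum_{m\in\Z^d\setminus\{0\}}\int_0^{\lambda^2}\cD_{\dsharp}(s:x-m)A_{\beta,a}^{(\dsharp+1)}(s)\,ds
\end{equation*}
is shown to equal $\big(u_{\beta,a}(x)-U_{\beta,a}(x)\big)+G_{\beta,a}(\lambda:x)+\tilde{r}_d(a:x)(L_{\beta,a}+o(1))\lambda^{-\beta}+O(\lambda^{-\beta-1})$ as $\lambda\to\infty$. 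Since $\dsharp>(d-1)/2$, the sum defining $R_\lambda(x)$ is absolutely convergent (as in \eqref{D}), which is what will legitimize the rearrangements below.

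First I would reduce a single summand to one Bessel integral. Inserting \eqref{cD} and \eqref{A_beta,a} and substituting $u=2\pi a\sqrt{s}$, a direct computation gives, with the shifted order $\mu=\tfrac d2+\dsharp$ and $r=|x-m|$,
\begin{equation*}
 \int_0^{\lambda^2}\cD_{\dsharp}(s:x-m)A_{\beta,a}^{(\dsharp+1)}(s)\,ds
 =(-1)^{\dsharp+1}\,2^\beta\Gamma(\beta+1)a^{2\beta}\Big(\tfrac ar\Big)^{\mu}\int_0^{2\pi a\lambda}\frac{J_{\mu}(\tfrac ra u)J_{\mu+\beta+1}(u)}{u^{\beta}}\,du .
\end{equation*}
This is precisely the integral treated in Corollaries~\ref{cor:Bessel1} and \ref{cor:Bessel44} and in Lemma~\ref{lem:Bessel3}, only with $\mu=\tfrac d2+\dsharp$ in place of $\tfrac d2-1$; note that the two signs $(-1)^{\dsharp+1}$ cancel in $R_\lambda(x)$. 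I would then split $\int_0^{2\pi a\lambda}=\int_0^{\infty}-\int_{2\pi a\lambda}^{\infty}$ in each summand.

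Next I would evaluate and reassemble the three resulting contributions. By Corollary~\ref{cor:Bessel1}, whose value is independent of $\mu$, the full integral $\int_0^\infty$ produces $\phi_{\beta,a}(|x-m|)=U_{\beta,a}(x-m)$ for every $m$ with $|x-m|\ne a$; summing over $m\ne0$ gives $\sum_{m\ne0}U_{\beta,a}(x-m)=u_{\beta,a}(x)-U_{\beta,a}(x)$, a finite sum since only finitely many $m$ satisfy $|x-m|<a$. The finitely many indices with $|x-m|=a$, of which there are $\tilde{r}_d(a:x)$ by \eqref{tr}, must instead be evaluated without splitting, directly by Corollary~\ref{cor:Bessel44}, giving $L_{\beta,a}\lambda^{-\beta}+o(\lambda^{-\beta})$ apiece and hence the total $\tilde{r}_d(a:x)(L_{\beta,a}+o(1))\lambda^{-\beta}$. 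Finally, for $|x-m|\ne a$ the tail $\int_{2\pi a\lambda}^\infty$ is supplied by Lemma~\ref{lem:Bessel3} with $\mu+\tfrac12=\tfrac{d+1}2+\dsharp$, which yields the term
\begin{equation*}
 \frac{\Gamma(\beta+1)(2a)^\beta}{\pi}\Big(\frac{a}{|x-m|}\Big)^{\frac{d+1}2+\dsharp}\psi_{\beta}(\lambda,a-|x-m|)
\end{equation*}
plus a remainder of size $\big(\tfrac{a}{|x-m|}\big)^{\frac{d+1}2+\dsharp}(1+\tfrac{a}{|x-m|})O(\lambda^{-\beta-1})$; summed over $m\ne0$ these assemble into $G_{\beta,a}(\lambda:x)$ (see \eqref{Gx}) together with an $O(\lambda^{-\beta-1})$ error, the requisite summability being furnished by \eqref{sup sum}, \eqref{G converge} and \eqref{G -b-1}.

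The main obstacle will be the bookkeeping of this tail sum and its identification with $G_{\beta,a}(\lambda:x)$. Two points need care. First, one must justify interchanging the summation over $m$ with the decomposition $\int_0^\infty-\int_{2\pi a\lambda}^\infty$, splitting $R_\lambda(x)$ into its main, boundary and tail pieces; this rests on the absolute convergence ($\dsharp>(d-1)/2$) together with the uniform bound \eqref{G converge}. Second, Lemma~\ref{lem:Bessel3} delivers $\psi_\beta$ with the signed argument $a-|x-m|$, whereas $G_{\beta,a}(\lambda:x)$ is written with $|a-|x-m||$; by Lemma~\ref{lem:psi}~(i) each such term is $|a-|x-m||^{-1}O(\lambda^{-\beta-1})$ and the weights are summable against \eqref{sup sum}, so for each fixed $x$ the discrepancy between the two conventions---and likewise all the accumulated remainders---is $O(\lambda^{-\beta-1})$ and is absorbed into the final error term. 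Collecting the main term $u_{\beta,a}(x)-U_{\beta,a}(x)$, the boundary term $\tilde{r}_d(a:x)(L_{\beta,a}+o(1))\lambda^{-\beta}$, the tail term $G_{\beta,a}(\lambda:x)$, and the $O(\lambda^{-\beta-1})$ remainder then yields \eqref{GHI}.
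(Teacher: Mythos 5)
Your proof is correct and follows essentially the same route as the paper's own: start from Corollary~\ref{cor:GHI}, reduce each off-diagonal term to the Bessel integral \eqref{DA kd} via \eqref{cD}, \eqref{A_beta,a} and the substitution $u=2\pi a\sqrt{s}$, split $\int_0^{2\pi a\lambda}=\int_0^{\infty}-\int_{2\pi a\lambda}^{\infty}$ for $|x-m|\ne a$, and invoke Corollary~\ref{cor:Bessel1}, Corollary~\ref{cor:Bessel44} and Lemma~\ref{lem:Bessel3} to produce $u_{\beta,a}(x)-U_{\beta,a}(x)$, the term $\tilde{r}_d(a:x)\big(L_{\beta,a}+o(1)\big)\lambda^{-\beta}$, the term $G_{\beta,a}(\lambda:x)$, and the $O(\lambda^{-\beta-1})$ error. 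If anything, you are slightly more careful than the paper, which silently identifies the signed argument $\psi_{\beta}(\lambda,a-|x-m|)$ delivered by Lemma~\ref{lem:Bessel3} with the unsigned $\psi_{\beta}(\lambda,|a-|x-m||)$ appearing in the definition \eqref{Gx}, a discrepancy that, as you note, is harmless since only the bound \eqref{G -b-1} on $G_{\beta,a}$ is ever used.
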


If $0<a<1/2$,
then $\tilde{r}_d(a:x)=0$ and
$u_{\beta,a}(x)=U_{\beta,a}(x)$
in $x\in\T^d$.
Combining these and \eqref{G -b-1},
we have the following corollary
of Theorem~\ref{thm:RT}.

\begin{cor}\label{cor:GHIa<1/2}
Let $d\ge 1, \beta>-1$ and $0<a<1/2$. 
Then
\begin{equation}\label{GHIa<1/2}
 S_\lambda(u_{\beta,a})(x)
 =
 \sigma_\lambda (U_{\beta,a})(x)
 + \cK_{\beta,a}(\lambda^2:x) 
  +O(\lambda^{-\beta-1})
 \quad\text{as}\quad \lambda\to\infty
\end{equation}
for all $x\in\T^d$.
\end{cor}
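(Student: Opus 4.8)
The plan is to read off this corollary directly from the Generalized Hardy's identity, Theorem~\ref{thm:RT}, by specializing the general formula \eqref{GHI} to the range $0<a<1/2$. All of the analytic content has already been discharged in Theorem~\ref{thm:RT} and in the convergence/decay estimates \eqref{G converge}--\eqref{G -b-1}; what remains is to observe that two of the terms on the right-hand side of \eqref{GHI} either collapse or vanish in this regime, and that a third is absorbed into the error.

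First I would record the two elementary geometric facts that drive everything. For $x\in\T^d=(-1/2,1/2]^d$ and any $m\in\Z^d\setminus\{0\}$, some coordinate satisfies $|m_k|\ge 1$, so $|x-m|\ge|x_k-m_k|\ge|m_k|-|x_k|\ge 1-1/2=1/2>a$. Two consequences follow at once: (a) only $m=0$ can contribute to the periodization \eqref{uba}, whence $u_{\beta,a}(x)=U_{\beta,a}(x)$ for every $x\in\T^d$; and (b) there is no $m\ne 0$ with $|x-m|=a$, so $\tilde{r}_d(a:x)=0$ by \eqref{tr}. These are exactly the assertions stated in the text preceding the corollary. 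I would then substitute them into \eqref{GHI}. The identity $u_{\beta,a}(x)=U_{\beta,a}(x)$ collapses the first two terms, since $u_{\beta,a}(x)+\big(\sigma_\lambda(U_{\beta,a})(x)-U_{\beta,a}(x)\big)=\sigma_\lambda(U_{\beta,a})(x)$; the vanishing $\tilde{r}_d(a:x)=0$ kills the term $\tilde{r}_d(a:x)(L_{\beta,a}+o(1))\lambda^{-\beta}$ entirely. The only survivor of the parenthesized block is $G_{\beta,a}(\lambda:x)$, which by \eqref{G -b-1} is $M_a(x)\,O(\lambda^{-\beta-1})$, and hence $O(\lambda^{-\beta-1})$; merging this with the error already present in \eqref{GHI} yields precisely \eqref{GHIa<1/2}.

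There is essentially no obstacle here, since the corollary is a clean specialization of a theorem we may assume. The single point deserving a moment's care is to confirm that $M_a(x)$ is finite. This is immediate from the same separation bound: for $m\ne 0$ we have $|a-|x-m||=|x-m|-a\ge 1/2-a>0$, so $M_a(x)\le(1/2-a)^{-1}<\infty$. Note that this bound is independent of $x$, so in fact the error term $O(\lambda^{-\beta-1})$ in \eqref{GHIa<1/2} can be taken uniform in $x\in\T^d$, although the statement only claims it pointwise.
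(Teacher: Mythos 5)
Your proof is correct and follows essentially the same route as the paper: the paper also obtains the corollary by specializing Theorem~\ref{thm:RT}, noting that $0<a<1/2$ forces $u_{\beta,a}(x)=U_{\beta,a}(x)$ and $\tilde{r}_d(a:x)=0$ on $\T^d$, and then absorbing $G_{\beta,a}(\lambda:x)$ into the error via \eqref{G -b-1}. Your explicit check that $M_a(x)\le(1/2-a)^{-1}$ (hence the uniformity of the $O(\lambda^{-\beta-1})$ term) is a detail the paper leaves implicit, but it is exactly the same argument.
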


If $d=2$, $\beta=0$ and $a>0$, 
then
$U_{0,a}(x)$ is the indicator function 
of the ball $\{x\in\R^2: |x|<a\}$.
In this case,
from Theorem~\ref{thm:FI} (i)~(a), (ii) and (iii)
it follows that
\begin{equation*}
 \lim_{\lambda\to\infty}\big(\sigma_{\lambda}(U_{0,a})(x)-U_{0,a}(x)\big)
 =
 \begin{cases}
  L_{0,a}=\dfrac12, & |x|=a, \\[1ex]
  0, & |x|\ne a,
 \end{cases}
 \quad
 x\in\R^2.
\end{equation*}
Then we conclude that
\begin{equation*}
 \lim_{\lambda\to\infty}\big(\sigma_{\lambda}(U_{0,a})(x)-U_{0,a}(x)\big)
 +\tilde{r}_2(a:x)L_{0,a}
 =
 \frac12 r_2(a:x)
 =
 \frac12 \sum_{|x+m|=a}1,
 \quad
 x\in\T^2.
\end{equation*}
By \eqref{G -b-1} and Lemma~\ref{lem:K 1-4} bellow
we also have
\begin{equation*}
 G_{0,a}(\lambda:x)+\cK_{0,a}(\lambda^2:x)\to0
 \quad\text{as} \ \lambda\to\infty,
 \quad
 x\in\T^2.
\end{equation*}
Since
$u_{0,a}(x)=\sum_{|x+m|<a}1$ (see \eqref{uba}),
we have the following corollary.

\begin{cor}[{\cite[Theorem]{Kuratsubo1996} and \cite[page 446]{Brandolini-Colzani1999}}]\label{cor:HI x}
Let $d=2$, $\beta=0$ and $a>0$.
Then
\begin{equation*}
 \lim_{\lambda\to\infty} S_{\lambda}(u_{0,a})(x)
 =
 \sum_{|x+m|<a}1 + \frac12 \sum_{|x+m|=a}1,
\end{equation*}
for all $x\in\T^2$.
\end{cor}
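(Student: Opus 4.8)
The plan is to read off the result by specializing the generalized Hardy's identity (Theorem~\ref{thm:RT}) to $d=2$ and $\beta=0$ and then letting $\lambda\to\infty$ term by term. With $\beta=0$ the scaling factor $\lambda^{-\beta}$ appearing in \eqref{GHI} is identically $1$ and the remainder $O(\lambda^{-\beta-1})=O(\lambda^{-1})$ vanishes in the limit, so no delicate growth estimate is needed; the whole argument reduces to identifying the limits of the four structural terms $\sigma_\lambda(U_{0,a})(x)-U_{0,a}(x)$, $\tilde r_2(a:x)L_{0,a}$, $G_{0,a}(\lambda:x)$ and $\cK_{0,a}(\lambda^2:x)$.

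First I would evaluate $\lim_{\lambda\to\infty}\bigl(\sigma_\lambda(U_{0,a})(x)-U_{0,a}(x)\bigr)$ using Theorem~\ref{thm:FI} with $d=2$ and $\beta=0$, splitting on the value of $|x|$ for the representative $x\in\T^2\subset\R^2$. Since $a>0$ we have $|0|\ne a$, so at the origin part~(i)(a) (valid since $\beta=0>(d-3)/2=-1/2$) gives a difference that is $O(\lambda^{-1/2})\to0$; for $x\in\tE_a$ part~(iii) gives $O(\lambda^{-1})\to0$; and for $x\in\tG_a$, where $U_{0,a}(x)=0$, part~(ii) gives $\sigma_\lambda(U_{0,a})(x)\to L_{0,a}=\tfrac12$. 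Hence this limit equals $\tfrac12$ when $|x|=a$ and $0$ otherwise.

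Next I would combine this with the term $\tilde r_2(a:x)L_{0,a}=\tfrac12\,\tilde r_2(a:x)$. Writing $r_2(a:x)=\tilde r_2(a:x)+\mathbf 1_{\{|x|=a\}}$, the indicator $\mathbf 1_{\{|x|=a\}}$ being exactly the $m=0$ contribution to $r_2$ (compare \eqref{rd} and \eqref{tr}), the two pieces add to $\tfrac12 r_2(a:x)=\tfrac12\sum_{|x+m|=a}1$ after relabelling $m\mapsto-m$. For the remaining two terms I would invoke the estimate \eqref{G -b-1}, which gives $G_{0,a}(\lambda:x)=M_a(x)O(\lambda^{-1})\to0$, together with Lemma~\ref{lem:K 1-4}, which yields $\cK_{0,a}(\lambda^2:x)\to0$; thus $G_{0,a}(\lambda:x)+\cK_{0,a}(\lambda^2:x)\to0$.

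Finally, substituting these four limits into \eqref{GHI} and using $u_{0,a}(x)=\sum_{|x+m|<a}1$ from \eqref{uba} gives the claimed identity. The argument is essentially bookkeeping; the only point requiring care is the clean separation of the $m=0$ contribution (handled analytically through $\sigma_\lambda$ and Theorem~\ref{thm:FI}) from the $m\ne0$ contributions (handled combinatorially through $\tilde r_2$ and $G_{0,a}$), so that the two jump contributions recombine into the single count $\tfrac12 r_2(a:x)$. Because $\beta=0$ removes all scaling issues, there is no genuine analytic obstacle beyond correctly matching these cases.
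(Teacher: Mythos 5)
Your proposal is correct and follows essentially the same route as the paper: the paper also derives Corollary~\ref{cor:HI x} by specializing Theorem~\ref{thm:RT} to $d=2$, $\beta=0$, evaluating $\lim_{\lambda\to\infty}\bigl(\sigma_\lambda(U_{0,a})(x)-U_{0,a}(x)\bigr)$ via Theorem~\ref{thm:FI}~(i)(a), (ii), (iii), recombining it with $\tilde r_2(a:x)L_{0,a}$ into $\tfrac12 r_2(a:x)=\tfrac12\sum_{|x+m|=a}1$, and killing $G_{0,a}(\lambda:x)+\cK_{0,a}(\lambda^2:x)$ by \eqref{G -b-1} and Lemma~\ref{lem:K 1-4}. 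Your explicit bookkeeping of the $m=0$ versus $m\ne0$ contributions is exactly the content of the paper's identity $\lim(\sigma_\lambda-U_{0,a})+\tilde r_2 L_{0,a}=\tfrac12 r_2$, so there is nothing to add.
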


\begin{rem}\label{rem:Hardy}
Let $d=2$, $\beta=0$ and $a>0$.
By \eqref{Poisson} and \eqref{hat U} together with \eqref{Bessel-at-zero} 
we have
$\hat u_{0,a}(0)=\pi a^2$ and
$\hat u_{0,a}(m)=a{J_1(2\pi a|m|)}/{|m|}$ for $m\ne0$.
Then
\begin{equation*}
 S_\lambda(u_{0,a})(x)
 =
 \pi a^2 
 + a\sum_{0<|m|<\lambda} \frac{J_1(2\pi a|m|)}{|m|} e^{2\pi imx}.
\end{equation*}
In particular, if $x=0$, then
Corollary~\ref{cor:HI x} shows Hardy's identity that
\begin{equation}\label{HI}
 \lim_{\lambda\to\infty}
 \left( \pi a^2 + a\sum_{0<|m|<\lambda} 
       \frac{J_1(2\pi a|m|)}{|m|} \right)
 =
 \sum_{|m|<a}1 + \frac12 \sum_{|m|=a}1,
\end{equation}
which was studied by {Vorono\"\i}~\cite{Voronoi1905} (1905), 
Hardy~\cite{Hardy1915} (1915)
and Hardy and Landau~\cite{Hardy-Landau1924} (1924).
Therefore, we shall call the identity \eqref{GHI} the generalized Hardy's identity.
\end{rem}

\begin{rem}[The "serendipitous phenomenon" by M.~Taylor]\label{rem:Taylor}
The observation of Taylor~\cite{Taylor-preprint-web,Taylor-preprint} 
can be stated as follows under our notation:
For $d=2$, $\beta=-1/2$ and $0<a<1/2$,
Corollary~\ref{cor:GHIa<1/2} implies that 
\begin{equation}\label{serendipitous}
 S_\lambda (u_{-\frac12,a})(x)
 =
 \sigma_\lambda (U_{-\frac12,a})(x)
  + \frac{\Delta_0(\lambda^2: x) }{\lambda}\sin(2\pi a\lambda)
  + O(\lambda^{-\frac12}).
\end{equation}
He found a certain choppiness of the graphs of $S_\lambda (u_{-\frac12,a})(x)$
outside the disk $\{|x|<a\}$.
This choppiness is absent for the analogous partial Fourier inversion 
$\sigma_\lambda (U_{-\frac12,a})(x)$,
since the graphs of $\sigma_\lambda (U_{-\frac12,a})(x)$ are rotational symmetry. 
He also found a further surprise that,
for certain discrete values of $\lambda$, 
this choppiness magically clears up, 
and $S_\lambda (u_{-\frac12,a})(x)$ behaves about as nicely
on the torus as does $\sigma_\lambda (U_{-\frac12,a})(x)$ on the Euclidean space.
Now, from \eqref{serendipitous}
we know that the set of these discrete values of $\lambda$ 
is 
\begin{equation*}
 \{\lambda>0:\sin(2\pi a\lambda)=0\}.
\end{equation*}
For example, if $a=3/(4\pi)$,
then 
$\lambda=({2\pi}/3)n$ for $n=1,2,\dots$,
which are
correspondent with 
Figure 7A ($n=6$) and Figure 7F ($n=7$) 
in  \cite{Taylor-preprint-web}.
For the coefficient of $\sin(2\pi a\lambda)$,
we can calculate by Lemma~\ref{lem:LPP1}
as
\begin{equation*}
 \frac{\Delta_0(\lambda^2:x)}{\lambda}=O(\lambda^{-\frac13}).
\end{equation*}
\end{rem}

In the rest of this subsection we state three lemmas.
Recall that 
\begin{equation*}
 c(d)
 =d-\frac{2d}{d+1}-\frac{d+1}2
 =\frac{d-5}2+\frac2{d+1}
 =\frac{d(d-4)-1}{2(d+1)}
 =\frac{d-3}2-\frac{d-1}{d+1}.
\end{equation*}

\begin{lem}\label{lem:K 1-4}
Let $d\ge1$, $\beta>-1$ and $a>0$. 
Then 
\begin{equation*}
 \cK_{\beta,a}(\lambda^2:x)
 =
 O(\lambda^{c(d)-\beta})
 \quad\text{uniformly on $\T^d$}.
\end{equation*}
Consequently, 
if $1\le d\le4$ and $\beta>c(d)$, then
\begin{equation*}
 \lim_{\lambda\to\infty}\cK_{\beta,a}(\lambda^2:x)=0 
 \quad\text{uniformly on $\T^d$},
\end{equation*}
and, 
if $d\ge2$, then
\begin{equation*}
 \lim_{\lambda\to\infty}
  \frac{\left|\cK_{\beta,a}(\lambda^2:x) \right|}
             {\lambda^{\frac{d-3}2-\beta}}
  =0 
 \quad\text{uniformly on $\T^d$}.
\end{equation*}
\end{lem}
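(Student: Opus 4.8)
The plan is to insert the two available ingredients into the finite sum \eqref{K} defining $\cK_{\beta,a}$ and to track the exponents term by term. Those ingredients are the size estimates for $\Delta_j(\lambda^2:x)$ from Lemma~\ref{lem:LPP1} (which are uniform in $x\in\T^d$) and an asymptotic bound for the derivatives $A_{\beta,a}^{(j)}(\lambda^2)$.

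First I would bound $A_{\beta,a}^{(j)}$. By \eqref{A_beta,a} and \eqref{Lam},
\begin{equation*}
 A_{\beta,a}^{(j)}(s)
 =
 (-1)^j\frac{\Gamma(\beta+1)}{\pi^{\beta-j}}a^{\frac d2+\beta+j}
 \frac{J_{\frac d2+\beta+j}(2\pi a\sqrt s)}{s^{\frac12(\frac d2+\beta+j)}},
\end{equation*}
so the asymptotic \eqref{Bessel-asymp-infty} gives $J_{\frac d2+\beta+j}(2\pi a\sqrt s)=O(s^{-1/4})$, whence
\begin{equation*}
 A_{\beta,a}^{(j)}(\lambda^2)=O(\lambda^{-\frac12-\frac d2-\beta-j})
 \quad\text{as}\quad \lambda\to\infty.
\end{equation*}

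Next I would combine this with Lemma~\ref{lem:LPP1}, first observing from $\dsharp=[(d-1)/2]+1$ that $j=0$ falls into the first case of \eqref{LPP1}, the indices $1\le j\le\dsharp-1$ into the second, and $j=\dsharp$ into the third. For $j=0$ the product $\Delta_0(\lambda^2:x)A_{\beta,a}^{(0)}(\lambda^2)$ has exponent
\begin{equation*}
 \Big(d-\frac{2d}{d+1}\Big)-\Big(\frac12+\frac d2+\beta\Big)=c(d)-\beta,
\end{equation*}
using $c(d)=\frac{d-1}2-\frac{2d}{d+1}$. For $1\le j\le\dsharp-1$ the exponent is $c(d)-\beta-\frac{d-1}{d+1}j+2\ve$, strictly below $c(d)-\beta$ once $\ve$ satisfies $2\ve<\frac{d-1}{d+1}$. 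Finally, for $j=\dsharp$ the exponent collapses to $-\beta-1$, and since $c(d)\ge-1$ for every $d\ge1$, this contribution is again $O(\lambda^{c(d)-\beta})$. Summing the $\dsharp+1$ terms gives $\cK_{\beta,a}(\lambda^2:x)=O(\lambda^{c(d)-\beta})$ uniformly on $\T^d$.

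The consequences follow from algebraic identities for $c(d)$. If $1\le d\le4$ and $\beta>c(d)$, the exponent $c(d)-\beta$ is negative, so the bound tends to $0$ uniformly. For the last assertion I would divide by $\lambda^{(d-3)/2-\beta}$ and invoke $c(d)=\frac{d-3}2-\frac{d-1}{d+1}$, giving
\begin{equation*}
 \frac{\cK_{\beta,a}(\lambda^2:x)}{\lambda^{\frac{d-3}2-\beta}}
 =O\big(\lambda^{c(d)-\frac{d-3}2}\big)
 =O\big(\lambda^{-\frac{d-1}{d+1}}\big)\to0
 \quad(d\ge2).
\end{equation*}
The main bookkeeping obstacle is the middle range: one must check that the $\ve$-loss in the second case of \eqref{LPP1} is harmless, which rests on the strict positivity of $\frac{d-1}{d+1}$ for $d\ge2$ (the range $1\le j\le\dsharp-1$ being empty when $d=1$), together with the borderline identity $c(1)=-1$ that makes the $j=\dsharp$ term match the claimed rate exactly in dimension one.
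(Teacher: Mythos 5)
Your proposal is correct and follows essentially the same route as the paper: the same bound $A_{\beta,a}^{(j)}(\lambda^2)=O(\lambda^{-(\frac d2+\beta+j+\frac12)})$ from the Bessel asymptotics, the same case split of Lemma~\ref{lem:LPP1} according to $j=0$, $1\le j\le\dsharp-1$, $j=\dsharp$, and the same exponent bookkeeping showing the $j=0$ term dominates with exponent $c(d)-\beta$ while the middle terms lose $j\frac{d-1}{d+1}$ (modulo a harmless $\ve$) and the top term gives $-1-\beta\le c(d)-\beta$. The deduction of the two consequences from $c(d)=\frac{d-3}2-\frac{d-1}{d+1}$ also matches the paper's argument.
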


\begin{lem}\label{lem:K 5-}
Let $d\ge5$, $\beta>-1$ and $a>0$. 
Then
\begin{alignat*}{2}
  &\lim_{\lambda\to\infty}
  \frac{\left|\cK_{\beta,a}(\lambda^2:x) \right|}
             {\lambda^{\frac{d-5}2-\beta}}
  =0, 
  &\quad\text{if $x\in\T^d\setminus\Q^d$},
 \\
  0<
  &\limsup_{\lambda\to\infty}
  \frac{\left|\cK_{\beta,a}(\lambda^2:x) \right|}
             {\lambda^{\frac{d-5}2-\beta}}
  <\infty, 
  &\quad\text{if $x\in\T^d\cap\Q^d$}.
\end{alignat*}
\end{lem}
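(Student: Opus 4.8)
The plan is to isolate the $j=0$ term in the definition \eqref{K} of $\cK_{\beta,a}$ and to show that it governs the whole expression, every term with $j\ge1$ being of strictly smaller order. First I would record the size of the coefficients $A_{\beta,a}^{(j)}(\lambda^2)$. Combining \eqref{A_beta,a} with \eqref{Lam}, the quantity $A_{\beta,a}^{(j)}(\lambda^2)$ equals a nonzero constant times $J_{\frac d2+\beta+j}(2\pi a\lambda)\,\lambda^{-(\frac d2+\beta+j)}$, so \eqref{Bessel-asymp-infty} yields $A_{\beta,a}^{(j)}(\lambda^2)=O(\lambda^{-\frac12-\frac d2-\beta-j})$ and, more precisely for $j=0$,
\[
 A_{\beta,a}^{(0)}(\lambda^2)
 =
 C_0\,\lambda^{-\frac12-\frac d2-\beta}
 \cos\!\left(2\pi a\lambda-\tfrac{d+2\beta+1}4\pi\right)
 +O(\lambda^{-\frac32-\frac d2-\beta}),
\]
where $C_0=\Gamma(\beta+1)a^{\frac d2+\beta-\frac12}/\pi^{\beta+1}>0$. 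Writing $\rho=\frac{d-5}2-\beta$ for the target exponent, note that the $j=0$ product $\Delta_0 A_{\beta,a}^{(0)}$ has leading exponent $(d-2)+(-\frac12-\frac d2-\beta)=\rho$.

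Next I would dispose of the terms with $j\ge1$. For $1\le j<(d-4)/2$ the first case of Lemma~\ref{lem:LPP2} gives $\Delta_j(\lambda^2:x)=O(\lambda^{d-2})$, so the $j$-th term is $O(\lambda^{\rho-j})=o(\lambda^{\rho})$; for $(d-4)/2\le j\le(d-1)/2$ the second case gives $\Delta_j(\lambda^2:x)=O(\lambda^{\frac23(d-1+j)+\ve})$, and a direct exponent count shows the product is $o(\lambda^{\rho})$ precisely because $4-d<j$, which holds for every $j\ge1$ when $d\ge5$ (with room to absorb $\ve$); finally for $j>(d-1)/2$ Lemma~\ref{lem:LPP1} gives $\Delta_j(\lambda^2:x)=O(\lambda^{\frac{d-1}2+j})$, whence the $j$-th term is $O(\lambda^{-1-\beta})=o(\lambda^{\rho})$ since $d\ge5$. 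Consequently
\[
 \cK_{\beta,a}(\lambda^2:x)
 =
 \Delta_0(\lambda^2:x)\,A_{\beta,a}^{(0)}(\lambda^2)
 +o(\lambda^{\rho})
 \quad\text{as}\quad\lambda\to\infty,
\]
reducing everything to the single product $\Delta_0\,A_{\beta,a}^{(0)}$.

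For $x\in\T^d\setminus\Q^d$ the first case of Lemma~\ref{lem:LPP2} gives $\Delta_0(\lambda^2:x)=o(\lambda^{d-2})$, so multiplying by $A_{\beta,a}^{(0)}(\lambda^2)=O(\lambda^{-\frac12-\frac d2-\beta})$ produces $\cK_{\beta,a}(\lambda^2:x)=o(\lambda^{\rho})$, which is the first assertion. For $x\in\T^d\cap\Q^d$ the same lemma gives $\Delta_0(\lambda^2:x)=O(\lambda^{d-2})$, hence the upper bound $\cK_{\beta,a}(\lambda^2:x)=O(\lambda^{\rho})$ and $\limsup<\infty$.

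The main obstacle is the strict positivity of the $\limsup$ for rational $x$, where I must prevent the oscillation of $\cos(2\pi a\lambda-\frac{d+2\beta+1}4\pi)$ from annihilating the lower bound $|\Delta_0(\lambda^2:x)|\gs\lambda^{d-2}$. This is exactly what Lemma~\ref{lem:LPP2S} is engineered for: it produces $s_k\in[\ell_k^2,m_k^2]$ with $|\Delta_0(s_k:x)|\ge C(x)s_k^{\frac d2-1}$, and the endpoints defined by \eqref{ell_k}--\eqref{m_k} are chosen so that for $\lambda\in[\ell_k,m_k]$ the phase $2\pi a\lambda-\frac{d+2\beta+1}4\pi$ lies in $[\pi k-\frac\pi4,\pi k+\frac\pi4]$, on which $|\cos|\ge 1/\sqrt2$. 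Setting $\lambda_k=\sqrt{s_k}\in[\ell_k,m_k]$, I would then obtain $|A_{\beta,a}^{(0)}(\lambda_k^2)|\ge\frac{C_0}{2}\lambda_k^{-\frac12-\frac d2-\beta}$ for large $k$, so that $|\Delta_0(\lambda_k^2:x)\,A_{\beta,a}^{(0)}(\lambda_k^2)|\ge C'(x)\lambda_k^{\rho}$; since the remaining terms are $o(\lambda_k^{\rho})$, it follows that $|\cK_{\beta,a}(\lambda_k^2:x)|\ge\frac{C'(x)}2\lambda_k^{\rho}$ for large $k$, giving $\limsup_{\lambda\to\infty}|\cK_{\beta,a}(\lambda^2:x)|/\lambda^{\rho}>0$ and completing the proof.
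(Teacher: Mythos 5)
Your proposal is correct and follows essentially the same route as the paper's proof: the same bounds from Lemmas~\ref{lem:LPP1} and \ref{lem:LPP2} combined with \eqref{Aaaj} control every term of $\cK_{\beta,a}$, and the same use of Lemma~\ref{lem:LPP2S} together with the phase localization $2\pi a\lambda-\frac{d+2\beta+1}4\pi\in[k\pi-\frac\pi4,k\pi+\frac\pi4]$ for $\lambda\in[\ell_k,m_k]$ (so $|\cos|\ge1/\sqrt2$) yields the strict positivity of the $\limsup$ at rational points. Your presentation is if anything slightly more explicit than the paper's in isolating the $j=0$ term first and noting that the $o(\lambda^{\frac{d-5}2-\beta})$ size of the $j\ge1$ terms is what allows the lower bound for $\Delta_0 A_{\beta,a}^{(0)}$ to pass to $\cK_{\beta,a}$ itself.
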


\begin{lem}\label{lem:K a.e.}
Let $d\ge4$, $\beta>-1/2$ and $a>0$.
Then 
\begin{equation*}
 \lim_{\lambda\to\infty}\cK_{\beta,a}(\lambda^2:x)=0,
 \quad\text{a.e.}\, x\in\T^d.
\end{equation*}
\end{lem}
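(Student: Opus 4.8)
The plan is to bound each summand $\Delta_j(\lambda^2:x)A_{\beta,a}^{(j)}(\lambda^2)$ of the finite sum \eqref{K} separately and to show that every one of them tends to zero for a.e.\ $x$; since there are only $\dsharp+1$ terms and a finite union of null sets is null, this yields the claim. First I would record the size of the coefficients. From the explicit formula \eqref{A_beta,a}, the definition \eqref{Lam} of $\Lambda_\nu$, and the large-argument asymptotic \eqref{Bessel-asymp-infty}, the Bessel factor $J_{\frac d2+\beta+j}(2\pi a\lambda)$ contributes $O(\lambda^{-1/2})$ while $s^{-\nu/2}$ at $s=\lambda^2$ contributes $\lambda^{-(\frac d2+\beta+j)}$, so that
\[
 A_{\beta,a}^{(j)}(\lambda^2)=O\bigl(\lambda^{-\frac{d+1}2-\beta-j}\bigr)\quad\text{as}\quad\lambda\to\infty,\qquad j=0,1,\dots,\dsharp.
\]

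Next I would split the index range according to which of the lattice-point estimates applies. Since $\dsharp-1=[\frac{d-1}2]\le\frac{d-1}2<\dsharp$, for $0\le j\le\dsharp-1$ the a.e.\ estimate of Lemma~\ref{lem:LPP3} (valid because $d\ge4$) gives, for every $\ve>0$,
\[
 \Delta_j(\lambda^2:x)=O\bigl(\lambda^{\frac d2+\frac{d-2}{d-1}j+2\ve}\bigr)\quad\text{for a.e.\ }x\in\T^d,
\]
so multiplying by the coefficient bound makes the $j$-th term $O(\lambda^{-\frac12-\beta-\frac{j}{d-1}+2\ve})$ a.e.; the exponent is largest at $j=0$, where it equals $-\frac12-\beta+2\ve$. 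For the single remaining index $j=\dsharp>\frac{d-1}2$ I would instead invoke the third (uniform) case of Lemma~\ref{lem:LPP1}, namely $\Delta_{\dsharp}(\lambda^2:x)=O(\lambda^{\frac{d-1}2+\dsharp})$ uniformly on $\T^d$, which makes that term $O(\lambda^{-1-\beta})$ uniformly.

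Finally I would assemble the bounds. The hypothesis $\beta>-1/2$ gives $-\tfrac12-\beta<0$, so choosing $\ve$ with $2\ve<\tfrac12+\beta$ forces every exponent above to be strictly negative, whence each term, and therefore $\cK_{\beta,a}(\lambda^2:x)$, tends to $0$ for a.e.\ $x\in\T^d$. I expect the only real subtlety to be the bookkeeping at the two ends of the index range: verifying $\dsharp-1\le(d-1)/2$ so that Lemma~\ref{lem:LPP3} covers $j=0,\dots,\dsharp-1$, and $\dsharp>(d-1)/2$ so that the uniform case of Lemma~\ref{lem:LPP1} covers $j=\dsharp$. The threshold $\beta=-1/2$ is dictated entirely by the worst term $j=0$ (the $j=\dsharp$ term already decays for $\beta>-1$), so it is the $j=0$ estimate that is the crux of the argument.
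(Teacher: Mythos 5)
Your proposal is correct and follows essentially the same route as the paper: the paper also bounds each term of \eqref{K} via \eqref{Aaaj}, applies Lemma~\ref{lem:LPP3} for the indices $0\le j\le (d-1)/2$ (equivalently $j\le\dsharp-1$) to get the a.e.\ bound $C\lambda^{-\frac12-\beta-\frac{j}{d-1}+\ve}$, and handles $j=\dsharp$ by the uniform estimate \eqref{DeltaA kd} (i.e.\ the third case of Lemma~\ref{lem:LPP1}), concluding from $\beta>-1/2$. Your identification of $j=0$ as the term forcing the threshold $\beta=-1/2$ matches the paper's computation exactly.
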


\subsection{Proof of Theorems~\ref{thm:PinskyPh}--\ref{thm:AEC}}\label{ss:proof}

In this section, 
using Theorem~\ref{thm:RT} and Lemmas~\ref{lem:K 1-4}--\ref{lem:K a.e.}, 
we prove the main theorems.

\subsubsection{Proof of Theorem~\ref{thm:PinskyPh}}
For all $\beta>-1$, 
from \eqref{G -b-1} 
and Lemma~\ref{lem:K 1-4} 
it follows that 
\begin{equation*}
 G_{\beta,a}(\lambda:0)=O(\lambda^{-\beta-1})
 \quad\text{and}\quad
 \cK_{\beta,a}(\lambda^2:0)=O(\lambda^{\frac{d-3}2-\beta}), 
\end{equation*}
respectively.
Since
$\tilde{r}_d(a,0)=r_d(a,0)$ as in \eqref{rd trd},
by Theorem~\ref{thm:RT} we have
\begin{multline}\label{PinskyPh r}
 S_{\lambda}(u_{\beta,a})(0)=u_{\beta,a}(0)
 +
 \big(\sigma_{\lambda}(U_{\beta,a})(0)-U_{\beta,a}(0)\big)
 \\
 +
 r_d(a:0)
 \left( L_{\beta,a}+o(1)\right)\lambda^{-\beta}
 +O(\lambda^{\frac{d-3}2-\beta}).
\end{multline}

If $\beta>(d-3)/2$, 
then, using Theorem~\ref{thm:FI}~(i)~(a), we have
\begin{equation*}
 \sigma_{\lambda}(U_{\beta,a})(0)-U_{\beta,a}(0)=O(\lambda^{\frac{d-3}2-\beta}).
\end{equation*}
Combining this with \eqref{PinskyPh r}, we have
\begin{equation*}
 S_{\lambda}(u_{\beta,a})(0)
 =
 u_{\beta,a}(0)
 +
 r_d(a:0)
 \left( L_{\beta,a}+o(1)\right)\lambda^{-\beta}
 +O(\lambda^{\frac{d-3}2-\beta}),
\end{equation*}
which shows (i).

If $d\ge2$ and $-1<\beta\le(d-3)/2$, 
then, using \eqref{FI 0} in Theorem~\ref{thm:FI}~(i), 
we have 
\begin{equation*}
 \sigma_{\lambda}(U_{\beta,a})(0)
 -
 U_{\beta,a}(0)
 =-P_{\beta,a}^{[d]}
  \cos\left(2\pi a\lambda-\frac{d-1+2\beta}4\pi\right)
  \lambda^{\frac{d-3}2-\beta}
 +O(\lambda^{\frac{d-5}2-\beta}),
\end{equation*}
which shows (ii).

\subsubsection{Proof of Theorem~\ref{thm:|x|=a and PWC}}
By Theorem~\ref{thm:FI}~(ii) and (iii) 
we have
\begin{equation*}
 \lim_{\lambda\to\infty}\frac{\sigma_{\lambda}(U_{\beta,a})(x)-U_{\beta,a}(x)}{\lambda^{-\beta}}
 =
 \begin{cases}
 L_{\beta,a}, & x\in \tG_a, \\
 0, & x\in \tE_a,
 \end{cases}
\end{equation*}
which shows
\begin{equation}\label{sU-U+r}
 \lim_{\lambda\to\infty}\frac{\sigma_{\lambda}(U_{\beta,a})(x)-U_{\beta,a}(x)}{\lambda^{-\beta}}
 +\tilde{r}_d(a:x)L_{\beta,a}
 =
 r_d(a:x)L_{\beta,a},
 \quad
 x\in E_a\cup G_a.
\end{equation}

\noindent
{\bf Proof of (i) (a)} \ 
Let $1\le d\le4$.
By \eqref{G -b-1} and Lemma~\ref{lem:K 1-4} we have
\begin{equation*}
 \frac{G_{\beta,a}(\lambda:x)}{\lambda^{-\beta}}
 =O(\lambda^{-1}),
 \quad
 \frac{\cK_{\beta,a}(\lambda^2:x)}{\lambda^{-\beta}}
 =O(\lambda^{\frac{d-5}2+\frac 2{d+1}})=o(1).
\end{equation*}
By Theorem~\ref{thm:RT} and \eqref{sU-U+r} we have
\begin{align*}
 &\frac{S_\lambda(u_{\beta,a})(x)-u_{\beta,a}(x)}{\lambda^{-\beta}}
\\
 &=
 \frac{\sigma_{\lambda}(U_{\beta,a})(x)-U_{\beta,a}(x)}{\lambda^{-\beta}}
 + \frac{G_{\beta,a}(\lambda:x)}{\lambda^{-\beta}}
 +\tilde{r}_d(a:x)\Big(L_{\beta,a}+o(1)\Big)
\\
 &\phantom{****************************}
 + \frac{\cK_{\beta,a}(\lambda^2:x)}{\lambda^{-\beta}} 
  +O(\lambda^{-1})
\\
 &\to
 r_d(a:x)L_{\beta,a}
 \quad\text{as}\quad \lambda\to\infty,
\end{align*}
which shows the conclusion.

\noindent
{\bf Proof of (i) (b)} \ 
Let $\beta>c(d)=\frac{d-5}2+\frac 2{d+1}$.
Then by Lemma~\ref{lem:K 1-4} we have
\begin{equation*}
 \cK_{\beta,a}(\lambda^2:x)
 =O(\lambda^{\frac{d-5}2+\frac 2{d+1}-\beta})=o(1)
 \quad\text{uniformly on $\T^d$}.
\end{equation*}
If $x\in E_a$,
then $ \tilde{r}_d(a:x)=0$ as \eqref{rd trd}.
By Theorem~\ref{thm:FI} (iii) and \eqref{G -b-1} we have
\begin{equation*}
 \lim_{\lambda\to\infty}
 (\sigma_\lambda(U_{\beta,a})(x)-U_{\beta,a}(x))
 =0,
\quad
 \lim_{\lambda\to\infty}
 G_{\beta,a}(\lambda: x)
 =0
\end{equation*}
uniformly on any compact set in $E_a$.
Hence, by Theorem~\ref{thm:RT} we have
\begin{align*}
 &S_\lambda(u_{\beta,a})(x)
 -
 u_{\beta,a}(x)
\\
 &=
 (\sigma_\lambda(U_{\beta,a})(x)
 -U_{\beta,a}(x))
 +G_{\beta,a}(\lambda: x)
 +\cK_{\beta,a}(\lambda^2:x) 
  +O(\lambda^{-\beta-1})
\\
 &\to0
 \quad\text{as}\quad \lambda\to\infty
\end{align*}
uniformly on any compact set in $E_a$,
which shows the conclusion.

\noindent
{\bf Proof of (ii)} \ 
Let $d\ge5$ and $x\in E_a\cup G_a$.
By Theorem~\ref{thm:RT} we have
\begin{align*}
 &
 \frac1{\lambda^{\frac{d-5}2}}
 \left(
  \frac{S_\lambda(u_{\beta,a})(x)-u_{\beta,a}(x)}{\lambda^{-\beta}}
  -r_d(a:x)L_{\beta,a}
 \right)
\\
 &=
 \frac1{\lambda^{\frac{d-5}2}}
 \left(
 \frac{\sigma_{\lambda}(U_{\beta,a})(x)-U_{\beta,a}(x)}{\lambda^{-\beta}}
 +\tilde{r}_d(a:x)\Big(L_{\beta,a}+o(1)\Big)
  -r_d(a:x)L_{\beta,a}
 \right)
\\
 &\phantom{**}
 +\frac{G_{\beta,a}(\lambda:x)}{\lambda^{\frac{d-5}2-\beta}}
 + \frac{\cK_{\beta,a}(\lambda^2:x)}{\lambda^{\frac{d-5}2-\beta}} 
  +O(\lambda^{-\frac{d-5}2-1}).
\end{align*}
By \eqref{sU-U+r}, \eqref{G -b-1} and Lemma~\ref{lem:K 5-} we have
\begin{equation*}
 \lim_{\lambda\to\infty}
 \frac1{\lambda^{\frac{d-5}2}}
 \left(
  \frac{S_\lambda(u_{\beta,a})(x)-u_{\beta,a}(x)}{\lambda^{-\beta}}
  -r_d(a:x)L_{\beta,a}
 \right)
 =
 \lim_{\lambda\to\infty}
 \frac{\cK_{\beta,a}(\lambda^2:x)}{\lambda^{\frac{d-5}2-\beta}}
 =0
\end{equation*}
for all $x\in(E_a\cup G_a)\setminus\Q^d$,
and
\begin{multline*}
 0<
 \limsup_{\lambda\to\infty}
 \frac1{\lambda^{\frac{d-5}2}}
 \left|
  \frac{S_\lambda(u_{\beta,a})(x)-u_{\beta,a}(x)}{\lambda^{-\beta}}
  -r_d(a:x)L_{\beta,a}
 \right|
\\
 =
 \limsup_{\lambda\to\infty}
 \frac{\left|\cK_{\beta,a}(\lambda^2:x)\right|}{\lambda^{\frac{d-5}2-\beta}}
 <\infty
\end{multline*}
for all $x\in(E_a\cup G_a)\cap\Q^d$.
The proof is complete.

\subsubsection{Proof of Theorem~\ref{thm:GibbsPh}}

Let $1\le d\le 4$, $c(d)<\beta\le 0$ and $0<a<1/2$. 
Then by Lemma~\ref{lem:K 1-4} we have
\begin{equation*}
 \frac{\cK_{\beta,a}(\lambda^2:x_{\lambda}^{\pm})}{\lambda^{-\beta}}
 =o(1).
\end{equation*}
If $m\ne0$, then $|m-x_0|\ge1-a$, since $|x_0|=a$.
Then,  for large $\lambda$, we have
\begin{equation*}
 |a-|x_{\lambda}^{\pm}-m||
 \ge
 |m-x_0|-|x_{\lambda}^{\pm}-x_0|-a
 \ge
 1-2a-|x_{\lambda}^{\pm}-x_0|
 \ge
 \frac{1-2a}2>0.
\end{equation*}
Hence, by \eqref{G -b-1} we have
\begin{equation*}
 \frac{G_{\beta,a}(\lambda:x_{\lambda}^{\pm})}
      {\lambda^{-\beta}}
 =O(\lambda^{-1}).
\end{equation*}
Since $x_{\lambda}^{\pm}\in E_a$, 
$\tilde{r}_d(a:x_{\lambda}^{\pm})=0$ as in \eqref{rd trd}.
By Theorem~\ref{thm:RT} and Theorem~\ref{thm:FI}~(v) we have
\begin{align*}
 &
 \frac{S_{\lambda}(u_{\beta,a})(x_{\lambda}^{\pm})-u_{\beta,a}(x_{\lambda}^{\pm})}
      {\lambda^{-\beta}}
\\
 &=
 \left(
  \frac{\sigma_{\lambda}(U_{\beta,a})(x_{\lambda}^{\pm})-U_{\beta,a}(x_{\lambda}^{\pm})}
       {\lambda^{-\beta}}
 \right)
 +
 \frac{G_{\beta,a}(\lambda:x_{\lambda}^{\pm})}
      {\lambda^{-\beta}}
 +
 \frac{\cK_{\beta,a}(\lambda^2:x_{\lambda}^{\pm})}{\lambda^{-\beta}}
 +O(\lambda^{-1})
\\
 &\to
 G_{\beta,a}^{\pm}
 \quad\text{as}\quad \lambda\to\infty,
\end{align*}
which is the conclusion.

\subsubsection{Proofs of Theorem~\ref{thm:AEC}}
Let $d\ge 4$, $\beta>-1/2$ and $a>0$. 
Then by lemma~\ref{lem:K a.e.} we have
\begin{equation*}
 \lim_{\lambda\to\infty}\cK_{\beta,a}(\lambda^2:x)=0,
 \quad\text{a.e.}\, x\in\T^d.
\end{equation*}
Let $x\in E_a$.
Then $\tilde{r}_d(a:x)=0$ as in \eqref{rd trd}.
By Theorem~\ref{thm:FI} (iii) and \eqref{G -b-1} we have
\begin{equation*}
 \sigma_{\lambda}(U_{\beta,a})(x)-U_{\beta,a}(x)
 =O(\lambda^{-\beta-1})
 \quad\text{and}\quad
 G_{\beta,a}(\lambda:x)
 =O(\lambda^{-\beta-1}).
\end{equation*}
Then by Theorem~\ref{thm:RT} 
we have, for a.e.\,$x\in E_a$,
\begin{align*}
 &
 S_\lambda(u_{\beta,a})(x)
 -
 u_{\beta,a}(x)
\\
 &=
 \big(\sigma_\lambda(U_{\beta,a})(x)-U_{\beta,a}(x)\big)
 +G_{\beta,a}(\lambda:x)
 + \cK_{\beta,a}(\lambda^2:x) 
  +O(\lambda^{-\beta-1})
\\
 &\to0 
 \quad\text{as}\quad \lambda\to\infty,
\end{align*}
which shows the conclusion,
since the measure of $\T^d\setminus E_a$ is zero.

\subsection{Proof of Theorem~\ref{thm:RT}}\label{ss:proof-thm:RT}

By Corollary~\ref{cor:GHI} and \eqref{K} we have that, 
for all $x\in\T^d$,
\begin{multline*}
 S_\lambda(u_{\beta,a})(x)
 =
 \sigma_{\lambda}(U_{\beta,a})(x) 
 +\cK_{\beta,a}(\lambda^2:x)
\\ 
 +(-1)^{\dsharp+1}\sum_{m\in\Z^d\setminus\{0\}}
 \int_0^{\lambda^2}{\cD}_{\dsharp}(s:x-m)A_{\beta,a}^{(\dsharp+1)}(s)\,ds.
\end{multline*}
Using \eqref{cD} and \eqref{A_beta,a} 
we have
\begin{align*}
 &{\cD}_{\dsharp}(s:x-m)A_{\beta,a}^{(\dsharp+1)}(s) \\
 &=
 \frac{s^{\frac d2+\dsharp}}{\pi^{\dsharp}}
 \frac{J_{{\frac d2}+\dsharp}(2\pi\sqrt{s}|x-m|)}{(\sqrt{s}|x-m|)^{{\frac d2}+\dsharp}}
 (-1)^{\dsharp+1}{\frac{\Gamma(\beta+1)}{\pi^{\beta-(\dsharp+1)}}}a^{{\frac d2}+\beta+\dsharp+1}
 \frac{J_{{\frac d2}+\beta+\dsharp+1}(2\pi a\sqrt{s})}{s^{{\frac d2}+\beta+\dsharp+1}} \\
 &=
 (-1)^{\dsharp+1} 2^{\beta}\Gamma(\beta+1) a^{2\beta}
 \frac{J_{{\frac d2}+\dsharp}(2\pi\sqrt{s}|x-m|) J_{{\frac d2}+\beta+\dsharp+1}(2\pi a\sqrt{s})}
      {(\frac{|x-m|}{a})^{\frac{d}2+\dsharp}(2\pi a\sqrt{s})^{\beta}}
  \frac{\pi a}{\sqrt{s}}.
\end{align*}
Then, letting $u=2\pi a\sqrt{s}$, 
we have
\begin{multline}\label{DA kd}
 (-1)^{\dsharp+1}
 \int_0^{\lambda^2}\cD_{\dsharp}(s:x-m)A_{\beta,a}^{(\dsharp+1)}(s)\,ds
\\
 =
 2^{\beta}\Gamma(\beta+1) a^{2\beta}
 \int_0^{2\pi a\lambda}
 \dfrac{J_{{\frac d2}+\dsharp}(\frac{|x-m|}{a}u)\,J_{{\frac d2}+\beta+\dsharp+1}(u)}
 {(\frac{|x-m|}{a})^{\frac{d}2+\dsharp}\,u^{\beta}}
 \,du.
\end{multline}
If $|x-m|\ne a$, then 
Corollary~\ref{cor:Bessel1} shows that
\begin{multline}\label{JJ kd}
 2^{\beta}\Gamma(\beta+1) a^{2\beta}
 \int_0^{\infty}
 \dfrac{J_{{\frac d2}+\dsharp}(\frac{|x-m|}{a}u)\,J_{{\frac d2}+\beta+\dsharp+1}(u)}
 {(\frac{|x-m|}{a})^{\frac{d}2+\dsharp}\,u^{\beta}}
 \,du
\\
 =
 U_{\beta,a}(x-m)
 =
 \begin{cases}
 0, & \text{if}\ \ a<|x-m|,\\
 (a^2-|x-m|^2)^{\beta}, & \text{if}\ \ 0<|x-m|<a.
 \end{cases}
\end{multline}
If $m\ne0$ and $|x-m|\ne a$, then 
Lemma~\ref{lem:Bessel3} shows that
\begin{multline}\label{JJ kd 2}
 2^{\beta}\Gamma(\beta+1) a^{2\beta}
 \int_{2\pi a\lambda}^{\infty} 
 \dfrac{J_{{\frac d2}+\dsharp}(\frac{|x-m|}{a}u)\,J_{{\frac d2}+\beta+\dsharp+1}(u)}
 {(\frac{|x-m|}{a})^{\frac{d}2+\dsharp}\,u^{\beta}}
 \,du \\
 =
 \frac{2^{\beta}\Gamma(\beta+1)a^{\beta}}{\pi}
 \left(\frac a{|x-m|}\right)^{\frac{d+1}2+\dsharp}
 \psi_{\beta}(\lambda,a-|x-m|)
 +\frac1{|x-m|^{\frac{d+1}2+\dsharp}}
 O(\lambda^{-\beta-1}),
\end{multline}
since $|x-m|\ge1/2$ for $m\ne0$ and $x\in\T^d$.
Combining \eqref{DA kd}, \eqref{JJ kd} and \eqref{JJ kd 2},
and observing \eqref{Gx}--\eqref{G converge},
we have
\begin{align*}
 &(-1)^{\dsharp+1}
 \sum_{m\in\Z^d\setminus\{0\},\ |x-m|\ne a}
 \int_0^{\lambda^2}{\cD}_{\dsharp}(s:x-m)A_{\beta,a}^{(\dsharp+1)}(s)\,ds
\\
 &=
 \left(\sum_{m\in\Z^d\setminus\{0\},\ |x-m|\ne a}
 U_{\beta,a}(x-m)\right)
 -G_{\beta,a}(\lambda:x)
 +O(\lambda^{-\beta-1}).
\\
 &=
 u_{\beta,a}(x)-U_{\beta,a}(x)
 -G_{\beta,a}(\lambda:x)
 +O(\lambda^{-\beta-1}).
\end{align*}
On the other hand,
if $m\ne0$ and $|x-m|=a$,
then, using \eqref{DA kd} and 
Corollary~\ref{cor:Bessel44}, 
we have
\begin{align*}
 &(-1)^{\dsharp+1}
 \int_0^{\lambda^2}
 \cD_{\dsharp}(s:x-m)A_{\beta,a}^{(\dsharp+1)}(s)
 \,ds
\\
 &= 
 2^{\beta}\Gamma(\beta+1) a^{2\beta}
 \int_0^{2\pi a\lambda}
 \dfrac{J_{{\frac d2}+\dsharp}(u)\,J_{{\frac d2}+\beta+\dsharp+1}(u)}
 {\,u^{\beta}}
 \,du
\\
 &= 
 L_{\beta,a}
 \lambda^{-\beta} 
 +
 \begin{cases}
  O(\lambda^{-\beta-1}), & \beta\ge0, \\
  O(1), & -1<\beta<0
 \end{cases}
\\
&=\left(L_{\beta,a}+o(1)\right)\lambda^{-\beta}.
\end{align*}
Therefore, 
we have the conclusion.

\subsection{Proof of Lemma~\ref{lem:K 1-4}}\label{ss:proof-lem:K 1-4}
To estimate 
\begin{equation*}
 \cK_{\beta,a}(\lambda^2:x) 
 =
 \sum_{j=0}^{\dsharp}(-1)^j\Delta_j(\lambda^2:x)A_{\beta,a}^{(j)}(\lambda^2),
\end{equation*}
we combine the estimates of $\Delta_j(\lambda^2:x)$ 
and $A_{\beta,a}^{(j)}(\lambda^2)$.
Firstly, by \eqref{Bessel-asymp-infty}
we see that
\begin{multline}\label{Aj-asymp}
 A_{\beta,a}^{(j)}(s)
 =
 (-1)^{j}{\frac{\Gamma(\beta+1)}{\pi^{\beta-j}}}a^{{\frac d2}+\beta+j}
 \frac{J_{{\frac d2}+\beta+j}(2\pi a\sqrt{s})}
      {s^{\frac12({\frac d2}+\beta+j)}}.
\\
 =
 (-1)^{j}{\frac{\Gamma(\beta+1)}{\pi^{\beta-j+1}}}
 \frac{a^{{\frac d2}+\beta+j-\frac12}}
      {s^{\frac12({\frac d2}+\beta+j+\frac12)}}
 \cos\left(2\pi a\sqrt{s}-\frac{d+2\beta+2j+1}4\pi\right)
 \\
 +O(s^{-\frac12({\frac d2}+\beta+j+\frac32)})
 \quad\text{as $s\to\infty$}.
\end{multline}
That is, for some positive constant $C$,
\begin{equation}\label{Aaaj}
 |A_{\beta,a}^{(j)}(\lambda^2)|\le C\lambda^{-(\frac d2+\beta+j+\frac12)},
 \quad \lambda\ge1,
\end{equation}
For the terms $\Delta_j(s:x)$,
we use Lemma~\ref{lem:LPP1}, that is, 
\begin{equation*}\label{Dj}
 \Delta_{j}(\lambda^2:x) 
 =
 \begin{cases}
 O(\lambda^{d-\frac{2d}{d+1}}), 
 & \text{if $j=0$},\\
 O(\lambda^{d-\frac{2d}{d+1}+\frac{2j}{d+1}+\ve}) \ \text{for every $\ve>0$}, 
 & \text{if $0<j\le\frac{d-1}2$},\\
 O(\lambda^{\frac{d-1}2+j}), 
 & \text{if } j>\frac{d-1}2.
 \end{cases}
\end{equation*}
If $j=0$, then 
\begin{equation}\label{DeltaA 0}
 |\Delta_0(\lambda^2:x)A_{\beta,a}(\lambda^2)|
 \le
 C\lambda^{d-\frac{2d}{d+1}}\lambda^{-(\frac d2+\beta+\frac12)}
 =
 C\lambda^{\frac{d-5}2+\frac{2}{(d+1)}-\beta}.
\end{equation}
If $0<j<\dsharp$, 
then $0<j\le(d-1)/2$
and then, 
for any small $\ve>0$,
\begin{multline}\label{DeltaA j}
 |\Delta_j(\lambda^2:x)A_{\beta,a}^{(j)}(\lambda^2)|
 \le
 C\lambda^{d-\frac{2d}{d+1}+\frac{2j}{d+1}+\ve}\lambda^{-(\frac d2+\beta+j+\frac12)}
 =
 C\lambda^{\frac{d-5}2+\frac{2-j(d-1)}{(d+1)}-\beta+\ve}.
\end{multline}
If $j=\dsharp$, then $j>(d-1)/2$ and then
\begin{equation}\label{DeltaA kd}
 |\Delta_{\dsharp}(\lambda^2:x)A_{\beta,a}^{(\dsharp)}(\lambda^2)|
 \le
 C\lambda^{\frac{d-1}2+\dsharp} \lambda^{-(\frac d2+\beta+\dsharp+\frac12)}
 =
 C\lambda^{-1-\beta}.
\end{equation}
Comparing \eqref{DeltaA 0}, \eqref{DeltaA j} and \eqref{DeltaA kd},
we have the conclusion.

\subsection{Proof of Lemma~\ref{lem:K 5-}}\label{ss:proof-lem:K 5-}
By Lemma~\ref{lem:LPP2}
we have that, as $\lambda\to\infty$,
\begin{align*}
 \Delta_j(\lambda^2:x)
 &=
 \begin{cases}
 O(\lambda^{d-2}) 
 & \text{for $x\in\T^d\cap\Q^d$ and if $0\le j<(d-4)/2$}, 
 \\
 o(\lambda^{d-2}) 
 & \text{for $x\in\T^d\setminus\Q^d$ and if $0\le j<(d-4)/2$}, 
 \\
 O(\lambda^{\frac{2(d-1+j)}3+\ve})
 & \text{for $x\in\T^d$ and if $(d-4)/2\le j\le(d-1)/2$}.
 \end{cases}
\end{align*}
Combining this and \eqref{Aaaj}, we have the following estimates:
If $0\le j<(d-4)/2$ and $x\in\T^d\setminus\Q^d$, then,
for some decreasing function $\vp(\lambda)$
which satisfies $\vp(\lambda)\to0$ as $\lambda\to\infty$,
\begin{equation*}
 |\Delta_j(\lambda^2:x)A_{\beta,a}^{(j)}(\lambda^2)|
 \le
 C\lambda^{d-2}\vp(\lambda)\lambda^{-(\frac d2+\beta+j+\frac12)}
 =
 C\lambda^{\frac{d-5}2-\beta-j}\vp(\lambda).
\end{equation*}
If $0\le j<(d-4)/2$ and $x\in\T^d\cap\Q^d$, then 
\begin{equation*}
 |\Delta_j(\lambda^2:x)A_{\beta,a}^{(j)}(\lambda^2)|
 \le
 C\lambda^{d-2}\lambda^{-(\frac d2+\beta+j+\frac12)}
 =
 C\lambda^{\frac{d-5}2-\beta-j}.
\end{equation*}
If $(d-4)/2\le j\le(d-1)/2$, then,
for small $\ve>0$,
\begin{multline*}
 |\Delta_j(\lambda^2:x)A_{\beta,a}^{(j)}(\lambda^2)|
 \le
 C\lambda^{\frac{2(d-1+j)}3+\ve}\lambda^{-(\frac d2+\beta+j+\frac12)}
 =
 C\lambda^{\frac{d-7}6-\beta-\frac j3+\ve} 
 \le 
 C\lambda^{-\frac12-\beta+\ve}.
\end{multline*}
If $j=\dsharp$, then we have \eqref{DeltaA kd}.
Comparing these estimates, we have
\begin{equation*}
 |\cK_{\beta,a}(\lambda^2:x)|
 \le
 \begin{cases}
 C\lambda^{\frac{d-5}2-\beta}\vp(\lambda)
 & \text{for $x\in\T^d\setminus\Q^d$},
 \\
 C\lambda^{\frac{d-5}2-\beta}
 & \text{for $x\in\T^d\cap\Q^d$},
 \end{cases}
\end{equation*}
which shows
\begin{alignat*}{2}
  &\lim_{\lambda\to\infty}
  \frac{\left|\cK_{\beta,a}(\lambda^2:x) \right|}
             {\lambda^{\frac{d-5}2-\beta}}
  =0, 
  &\quad\text{if $x\in\T^d\setminus\Q^d$},
 \\
  &\limsup_{\lambda\to\infty}
  \frac{\left|\cK_{\beta,a}(\lambda^2:x) \right|}
             {\lambda^{\frac{d-5}2-\beta}}
  <\infty, 
  &\quad\text{if $x\in\T^d\cap\Q^d$}.
\end{alignat*}
Next we shall prove 
\begin{equation*}
 0<
 \limsup_{\lambda\to\infty}
  \frac{\left|\cK_{\beta,a}(\lambda^2:x) \right|}
             {\lambda^{\frac{d-5}2-\beta}},
  \quad\text{if $x\in\T^d\cap\Q^d$}.
\end{equation*}
If $j=0$ and $x\in\T^d\cap\Q^d$, 
then, using Lemma~\ref{lem:LPP2S}, we have 
\begin{equation*}
 |\Delta_0(\lambda_k^2:x)|
 \ge
 C(x)\lambda_k^{d-2},
 \quad
 \lambda_k\in[\ell_k,m_k],\ k\in\N.
\end{equation*}
On the other hand, by \eqref{Aj-asymp},
\begin{multline*}
 A_{\beta,a}(\lambda^2)
 =
 {\frac{\Gamma(\beta+1)}{\pi^{\beta+1}}}
 \frac{a^{{\frac d2}+\beta-\frac12}}
      {\lambda^{{\frac d2}+\beta+\frac12}}
 \cos\left(2\pi a\lambda-\frac{d+2\beta+1}4\pi\right) \\
 +O(\lambda^{-({\frac d2}+\beta+\frac32)})
 \quad\text{as $\lambda\to\infty$}.
\end{multline*}
If $\lambda\in[\ell_k,m_k]$, then
\begin{equation*}
 k\pi-\frac14\pi
 \le
 2\pi a\lambda-\frac{d+2\beta+1}4\pi
 \le
 k\pi+\frac14\pi,
\end{equation*}
that is,
\begin{equation*}
 \left|\cos\left(2\pi a\lambda-\frac{d+2\beta+1}4\pi\right)\right|
 \ge
 \frac1{\sqrt 2}.
\end{equation*}
Therefore, if $x\in\T^d\cap\Q^d$,
then, for $\lambda_k\in[\ell_k,m_k]$, 
\begin{equation*}
 |\Delta_0(\lambda_k^2:x)A_{\beta,a}(\lambda_k^2)|
 \ge
 C_{\beta,a}C(x)\lambda_k^{d-2}\lambda_k^{-({\frac d2}+\beta+\frac12)}
 =
 C_{\beta,a}C(x)\lambda_k^{\frac{d-5}2-\beta},
\end{equation*}
that is,
\begin{equation*}
 0<\limsup_{\lambda\to\infty}
 \frac{|\Delta_0(\lambda^2:x)A_{\beta,a}(\lambda^2)|}
      {\lambda^{\frac{d-5}2-\beta}}.
\end{equation*}
The proof is complete.

\subsection{Proof of Lemma~\ref{lem:K a.e.}}\label{ss:proof-lem:K a.e.}

If $j=\dsharp$, then we have \eqref{DeltaA kd}.
If $0\le j\le(d-1)/2$, then, using Lemma~\ref{lem:LPP3} and \eqref{Aaaj}, 
we have, for small $\ve>0$,
\begin{multline*}
 |\Delta_j(\lambda^2:x)A_{\beta,a}^{(j)}(\lambda^2)|
 \le
 C\lambda^{\frac{d}2+\frac{d-2}{d-1}j+\ve}\lambda^{-(\frac d2+\beta+j+\frac12)}
 =
 C\lambda^{-\frac12-\beta-\frac j{d-1}+\ve}
 \le
 C\lambda^{-\frac12-\beta+\ve},
 \\ \lambda\ge1,\ \text{a.e.}\, x\in\T^d.
\end{multline*}
This shows the conclusion.

\section{A relation between multiple Fourier series and lattice point problems}\label{sec:relation}

Let $d\ge1$, $\beta>-1$ and $0<a<1/2$.
Then Corollary~\ref{cor:GHIa<1/2} has shown that
\begin{equation*}
 S_\lambda(u_{\beta,a})(x)
 =
 \sigma_\lambda (U_{\beta,a})(x)
 + \cK_{\beta,a}(\lambda^2:x) 
  +O(\lambda^{-\beta-1})
 \quad\text{as}\quad \lambda\to\infty
\end{equation*}
for all $x\in\T^d$.
The behavior of $\sigma_\lambda(U_{\beta,a})(x)$ 
has been clarified by Theorem~\ref{thm:FI}.
Therefore, to clarify the behavior of $S_\lambda(u_{\beta,a})(x)$
we need to investigate the term 
\begin{equation*}
 \cK_{\beta,a}(\lambda^2:x) 
 =
 \sum_{j=0}^{\dsharp}(-1)^j\Delta_j(\lambda^2:x)A_{\beta,a}^{(j)}(\lambda^2).
\end{equation*}
Since the estimates for the terms $A_{\beta,a}^{(j)}(s)$ 
are gotten as \eqref{Aaaj},
the convergence problem depends only on the terms $\Delta_j(s:x)$,
which are connected with the lattice point problem.
Especially, the estimate for $\Delta_0(s:x)$ is very important 
and difficult.

First we state the following theorem, which will be proven later:

\begin{thm}\label{thm:conj d=23}
Let $d=2$ or $3$, $0<a<1/2$ and $x_0\in\T^d$. 
Then, 
\begin{equation*}
 S_\lambda(u_{\beta,a})(x_0)-\sigma_\lambda(U_{\beta,a})(x_0)=o(1)
 \ \text{as\ $\lambda\to\infty$ for all $\beta>-1$},
\end{equation*}
if and only if 
\begin{equation}\label{conj}
 \Delta_0(s:x_0)=O(s^{\frac{d-1}4+\varepsilon})
 \ \text{as $s\to\infty$ for all $\varepsilon>0$}.
\end{equation}
\end{thm}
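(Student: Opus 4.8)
The plan is to read the difference $S_\lambda(u_{\beta,a})(x_0)-\sigma_\lambda(U_{\beta,a})(x_0)$ off Corollary~\ref{cor:GHIa<1/2} and to reduce the entire biconditional to the behaviour of the single term $\Delta_0(\lambda^2:x_0)A_{\beta,a}(\lambda^2)$. Since $0<a<1/2$, Corollary~\ref{cor:GHIa<1/2} gives $S_\lambda(u_{\beta,a})(x_0)-\sigma_\lambda(U_{\beta,a})(x_0)=\cK_{\beta,a}(\lambda^2:x_0)+O(\lambda^{-\beta-1})$, and the last term is $o(1)$ for every $\beta>-1$. Hence the stated left-hand condition is equivalent to $\cK_{\beta,a}(\lambda^2:x_0)=o(1)$ for all $\beta>-1$. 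First I would isolate the $j=0$ summand of $\cK_{\beta,a}$: applying the uniform bounds of Lemma~\ref{lem:LPP1} for $\Delta_j(\lambda^2:x_0)$ with $j\ge1$ together with the size estimate \eqref{Aaaj} for $A_{\beta,a}^{(j)}$ — exactly the computations \eqref{DeltaA j} and \eqref{DeltaA kd} carried out at the single point $x_0$, where for $d=2$ only $\dsharp=1$ and for $d=3$ only $\dsharp=2$ occur — each term with $1\le j\le\dsharp$ is $O(\lambda^{-1-\beta+\ve})$, hence $o(1)$ for fixed $\beta>-1$. Thus $\cK_{\beta,a}(\lambda^2:x_0)=\Delta_0(\lambda^2:x_0)A_{\beta,a}(\lambda^2)+o(1)$ for each $\beta>-1$.

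Next I would insert the asymptotic expansion \eqref{Aj-asymp} with $j=0$, namely $A_{\beta,a}(\lambda^2)=C_{\beta,a}\,\lambda^{-(d/2+\beta+1/2)}\cos(2\pi a\lambda-\tfrac{d+2\beta+1}{4}\pi)+O(\lambda^{-(d/2+\beta+3/2)})$ with $C_{\beta,a}>0$. Multiplying the $O$-remainder by the a priori bound $\Delta_0(\lambda^2:x_0)=O(\lambda^{d-2d/(d+1)})$ from Lemma~\ref{lem:LPP1} produces a contribution $O(\lambda^{-\beta-1}\cdot\lambda^{d/2-2d/(d+1)-1/2})$; for $d=2,3$ this exponent is negative whenever $\beta>-1$, so this contribution is $o(1)$. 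The statement therefore reduces to the clean equivalence: $\Delta_0(\lambda^2:x_0)\,\lambda^{-(d/2+\beta+1/2)}\cos(2\pi a\lambda-\tfrac{d+2\beta+1}{4}\pi)\to0$ for all $\beta>-1$ if and only if $\Delta_0(s:x_0)=O(s^{(d-1)/4+\ve})$ for all $\ve>0$.

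The direction $\Leftarrow$ is immediate: if $\Delta_0(\lambda^2:x_0)=O(\lambda^{(d-1)/2+2\ve})$, the displayed product is $O(\lambda^{-1-\beta+2\ve})$, which is $o(1)$ once $\ve<(1+\beta)/2$. The direction $\Rightarrow$ is the heart of the matter, and I would argue by contraposition. Writing $M(\lambda)=|\Delta_0(\lambda^2:x_0)|\,\lambda^{-(d/2+1/2)}$, failure of the right-hand bound means $\rho:=\limsup_{\lambda\to\infty}\log M(\lambda)/\log\lambda>-1$ (finite by Lemma~\ref{lem:LPP1}). I choose $\lambda_k\to\infty$ with $\log M(\lambda_k)/\log\lambda_k\to\rho$ and pass to a subsequence along which $2\pi a\lambda_k$ converges modulo $2\pi$ to some $\phi_0$; then the phase $2\pi a\lambda_k-\tfrac{d+2\beta+1}{4}\pi$ converges modulo $2\pi$ to a value $\Theta(\beta)$ depending affinely on $\beta$. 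The map $\beta\mapsto\cos\Theta(\beta)$ vanishes only on a discrete arithmetic progression of spacing $2$, so I can fix a $\beta^{*}$ in the nonempty open interval $(-1,\rho)$ with $\cos\Theta(\beta^{*})\ne0$. For this $\beta^{*}$ one has $M(\lambda_k)\lambda_k^{-\beta^{*}}=\lambda_k^{\rho-\beta^{*}+o(1)}\to\infty$ while $|\cos(\cdots)|$ stays bounded below, so (the remainder being negligible as above) the product does not tend to $0$, contradicting the hypothesis at $\beta=\beta^{*}$.

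The main obstacle is precisely this $\Rightarrow$ direction: a troublesome sequence $\lambda_k$ on which $\Delta_0$ is large could have the cosine factor almost vanishing for a given $\beta$, so no single fixed $\beta$ obviously detects the blow-up. The resolution is to exploit the freedom in $\beta$: because the phase is affine in $\beta$ and the \emph{bad} values (where the limiting cosine vanishes) form only a discrete set, fixing the subsequential limit $\phi_0$ of $2\pi a\lambda_k \bmod 2\pi$ always leaves a legitimate $\beta^{*}\in(-1,\rho)$ that witnesses the failure. The restriction $d=2,3$ enters only through the numerology guaranteeing that the $j\ge1$ terms of $\cK_{\beta,a}$ and the subleading part of $A_{\beta,a}$ are $o(1)$ for every $\beta>-1$ — that is, that $\dsharp$ is small enough ($\dsharp=1$ for $d=2$, $\dsharp=2$ for $d=3$) that no intermediate lattice-point term survives.
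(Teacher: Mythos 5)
Your proposal is correct, and its skeleton coincides with the paper's: both reduce the statement via Corollary~\ref{cor:GHIa<1/2} (equivalently Theorem~\ref{thm:RT}) to the condition $\cK_{\beta,a}(\lambda^2:x_0)=o(1)$ for all $\beta>-1$, strip away the $j\ge1$ terms exactly as in \eqref{DeltaA j} and \eqref{DeltaA kd}, insert the asymptotics \eqref{Aj-asymp} of $A_{\beta,a}$, and dispose of the easy direction identically. Where you genuinely diverge is the hard direction. The paper argues \emph{directly}: for a given $\ve>0$ it fixes two parameters $-1<\beta(1)<\beta(2)=-1+\ve$ with $\beta(2)-\beta(1)<1$, notes that the corresponding phases differ by the constant $2\theta_0=(\beta(2)-\beta(1))\pi/2$, and uses the elementary inequality $\max\{|\cos\theta|,|\cos(\theta-2\theta_0)|\}\ge\sin\theta_0$, valid for \emph{every} $\theta$, so that at every single $\lambda$ at least one of the two hypotheses \eqref{DeltaCos} is active; this immediately yields the (slightly stronger, little-$o$) bound $\Delta_0(\lambda^2:x_0)=o(\lambda^{(d-1)/2+\ve})$ for each $\ve$, with no subsequence extraction. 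You instead argue by contraposition: you extract a subsequence realizing $\rho=\limsup\log M(\lambda)/\log\lambda>-1$, use compactness of $\mathbb{R}/2\pi\mathbb{Z}$ to make the phase converge, and then pick a single witness $\beta^{*}\in(-1,\rho)$ avoiding the discrete (spacing-$2$) zero set of the limiting cosine. Both mechanisms exploit the same leverage --- the phase is affine in $\beta$ --- but yours buys a conceptually transparent identification of the only possible obstruction (asymptotic vanishing of the cosine) at the cost of a qualitative, subsequential contradiction argument, while the paper's two-parameter trick buys uniformity in $\lambda$ and an explicit rate. Your bookkeeping at the reduction step (the remainder $O(\lambda^{d/2-2d/(d+1)-3/2-\beta})$ being $o(1)$ for $d=2,3$, $\beta>-1$) is also correct and matches why the theorem is stated only for $d\le3$.
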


For $d=1$ the convergence property of 
$S_\lambda(u_{\beta,a})(x)$ and $\sigma_\lambda(U_{\beta,a})(x)$ are well known
and the estimate \eqref{conj} holds.
Actually, we have $\Delta_0(s:x)=O(1)$ for all $x\in\T$, 
see Remark~\ref{rem:Landau}.
However, for $d=2$ and $d=3$, the estimate \eqref{conj} is an open problem,
see Remarks~\ref{rem:conj 2} and~\ref{rem:conj 3}, respectively.
For $d\ge4$, see Remarks~\ref{rem:conj 4} and~\ref{rem:conj-aec}.

\begin{rem}\label{rem:conj 2}
If $d=2$, then
the following fact is well known 
(see \cite[Hauptsatz 3]{Novak1969} and Remark~\ref{rem:Delta-P}):
\begin{equation}\label{C2D2}
 C_2(x)\,t^{\frac14}
 <
 \left(\frac1t\int_0^t|\Delta_0(s:x)|^2\,ds\right)^\frac12
 <
 D_2(x)\,t^{\frac14} \ \text{for all $x\in\T^2$},
\end{equation}
where $C_2(x)$ and $D_2(x)$ are positive constants depending on $x\in\T^2$.
Therefore it is natural to conjecture that 
$\Delta_0(s:x)=O(s^{\frac14+\ve})$ for all $x\in\T^2$.
However,
\begin{equation*}
 \text{``$\Delta_0(s:0)=O(s^{\frac14+\varepsilon})$ for all $\ve>0$''}
\end{equation*}
is an open problem as famous Hardy's conjecture on Gauss's circle problem,
since
\begin{equation*}
 \Delta_0(s:0)
 =
 D_0(s:0)-\cD_0(s:0)
 =
 \sum_{|m|^2<s}1-\int_{|\xi|^2<s}\,d\xi,
\end{equation*}
which is the difference between 
the number of lattice points inside the circle
and 
the area of the circle.
Up to now the best result on this problem is 
$\Delta_0(s:0)=O(s^{131/416}(\log s)^{18637/8320})$ 
by M.~N.~Huxley~\cite{Huxley2003} in 2003.
(Recently, Bourgain and Watt~\cite{Bourgain-Watt_arXiv1709-04340v1} 
gave $\theta=517/1648=0.31371\dots$ in arXiv, 2017.)
By Theorem~\ref{thm:conj d=23}, 
it turns out that
Hardy's conjecture on Gauss's circle problem
is equivalent to
\begin{equation*}
 S_\lambda(u_{\beta,a})(0)-\sigma_\lambda(U_{\beta,a})(0)=o(1)
 \ \text{as\ $\lambda\to\infty$ for all $\beta>-1$}.
\end{equation*}
\end{rem}

\begin{rem}\label{rem:conj 3}
If $d=3$ and  for all $ x\in\T^3$ then 
(see \cite[Hauptsatz 3]{Novak1969} and Remark~\ref{rem:Delta-P})
\begin{equation}\label{C3D3}
 C_3(x)\,t^{\frac12}
 <
 \left(\frac1t\int_0^t|\Delta_0(s:x)|^2\,ds\right)^\frac12
 <
 D_3(x)\,t^{\frac12}\log^{\frac12}t \ \text{for all $x\in\T^3$} ,
\end{equation}
where $C_3(x)$ and $D_3(x)$ are positive constants depending on $x\in\T^3$.
Therefore it is natural to conjecture that 
$\Delta_0(s:x)=O(s^{\frac12+\varepsilon})$ for all $x\in\T^3$.
However, this problem is also very hard. Up to now the best result on this problem is 
$\Delta_0(s:0)=O(s^{21/32+\varepsilon})$ 
by D.~R.~Heath-Brown~\cite{HeathBrown1999} in 1999.
\end{rem}

For $d\ge4$ we have the following theorem. The proof will be given later:

\begin{thm}\label{thm:conj d=4}
Let $d\ge 4$, $\beta>-1$ and $0<a<1/2$. 
For $x_0\in\T^d$,
if \eqref{conj} holds,
then
\begin{equation*}
 S_\lambda(u_{\beta,a})(x_0)-\sigma_\lambda(U_{\beta,a})(x_0)=o(1)
 \ \text{as $\lambda\to\infty$}.
\end{equation*}
\end{thm}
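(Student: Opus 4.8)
The plan is to reduce the claim to a decay estimate for the single term $\cK_{\beta,a}(\lambda^2:x_0)$ and then to feed the hypothesis \eqref{conj} into the Riesz convexity theorem. Since $0<a<1/2$, Corollary~\ref{cor:GHIa<1/2} already gives
\[
S_\lambda(u_{\beta,a})(x_0)-\sigma_\lambda(U_{\beta,a})(x_0)=\cK_{\beta,a}(\lambda^2:x_0)+O(\lambda^{-\beta-1})
\quad\text{as}\quad\lambda\to\infty .
\]
Because $\beta>-1$, the remainder $O(\lambda^{-\beta-1})$ is itself $o(1)$, so the whole statement comes down to proving $\cK_{\beta,a}(\lambda^2:x_0)=o(1)$. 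Recalling the definition \eqref{K}, this in turn amounts to showing that each of the finitely many products $\Delta_j(\lambda^2:x_0)A_{\beta,a}^{(j)}(\lambda^2)$, $0\le j\le\dsharp$, tends to $0$.

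For the factors $A_{\beta,a}^{(j)}$ I would invoke \eqref{Aaaj}, namely $|A_{\beta,a}^{(j)}(\lambda^2)|\le C\lambda^{-(d/2+\beta+j+1/2)}$. The crux is then to control $\Delta_j(\lambda^2:x_0)$ for every $j$, whereas the hypothesis \eqref{conj} speaks only about $j=0$. The key idea is to \emph{propagate} \eqref{conj} to all intermediate orders by Riesz convexity (Theorem~\ref{thm:Riesz}). I would interpolate between the anchor bound $|\Delta_0(s:x_0)|\le Cs^{(d-1)/4+\varepsilon}$ furnished by \eqref{conj} and the deterministic high‑order bound $|\Delta_{\dsharp}(s:x)|\le Cs^{(d-1)/4+\dsharp/2}$ coming from Lemma~\ref{lem:LPP1} (its case $\alpha>(d-1)/2$, valid uniformly in $x$). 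Taking $\theta=j/\dsharp$, so that $\alpha=(1-\theta)\cdot0+\theta\dsharp=j$, the two exponents combine to $(d-1)/4+\theta\dsharp/2+(1-\theta)\varepsilon=(d-1)/4+j/2+(1-\theta)\varepsilon$, giving
\[
\Delta_j(s:x_0)=O\!\left(s^{\frac{d-1}4+\frac j2+\varepsilon}\right)\quad\text{for every }\varepsilon>0,\ 0\le j\le\dsharp .
\]
(The only technical point is that the majorants $V_i(s)$ in Theorem~\ref{thm:Riesz} must be positive and nondecreasing, which is clear since both exponents $(d-1)/4+\varepsilon$ and $(d-1)/4+\dsharp/2$ are positive.)

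Substituting $s=\lambda^2$ gives $\Delta_j(\lambda^2:x_0)=O(\lambda^{(d-1)/2+j+2\varepsilon})$, and combining with \eqref{Aaaj} every term of $\cK_{\beta,a}(\lambda^2:x_0)$ satisfies
\[
\bigl|\Delta_j(\lambda^2:x_0)A_{\beta,a}^{(j)}(\lambda^2)\bigr|
\le C\,\lambda^{\frac{d-1}2+j+2\varepsilon-(\frac d2+\beta+j+\frac12)}
=C\,\lambda^{-1-\beta+2\varepsilon},
\]
where remarkably the exponent is independent of $j$. Choosing $\varepsilon<(1+\beta)/2$ makes it negative, so $\cK_{\beta,a}(\lambda^2:x_0)=O(\lambda^{-1-\beta+2\varepsilon})=o(1)$, which closes the argument. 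The main obstacle I anticipate is precisely this passage from the single hypothesis on $\Delta_0$ to bounds on all the $\Delta_j$: the generic estimates of Lemma~\ref{lem:LPP1} used in Lemma~\ref{lem:K 1-4} only yield $\cK_{\beta,a}(\lambda^2:x)=O(\lambda^{c(d)-\beta})$, which fails to be $o(1)$ once $d\ge5$ (where $c(d)\ge0$) or when $-1<\beta\le c(d)$ in dimension $4$. The interpolation step is exactly what removes this deficiency by transferring the sharp conjectural gain in $\Delta_0$ to every higher order.
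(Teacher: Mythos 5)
Your proof is correct and follows essentially the same route as the paper's: reduce the claim to $\cK_{\beta,a}(\lambda^2:x_0)=o(1)$ via the fundamental identity, propagate the hypothesis \eqref{conj} from $\Delta_0$ to all orders $\Delta_j$ by the Riesz convexity theorem (Theorem~\ref{thm:Riesz}), and combine with \eqref{Aaaj} to get each term bounded by $C\lambda^{-1-\beta+\ve}$. The only (immaterial) difference is your choice of interpolation endpoints: you interpolate between $\alpha_0=0$ and $\alpha_1=\dsharp$, covering all $0\le j\le\dsharp$ at once, whereas the paper interpolates between $0$ and $(d-1)/2$ and handles the top term $j=\dsharp$ separately via \eqref{DeltaA kd}; both give the same conclusion.
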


\begin{rem}\label{rem:conj 4}
If $d\ge4$ and $x\in\T^d\cap\Q^d$, then 
(see \cite[Hauptsatz 1]{Novak1969} and Remark~\ref{rem:Delta-P})
\begin{equation*}
 \left(\frac1t\int_0^t|\Delta_0(s:x)|^2\,ds\right)^\frac12
 =
 M_d(x)\,t^{\frac d2-1}+O(t^{\frac d2-\frac32+\ve}),
\end{equation*}
where $M_d(x)$ is a positive constant depending on $x$.
Therefore, the estimate \eqref{conj} fails at $x\in\T^d\cap\Q^d$ for $d\ge4$.
\end{rem}

By Propositions~\ref{thm:conj d=23} and~\ref{thm:conj d=4}
we have the following corollary immediately:

\begin{cor}\label{cor:conj-aec}
Let $d\ge2$, $\beta>-1$ and $0<a<1/2$. 
Assume that, for all $\ve>0$,
\begin{equation}\label{conj-ae}
 \Delta_0(s:x)
 =
 O(s^{\frac{d-1}4+\ve})
 \ \text{as $s\to\infty$ for a.e. $x\in\T^d$}.
\end{equation}
Then
\begin{equation*}
 S_\lambda(u_{\beta,a})(x)- \sigma_\lambda(U_{\beta,a})(x)=o(1)
 \ \text{as $\lambda\to\infty$ for a.e. $x\in\T^d$}.
\end{equation*}
Consequently,
\begin{equation*}
 \lim_{\lambda\to\infty}S_\lambda(u_{\beta,a})(x)
 =
 u_{\beta,a}(x)
 \quad\text{for a.e. $x\in\T^d$}.
\end{equation*}
\end{cor}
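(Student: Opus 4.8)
The plan is to derive Corollary~\ref{cor:conj-aec} by applying the pointwise results Theorems~\ref{thm:conj d=23} and~\ref{thm:conj d=4} at almost every $x\in\T^d$, and then combining the outcome with the known behavior of $\sigma_\lambda(U_{\beta,a})$ on the full-measure set $E_a$. Since these pointwise theorems already encode the hard analysis (via Corollary~\ref{cor:GHIa<1/2} and the reduction to the single term $\Delta_0$), the remaining work is essentially bookkeeping.

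First I would perform the quantifier interchange needed to convert hypothesis \eqref{conj-ae} into a single exceptional null set. The hypothesis asserts that for each fixed $\ve>0$ there is a null set $N_\ve\subset\T^d$ off which $\Delta_0(s:x)=O(s^{(d-1)/4+\ve})$. Taking $\ve=1/n$ and setting $N=\bigcup_{n\in\N}N_{1/n}$, which is again null, I obtain that for every $x\in\T^d\setminus N$ the estimate $\Delta_0(s:x)=O(s^{(d-1)/4+1/n})$ holds for all $n$, and hence $\Delta_0(s:x)=O(s^{(d-1)/4+\ve})$ for every $\ve>0$. In other words, the pointwise hypothesis \eqref{conj} of Theorems~\ref{thm:conj d=23} and~\ref{thm:conj d=4} is verified at every point $x_0$ outside the single null set $N$.

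Next I would apply the appropriate pointwise theorem at each such $x_0$: Theorem~\ref{thm:conj d=23} when $d=2$ or $3$, and Theorem~\ref{thm:conj d=4} when $d\ge4$. In both cases the conclusion is $S_\lambda(u_{\beta,a})(x_0)-\sigma_\lambda(U_{\beta,a})(x_0)=o(1)$ as $\lambda\to\infty$. Because this holds for all $x_0\in\T^d\setminus N$, the first displayed assertion of the corollary, namely $S_\lambda(u_{\beta,a})(x)-\sigma_\lambda(U_{\beta,a})(x)=o(1)$ for a.e.\ $x$, follows at once. For the ``consequently'' statement I would then identify the limit of $\sigma_\lambda(U_{\beta,a})$: since $0<a<1/2$, the set $\T^d\setminus E_a=\{0\}\cup G_a$ has measure zero, so a.e.\ $x$ lies in $E_a$, and Theorem~\ref{thm:FI}(iii) gives $\sigma_\lambda(U_{\beta,a})(x)=U_{\beta,a}(x)+O(\lambda^{-\beta-1})\to U_{\beta,a}(x)$ there. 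Combining with the fact, recorded before Corollary~\ref{cor:GHIa<1/2}, that $a<1/2$ forces $u_{\beta,a}(x)=U_{\beta,a}(x)$ for all $x\in\T^d$, I conclude $S_\lambda(u_{\beta,a})(x)\to U_{\beta,a}(x)=u_{\beta,a}(x)$ for a.e.\ $x$.

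The only step that is not completely formal is the quantifier interchange in the second paragraph; everything else is a direct citation of the pointwise theorems and of Theorem~\ref{thm:FI}(iii). I therefore expect no genuine obstacle: the corollary is designed to follow immediately once the pointwise equivalences are established, and the countable-union argument is the sole point requiring a word of justification.
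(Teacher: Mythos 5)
Your proposal is correct and follows exactly the route the paper intends: the paper derives Corollary~\ref{cor:conj-aec} directly ("immediately") from Theorems~\ref{thm:conj d=23} and~\ref{thm:conj d=4}, and your write-up simply supplies the routine details --- the countable-union quantifier interchange to get a single null set, and the identification of the limit via Theorem~\ref{thm:FI}(iii) together with $u_{\beta,a}=U_{\beta,a}$ and the full measure of $E_a$ when $0<a<1/2$. All of these steps are sound, so nothing is missing.
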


\begin{rem}\label{rem:conj-aec}
If $d=2$ or $d=3$, then \eqref{conj-ae} is conjectured naturally by \eqref{C2D2}
and \eqref{C3D3}.
If $d\ge4$, then the following is known 
(see \cite[Theorem 5]{Novak1970} and Remark~\ref{rem:Delta-P})
\begin{equation*}
 \left(\frac1t\int_0^t |\Delta_0(s:x)|^2 \,ds\right)^{1/2}
 =
 O(t^\frac{d-1}4)\ \ \text{for a.e. $x$}.
\end{equation*}
Therefore \eqref{conj-ae} is conjectured naturally for $d\ge4$ also.
\end{rem}

\begin{proof}[Proof of Theorem~\ref{thm:conj d=23}]\label{proof:conj d=23}
First observe that Theorem~\ref{thm:RT} implies that
\begin{equation*}
 S_\lambda(u_{\beta,a})(x_0)=\sigma_\lambda(U_{\beta,a})(x_0)+o(1)
 \quad\text{as}\quad \lambda\to\infty,
\end{equation*}
if and only if
\begin{equation}\label{K=o(1)}
 \cK_{\beta,a}(\lambda^2:x_0) 
 =
 \sum_{j=0}^{\dsharp}(-1)^j\Delta_j(\lambda^2:x_0)A_{\beta,a}^{(j)}(\lambda^2)
 =
 o(1).
\end{equation}

If $j=\dsharp$, then by \eqref{DeltaA kd} we have
\begin{equation*}
 \Delta_{\dsharp}(\lambda^2:x_0)A_{\beta,a}^{(\dsharp)}(\lambda^2)
 =
 O(\lambda^{-1-\beta}).
\end{equation*}
Note that $\dsharp=1$ if $d=2$ and $\dsharp=2$ if $d=3$.
For the case $d=3$ and $j=1$, by \eqref{DeltaA j},
we have, for small $\ve_0>0$,
\begin{equation*}
 \Delta_{1}(\lambda^2:x_0)A_{\beta,a}^{(1)}(\lambda^2)
 =
 O(\lambda^{-1-\beta+\ve_0}).
\end{equation*}
Hence
\begin{equation*}\label{K=DeltaA+O}
 \cK_{\beta,a}(\lambda^2:x_0) 
 =
 \begin{cases}
 \Delta_{0}(\lambda^2:x_0)A_{\beta,a}(\lambda^2)
 +
 O(\lambda^{-1-\beta}),
 & d=2, \\
 \Delta_{0}(\lambda^2:x_0)A_{\beta,a}(\lambda^2)
 +
 O(\lambda^{-1-\beta+\ve_0}),
 & d=3.
 \end{cases}
\end{equation*}
Since we can take $\ve_0>0$ as $-1-\beta+\ve_0<0$ in the case $d=3$,
we see that \eqref{K=o(1)} is equivalent to
\begin{equation}\label{DA=o(1)}
 \Delta_{0}(\lambda^2:x_0)A_{\beta,a}(\lambda^2)=o(1).
\end{equation}

\rm{(i)} 
Assume that the estimate \eqref{conj} holds.
Then,
for all $\beta>-1$, 
we can take $\ve>0$ as $-1-\beta+\ve<0$ and   
\begin{equation*}
 |\Delta_{0}(\lambda^2:x_0)A_{\beta,a}(\lambda^2)|
 \le
 C\lambda^{\frac{d-1}2+\ve}\lambda^{-(\frac d2+\beta+\frac12)}
 =
 C\lambda^{-1-\beta+\ve},
\end{equation*}
where we use \eqref{conj} and \eqref{Aaaj}.
This shows \eqref{DA=o(1)}.

\rm{(ii)} 
Conversely, assume that the estimate \eqref{DA=o(1)} holds
for all $\beta>-1$.
Then by \eqref{Aj-asymp} we have that
\begin{equation*}
 \frac{\Delta_{0}(\lambda^2:x)}{\lambda^{\frac d2+\beta+\frac12}}
 \cos\left(2\pi a\lambda-\frac{d+2\beta+1}4\pi\right)
 =o(1)
\end{equation*}
for all $\beta>-1$.
Now, for all $\ve>0$,
take $\beta(1)$ and $\beta(2)$ such that 
$-1<\beta(1)<-1+\epsilon=\beta(2)<\beta(1)+1$. 
Then 
\begin{equation}\label{DeltaCos}
 \frac{\Delta_{0}(\lambda^2:x)}{\lambda^{\frac d2+\beta(i)+\frac12}}
 \cos\left(2\pi a\lambda-\frac{d+2\beta(i)+1}4\pi\right)
 =o(1)
 \quad (i=1,2).
\end{equation}
Let $\theta_0=(\beta(2)-\beta(1))\pi/4$.
Then $0<\theta_0<\pi/4$
and 
\begin{equation*}
 \min_{\theta\in\R}
 \left(\max\left\{
  |\cos\theta|,|\cos(\theta-2\theta_0)|\right\}\right)
 =
 \min_{-\pi\le\theta\le\pi}
\left( \max\left\{
  |\sin\theta|,|\sin(\theta-2\theta_0)|\right\}\right)
 \ge
 \sin\theta_0.
\end{equation*}
Therefore,
\begin{equation*}
 \min_{\lambda>0}
\left( \max\left\{
 \left|
 \cos\left(2\pi a\lambda-\frac{d+2\beta(i)+1}4\pi\right)
 \right|: i=1,2
 \right\}\right)
 \ge
 \sin\theta_0.
\end{equation*}
Combining this and \eqref{DeltaCos}, 
and observing that $\beta(1)<\beta(2)$, 
we conclude that
\begin{equation*}
 \frac{\Delta_{0}(\lambda^2:x_0)}{\lambda^{\frac d2+\beta(2)+\frac12}}
 =
 \frac{\Delta_{0}(\lambda^2:x_0)}{\lambda^{\frac{d-1}2+\epsilon}}
 =
 o(1).
\end{equation*}
This shows the conclusion.
\end{proof}

\begin{proof}[Proof of Theorem~\ref{thm:conj d=4}]\label{proof:conj d=4}
From Theorem~\ref{thm:RT} it is enough to prove that
\begin{equation*} 
 \cK_{\beta,a}(\lambda^2:x_0) 
 =
 \sum_{j=0}^{\dsharp}(-1)^j\Delta_j(\lambda^2:x_0)A_{\beta,a}^{(j)}(\lambda^2)
 =
 o(1).
\end{equation*}
By the assumption and Lemma~\ref{lem:LPP1},
for all $\ve>0$,
\begin{equation*}
 \Delta_{\alpha}(s:x_0)
 =
 \begin{cases}
 O(s^{\frac{d-1}4+\ve}), & \text{if $\alpha=0$}, \\
 O(s^{\frac{d-1}2+\ve}), & \text{if $\alpha=\frac{d-1}2$}.
\end{cases}
\end{equation*}
Applying Theorem~\ref{thm:Riesz} 
as 
\begin{equation*}
 \alpha_0=0,\ \alpha_1=\frac{d-1}2\ \
 \text{and}\ \ \alpha=\theta\alpha_1, 
\end{equation*}
we have 
\begin{equation*}
 \Delta_{\alpha}(s:x_0)
 =
 O(s^{\frac{d-1}4+\frac{\alpha}2+\ve}),
 \quad\text{if}\ \ 0\le\alpha\le\frac{d-1}2,
\end{equation*}
since
\begin{equation*}
 (1-\theta)\frac{d-1}4+\theta\frac{d-1}2
 =\frac{d-1}4+\frac{\alpha}2. 
\end{equation*}
Take $\ve>0$ as $-1-\beta+\ve<0$. 
Then we have by \eqref{Aaaj} and \eqref{DeltaA kd}
\begin{align*}
 |\cK_{\beta,a}(\lambda^2:x_0)| 
 &\le
 \sum_{j=0}^{\dsharp-1}|\Delta_j(\lambda^2:x_0)A_{\beta,a}^{(j)}(\lambda^2)|
 +
 O(\lambda^{-1-\beta})
\\
 &\le C
 \sum_{j=0}^{\dsharp-1} 
  \lambda^{\frac{d-1}2+j+\ve}\lambda^{-(\frac d2+\beta+j+\frac12)}
 +
 O(\lambda^{-1-\beta})
\\
 &=O(\lambda^{-1-\beta+\ve})\\
 &=o(1).
\end{align*} 
The proof is complete.
\end{proof}


\vskip 5mm

\begin{flushright}
\begin{minipage}{80mm}
Shigehiko Kuratsubo \\
Department of Mathematical Sciences \\
Hirosaki University \\
Hirosaki 036-8561, Japan \\
E-mail: kuratubo@hirosaki-u.ac.jp \\
\end{minipage}
\begin{minipage}{80mm}
Eiichi Nakai \\
Department of Mathematics  \\
Ibaraki University  \\
Mito, Ibaraki 310-8512, Japan \\
E-mail: eiichi.nakai.math@vc.ibaraki.ac.jp 
\end{minipage}
\end{flushright}

\end{document}